    \newtheorem{theorem}{Theorem}
    \newtheorem{lemma}[theorem]{Lemma}
    \newtheorem{proposition}[theorem]{Proposition}
    \newtheorem{corollary}[theorem]{Corollary}
    \newtheorem{assumption}[theorem]{Assumption}
\theoremstyle{definition} 
    \newtheorem{definition}[theorem]{Definition}
    \newtheorem{remark}[theorem]{Remark}
\newcommand{\var}{\operatorname{Var}}
\renewcommand{\P}[1]{\mathds{P}\left[#1\right]}
\newcommand{\E}[1]{\mathds{E}\left[#1\right]}
\renewcommand{\i}{\mathbf{i}}
\newcommand{\1}{\mathbf{1}}
\newcommand{\A}{\mathcal{A}}
\renewcommand{\d}{\mathrm{d}}
\newcommand{\C}{\mathbb{C}}
\renewcommand{\E}{\mathbb{E}}
\newcommand{\D}{\mathbb{D}}
\newcommand{\F}{\mathscr{F}}
\newcommand{\N}{\mathbb{N}}
\renewcommand{\O}{\mathcal{O}}
\renewcommand{\P}{\mathbb{P}}
\newcommand{\R}{\mathbb{R}}
\newcommand{\Id}{\operatorname{Id}}
\newcommand{\diag}{\operatorname{diag}}
\numberwithin{equation}{section}
\numberwithin{theorem}{section}
\newcommand{\Exp}{\mathbb{E}}
\newcommand{\prob}{\mathbb{P}}
\newcommand{\filt}{\mathscr{F}}
\renewcommand{\Pr}{\prob}
\DeclareDocumentCommand \one { o }
{%
\IfNoValueTF {#1}
{\mathbf{1}  }
{\mathbf{1}\left\{ {#1} \right\} }%
}
\newcommand{\Var}{\operatorname{Var}}
\newcommand{\Beta}{\operatorname{Beta}}
\newcommand{\Unif}{\operatorname{Unif}}
\newcommand{\As}{\operatorname{a.s.}}
\DeclareDocumentCommand{\Prto} {o} {
\IfNoValueTF {#1}
 {\overset{\Pr}{\longrightarrow}}
 { \xrightarrow[ #1 \to \infty]{\Pr }}
}
\DeclareDocumentCommand{\Asto} {o} {
\IfNoValueTF {#1}
 {\overset{\operatorname{a.s.}}{\longrightarrow}}
 {
 \xrightarrow[ #1 \to \infty]{\operatorname{a.s.} }
 }
}
\DeclareDocumentCommand{\Mgfto} {o} {
\IfNoValueTF {#1}
{\overset{\operatorname{mgf}}{\longrightarrow}}
{ \xrightarrow[ #1 \to \infty]{\operatorname{mgf} }}
}
\DeclareDocumentCommand{\Wkto} {o} {
\IfNoValueTF {#1}
 {\overset{(d)}{\longrightarrow}}
 { \xrightarrow[ #1 \to \infty]{(d) }}
}
\DeclareDocumentCommand \LPto { O{1} }
{\overset{\operatorname{\LP^{#1}}}{\longrightarrow}}
\title{Strong approximation of Gaussian $\beta$ ensemble characteristic polynomials: the hyperbolic regime}
\date{\today}
\author{Gaultier Lambert\footnote{
University of Zurich, Winterthurerstrasse 190, 8057 Z\"urich, Switzerland. 
\newline G.L. research is supported  by the SNSF Ambizione grant S-71114-05-01.
\newline Email: \href{mailto:gaultier.lambert@math.uzh.ch}{\nolinkurl{gaultier.lambert@math.uzh.ch}}} 
 and Elliot Paquette\footnote{
 McGill University, 805 Sherbrooke Street West, Montreal, Quebec, Canada.
 \newline Supported by Simons Foundation travel grant 638152.
 \newline Email: \href{mailto:elliot.paquette@mcgill.ca}{\nolinkurl{elliot.paquette@mcgill.ca}}
 }}
\begin{document}

\maketitle

\begin{abstract}
We investigate the characteristic polynomials $\varphi_N$ of the Gaussian $\beta$-ensemble for general $\beta>0$ through its transfer matrix recurrence. 
Our motivation is to obtain a (probabilistic) approximation for  $\varphi_N$ in terms of a Gaussian log--correlated field. We distinguish between different types of transfer matrices and analyze completely the  \emph{hyperbolic part} of the recurrence.
As a result, we obtain a new coupling between $\varphi_N$ and a Gaussian analytic function with an error which is uniform away from the support of the semicircle law. 
We use this as input to give the almost sure scaling limit of the characteristic polynomial at the edge in \cite{LambertPaquette03}.
This is also required to obtain analogous strong approximations inside of the bulk of the semicircle law. 
Our analysis relies on moderate deviation estimates for the product of transfer matrices and this approach might also be useful in different contexts.
\end{abstract}
{
  \hypersetup{colorlinks=true,linkcolor=black}  
\tableofcontents
}


\section{Introduction}

In this article, we will develop new properties of $N$--dimensional  \emph{Gaussian $\beta$-ensembles}, or G$\beta$E, and extend its known connections to \emph{log--correlated fields}.  For $\beta>0$, the G$\beta$E is the $N$--point process on $\R$ with joint law
\begin{equation}\label{eq:GbE}
d\mu_{G\beta E}(\lambda_1, \lambda_2, \dots,\lambda_N)
=
\frac{1}{\mathcal{Z}_{N,\beta}} e^{- \sum_{i=1}^N \beta N \lambda_i^2} \prod_{i > j} \left|\lambda_i - \lambda_j\right|^\beta , 
\end{equation}
where $\mathcal{Z}_{N,\beta}>0$ is a normalizing constant.
In this scaling, the limiting spectral distribution as $N\to\infty$ is a semicircle on $[-1,1].$
In terms of these points, we define the characteristic polynomial 
\[
  \varphi_N(z) = {\textstyle \prod_{i=1}^N} ( z - \lambda_i)  , \qquad z\in\C. 
\]
It is well--known that $\log|\varphi_N(z)|-\Exp \log|\varphi_N(z)|$ converges weakly to a Gaussian field $X(z)$ at fixed $z\in \C\setminus[-1,1]$ as it follows e.g.~from Johansson classical central limit Theorem~\ref{thm:clt}. 
In particular, as a consequence of Theorem~\ref{thm:planar} below
this can be extended to process convergence in the sense of locally uniform convergence on $\C \setminus [-1,1]$ (in the notation of Theorem~\ref{thm:planar},  $X= \sqrt{2/\beta} \Re \mathrm{W}$).

\medskip

An important feature of the characteristic polynomial is that the boundary values of $X$ on $[-1,1]$ form a log--correlated Gaussian field 
\[
  \Exp\big[ X(x)X(y) \big] = -\frac{\log|2(x-y)|^{-1}}{\beta},
  \qquad
  \text{for all } 
  x,y \in [-1,1]
\]
cf~\eqref{logcov}.
Hence, this field is not pointwise defined on $[-1,1]$ and its boundary values must be understood in an appropriate functional sense.  By an approximation argument (\cite{Kahane,Berestycki,Shamov}), it is however possible to define the exponential of $X(z)$ as a family of Gaussian multiplicative chaos measures $\big\{M_\gamma(dx) : \gamma \in [0, \sqrt{2\beta})\big\}.$ A natural approximation scheme is to take a limit from the upper half-plane, i.e.\ to consider the in-probability weak limit of measures
  \[
    M_\gamma(dx) = \lim_{\epsilon \to 0} e^{ \gamma X(x+i\epsilon) - \frac{\gamma^2}{2} \E X^2(x+i\epsilon)}dx.
  \]
Many other methods of approximation can be shown to yield the same limit, suggesting $M_\gamma$ as an unambiguous representation of the exponential of $X(z)$ on $[-1,1]$ for $\gamma \in [0, \sqrt{2\beta}).$

  The value $\gamma = \sqrt{2\beta}$ is critical, and a further renormalization is needed to produce a nondegenerate limit.  Here too there are senses in which the limit is independent of the method of approximation \cite{JS17,Junnila18}.   For the supercritical cases $\gamma > \sqrt{2\beta},$ it is no longer possible to consider in-probability weak limits, but there are various senses of weak limits in law that can be considered (see e.g.\ \cite{MRV16}).

\medskip

This naturally motivates the analytic question if $|\varphi_N(z)|^\gamma/\Exp|\varphi_N(z)|^\gamma$ converges to a Gaussian multiplicative chaos in a suitable sense. 
That is, can we use the characteristic polynomial of  the Gaussian $\beta$-ensemble as a smooth, and in some sense finite, approximation of a family of Gaussian multiplicative chaos measures. 
 In the special case of $\beta=2,$ relying on the determinantal structure and the associated othogonal polynomials Riemann--Hilbert problems, \cite{BWW} obtain such a weak convergence for $0 \leq \gamma < \sqrt{2},$ the so-called $L^2$--phase, and \cite{CFLW} in the whole subcritical--phase $\gamma <2$. 
Analogous results also exist for the characteristic polynomial of random unitary matrices (also known as  circular unitary ensemble) \cite{WebbL2, Nikula}.  
For circular $\beta$-ensembles for general $\beta>0$, convergence to the GMC has been shown by the first author in the subcritical phase for a regularized version of the characteristic polynomial (performed by looking on a circle inscribed in the unit disk by $\Theta( (\log N)^6/N)$)  \cite{Lambert}.  Moreover the spectral mesures of the \emph{CMV representation} of the infinite circular $\beta$-ensemble are (up to normalization) also given exactly by Gaussian multiplicative chaos measures \cite{CN19, AN19} when $\beta \ge 2$. 

\medskip

One can also ask if other log--correlated field predictions hold for $\varphi_N(z),$ such as the behavior of its extreme values. For the Gaussian unitary ensemble ($\beta=2),$ \cite{FyodorovSimm} conjecture an exact convergence in law for these extreme values.  In addition, the law of the position of the maximum is characterized by \cite{FyodorovDoussal}.  For $\beta=2,$ the authors show convergence of the leading term of the maximum of the modulus of the recentered log--characteristic polynomial \cite{LambertPaquette01}.  Again for $\beta=2,$ similar theorems are proven for the behavior of the maximum of the recentered argument of the characteristic polynomial \cite{CFLW}.
For the circular ensembles, the state of the art is substantially better developed \cite{ArguinBeliusBourgade,PaquetteZeitouni02,CMN}. 
Indeed, \cite{CMN} obtains the precise behavior of the maximum for general $\beta$ based on the analysis of \emph{Pr\"ufer phases}, which give an effective Markov process description for the log--characteristic polynomial of a circular $\beta$-ensemble.  

\subsection{Transfer-matrix recurrence} \label{sect:TM}

Recall that for any $\alpha>0$, a $\chi_{\alpha}$ random variable has density proportional to
$x^{\alpha -1} e^{- x^2 /2} \1_{x>0}$ and we have $\chi_{\alpha}^2 \sim \Gamma(\frac\alpha2, 2)$ where $\Gamma(\frac\alpha2, 2)$ denotes a Gamma distribution with shape $\frac\alpha2$ and rate $\frac 12$.  
In terms of these variables, we define the semi-infinite tridiagonal matrix 
\begin{equation} \label{def:trimatrix}
\mathbf{A} =
\left[ \begin{array}{cccc} 
b_1 & a_1 & &\\
a_1 & b_2 & a_2 & \\
& a_2 & b_3 & \ddots  \\
&& \ddots & \ddots
\end{array} \right],
\end{equation}
where $b_i \sim \mathcal{N}(0,2)$ and  $a_i \sim \chi_{\beta i}$ are independent random variables. 
By \cite{DumitriuEdelman}, the eigenvalues of the principal $N \times N$ minor of  the random matrix 
$ \mathbf{A}/\sqrt{4N\beta}$ 
have the same law as the Gaussian $\beta$-ensemble, \eqref{eq:GbE}, and so in particular $\varphi_N(z) = \det([z-({4}{N\beta})^{-1/2}\mathbf{A}]_{N,N}).$

\medskip

We let $\Phi_n(z)= \det([z-({4}{N\beta})^{-1/2}\mathbf{A}]_{n,n})$ for any $n\in\N.$
By cofactor expanding the $n$--th column of this determinant, we are led to the following recurrence for any integer $n \geq 2$, 
\begin{equation*}
 \left[ \begin{array}{c} 
\Phi_{n}(z) \\ \Phi_{n-1}(z) \end{array}\right]
= \left[ \begin{array}{cc}z-\frac{b_n}{{2}\sqrt{N\beta}} & -\frac{ a_{n-1}^2}{{4}N\beta}  \\
 1 & 0 \end{array} \right]
\left[ \begin{array}{c}
\Phi_{n-1}(z)\\
\Phi_{n-2}(z)
\end{array}\right] 
\eqqcolon T_n(z) \left[ \begin{array}{c}
\Phi_{n-1}(z)\\
\Phi_{n-2}(z)
\end{array}\right] ,
\end{equation*}
where by convention $\Phi_0 = 1$ and  
$ \Phi_1(z) = z- \frac{b_1}{ 2\sqrt{N\beta}}$. 
This shows that for any $n\ge 1$,
\begin{equation} \label{eq:recurrence}
\begin{pmatrix} \Phi_{n}(z) \\ \Phi_{n-1}(z) \end{pmatrix}
= T_n(z)  \cdots T_2(z)  \begin{pmatrix}   z- \frac{b_1}{ 2\sqrt{N\beta}}   \\ 1\end{pmatrix} . 
\end{equation}

A similar matrix recurrence, the Szeg\H{o} recurrence, can be posed for the circular $\beta$-ensemble (\cite[ Equation (2.1)]{CMN}):
\begin{equation}
  \left[ \begin{array}{c} 
  \Theta_{n+1}(z) \\ \Theta_{n+1}(z)^* \end{array}\right]
  = \left[ 
    \begin{array}{cc}
      z & -\overline{\alpha_n}  \\
      -\alpha_n z & 1
    \end{array} 
    \right]
    \left[ \begin{array}{c}
      \Theta_{n}(z)\\
      \Theta_{n}(z)^*
  \end{array}\right],
  \quad
  \begin{aligned}
    &|\alpha_n|^2 \sim \Beta(1,\beta(n+1)/2), \\
    &\arg(\alpha_n) \sim \Unif( [0,2\pi]),
  \end{aligned}
  \label{eq:Szego}
\end{equation}
and all random variables are independent.
We expect that \eqref{eq:recurrence} has the potential to give the same type of precise information on the statistics of the characteristic polynomial as (implicitly) \eqref{eq:Szego} gave for \cite{CMN}.  We further expect \eqref{eq:recurrence} to be useful in giving the needed estimates for showing GMC convergence as well as the asymptotics of the maximum of the characteristic polynomial.  It should be noted it is also possible to define Pr\"ufer phases for the Gaussian $\beta$-ensemble \cite[ Equation (1.170)]{Forrester}), in much the same way as it is possible to define Pr\"ufer phases for \eqref{eq:Szego}, but we do not believe analyzing the Pr\"ufer phase recurrence for the Gaussian $\beta$-ensemble  would be appreciably easier than analyzing the transfer matrix recurrence.

\subsection{Hermite recurrence}
\label{sec:Hermite}

There is however a \emph{major} phenomenological difference between \eqref{eq:recurrence} and the Szeg\H{o} recurrence for the circular $\beta$-ensemble.  The Gaussian $\beta$-ensemble recurrence can have three $z$-dependent regimes of $n,$ each of which has a substantially different dynamical behavior.  To illustrate this, we consider the Hermite recurrence.  Define for $n \ge 2$, 
\begin{equation}\label{eq:hermite}
\widetilde T_{n}= \E T_n = \left[\begin{array}{cc} z &- \frac{n-1}{{ 4}N} \\  1 & 0  \end{array}\right] 
\quad\text{and}\quad
  \begin{pmatrix}
    \pi_n(z) \\
    \pi_{n-1}(z) 
  \end{pmatrix}
  =
  \widetilde T_{n}
  \widetilde T_{n-1}
  \cdots
  \widetilde T_{2}
  \begin{pmatrix}
  z \\ 1
\end{pmatrix}.
\end{equation}
Then $\{\pi_n\}$ are exactly the monic Hermite polynomials scaled to be orthogonal with respect to the weight $e^{-2N x^2}$ on $\R$.
In particular, it follows from \eqref{eq:hermite} that with our conventions:
$\E \Phi_{n} = \pi_n$ for any $n\in\N$.
Let us record how to diagonalize the matrices $\widetilde T_k.$ 
\begin{lemma}
  \label{lem:diagonalization}
  For any $z \in \C$ and $t >0$ with $z^2 \neq t,$
  \[
    \begin{aligned}
    &\begin{bmatrix}
      z & - \frac{t}{ 4} \\
      1 & 0 \\
    \end{bmatrix}
    =
    \begin{bmatrix}
      \lambda_+ & \lambda_{-} \\
      1 & 1 \\
    \end{bmatrix}
    \begin{bmatrix}
      \lambda_+ & 0 \\
      0 & \lambda_- \\
    \end{bmatrix}
    \begin{bmatrix}
      \lambda_+ & \lambda_{-}  \\
      1 & 1 \\
    \end{bmatrix}^{-1},
    \quad\text{where}\quad
    &\begin{bmatrix}
      \lambda_+ & \lambda_{-} \\
      1 & 1 \\
    \end{bmatrix}^{-1}
   =
    \begin{bmatrix}
      1 &-\lambda_{-} \\
      -1  & \lambda_{+}
    \end{bmatrix}\frac{1}{\lambda_+ - \lambda_{-}},
  \end{aligned}
  \]
  with
  \begin{equation}  \label{def:root}
  \lambda_{\pm}(t) = \frac{z \pm \sqrt{z^2-t}}{2},
  \end{equation}
  and where we take the convention here that the branch behaves like $z$ at $\infty$ so that $|\lambda_+| \geq |\lambda_{-}|.$
  \end{lemma}

Let us observe that this operation becomes singular when $z \approx \pm\sqrt{t}$ which corresponds to a \emph{turning point} in the recursions \eqref{eq:hermite} as well as \eqref{eq:recurrence} where the transfer matrix develops a nontrivial Jordan form.
Therefore, for any fixed $z \in [-1,1],$ we need to distinguish three different regimes:
\begin{itemize}
\item[(i)] For $n \ll N z^2$, the eigenvalues of $\widetilde T_n$ are real--valued and have distinct modulus.
  Such $2\times 2$ real matrices are called \emph{hyperbolic} as the mapping $x \mapsto {\widetilde T_n x}$ has two fixed points on real projective space (corresponding to the two real eigenvectors of $\widetilde T_n$).
  Moreover, by Lemma~\ref{lem:diagonalization}, the matrices $\widetilde T_n$ have slowly varying eigenspaces in $n$, so that the expanding and contracting directions along this recurrence remain nearly aligned as $n$ varies.
  This causes the product of deterministic matrices to approximately degenerate into a product of scalars.  The \emph{separation} between the real eigenvalues should be considered as a measure of the hyperbolicity, with larger separation yielding better approximation by a scalar recurrence.
\item[(ii)]  For $n \approx Nz^2,$ the transfer matrices become almost singular or \emph{parabolic}, corresponding to $x \mapsto {\widetilde T_n x}$ having a single fixed point on real projective space. The recurrence takes on a transitional type behavior, which naturally gives rise to the Airy type asymptotics for the Hermite polynomials when $z\approx 1$ in a scaling window in $n$ around the turning point of width $N^{1/3}.$ Specifically,
\begin{equation} \label{Airy}
  \pi_n(z) \sim (2\pi)^{1/4} e^{Nz^2} 2^{-n} n^{-1/12} \sqrt{\tfrac{n!}{N^{n}}}\text{Ai}(-k) \quad \text{where}\quad 
  k = (n-Nz^2)(Nz^2)^{-1/3}.
  \end{equation}
  
\item[(iii)] For $n \gg N z^2$,  the eigenvalues of  $\widetilde T_n$  are complex conjugate and $\widetilde T_n$ are called \emph{elliptic}, as the map $x \mapsto {\widetilde T_n x}$ has no fixed points on real projective space. This part of the recurrence gives rise to the oscillatory portion of the Hermite asymptotics which is observed as $z$ varies in the support of the semicircle. 
\end{itemize} 
\noindent We refer to  \cite[Chapter 12]{DeiftMcLaughlin}, in which the Hermite polynomial asymptotics are recovered from the recurrence \eqref{eq:hermite} by analyzing these three different regimes. 

\subsection{G$\beta$E recurrence}

The recurrence \eqref{eq:recurrence} for the characteristic polynomial of the Gaussian $\beta$-ensemble
can be understood as a random perturbation of the Hermite recurrence: for all $1 \leq n \leq N,$  $T_n(z) = \widetilde{T}_n(z) + \O(1/\sqrt{N}),$ where these $\O(1/\sqrt{N})$ terms are independent, centered, and have good tails.  \emph{How} this random perturbation affects the transfer matrix recurrence \emph{strongly} depends on the previous regimes. 
In the hyperbolic regime, we show that even with these random perturbations, the product of $\left\{ {T}_n \right\}$ are well--approximated by a (noisy) scalar recurrence.  
Moreover, this approximation can be performed at the level of moderate deviations, allowing us to uniformly control the difference at many different $z$ and $n$ with very high probability. 

In the parabolic regime, the behavior of the recurrence is related to a second order differential operator, the \emph{stochastic Airy operator} of \cite{RiderVirag}.
In \cite{LambertPaquette03} we construct a random entire function $\text{SAi},$ which is the scaling limit of the characteristic polynomial in a neighborhood of the edge: specifically
\begin{equation} \label{SAi}
 w_N \varphi_N(1+\frac{\lambda}{2 N^{2/3}})
 \tfrac
    {\Exp \exp(\mathfrak{G}_N)}
    {\exp(\mathfrak{G}_N)}
    \Asto[N]
    \bigl(\text{SAi}_\lambda : \lambda \in \R\bigr),
\end{equation}
where 
\(w_N^{-1}(z) \coloneqq (2\pi)^{1/4}e^{N z^2} 2^{-N} N^{-1/12}\sqrt{\tfrac{N!}{N^N}}\), see \eqref{Airy},
and $\mathfrak{G}_N$ is a Gaussian random variable of logarithmic variance, primarily measurable with respect to hyperbolic portion of the transfer matrix recurrence.
We emphasize that the results obtained here are crucial input to that limit. 

In the elliptic regime, the transfer matrices become highly oscillatory, the microscopic fluctuations of which are precisely what gives rise to the \emph{Brownian carousel} \cite{ValkoVirag}.  The scaling limit of the characteristc polynomial in this regime has not been considered, but it should be the Stochastic $\zeta$-function of   \cite{ValkoVirag20}, which for $\beta=2$ originates in \cite{CNN17} and \cite{CHNNR}. The analysis in this paper is the first, independent step to solving this problem.

So, we treat of these problems (the hyperbolic, parabolic and elliptic transfer matrix recurrences of G$\beta$E) separately, and in this paper we solely focus on the analysis of the hyperbolic portion of the random recurrence.
This is needed to describe the asymptotics of the characteristic polynomial for all $z \in \C$ (save for a small window around $z=0$).  

\subsection{Gaussian coupling}\label{sec:coupling}

Our goal is to build a probability space on which the characteristic polynomial $\{\varphi_n(z): z, n\}$ is well approximated by the exponential of a Gaussian field.  The underlying coupling will be between two random walks defined in terms of $\mathbf{A}$ and Brownian motions.
Define for all $k \geq 1,$
\begin{equation} \label{def:XY}
X_k = \frac{b_k}{\sqrt{2}} 
\qquad\text{and}\qquad 
Y_k = \frac{a_{k-1}^2 - \beta(k-1)}{\sqrt{2\beta(k-1)}}, 
\end{equation}
where we take by convention $Y_1=0.$  Then these are independent, mean $0$ and variance $1$.  We shall work on a probability space that supports two independent standard Brownian motions $(\widehat{\mathbf{X}}_t : t \geq 0)$ and $(\widehat{\mathbf{Y}}_t : t \geq 0)$ which are coupled to $\left\{ X_k \right\}$ and $\left\{ Y_k \right\}$ in such a way that
\[
  \sum_{j=1}^n X_j = \widehat{\mathbf{X}}_n
  \quad
  \text{for all $n \geq 1$ and}
  \quad
  \limsup_{n \to \infty} 
  \frac{1}{\log n}\biggl|\sum_{j=1}^n Y_j - \widehat{\mathbf{Y}}_n\biggr|
  < \infty\quad \As
\]
Such an embedding is usually referred to as a \emph{strong embedding} of random walk or \emph{KMT} embedding.  For a specific discussion of such embeddings, see Appendix \ref{sec:embeddings} and Theorem \ref{thm:shaoKMT}.  In particular, we use a version which gives some exponential moment control for $\max_{1 \leq j \leq N}\bigl|\sum_{j=1}^n Y_j - \widehat{\mathbf{Y}}_n\bigr|$.  
\begin{remark}
  The random variables $\{ X_k, Y_k \}$ we consider are mean $0$, independent, variance $1$ and have uniform control on their exponential moments.  For any such random variables, the version of the KMT embedding in Theorem \ref{thm:shaoKMT} applies.  On the other hand, for the exact G$\beta$E recurrence, it is possible to avoid appealing to KMT type theorems and rather exploit that the $X_k$ are exactly normal while $Y_k$ become increasingly close to normal as $k$ grows.  Moreover the rate is fast enough to conclude the same control on the random walk as guaranteed by Theorem \ref{thm:shaoKMT}, for a different entry-by-entry coupling (and indeed one in which $\{ (X_k, Y_k, \widehat{\mathbf{X}}_k - \widehat{\mathbf{X}}_{k-1},\widehat{\mathbf{Y}}_k - \widehat{\mathbf{Y}}_{k-1}): k \in \N \}$ are independent random vectors).  In any case, the only hypothesis on the coupling that we need is that the conclusion of Theorem \ref{thm:shaoKMT} holds.
\end{remark}

We will express the random corrections to $\varphi_N$ in terms of explicit functionals of these Brownian motions.  Let $({\mathbf{X}}_t : t \in [0,1]) = (N^{-1/2}\widehat{\mathbf{X}}_{tN} : t \in [0,1])$ and $({\mathbf{Y}}_t : t\in[0,1]) = (N^{-1/2}\widehat{\mathbf{Y}}_{tN} : t \in [0,1]),$ which remain standard Brownian motions by Brownian scaling.   
Let $J: \C \setminus [-1,1] \to \D$ be the inverse Joukowsky transform,
\begin{equation}\label{eq:J}
J(z) = z- \sqrt{z^2-1}  ,
\end{equation}
where $\sqrt{\cdot}$ is chosen so that $J$ is a conformal map. 

Define for $t \in [0,1)$ and $z \in \C \setminus [-\sqrt{t},\sqrt{t}],$
\begin{equation}
    \mathfrak{g}_t(z) = -\frac{1}{2}\int_0^t \frac{\d \mathbf{X}_u+ J(z/\sqrt{u}) \d\mathbf{Y}_u}{\sqrt{z^2-u}}.
  \label{eq:Wtz}
\end{equation}
Note by reflection symmetry, $\overline{  \mathfrak{g}_t(z)} = {  \mathfrak{g}_t(\overline{z})}.$
For $z \in \C \setminus [-1,1]$ this extends continuously on sending $t \to 1,$ and we define for $z \in \C \setminus [-1,1],$ $\mathrm{W}(z) \coloneqq \lim_{t \to 1}   \mathfrak{g}_t(z).$ For $z \in [-1,1],$  $\mathrm{W}(z) \coloneqq \lim_{t \to 1}   \mathfrak{g}_t(z)$ exists as a log-correlated Gaussian field (c.f.\ Remark \ref{rk:logcorrelated}).  The covariance structure of this field can be given explicitly (c.f.\ Lemma \ref{lem:magic})
\begin{equation}
  \E\left[  \mathfrak{g}_t(z)   \mathfrak{g}_s(q)  \right]
=  - \log\big(1- J\big(z/\sqrt{t\wedge s}\big)J\big(q/\sqrt{t\wedge s}\big)\big) , 
\qquad  z\in \C \setminus[-\sqrt{s},\sqrt{s}] \, ,  q\in \C \setminus[-\sqrt{t},\sqrt{t}].
  \label{eq:Wtzmagic}
\end{equation}

\begin{remark} \label{rk:GAF}
Let $\xi_1,\xi_2 \dots$ be a sequence of i.i.d.\ real standard Gaussian random variables and define the Gaussian process $\xi(q) = \sum_{k\ge 1} \frac{\xi_k}{\sqrt{k}} q^k$ for $q \in \D$. This is a \emph{Gaussian analytic function} (GAF) which satisfies  $\xi(\overline{q}) = \overline{\xi(q)}$ and has covariance structure:
\[
  \E[\xi(q){\xi(w)}] = -\log(1-q{w}), 
\qquad w,q \in \D. 
\]
The Gaussian field $\mathrm{W} \coloneqq   \mathfrak{g}_1$ is the pull--back of $\xi$ by the map  $J$, i.e.\  
$\mathrm{W} = \xi \circ J$. This field satisfies $\mathrm{W}(\overline{z}) = \overline{W(z)}$ and has covariance:
\begin{equation}\label{eq:Wdef}
  \E[\mathrm{W}(z)\mathrm{W}(w)] = - \log\big(1- J(z)J(w)\big)  , \qquad z,w\in \C\setminus[-1,1]. 
\end{equation}
Note that due to the scaling invariance of Brownian motion, we have that for any $s \in (0,1),$ $(  \mathfrak{g}_s(z\sqrt{s}) : z \in \C \setminus [-1,1]) \overset{\rm law}{=} (\mathrm{W}(z) : z \in \C \setminus [-1,1]),$
and hence for any $s \in (0,1),$ $  \mathfrak{g}_s(z\sqrt{s})$ is also expressible in terms of this GAF in law.
\end{remark}

\begin{remark}[Boundary values] \label{rk:logcorrelated}
The real and imaginary parts of the  Gaussian process  $\mathrm{W}$  are not independent. 
The boundary values of the GAF $\xi$ are well--defined as a log--correlated random field (random distribution in $H^{-\epsilon}(\partial \D)$ for any $\epsilon>0$ given by a Fourier series). 
This shows that  we can extend the process $\mathrm{W}(z)$ as a (complex--valued) Gaussian  log--correlated random field for $z\in [-1,1]$ which satisfies:
\[
\Re \mathrm{W}(\cos\theta)  = \frac{\zeta(e^{\i\theta})+\zeta(e^{-\i\theta}) }{\sqrt{2}} , \qquad
\Im \mathrm{W}(\cos\theta)  = \frac{\widetilde\zeta(e^{\i\theta})-\widetilde\zeta(e^{-\i\theta}) }{\sqrt{2}} , \qquad 
\text{for}\quad \theta\in [0,\pi] ,
\]
where $\zeta$ and $\widetilde\zeta$  are two independent copies of the \emph{Gaussian Free Field} on $ \partial \D$.
This shows that $\Re\mathrm{\mathrm{W}}(q)$ and $\Im\mathrm{\mathrm{W}}(q)$ for $q\in[-1,1]$ are independent Gaussian fields with covariance structure:
\begin{equation} \label{logcov}
\begin{aligned}
\E\big[ \Re \mathrm{W}(x) \Re \mathrm{W}(y) \big] & = \frac{\log|2(x-y)|^{-1}}{2}  , &\quad  x,y \in [-1,1] \ \\
\E\big[ \Im \mathrm{W}(x) \Im \mathrm{W}(y) \big]  &= - \frac{1}{2} \log\bigg|\frac{x-y}{1-xy+\sqrt{1-x^2}\sqrt{1-y^2}}\bigg| , 
&\quad y, x\in [-1,1] . 
\end{aligned}
\end{equation}
These formulae are consistent with those obtained for the real and imaginary parts of the logarithm of the characteristic of the Gaussian unitary ensemble ($\beta=2$) in \cite[Theorem 1.1]{BWW} and \cite[Section~2.1]{CFLW}. 

\end{remark}

\subsection{Strong approximation in the plane and GMC applications}

Let $\alpha>0$ and 
\begin{equation} \label{def:P}
\mathscr{P} =\left\{ z \in \C :\ |\Im z| \ge N^{-\alpha} \text{ or }|\Re z| \ge 1+ N^{-2\alpha}/2 \right\}.
\end{equation}
We show that if $\alpha$ is sufficiently small, then we can couple the above GAF and the characteristic polynomial to be uniformly close in this domain:

\begin{theorem}\label{thm:planar}
Choose $\alpha =1/9$ and $\delta = 1/45.$  For any compact set $K \subset \C$, 
\[
\P\left[ 
  \sup_{z \in K\cap \mathscr{P} } \Bigg|
  \frac{\varphi_{N}(z) \E\big[\exp\big(\sqrt{\frac{2}{{\beta}}}\mathrm{W}(z)\big)\big]  }{\pi_N(z) \exp\big(\sqrt{\frac{2}{{\beta}}}\mathrm{W}(z)\big)} -1 \Bigg|  \ge C_{\beta, K} N^{-\frac{1}{15}}  \right] \le e^{-c_\beta N^{\delta}} , 
 \]
where $C_{\beta, K} , c_\beta>0$ are constants. 
\end{theorem}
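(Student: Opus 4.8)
The plan is to run the transfer–matrix recurrence \eqref{eq:recurrence} in the slowly varying eigenbasis of the Hermite matrices $\widetilde T_n$ of \eqref{eq:hermite}, to show that on $K\cap\mathscr{P}$ the random product of $2\times2$ matrices collapses into a \emph{scalar} recurrence up to a moderate–deviations error, and then to match the logarithm of that scalar recurrence with $\sqrt{2/\beta}\,\mathrm W(z)$ through the strong embedding. The structural point making everything uniform is that on $\mathscr{P}$ there is no turning point: for all $1\le n\le N$ one has $\bigl|z^2-\tfrac{n-1}{N}\bigr|\gtrsim N^{-2\alpha}$, so by Lemma~\ref{lem:diagonalization} the eigenvalue gap $\lambda_+^{(n)}-\lambda_-^{(n)}=\sqrt{z^2-(n-1)/N}$ is $\gtrsim N^{-\alpha}$ while the contraction ratio satisfies $|\lambda_-^{(n)}/\lambda_+^{(n)}|\le 1-cN^{-\alpha}$; here $\lambda_\pm^{(n)}=\lambda_\pm((n-1)/N)$ with $\lambda_\pm$ as in \eqref{def:root}.

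\emph{Change of frame.} Diagonalising $\widetilde T_n=V_nD_nV_n^{-1}$ as in Lemma~\ref{lem:diagonalization} and setting $u_n=V_n^{-1}(\Phi_n,\Phi_{n-1})^{\mathsf T}$, the recurrence becomes $u_n=M_nu_{n-1}$ with $M_n=V_n^{-1}T_nV_{n-1}=D_n+F_n+\Xi_n$. The term $F_n=D_nV_n^{-1}(V_{n-1}-V_n)$ records the slow drift of the eigenframe: since $V_{n-1}-V_n$ is $O\bigl(N^{-1}(z^2-t)^{-1/2}\bigr)$ ($t=(n-1)/N$) one has $\|F_n\|\lesssim N^{-1+2\alpha}$, and the partial sums $\sum_n[F_n]_{11}/\lambda_+^{(n)}$ form a purely deterministic $O(\log N)$ sequence. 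The random term $\Xi_n=V_n^{-1}(T_n-\widetilde T_n)V_{n-1}$ comes from the rank–one matrix $T_n-\widetilde T_n$, whose top row has entries of order $X_n/\sqrt N$ and $\sqrt n\,Y_n/N$ in the notation of \eqref{def:XY}; a short computation using the identity $\sqrt t\,/(2\lambda_+(t))=J(z/\sqrt t)$ with $J$ as in \eqref{eq:J} yields the crucial matrix entry
\[
L_n''(z):=\frac{[\Xi_n]_{11}}{\lambda_+^{(n)}}\;=\;-\,\frac{1}{\sqrt{2N\beta}}\cdot\frac{X_n+J\bigl(z/\sqrt t\bigr)\,Y_n}{\sqrt{z^2-t}},\qquad t=\tfrac{n-1}{N},
\]
while all other entries of $M_n-D_n$ are $O(N^{-1/2+\alpha})$ on the event $\mathcal G=\{\max_{n\le N}(|X_n|+|Y_n|)\le N^{\alpha}\}$, whose complement has probability at most $e^{-c_\beta N^\delta}$ by the sub–exponential tails of $b_n$, $a_{n-1}^2$ and a union bound.

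\emph{Collapse to a scalar recurrence.} On $\mathcal G$ the ratio $\theta_n$ of the two coordinates of $u_n$ obeys a contracted recursion $\theta_n=(\lambda_-^{(n)}/\lambda_+^{(n)})\,\theta_{n-1}(1+o(1))+O(N^{-1/2+\alpha})$, so (the contraction factor being $\le 1-cN^{-\alpha}$) a Grönwall/fixed–point argument gives $\sup_n|\theta_n|\lesssim N^{-1/2+2\alpha+o(1)}$. Hence the dominant coordinate evolves by the scalar recurrence $(u_n)_1=\lambda_+^{(n)}(1+L_n')(u_{n-1})_1$ with $L_n'=L_n''+[F_n]_{11}/\lambda_+^{(n)}+[\Xi_n+F_n]_{12}\,\theta_{n-1}/\lambda_+^{(n)}$, the last term being $O(N^{-1+3\alpha+o(1)})$ per step. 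Reconstituting $\varphi_N(z)=\Phi_N(z)=[V_Nu_N]_1=\lambda_+^{(N)}(u_N)_1(1+o(1))$ and using $\pi_N=\E[\Phi_N]$ exactly (which cancels the deterministic skeleton $\prod_n\lambda_+^{(n)}$ and all deterministic drifts, equivalently the classical Plancherel--Rotach asymptotics for Hermite polynomials), we obtain on $\mathcal G$, up to standard truncation estimates on $\mathcal G^c$,
\[
\log\varphi_N(z)-\log\pi_N(z)\;=\;S_N(z)-\log\E\!\bigl[\exp(S_N(z))\bigr]+o\bigl(N^{-1/10}\bigr),\qquad S_N(z):=\log(u_1)_1+{\textstyle\sum_{n=2}^N}\log(1+L_n'(z)).
\]
Expanding $\log(1+L_n')=L_n''+\text{(quadratic and higher)}$, the $F_n$–terms and the conditional means of the higher–order corrections are deterministic and drop out of the ratio, while their residual fluctuations are $o(N^{-1/10})$ on $\mathcal G$; it is here that the $N^{-1/10}$ budget and the precise exponents are consumed.

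\emph{Identification of the limit and uniformity.} It remains to show $\log(u_1)_1+\sum_{n=2}^N L_n''(z)=-\sqrt{2/\beta}\,\mathrm W(z)+o(N^{-1/10})$ on an event of probability $\ge 1-e^{-c_\beta N^\delta}$. Using $\widehat{\mathbf X}_n=\sum_{j\le n}X_j$ exactly and $\widehat{\mathbf Y}_n=\sum_{j\le n}Y_j+O(\log n)$ a.s.\ (with the exponential–moment control of Theorem~\ref{thm:shaoKMT}), summation by parts turns this sum into a stochastic integral against $\widehat{\mathbf X},\widehat{\mathbf Y}$, with Riemann–sum discretisation and KMT errors both $o(N^{-1/10})$ once one bounds $\bigl\|\partial_u\bigl[J(z/\sqrt u)(z^2-u)^{-1/2}\bigr]\bigr\|_{L^1[0,1]}\lesssim N^{O(\alpha)}$ uniformly on $\mathscr{P}$. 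By Brownian scaling $\mathbf X_u=N^{-1/2}\widehat{\mathbf X}_{uN}$, $\mathbf Y_u=N^{-1/2}\widehat{\mathbf Y}_{uN}$ and the above Joukowsky identity, the main term equals $-\tfrac{2}{\sqrt{2\beta}}\,\mathbf W_1(z)=-\sqrt{2/\beta}\,\mathrm W(z)$ by \eqref{eq:Wtz} (one matches the covariance against \eqref{eq:Wdef}); since $-\mathrm W$ is itself a copy of the GAF ($\mathrm W\lawequals -\mathrm W$), relabelling gives the $+$ sign of the statement, the normalisation $\E[\exp(\sqrt{2/\beta}\mathrm W)]$ appearing from $\log\E[\exp(S_N)]$ and a standard exponential–moment comparison. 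Finally, for the supremum over $z\in K\cap\mathscr{P}$ one runs the whole argument at the points of a polynomially fine net, takes a union bound (the $e^{-c_\beta N^\delta}$ probability survives, which is what fixes $\delta$), and interpolates, all the objects being $\mathrm{poly}(N)$–Lipschitz on $K\cap\mathscr{P}$ since the only rough $z$–dependence is through $(z^2-u)^{-1/2}$.

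\emph{Main obstacle.} The crux is the collapse step: the uniform, \emph{moderate–deviation} control showing that the transfer–matrix product is approximated by the scalar recurrence near $\partial\mathscr{P}$, where hyperbolicity degrades polynomially in $N$. One must simultaneously trap the angle $\theta_n$ and bound the accumulated multiplicative errors --- off–diagonal coupling, slow–frame drift, and the quadratic corrections in $\log(1+L_n')$ --- by $o(N^{-1/10})$ on an event of probability $\ge 1-e^{-c_\beta N^\delta}$. Balancing the gap $N^{-\alpha}$ against the per–step noise $N^{-1/2}$, the number $N$ of steps, the logarithmic variance of $\mathrm W$, and the size of the $z$–net is precisely what forces $\alpha=1/9$, $\delta=1/45$ and the rate $N^{-1/10}$.
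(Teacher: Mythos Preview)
Your outline correctly identifies the architecture of the proof: diagonalise in the slowly varying Hermite eigenbasis, collapse the $2\times2$ recurrence to a scalar one, and couple the scalar to $\mathrm W$ via the strong embedding. The paper follows exactly this architecture (Section~3), and your identification of $L_n''$ with the integrand of $\mathbf W_t$ is the same as the paper's formula~\eqref{def:eta1}. The KMT step and the Hermite/Plancherel--Rotach cancellation are also handled the same way (Propositions~\ref{prop:coupling1} and~\ref{prop:Hermite}).

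Where you diverge from the paper is the \emph{collapse step}, and this is where your argument has a gap. You bound the phase by a deterministic Gr\"onwall on the truncated event $\mathcal G$, obtaining $\sup_n|\theta_n|\lesssim N^{-1/2+2\alpha+o(1)}$, and then assert that the cross term $[\Xi_n+F_n]_{12}\,\theta_{n-1}/\lambda_+^{(n)}$ contributes only $o(N^{-1/10})$ because ``the $F_n$--terms \dots\ are deterministic and drop out of the ratio''. But $[F_n]_{12}\,\theta_{n-1}$ is \emph{not} deterministic: $\theta_{n-1}$ is random, and this piece is not a martingale increment either (its conditional mean given $\mathcal F_{n-1}$ is itself). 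With your Gr\"onwall bound and $|[F_n]_{12}|\lesssim N^{-1+2\alpha}$, the crude sum over $n$ is
\[
\sum_{n\le N}\bigl|[F_n]_{12}\,\theta_{n-1}\bigr|\;\lesssim\; N\cdot N^{-1+2\alpha}\cdot N^{-1/2+2\alpha+o(1)}\;=\;N^{-1/2+4\alpha+o(1)}\;=\;N^{-1/18+o(1)},
\]
which \emph{exceeds} the $N^{-1/10}$ budget. The same issue appears in the quadratic correction $\tfrac12\sum(L_n')^2$: its mean is not purely deterministic once $\theta_{n-1}$ enters. To close the argument one must exploit the \emph{random} structure of $\theta_{n-1}$ (it is itself an exponentially damped martingale transform), not just its sup bound.

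The paper circumvents this by abandoning the phase variable altogether and instead writing $\prod_k U_k=\sum_{j\ge0}\psi^{(j)}$ as a perturbative (Neumann--type) expansion in the off--diagonal noise; see \eqref{eq:psi}--\eqref{eq:psiTaylor}. The alternating structure (Lemma~\ref{lem:psistructure}) and the recursions \eqref{eq:psijbackward}--\eqref{eq:simple22} let one control each $\psi^{(j)}$ by a Freedman--Tropp martingale bound (Proposition~\ref{prop:induction}), and a two--step induction (Proposition~\ref{prop:planar}) shows $\|\psi^{(j)}\|$ decreases geometrically in $j$, yielding $\|\psi^{(>0)}_{N,1}\|\le N^{15\alpha/2-1+\epsilon+3\delta/2}=N^{-1/10}$ at $\alpha=1/9$, $\delta=1/45$. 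This is precisely the bookkeeping your Gr\"onwall shortcut skips: each order in the expansion gains a factor from hyperbolicity \emph{and} a factor from the martingale cancellation, whereas the sup bound on $\theta_n$ captures only the former.

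Two minor remarks. Your appeal to $\mathrm W\lawequals -\mathrm W$ to fix the sign is a distributional identity, but the theorem is a pathwise coupling statement; the paper obtains $\exp(-\sqrt{2/\beta}\,\mathrm W)$ directly and writes $\E[\exp(-\sqrt{2/\beta}\,\mathrm W)]=\E[\exp(+\sqrt{2/\beta}\,\mathrm W)]$ by symmetry of a centred Gaussian. And your net argument for uniformity in $z$ is a reasonable way to pass from pointwise to uniform; the paper states the uniform conclusion (Corollary~\ref{cor:transfer}) without spelling this out.
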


The main steps of the proof of this theorem are explained in Section~\ref{sec:transfermatrices}, while the details are given in Sections~\ref{sec:rmp} and~\ref{sect:coupling1}.
This settles the question of the asymptotics of the characteristic polynomial of the Gaussian $\beta$-ensemble for fixed $z \in \C\setminus [-1,1]$.  Theorem~\ref{thm:planar}, gives a strong approximation in that it holds uniformly as a random process with a polynomial rate, but it requires $z$ quantitatively far from $[-1,1]$, the support of the semicircle law.

This result already implies a multiplicative chaos convergence in a regularized sense, as we illustrate below.
Let $X= \Re\mathrm{W}$. 
Since $\mathrm{W}$ is a GAF on $\C\setminus[-1,1]$, it can be recovered from its boundary values, that is, for $x\in[-1,1]$ and $\epsilon>0$, 
\begin{equation} \label{Poisson}
X(x+ \i \epsilon) = \int_\R X(t) P_\epsilon(x-t) \d t
\end{equation}
where $P_\epsilon(t) = \tfrac{1}{\epsilon \pi(1+ (t/\epsilon)^2)}$ is a probability measure on $\R$ called the Poisson kernel. 
Observe that for $x\in\R\setminus[-1,1]$,
\[
\E X(x)^2 = \E \mathrm{W}(x)^2 =   - \log\big(1- J(x)^2\big) \sim \frac{1}{4x^2} \qquad\text{as }|x| \to\infty
\]
so that $X \in L^2(\R\setminus[-1,1])$  almost surely and the integral \eqref{Poisson} makes sense. 
Hence, we can view $x\mapsto X(x+ \i \epsilon)$ for small $\epsilon>0$ as a regularization of the log-correlated field $X$ with covariance structure \eqref{logcov}.
By Gaussian multiplicative chaos arguments\footnote{This is different from the usual setting since the field $X(x)$ is defined on $\R$, but only log-correlated  on $[-1,1]$ and the Poisson kernel is not a usual (compactly supported) mollifier used to regularized. However, one can easily adapt the elementary proof from \cite{Berestycki} to show convergence in this case by using the explicit formula \eqref{eq:Wdef} for the correlation kernel of the filed $z\in \mathbb{H} \mapsto X(z)$.}, it holds in probability as $\epsilon\to0$, 
\[
 \frac{\exp\big(\gamma X(x+\i\epsilon)\big)}{\E\big[\exp\big(\gamma X(x+\i \epsilon)\big)\big]} \d x \to 
 \mu_\gamma(\d x)
\]
where $ \mu_\gamma$ is a (random) measure on $[-1,1]$ and the convergence holds weakly. 
The measure  $ \mu_\gamma$ is non-trivial if  $\gamma \in(0,2)$. 
This convergence actually holds in $L^1$ for the probability space defined in Section~\ref{sec:coupling} and it follows from Theorem~\ref{thm:planar} that with $\epsilon_N =N^{-\alpha}$, for any $\beta>0$, 
\begin{equation} \label{GMC}
\frac{|\varphi_N(x+\i\epsilon_N)|^\gamma}{\Gamma_N^{\gamma,\beta}(x)} \to  \mu_{\gamma_{\beta}}(\d x) \qquad\text{in probability as $N\to\infty$ where $\gamma_\beta = \gamma \sqrt{\tfrac{2}{\beta}}$}
\end{equation}
 and 
\[
\Gamma_N^{\gamma,\beta}(x) 
= |\pi_N(x+\i\epsilon_N)|^\gamma   \frac{\exp\big(\frac{\gamma^2}{\beta}\E[X(x+\i\epsilon_N)^2]\big)}{\exp\big(\frac{\gamma^2}{{\beta}}\Re\E[\mathrm{W}(x+\i\epsilon_N)^2]\big)}
= |\pi_N(x+\i\epsilon_N)|^\gamma  \exp\big(\tfrac{\gamma^2}{{\beta}}\E[(\Im\mathrm{W}(x+\i\epsilon_N))^2]\big).
\]
In particular, the limit \eqref{GMC} is non-trivial in the subcritical regime $\gamma_\beta < 2$, that is for any exponent $\gamma \in (0,\sqrt{2\beta})$. 

Using recent (optimal) rigidity results from \cite{BMP} for the eigenvalues of the Gaussian $\beta$-ensembles, one should be able to upgrade \eqref{GMC} to convergence in $L^1$ and then go back the usual normalization, 
\begin{equation} \label{GMC2}
\frac{|\varphi_N(x+\i\epsilon_N)|^\gamma}{\E|\varphi_N(x+\i\epsilon_N)|^\gamma} \to  \mu_{\gamma_{\beta}}(\d x) \qquad\text{as $N\to\infty$ in $L^1$} . 
\end{equation}
%
These limits have been obtained in \cite{BWW} on the spectrum ($\epsilon_N=0$) for $\beta=2$ and $\gamma <\sqrt{2}$ (the $L^2$ phase) and in \cite{Kivimae} for $\beta=1,4$ and small enough $\gamma$. 
However, let us emphasize that this is the first result valid for general Gaussian $\beta$-ensembles and with a convergence in probability (in usual applications from random matrix theory, there is no coupling for different $N\in\N$).


Like \eqref{Poisson}, one can view $\big(\varphi_N(x+\i\epsilon_N)\big)_{x\in[-1,1]}$ as a (mesoscopic) regularization of the characteristic polynomial, hence \eqref{GMC} indicates that  $|\varphi_N(x)|^\gamma$ suitably re-normalized converges to the same GMC measures.

\medskip
As another application, we show in the Appendix~\ref{sec:CLT} how we can recover from Theorem \ref{thm:planar} Johansson's central limit theorem for linear statistics, \cite{Johansson}.

\subsection{Strong approximation near the real line}

Our main theorem gives an effective approximation for the hyperbolic portion of the recurrence when $z$ lies near the real line.  As it happens, the point $z=0$ is special, in that the entire transfer matrix recurrence is elliptic in a mesoscopic window around $0.$  For this reason, we make the following definition.

\begin{definition} \label{def:hyper}
  Let $N_p(z) = \lfloor N (\Re z)^2 \rfloor.$ Fix a small $\delta \in (0,\frac{1}{2})$ and let $\omega_N(z) =  N_p^{1/3}( \Omega \log  N_p)^{2/3}$ where $\Omega$ is allowed to depend on $N$ in such a way that $\Omega \le N^{\delta/6}.$
  We let
  \[
    \mathscr{D}_H = \left\{ z\in \C : 0 \le \Im z \le 2\Re z  , \Re z \ge N^{\delta - \frac{1}{2}}  \right\} 
  \] 
  and for any $z\in \mathscr{D}_H$,
  \[
    N_H(z) = (N_p - \omega_N) \wedge N.
  \]
\end{definition}

Note that for $z \in \mathscr{D}_H,$ we always have $N_p \geq N^{2\delta}$ and $N_p \ge 2\omega_N$.
We call 
\[
\big\{ T_k : 1 \leq k \leq N_H(z) , z\in  \mathscr{D}_H  \big\}
\]
the hyperbolic region. 

\begin{theorem} \label{thm:real}
Let us define the random vector $\bigl(\Gamma_N(z) : z\in\C\setminus [-\sqrt{t_H}, \sqrt{t_H}]\ \bigr)$ implicitly by 
\begin{equation} \label{def:Upsilon}
 \begin{pmatrix} \Phi_{{N_H}} \\ \Phi_{{N_H}-1} \end{pmatrix}
 = \pi_{{N_H}-1} \tfrac{\exp\big(\sqrt{\frac{2}{\beta}}\mathfrak{g}_{t_H}\big)}{\E\big[\exp\big(\sqrt{\frac{2}{\beta}}\mathfrak{g}_{t_H}\big)\big]}
 \begin{bmatrix}
      \lambda_+(t_H) & \lambda_{-}(t_H) \\
      1 & 1 \\
    \end{bmatrix} \left(  \begin{pmatrix} 1 \\ 0
    \end{pmatrix}  + \Gamma_N \right)
\end{equation}
where $t_H = N_H/N$, the GAF $\mathfrak{g}$ is given by \eqref{eq:Wtz} and $\lambda_{\pm}$ are as in \eqref{def:root}. 
There exists a small constant $c_R = c_R(\beta,\delta)$ such that the $\Gamma_N$ satisfies for any $R,\eta>0$, 
\begin{equation} \label{errorcontrol}
  \max_{z \in \mathscr{D}_H}
  \P\bigg[ |\Gamma_N(z)| \ge  \eta \bigg] \le c_R^{-1} (N^{-c_R \eta^2\Omega} + N^{4-R\Omega} \big).
\end{equation}
\end{theorem}

This results shows that in the hyperbolic region, the G$\beta$E characteristic polynomials can be approximated by the Hermite polynomial times the exponential of a GAF which arise by \emph{linearizing} the transfer matrix recurrence. The (multiplicative) error is controlled uniformly over the domain $\mathscr{D}_H$ and it is small only by choosing the parameter $\Omega$ which controls the distance to the turning point sufficiently large. 
In our subsequent applications of this result, we can choose $\Omega = \Omega_N = (\log N)^{1/3-\epsilon}$ for a small $\epsilon>0$ as explained in  \cite[Appendix~A]{LambertPaquette03}.  

For $z\in \R$, this distinguishes between the following cases:
\begin{itemize}[leftmargin=*]   \setlength\itemsep{0em}
\item If $|z|>1$, that is outside of the spectrum, Theorem~\ref{thm:real} covers  the full transfer matrix recurrence and $t_H=1$. In this case, extracting the first column of \eqref{def:Upsilon}, one recovers the asymptotics\footnote{Using the fact that $\pi_{N_H}= \lambda_+(t_H)\pi_{N_H-1}$ up to a small multiplicative error, cf. Proposition~\ref{prop:Hermite}.} from Theorem~\ref{thm:planar}. Note that these asymptotics actually hold under the optimal condition $|z| \ge 1 + \big( \frac{\Omega \log N}{N} \big)^{2/3}$.
\item At the edge, $z=1+ \frac{\lambda}{2N^{2/3}}$, $N_H(z) = N - N^{1/3} (\Omega\log N)^{1/3}$ and Theorem~\ref{thm:real} covers almost the entire transfer matrix recurrence except for the last block of size $N^{1/3}$. 
Hence, this result does not capture the full asymptotics of the characteristic polynomial. 
 This \emph{parabolic part} of the recurrence can be approximated by the \emph{stochastic Airy equation} (\cite{LambertPaquette03}) and this leads to a correction in  the form of the \emph{stochastic Airy function} \eqref{SAi}. 
\item If $|z|<1$, that is in the bulk, the situation is more involved and Theorem~\ref{thm:real} only yields a partial result in the sense that it provides the input (with overwhelming probability) to study the  \emph{parabolic} and \emph{elliptic} $(n \gg Nz^2)$ region of the recurrence. 
In this case, we expect a very different behavior due to the dense zeros of $\Phi_N$, akin to the classical Plancherel--Rotach asymptotics for the Hermite polynomial. 
\end{itemize}


\begin{remark}
It is worth pointing out that the methods of the proofs of Theorems~\ref{thm:planar} and \ref{thm:real} are rather insensitive to the distribution of the noise in the matrix \eqref{def:trimatrix}. 
The only important conditions are that (up to small corrections), the random variables \eqref{def:XY} have mean~0, variance~1 and sub--Gaussian tails in a certain moderate deviation regime.  
This is in sharp contrast with the case of the characteristic polynomial of the circular $\beta$-ensembles, for which the Gaussian behavior inside of the unique disk comes specifically from the law of the Verblunsky coefficients in   \eqref{eq:Szego}, see e.g. \cite{CN19}. 
\end{remark}

\subsection{Organization}

In Section~\ref{sect:prel}, we introduce the notation as well as the formalism for concentration of random variables  that we will use in the remainder of this paper.
In Section~\ref{sec:transfermatrices}, we explain the general strategy of the proofs of Theorems~\ref{thm:planar} and \ref{thm:real}. 
Some auxiliary results regarding the coupling between the noise and Brownian motions known as \emph{strong embedding} are given in Section~\ref{sect:coupling} and the Appendix~\ref{sec:embeddings}. 
In Sections~\ref{sect:diag}, we reformulate the transfer matrix recurrence \eqref{eq:recurrence} by isolating the contribution of the noise from that of the deterministic recurrence \eqref{eq:hermite} by suitable conjugations.
In Section~\ref{sect:main}, we present our main results, Proposition~\ref{prop:rec} and  Theorem~\ref{thm:main}, for products of $2\times 2$ random matrices with a hyperbolic character. 
Then, we discuss applications to the characteristic polynomial of the Gaussian $\beta$-ensembles.
In Section~\ref{sec:rmp}, we introduce the general framework for the proof of our main results and we give the proof of Proposition~\ref{prop:rec}. 
In Section~\ref{sec:linearized}, we give the proof of Theorem~\ref{thm:main} which is the most technical part of this paper. 
The main steps of this proof are summarized in Section~\ref{sect:strategy}. 
Finally, in the Appendices~\ref{sect:properties} an \ref{sect:est}, we review the asymptotics of Hermite polynomials and provide estimates for the noise that are required for our proofs. 

\paragraph{Acknowledgements}

We would like to thank Diane Holcomb, conversations with whom helped launched this project.  We acknowledge support from the Park City Mathematics Institute 2017, at which this program was begun, and in particular acknowledge NSF grant DMS:1441467.  We also thank the anonymous referees for their careful reading of the manuscript which has greatly improved its quality.

\section{Preliminaries} \label{sect:prel}

In this section, we review some formalism for concentration and moderate deviations for certain random variables which we will use for the proofs of Theorem~\ref{thm:planar} and Theorem~\ref{thm:real}.

\subsection{Notation}

Throughout this article, the parameter $\beta>0$ is fixed and we do not keep track of the $\beta$-dependency of the various constants.
We make use of the symbols $\lesssim$ and $\gtrsim$ in the following form.  We write $f(x) \lesssim_{\alpha} g(x)$ if there is a finite function $C(\beta,\alpha)>0$ so that for all $x$ for which $f$ is being compared to $g$,  $|f(x)| \le C(\beta,\alpha)g(x)$.
We alternatively use $f(x) =\O_{\alpha}\big(g(x) \big)$ or $f(x) \leq \O_\alpha\big(g(x)\big)$ to mean  $f(x) \lesssim_{\alpha} g(x)$.  If we omit the subscript $\alpha$ in either case, we mean the inequality holds with a constant $C_\beta$ which only depends on $\beta>0$. 

 For a matrix $M$, we let $\|M\|$ be the operator norm of $M$ and we will frequently use that 
 $\|M\| \le c_d \sup_{i,j \le d}|M_{i,j}|$ for a constant $c_d$ which depends only on the dimension $d\in\N$ of $M$.
 We also let  $\diag(M)$ denote the diagonal matrix matching the diagonal of matrix $M.$
We take the convention that for a sequence of matrices $\left\{ M_n \right\}$, 
\[
 {\textstyle \prod_{j=p}^n} M_j = M_n M_{n-1} \dots M_{p+1}M_p.
\]

\subsection{Concentration} \label{sec:concentration}

We make crucial use of the theory of sub--Gaussian and sub--exponential random variables.  Furthermore, we will formulate many standard concentration results in terms of this theory.  For clarity, we briefly overview this theory, following \cite[Chapter 2]{Vershynin}, where one may find the proofs of all the claims in this section.  

Define, for any $p \geq 1,$ and any complex valued random variable $X$, 
\[
  \VERT X \VERT_p = \inf\left\{ t \geq 0 : \Exp e^{|X|^p/t^p} \leq 2\right\}.
\]
For all those $X$ for which $\VERT X \VERT_p < \infty,$ this defines a norm.  In the cases of $p=1$ and $p=2,$ these are the sub--exponential norm and the sub--Gaussian norm, respectively, and those are the only two cases we will use in this paper.
  For a matrix--valued random variable~$X,$ we will write $\VERT X \VERT_p$ as a shorthand for $\VERT \| X \| \VERT_p.$ 

By Markov's inequality, it follows that if $\VERT X \VERT_p < \infty,$ then for all $t \geq 0$
\begin{equation}\label{eq:Xtail}
  \Pr \left[ |X| \geq t \right] \leq 2\exp( -t^p/ \VERT X \VERT_p^p ),
\end{equation}
on observing the infimum in the definition of $\VERT \cdot \VERT_p$ is attained whenever it is finite.  Moreover, this concentration inequality is equivalent to the finiteness of $\VERT\cdot\VERT_p,$ in that if there exists $s \geq 0$ such that
\begin{equation}\label{eq:normup}
  \Pr \left[ |X| \geq t \right] \leq s\exp( -t^p ) \quad \text{for all } t \geq s,
\end{equation}
then $\VERT X \VERT_p \leq C_{p,s}$ for some absolute constant $C_{p,s} > 0.$  As a corollary, it follows that $\VERT \cdot \VERT_p$ is essentially monotone in $p$ in that for any $p \leq q,$ there is an absolute constant $C_{p,q}$ so that for all $X$,
\[
  \VERT X \VERT_p \leq C_{p,q} \VERT X \VERT_q.
\]

Control of $\VERT X \VERT_p$ can also be formulated in terms of the moments of $X.$  For our purposes, it will be enough to observe that for any $k \in \N$ and any $p \geq 1,$ there is a constant $C_{k,p}$ so that for all $X$,
\begin{equation}\label{eq:Xmoment}
  \Exp |X|^k \leq C_{k,p} \VERT X\VERT_p^k.
\end{equation}
Furthermore, centering a random variable can not greatly deterioriate its concentration in that there is an absolute constant $C_p$ so that for all $X$
\begin{equation}\label{eq:Xcentering}
  \VERT X - \Exp X \VERT_p \leq C_p \VERT X \VERT_p.
\end{equation}

Finally we observe as a consequence of Young's inequality that for any $p,q \geq 1$ satisfying $1/p + 1/q = 1,$
for any two random variables $X$ and $Y$, 
\[
  \VERT XY \VERT_{1} \leq  \VERT X\VERT_p \VERT Y \VERT_q.
\]
See the proof of \cite[Lemma 2.7.7]{Vershynin} for details.

\subsection{Moderate deviations} \label{sec:moddev}

We shall use a variety of concentration inequalities, which we will formulate in terms of the $\VERT \cdot \VERT_p$ norms.  
We begin with what can be viewed as a version of Hoeffding's inequality.

\begin{theorem}[{\cite[Proposition 2.6.1]{Vershynin}}]\label{thm:hoeffding}
If $X_1, X_2, \dots, X_n$ are independent, centered, sub--Gaussian random variables, then.
  \[
    \VERT \textstyle{\sum_{i=1}^n X_i} \VERT_2^2 \lesssim \sum_{i=1}^n \VERT X_i \VERT_2^2.
  \]
\end{theorem}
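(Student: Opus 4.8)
The plan is to route through the moment generating function (MGF), which converts the norm estimate into a product of one--dimensional bounds via independence. The one fact needed about a single centered sub--Gaussian variable is: there is an absolute constant $C$ so that any centered $X$ with $\VERT X\VERT_2 = K < \infty$ satisfies $\Exp e^{\lambda X} \le e^{C K^2 \lambda^2}$ for every $\lambda \in \R$. Granting this, independence gives $\Exp \exp\big(\lambda \sum_{i=1}^n X_i\big) = \prod_{i=1}^n \Exp e^{\lambda X_i} \le \exp\big(C \lambda^2 \sigma^2\big)$, where $\sigma^2 := \sum_{i=1}^n \VERT X_i\VERT_2^2$. A Chernoff bound (optimizing over $\lambda = t/(2C\sigma^2)$), applied both to $\sum_{i=1}^n X_i$ and to $-\sum_{i=1}^n X_i$ (which has the same $\sigma$, since $\VERT\cdot\VERT_2$ depends only on the modulus), yields $\Pr\big[|\sum_{i=1}^n X_i| \ge t\big] \le 2\exp\big(-t^2/(4C\sigma^2)\big)$ for all $t \ge 0$. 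Rescaling, the variable $Y := (4C)^{-1/2}\sigma^{-1}\sum_{i=1}^n X_i$ obeys $\Pr[|Y| \ge u] \le 2 e^{-u^2}$, which is a bound of the form \eqref{eq:normup} with $s = 2$ and $p = 2$; hence $\VERT Y\VERT_2$ is at most an absolute constant, and unscaling gives $\VERT \sum_{i=1}^n X_i\VERT_2 \lesssim \sigma$, i.e. the claim after squaring.

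It remains to establish the one--variable MGF bound, which I would prove by splitting on the size of $|\lambda| K$. For $|\lambda| K \le c_0$, with $c_0$ a small absolute constant, Taylor--expand $\Exp e^{\lambda X} = 1 + \sum_{k \ge 2} \lambda^k \Exp X^k / k!$, where the linear term vanishes because $X$ is centered. Integrating the tail bound \eqref{eq:Xtail} gives $\Exp|X|^k \le k K^k \Gamma(k/2) \le (C\sqrt{k}\,K)^k$, and combined with $k! \ge (k/e)^k$ the series is dominated by a geometric series of ratio $\lesssim |\lambda| K$; choosing $c_0$ small enough, $\Exp e^{\lambda X} \le 1 + C' K^2 \lambda^2 \le e^{C' K^2 \lambda^2}$. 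For $|\lambda| K > c_0$, use the elementary inequality $\lambda X \le \tfrac14 \lambda^2 K^2 + X^2/K^2$, so $\Exp e^{\lambda X} \le e^{\lambda^2 K^2/4}\,\Exp e^{X^2/K^2} \le 2 e^{\lambda^2 K^2/4}$ by the (attained) definition of $\VERT X\VERT_2$; absorbing the factor $2$ into the exponent via $\lambda^2 K^2 > c_0^2$ gives $\Exp e^{\lambda X} \le e^{(1/4 + \log 2 / c_0^2)K^2 \lambda^2}$. Taking $C$ to be the larger of the two exponential rates closes the single--variable estimate.

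The main obstacle is really just this single--variable MGF bound, and within it the small--$\lambda$ regime: one genuinely needs the centering hypothesis to get the $\lambda^2$ rather than $\lambda$ leading order, and one must track the absolute constants carefully so that the final rescaling slots cleanly into the converse estimate \eqref{eq:normup}. Everything else — the passage to the sum by independence, the Chernoff step, and the symmetrization — is routine.
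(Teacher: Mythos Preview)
Your proof is correct and is essentially the standard argument (and the one in the cited reference \cite[Proposition~2.6.1]{Vershynin}): reduce to a single-variable MGF bound $\Exp e^{\lambda X} \le e^{CK^2\lambda^2}$ via centering plus moment control for small $|\lambda|K$ and the trivial Young bound for large $|\lambda|K$, then multiply over the independent summands and invert the Chernoff tail via \eqref{eq:normup}. The paper itself gives no proof of this statement, deferring entirely to Vershynin, so there is nothing further to compare.
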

Using \eqref{eq:Xcentering}, it also holds that upon dropping the assumption that $\left\{ X_i \right\}$ are centered, we have that 
\[
    \VERT \textstyle{\sum_{i=1}^n (X_i-\Exp X_i }) \VERT_2 \lesssim  \sum_{i=1}^n \VERT X_i \VERT_2^2.
\]

We shall also encounter sums of random variables which are only subexponential, in which case such strong concentration is not possible for the entire tail of the sum, but remains true in the regime of moderate deviations. The following is roughly a corollary of Bernstein's inequality.
\begin{theorem} \label{thm:bernstein}
  Let $X_1, X_2, \dots, X_n$ be independent, centered, sub--exponential random variables.  There is an absolute constant $C>0$ and  an event $\A \in \sigma( X_i : 1 \leq i \leq n)$ having
  \[
    \VERT (\textstyle{\sum_{i=1}^n X_i})\one_\A \VERT_2^2 \lesssim \sum_{i=1}^n \VERT X_i \VERT_1^2
    \quad\text{and}\quad
    \Pr[ \A^c ] \leq 2\exp\left(-\frac{\textstyle{\sum_{i=1}^n} \VERT X_i \VERT_1^2}{C\max_{1 \leq i \leq n} \VERT X_i \VERT_1^2}\right).
  \]
\end{theorem}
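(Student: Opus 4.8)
The plan is to take for $\A$ the event on which $S \coloneqq \sum_{i=1}^n X_i$ stays inside its \emph{Gaussian window}: the range of deviations on which Bernstein's inequality still produces a Gaussian---rather than exponential---tail. Set $K_i = \VERT X_i\VERT_1$, $\sigma^2 = \sum_{i=1}^n K_i^2$ and $K_\ast = \max_{1\le i\le n}K_i$. Bernstein's inequality for independent, centered, sub--exponential summands (see \cite[Section~2.8]{Vershynin}) provides an absolute constant $c>0$ with
\[
  \Pr\big[|S|\ge t\big]\le 2\exp\Big(-c\min\Big(\tfrac{t^2}{\sigma^2},\tfrac{t}{K_\ast}\Big)\Big)\qquad\text{for all }t\ge 0.
\]
The two branches of the minimum cross at $t=\sigma^2/K_\ast$, and I would simply set $\A=\{|S|\le \sigma^2/K_\ast\}$, which manifestly lies in $\sigma(X_i:1\le i\le n)$.

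For the bound on $\Pr[\A^c]$, evaluate the Bernstein estimate at $t=\sigma^2/K_\ast$: there both arguments of the minimum equal $\sigma^2/K_\ast^2$, so
\[
  \Pr[\A^c]=\Pr\big[|S|>\sigma^2/K_\ast\big]\le 2\exp\Big(-c\,\tfrac{\sigma^2}{K_\ast^2}\Big),
\]
which is the claimed estimate with $C=1/c$.

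For the norm bound, the point is that $|S|\one_{\A}$ is pointwise $\le \sigma^2/K_\ast$ (it vanishes off $\A$ and is $\le \sigma^2/K_\ast$ on $\A$), so $\Pr[|S|\one_{\A}\ge t]=0$ for $t>\sigma^2/K_\ast$; while for $0\le t\le \sigma^2/K_\ast$ one has $t/K_\ast\ge t^2/\sigma^2$, so the Bernstein minimum equals $t^2/\sigma^2$ and $\Pr[|S|\one_{\A}\ge t]\le\Pr[|S|\ge t]\le 2\exp(-ct^2/\sigma^2)$. Combining the two ranges, this Gaussian tail bound holds for \emph{all} $t\ge 0$. Rescaling the variable by $\sigma/\sqrt c$ puts it into the form \eqref{eq:normup} with $p=2$ and $s=2$, and the equivalence recorded there yields $\VERT |S|\one_{\A}\VERT_2\lesssim \sigma$, i.e.\ $\VERT (\textstyle\sum_{i=1}^n X_i)\one_{\A}\VERT_2^2\lesssim \sigma^2=\sum_{i=1}^n\VERT X_i\VERT_1^2$.

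I do not foresee a genuine obstacle: this is the standard repackaging of Bernstein's inequality, and the only delicate point is to fix the truncation threshold \emph{exactly} at the crossover $\sigma^2/K_\ast$---this is what makes the two conclusions (the sub--Gaussian norm bound for $|S|\one_{\A}$ and the exponential tail bound for $\A^c$) come out simultaneously with matching constants. One should still check the trivial edge cases (a degenerate $\sigma=0$, and that for small $t$ the tail bound is vacuous because its right-hand side exceeds $1$), but those are routine.
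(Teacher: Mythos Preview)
Your proof is correct and takes essentially the same approach as the paper: both invoke Bernstein's inequality from \cite{Vershynin}, define $\A=\{|S|\le \sigma^2/K_\ast\}$ at the Gaussian/exponential crossover, and read off the two conclusions. The paper's version is terser (it simply writes ``the claim follows'' after defining $\A$), while you have spelled out the tail argument for $|S|\one_\A$ explicitly.
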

\begin{proof}
  Theorem 2.8.1 of \cite{Vershynin} states that for all $t \geq 0$
  \begin{equation}\label{eq:actualbernstein}
    \Pr\left[ \left| \textstyle{\sum_{i=1}^n} X_i \right| \geq t \right] \leq 2\exp\left( -\frac{1}{c}\min\left\{ \frac{t^2}{\sum_{i=1}^n \VERT X_i \VERT_1^2}, \frac{t}{\max_i \VERT X_i\VERT_1} \right\} \right).
  \end{equation}
  Hence on letting
  \[
    \A =\left\{ \left| \textstyle{\sum_{i=1}^n} X_i \right| \leq \textstyle{\sum_{i=1}^n} \VERT X_i \VERT_1^2 /\max_i \VERT X_i\VERT_1  \right\},
  \]
  we may write the unconditional tail bound for all $t \geq 0,$
  \begin{equation*}
    \Pr\left[ \left| \textstyle{\bigl(\sum_{i=1}^n} X_i\bigr)\one_{\A} \right| \geq t \right] \leq 2\exp\left( -\frac{t^2}{c\sum_{i=1}^n \VERT X_i \VERT_1^2}\right),
  \end{equation*}
  noting that $\bigl|\sum_{i=1}^n X_i\bigr|\one_{\A}$ is bounded above by $\textstyle{\sum_{i=1}^n} \VERT X_i \VERT_1^2 /\max_i \VERT X_i\VERT_1$ almost surely.  This implies the subgaussian norm bound.  The claimed probability bound on $\A^c$ also follows from \eqref{eq:actualbernstein}.
\end{proof}

As a small generalization, we can apply this inequality to sums which are constructed from families of independent variables.  Such tail bounds have appeared in the literature in the context of dependency graphs \cite{Janson}.
\begin{theorem} \label{thm:Svante}
Let $\mathcal{J}$ be a finite set and $Y= \sum_{\alpha \in \mathcal{J} } X_\alpha$ where $X_\alpha$ are centered random variables.
Let $\gamma \in \N$ and assume that we have a partition $\mathcal{J}  = \mathcal{J}_1 \cup \cdots \cup \mathcal{J}_\gamma$ such that each family $\left\{ X_\alpha : \alpha \in \mathcal{J}_k \right\}$ is one of independent variables, for $1 \leq k \leq \gamma.$ 
Suppose that 
$b$ and $\sigma^2$ are chosen so that
\[
b \ge \max_{\alpha\in \mathcal{J} }\VERT X_\alpha \VERT_1
\qquad\text{and}\qquad
\sigma^2 \ge \max_{k=1,\dots, \gamma}\Big( \sum_{\alpha \in \mathcal{J}_k } \VERT X_\alpha\VERT_1^2 \Big). 
\]
There exists an event $\A$ depending on $(b,\sigma^2)$ that is measurable with respect to $\sigma( X_\alpha : \alpha \in \mathcal{J})$ and a numerical constant $c$ such that 
\[
\VERT  Y \1_{\A}\VERT_2 \lesssim \gamma \sigma
\quad
\text{and}
\quad
\P[\A^c] \leq 2\gamma e^{-c^{-1}(\sigma/b)^2}
\]
\end{theorem}
\begin{proof}
For each $1 \leq k \leq \gamma,$ define the event
\[
  \A_k = \left\{ \left|\textstyle{\sum_{\alpha \in \mathcal{J}_k}} X_\alpha\right| \leq \sigma^2/b \right\},
\]
and define $\A = \cap_{k=1}^\gamma \A_k.$  Applying \eqref{eq:actualbernstein} exactly as in the proof of Theorem~\ref{thm:bernstein}, we obtain that for any $1 \leq k \leq \gamma$,
\begin{equation} \label{eq:wawa}
  \VERT  (\textstyle{\sum_{\alpha \in \mathcal{J}_k}} X_\alpha)\one_{\A_k} \VERT_2
  \lesssim \sigma
  \quad
\text{and}
\quad
  \Pr[\A_k^c] \leq 2e^{- c^{-1}(\sigma/b)^2}.
\end{equation}
Applying the triangle inequality, we have
\[
  \VERT  (\textstyle{\sum_{k=1}^\gamma \sum_{\alpha \in \mathcal{J}_k}} X_\alpha)\one_{\A} \VERT_2
\leq 
  \sum_{k=1}^\gamma 
  \VERT  (\textstyle{\sum_{\alpha \in \mathcal{J}_k}} X_\alpha)\one_{\A_k} \VERT_2   \lesssim \gamma \sigma \, ,
\]
where  we have used that if $|X| \leq |Y|$ almost surely then $\VERT X \VERT_2 \leq \VERT Y \VERT_2.$
The desired conclusion now follows from \eqref{eq:wawa} by a union bound to estimate $\P[\A^c]$. 
\end{proof}

We will also heavily use a matrix martingale concentration inequality of \cite{Tropp}, which generalizes a scalar martingale inequality of \cite{Freedman}.

\begin{theorem} \label{thm:M}
Let $(M_n)_{n\ge 0}$ be a $\d \times \d$ matrix--valued  $\F_n$--martingale such that $M_0 = 0$.
Let $n \in \N$ and suppose that there is an $\alpha \geq 0$ such that
\[
\max_{k < n} \| M_{k+1}-M_k \|  \le \alpha ~ ~ ~\As
\]
Define 
\[
V_n = \sum_{k < n} \E\big[ \| M_{k+1}-M_k \|^2 | \F_k\big].
\]
There exists a constant $c>0$ so that for any $\Sigma > 0$, if we define the event $\A = \{ \sup_{k\le n}\|M_k\|  \le c \frac{\Sigma^2}{\alpha}\} \cap \{V_n \leq \Sigma^2\}$,
then
\[
  \big\VERT \max_{k\le n}\|M_k\| \1_{\A} \big\VERT_2 \lesssim_\d \Sigma
\quad
\text{and}
\quad
\P\big[\A^c \cap \{V_n \leq \Sigma^2\}\big] \le 2\d e^{-(\frac{\Sigma}{\alpha})^2}.
\]
\end{theorem}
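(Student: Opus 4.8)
The plan is to derive the matrix Freedman inequality of \cite{Tropp} for the martingale $(M_n)$ itself, use it to produce the event $\A$, and then convert the resulting tail bound on $\sup_{k\le n}\|M_k\|$ into a bound on the $\VERT\cdot\VERT_2$ norm of the truncated maximum. First I would recall Tropp's matrix Freedman inequality in the self-adjoint form: if $(S_k)$ is a self-adjoint $\d\times\d$ matrix martingale with uniformly bounded increments $\|S_{k+1}-S_k\|\le\alpha$ and predictable quadratic variation $W_n=\sum_{k<n}\E[(S_{k+1}-S_k)^2\mid\F_k]$, then
\[
  \P\big[\exists k\le n:\ \lambda_{\max}(S_k)\ge u\ \text{and}\ \|W_n\|\le\sigma^2\big]
  \le \d\cdot\exp\!\Big(-\tfrac{u^2/2}{\sigma^2+\alpha u/3}\Big).
\]
To apply this to the (not necessarily self-adjoint) matrix martingale $M_n$, I would use the standard dilation trick: replace $M_k$ by the Hermitian dilation $\mathcal{H}(M_k)=\begin{bmatrix}0 & M_k\\ M_k^* & 0\end{bmatrix}$, which is a $2\d\times 2\d$ self-adjoint martingale with $\|\mathcal{H}(M_{k+1})-\mathcal{H}(M_k)\|=\|M_{k+1}-M_k\|\le\alpha$, with $\lambda_{\max}(\mathcal{H}(M_k))=\|M_k\|$, and whose quadratic variation has norm controlled by $V_n$ as defined in the statement (up to the block structure, $\|W_n\|\le V_n$). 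This reduces everything to the self-adjoint case.

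Next I would choose the threshold. On the event $\{V_n\le\Sigma^2\}$, apply the inequality above with $u = c'\Sigma^2/\alpha$ for a suitably small absolute constant $c'$ so that the denominator $\sigma^2+\alpha u/3 \le \Sigma^2 + \alpha\cdot c'\Sigma^2/(3\alpha)\le 2\Sigma^2$, and then $u^2/2 \ge (c')^2\Sigma^4/(2\alpha^2)$, giving an exponent $\gtrsim (\Sigma/\alpha)^2$. Absorbing the $\d$ prefactor into $2\d$ and relabeling constants, this yields exactly
\[
  \P\big[\A^c\cap\{V_n\le\Sigma^2\}\big]
  = \P\big[\{\sup_{k\le n}\|M_k\|> c\Sigma^2/\alpha\}\cap\{V_n\le\Sigma^2\}\big]
  \le 2\d\, e^{-(\Sigma/\alpha)^2},
\]
which is the second assertion (one may need to tune the constant inside the exponent; since the statement only asks for $e^{-(\Sigma/\alpha)^2}$, choosing $c$ small enough makes this work, at worst after replacing $(\Sigma/\alpha)^2$ by a constant multiple, which can be re-absorbed by shrinking $c$ further).

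For the norm bound, I would observe that on $\A$ the quantity $Z \coloneqq \sup_{k\le n}\|M_k\|\one_\A$ is bounded by $c\Sigma^2/\alpha$ deterministically, but that is not by itself enough to conclude $\VERT Z\VERT_2\lesssim_\d\Sigma$ — instead one needs the full tail. The point is that $\{Z\ge t\}$ for $c\Sigma\le t\le c\Sigma^2/\alpha$ is contained in $\{\sup_{k\le n}\|M_k\|\ge t\}\cap\{V_n\le\Sigma^2\}$, and a rerun of the Freedman bound with threshold $u=t$ (valid since $t\ge c\Sigma$ forces $\alpha u/3\le\alpha\cdot c\Sigma^2/(3\alpha c)$, keeping the denominator $\lesssim\Sigma^2$, hence the exponent $\gtrsim (t/\Sigma)^2$) gives $\P[Z\ge t]\le 2\d\,e^{-t^2/(C\Sigma^2)}$ for all $t\ge c\Sigma$; for $t\le c\Sigma$ this holds trivially after adjusting the constant $C$ and using $\P[Z\ge t]\le 1\le 2\d$. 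Then by \eqref{eq:normup} with $p=2$ (after rescaling $Z$ by $\sqrt{C}\Sigma$ and using that $2\d$ is an admissible value of $s$), we get $\VERT Z\VERT_2\lesssim_\d\Sigma$, as claimed.

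The main obstacle is purely bookkeeping: tracking the absolute constants through the dilation, the choice of $c$, and the interplay between the two regimes of $t$ in the tail so that a single exponent of the form $(\Sigma/\alpha)^2$ in the probability and $\Sigma$ in the norm survives. The conceptual content — dilation to self-adjoint plus Tropp's matrix Freedman bound plus the tail-to-norm conversion \eqref{eq:normup} — is routine; no genuinely hard step is expected.
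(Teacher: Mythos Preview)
Your proposal is correct and follows essentially the same route as the paper. The paper simply quotes \cite[Corollary 1.3]{Tropp} directly in the rectangular form, which already has the Hermitian dilation built in, so you are doing slightly more work by re-deriving that step; otherwise the structure (Freedman--Tropp tail bound, choice of threshold $t=c\Sigma^2/\alpha$ to get the probability estimate, then the tail-to-norm conversion via \eqref{eq:normup} on the restricted range $t\le c\Sigma^2/\alpha$) is identical.
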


\begin{proof}
  Apply the Freedman--Tropp's inequality (\cite[Corollary 1.3]{Tropp}) to the martingale $M_n$, for any $t \geq 0$,
  \begin{equation} \label{eq:FT}
  \P\left[\left\{ \max_{k\le n}\|M_k\|   \ge t \right\} \cap \left\{ V_n \leq \Sigma^2\right\}\right] \le 2\d \exp\left(-\frac{t^2/2}{\alpha t/3+ \Sigma^2} \right). 
\end{equation}
By taking $t =  c \Sigma^2/\alpha$ and choosing $c>0$ such that $\frac{3c^2}{2(c+3)} =1$,  this implies that
\[
\P[\A^c \cap \left\{V_n \leq \Sigma^2\right\}] \le 2\d \exp\left(-\frac{\Sigma^2}{\alpha^2} \right) . 
\] 
We also obtain the tail bound
\[
  \P\left[ \left\{ \sup_{k\le n}\|M_k\|   \ge t \right\} \cap \A \right] \le 2\d \exp\left(-\frac{t^2}{c^2\Sigma^2} \right).
\]
By definition of the sub--Gaussian norm and \eqref{eq:normup}, this yields the claim.  
\end{proof}

\section{Transfer matrix reformulation}
\label{sec:transfermatrices}

\subsection{Approximate diagonalization} \label{sect:diag}
The goal of this section is to reformulate our main theorems by performing a conjugation of the transfer matrices.  
Using Lemma~\ref{lem:diagonalization}, we have $\widetilde T_k = V_k \Lambda_k V_k^{-1}$  where for $k\ge 2$, 
\begin{equation} \label{def:V}
V_{k} =     \begin{bmatrix}
      \lambda_+(\frac{k-1}N) & \lambda_{-}(\frac{k-1}N) \\
      1 & 1 \\
    \end{bmatrix}
 \qquad\text{and}\qquad 
 \Lambda_{k} =     \begin{bmatrix}
      \lambda_+(\frac{k-1}N) & 0 \\  0 & \lambda_{-}(\frac{k-1}N) 
    \end{bmatrix} . 
\end{equation}
Let us also record that the eigenvalues of the deterministic transfer matrices  satisfy for any $t \in (0,1]$ and $z\in \C$ with $\Im z \ge 0$, 
\begin{equation} \label{def:lambda}
  \lambda_{\pm}(t) = \frac{\sqrt{t} J(z/\sqrt{t})^{\mp 1}}{ 2},
\end{equation}
where $J$ is the inverse Joukowsky map (see \eqref{eq:J}).
This map is continuous in the open upper half plane and extends continuously to its closure.  Hence for $z\in[-\sqrt{t},\sqrt{t}]$,  $\lambda_{\pm}$ are still well--defined by continuity from the upper half plane. 
From \eqref{def:V}, we verify that for $k\ge 2$, 
\begin{equation} \label{eq:delta}
  V_{k+1}^{-1} V_{k}
  =  \Id
  -  \delta_k\begin{bmatrix}
    1 & -1 \\
    -1  & 1
  \end{bmatrix}
  \quad
  \text{where}
  \quad
  \delta_k 
  \coloneqq \frac{\lambda_+(\frac{k}N) -  \lambda_+(\frac{k-1}N)}{\lambda_+(\frac kN) - \lambda_{-}(\frac kN)}
  =\frac{\sqrt{z^2 - \frac{k-1}{N}}-\sqrt{z^2 - \frac{k}{N}}}{2\sqrt{ z^2 - \frac{k}{N}}} .
\end{equation}
From Taylor expanding the numerator of $\delta,$ we can estimate $\delta_k = \O( |Nz^2-k|^{-1})$ uniformly on sets of $z \in \C$ and $k,N \in \N$ such that $|Nz^2-k|$ is large.  Indeed for any $z \in \mathscr{D}_H,$ and for any $k=1,\dots, N_H$,
we have $|Nz^2 - k| \geq \frac{\omega_N}{\sqrt{2}}$ (see \eqref{est0}).

\medskip

We can use the eigenvector matrices of the deterministic recurrence to approximately diagonalize the transfer matrix recurrence \eqref{eq:recurrence}. For any $n\ge 2$, 
\begin{equation*} 
 T_n \cdots T_2=V_{n+1} \left( \prod_{k=2}^n V_{k+1}^{-1} T_k V_k  \right) V_2^{-1}  . 
\end{equation*}
Let us define the matrices for $k\ge 2$, 
  \begin{equation} 
    \label{eq:fluctuations}
\boldsymbol{\epsilon}_k   =  V_{k+1}^{-1}(\tilde T_k - T_k)V_k 
= \frac{1}{\sqrt{2\beta N}}V_{k+1}^{-1}\begin{pmatrix} X_k &  \frac 12\sqrt{\frac{k-1}{N}} Y_k \\ 0 &0  \end{pmatrix}V_k , 
\end{equation}
where we recall from \eqref{def:XY}: for $k\ge1$,
\begin{equation*} 
X_k = \frac{b_k}{\sqrt{2}} 
\qquad\text{and}\qquad 
Y_k = \frac{a_{k-1}^2 - \beta(k-1)}{\sqrt{2\beta(k-1)}}.
\end{equation*}
Note that the random variables $X_1, X_2, X_3, \dots $ and $Y_2, Y_3, \dots$ are all independent with mean 0, variance 1 and some moderate deviation estimates for the $Y_k$ are recorded in Lemma~\ref{lem:XY}. 

\medskip

Define for $k =1, \dots, N$,
\begin{equation} \label{Jlambda}
\rho_k(z)=  \frac{\lambda_{-}(\frac kN)}{\lambda_{+}(\frac kN)}  =  J( \tfrac{z}{\sqrt{k/N}})^2 . 
\end{equation} 
The quantities $\rho_k$ and $\delta_k$ measure the hyperbolicity of transfer matrices $\widetilde T_k$. 
The matrices $\boldsymbol{\epsilon}_k$ represents the (independent) noise at each step of the transfer matrix recurrence. 
Since $\widetilde T_k = V_k \Lambda_k V_k^{-1}$, we obtain
\begin{equation} \label{prod1}
 T_n \cdots T_2=V_{n+1}  \prod_{k=2}^n\left(V_{k+1}^{-1} \tilde T_k V_k - \boldsymbol{\epsilon}_k  \right) V_2^{-1}   = V_{n+1}\prod_{k=2}^n\left( V_{k+1}^{-1} V_k \Lambda_k - \boldsymbol{\epsilon}_k  \right) V_2^{-1} . 
\end{equation}
To exploit the hyperbolic feature of the transfer matrices, it will be useful to factor the first entry of the matrix on the RHS of \eqref{prod1}. This is recorded by the next lemma.

\begin{lemma} \label{lem:diag}
We have for any $n \ge 2$,
\[
 T_n \cdots T_2 =  \prod_{k=2}^{n}  \lambda_+(\tfrac{k-1}N)\big(1- \delta_k- \eta_{k,11} \big)   V_{n+1}\prod_{k=2}^n U_k V_2^{-1} ,
\]
where
\begin{equation} \label{def:eta1}
U_k = \begin{pmatrix} 
1 &   \eta_{k,12} \\   \eta_{k,21} & \rho_k - \eta_{k,22}
\end{pmatrix} 
 \qquad\text{and}\qquad 
 \eta_{k,11} =    \sqrt{\frac{1/2\beta}{Nz^2-k}}\left( X_k +  Y_k J\big(z \sqrt{\tfrac{N}{k-1}}\big) \right)
\end{equation}
Moreover, the random matrices $U_1, U_2, \dots$ are independent and given by \eqref{def:eta2} below. 
\end{lemma}

\begin{proof}
By \eqref{eq:fluctuations}, we verify from that  for any $k\ge 2$,  
\begin{equation*}
  \label{eq:he_epsy}
  \begin{aligned}
\boldsymbol{\epsilon}_k
 &=\frac{   \lambda_+(\frac{k-1}N)}{\sqrt{2\beta N}\big(\lambda_+(\frac kN) - \lambda_-(\frac kN) \big)} 
 \begin{bmatrix}
X_k + \breve{Y}_k   & \rho_{k-1}  X_k +\breve{Y}_k   \\
   -X_k -\breve{Y}_k & -\rho_{k-1}X_k -\breve{Y}_k \\
 \end{bmatrix},
 \end{aligned}
\end{equation*}
where we set (recall \eqref{def:lambda} and \eqref{def:V}), 
\[
\breve{Y}_k = \tfrac 12  Y_k  \sqrt{\tfrac{k-1}{N}} \lambda_+\left(\tfrac{k-1}N\right)^{-1} =Y_k J\big(z \sqrt{\tfrac{N}{k-1}}\big)
\qquad\text{and}\qquad
 \rho_k = \frac{  \lambda_-(\tfrac{k}N)}{  \lambda_+(\tfrac{k}N)} . 
\]
As for the mean,  let us observe that by \eqref{eq:delta}, we  have
\begin{equation*}
  \label{eq:he_mean}
  \begin{aligned}
    V_{k+1}^{-1} V_{k}\Lambda_k
    - \boldsymbol{\epsilon}_k 
    =   \lambda_+(\tfrac{k-1}N)
    \left(
    \begin{bmatrix}
      1-\delta_k  &   \rho_{k-1} \delta_k \\
      \delta_k & \rho_{k-1} (1-\delta_k)
    \end{bmatrix}
    +
    \sqrt{\tfrac{1/2\beta}{Nz^2-k}}
    \begin{bmatrix}
      -X_k -\breve{Y}_k   & -\rho_{k-1}  X_k -\breve{Y}_k   \\
      X_k +\breve{Y}_k & \rho_{k-1}X_k +\breve{Y}_k \\
    \end{bmatrix}
    \right).
  \end{aligned}
\end{equation*}
Hence, if we denote $\eta_{k,11} =  \sqrt{\frac{1/2\beta}{Nz^2-k}}\big( X_k + \breve{Y}_k \big)$ then we have $\boldsymbol{\epsilon}_{k,11} = \lambda_+\left(\tfrac{k-1}N\right)\eta_{k,11}$.  Furthermore, if we define 
\begin{equation} \label{def:eta2}
\eta_{k,21} = \frac{\delta_k + \eta_{k,11}}{1-  \delta_k - \eta_{k,11}} , \quad
\eta_{k,12} = \frac{  \rho_{k-1} \delta_k -  \sqrt{\frac{1/2\beta}{Nz^2-k}} \big( \rho_{k-1}  X_k +\breve{Y}_k \big)     }{1-\delta_k- \eta_{k,11}} ,
\quad
\eta_{k,22} = \frac{\rho_{k-1}(-\eta_{k,11}-\delta_k)}{1- \delta_k- \eta_{k,11}} + \eta_{k,12} ,\end{equation}
we obtain for any $k\ge 2$,
\[ \begin{aligned}
V_{k+1}^{-1} V_k \Lambda_k - \boldsymbol{\epsilon}_k 
& =  \lambda_+\left(\tfrac{k-1}N\right) (1 -\delta_k-\eta_{k,11}) \begin{pmatrix} 
1 &   \eta_{k,12} \\   \eta_{k,21} & \rho_{k-1}-\eta_{k,22} 
\end{pmatrix}  \\
&= \lambda_+\left(\tfrac{k-1}N\right) (1- \delta_k -\eta_{k,11}) U_k . 
\end{aligned}\]
Hence, by formula \eqref{prod1}, this completes the proof.
\end{proof}

From \eqref{eq:recurrence} and Lemma~\ref{lem:diag}, we obtain that
\begin{equation}  \label{rec2}
\begin{pmatrix} \Phi_{n}(z) \\ \Phi_{n-1}(z) \end{pmatrix}
= \biggl[ \prod_{k=2}^{n}  \lambda_+(\tfrac{k-1}N)\big(1- \delta_k - \eta_{k,11} \big) \biggr]   V_{n+1} \biggl[\prod_{k=2}^n U_k \biggr] V_2^{-1} \begin{pmatrix}   z- \frac{b_1}{ 2\sqrt{N\beta}}   \\ 1\end{pmatrix} . 
\end{equation}
Then, in order to obtain the asymptotics of the characteristic polynomial $\Phi_n(z)$ for large $n$, we need an approximation for  $\prod_{k=2}^n U_k$ where the random matrix $U_k$ are as in Lemma~\ref{lem:diag}.  
In fact, the precise form of the noise $\{ \eta_{k,12}, \eta_{k,21}, \eta_{k,22} \}_{k=1}^N$ is not important for our applications. What will be relevant is a set of estimates that are summarized in the next section.

\subsection{Control of product of hyperbolic transfer matrices} \label{sect:main} 

In this section, we use the notation from Lemma~\ref{lem:diag}.  
We begin by stating a result which will be key for proving Theorem~\ref{thm:planar}. 

\begin{proposition}\label{prop:rec}
Let $\alpha \in (0, \frac 19]$, and $\epsilon, \delta>0$ be sufficiently small so that  $\delta+\epsilon \le \frac \alpha 2$.
 Suppose that the random variables \eqref{def:eta1}--\eqref{def:eta2} satisfy for all $k\in\{1,\dots, N\}$,
   \begin{equation} \label{cond:eta}
    |\Exp {\eta_{k,ij}}| \le N^{2\alpha-1},
    \quad
    \Exp |{\eta_{k,ij}}|^2 \le N^{2\alpha-1},
    \quad
    |{\eta_{k,ij}}| \le N^{\alpha-1/2+ \epsilon} 
    \quad\As,
  \end{equation}
  and that $|\rho_k| \le 1- c N^{-\alpha}  $ for some absolute constant $c>1$. Then,   if $N$ sufficiently large, 
   there is an event $\A$ and a constant $c_\beta > 0$
  with $\P[\A^c] \le e^{-c_\beta N^\delta}$    on which
  \[
  \left\| {\textstyle \prod_{k=1}^N U_k } - \left(\begin{smallmatrix} 1 & 0 \\ 0 & 0 \end{smallmatrix}\right)  \right\| \le
  3  N^{\frac{15\alpha}{2} -1+ \epsilon + \frac{3\delta}{2}}   . 
  \]
\end{proposition}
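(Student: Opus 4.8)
The plan is to analyze the partial products $P_n \coloneqq \prod_{k=1}^n U_k$ and show that they stay close to the rank-one projection $e_1 e_1^\top = \left(\begin{smallmatrix} 1 & 0 \\ 0 & 0\end{smallmatrix}\right)$ throughout the recursion. Write $U_k = \left(\begin{smallmatrix}1 & 0 \\ 0 & 0\end{smallmatrix}\right) + \Delta_k$ where $\Delta_k = \left(\begin{smallmatrix} \eta_{k,12}\text{-column structure}\end{smallmatrix}\right)$, i.e.\ $\Delta_k = \left(\begin{smallmatrix} 0 & \eta_{k,12} \\ \eta_{k,21} & \rho_k - \eta_{k,22}\end{smallmatrix}\right)$. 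The key algebraic point is that $\left(\begin{smallmatrix}1&0\\0&0\end{smallmatrix}\right) U_k = \left(\begin{smallmatrix} 1 & \eta_{k,12} \\ 0 & 0\end{smallmatrix}\right)$, so multiplying on the left by the projection kills the "dangerous" second row of $U_k$ (the one carrying the $\rho_k$ term that, while $|\rho_k| < 1$, is not small). Concretely I would set $P_n = e_1 e_1^\top + E_n$ with $E_1 = \Delta_1$, and derive the recursion $E_{n} = E_{n-1} U_n + e_1 e_1^\top \Delta_n = E_{n-1}(e_1 e_1^\top + \Delta_n) + e_1 e_1^\top \Delta_n$. The first row of $E_{n-1}$ behaves like a near-martingale increment process (its growth is driven by the small $\eta$'s), while the second row of $E_{n-1}$ gets contracted at each step by a factor $\approx |\rho_n| \le 1 - cN^{-\alpha}$, so it cannot build up: after $\Theta(N^\alpha \log N)$ steps the second-row contribution equilibrates at size $O(N^{\alpha - 1/2 + \epsilon}\cdot N^{\alpha})$ (increment size times relaxation time), and stays there.

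The execution has two coupled pieces that I would run simultaneously via a stopping-time / bootstrap argument. Fix a target radius $r_N = 3N^{15\alpha/2 - 1 + \epsilon + 3\delta/2}$ and let $\tau$ be the first $n$ with $\|E_n\| > r_N$ (or the first time an auxiliary martingale-bracket bound is violated). On $\{n \le \tau\}$ we have $\|P_n\| \le 1 + r_N \le 2$, which lets us linearize: $\|E_n\| \le (1 + \|\Delta_n\|)\|E_{n-1}\| + \|\Delta_n\| \le \cdots$. For the deterministic/expectation part, the contributions $|\E \eta_{k,ij}| \le C_\beta N^{2\alpha - 1}$ summed over $N$ terms give $O(N^{2\alpha})$ — wait, that is too big, so one must exploit oscillation: the relevant sums are of the form $\sum_k (\text{something})\cdot \rho_k^{\text{power}}\cdot(\text{accumulated first row})$, and the geometric decay of powers of $\rho_k$ (uniformly $|\rho_k| \le 1 - cN^{-\alpha}$) converts the naive $N$-term sum into an effective $O(N^\alpha)$-term sum. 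So the deterministic drift is $O(N^\alpha \cdot N^{2\alpha - 1}) = O(N^{3\alpha - 1})$, comfortably inside $r_N$. For the random/fluctuation part I would apply the matrix martingale inequality Theorem~\ref{thm:M} to the martingale $M_n \coloneqq \sum_{k \le n}(\text{first-row increment of } E_{k-1}\Delta_k - \text{its conditional mean})$, plus the analogous scalar martingale bounds for the row sums $\sum_k \eta_{k,ij}$ handled by Theorem~\ref{thm:hoeffding}/Theorem~\ref{thm:bernstein}. The per-step increment bound $\alpha$ in Theorem~\ref{thm:M} is $\lesssim \|E_{k-1}\| \cdot N^{\alpha - 1/2 + \epsilon} \le r_N N^{\alpha - 1/2 + \epsilon}$ on the good event, and the quadratic bracket $V_n \le \Sigma^2$ with $\Sigma^2 \lesssim N \cdot (r_N + 1)^2 N^{2\alpha - 1} \approx N^{2\alpha}\cdot(1+r_N^2)$; feeding $\Sigma/\alpha = N^{\delta/2}$-ish into the exponential tail gives the probability bound $e^{-cN^\delta}$ and the $\VERT \cdot \VERT_2$ bound $\lesssim \Sigma \le N^{\alpha}\cdot(\text{small})$, which by \eqref{eq:Xtail} makes $\|M_n\|$ smaller than $r_N$ with probability $\ge 1 - e^{-cN^\delta}$. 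Tuning $\delta, \epsilon$ against $\alpha$ (using $\delta + \epsilon \le \alpha/2$) makes all the exponents line up: every error term is $\le N^{15\alpha/2 - 1 + \epsilon + 3\delta/2}$, and hence on $\A$ we get $\tau > N$ and $\|E_N\| \le 3 N^{15\alpha/2 - 1 + \epsilon + 3\delta/2}$.

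The main obstacle I anticipate is the bootstrap/self-consistency between the size of $\|E_n\|$ (which I want bounded by $r_N$) and the size of the martingale increments and conditional variances, which themselves depend on $\|E_{n-1}\|$; naively closing the loop would only give $\|E_N\| \lesssim$ (something growing with $N$ beyond $r_N$) unless one is careful to always multiply the a priori bound $\|E_{k-1}\| \le r_N$ into the increment estimates \emph{before} summing, and to use the geometric $\rho_k$-decay to replace $N$ by $N^\alpha$ in every sum. A secondary technical nuisance is that $U_k$ is only approximately triangular — the $(2,1)$ entry $\eta_{k,21}$ is nonzero — so the "second row gets contracted" heuristic needs the slightly more careful two-component bookkeeping sketched above (splitting $E_n$ into its two rows, with the first row accumulating diffusively and the second row being a geometric series with small, $\|E\|$-dependent forcing). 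Once the exponents are matched this is a routine but delicate accounting; the real content is the decay-of-$\rho_k$-saves-an-$N^{1-\alpha}$ observation together with the moderate-deviation control of the per-step noise.
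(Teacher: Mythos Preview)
Your approach is genuinely different from the paper's, and as written it has a gap that prevents you from reaching the stated exponent $15\alpha/2-1$.

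The paper does \emph{not} bootstrap on $\|P_n-e_1e_1^\top\|$ directly. Instead it takes $V_k=\diag(U_k)$ and expands $\prod_k U_k=\sum_{j\ge 0}\psi^{(j)}$, where $\psi^{(j)}$ collects all products in which exactly $j$ of the factors are replaced by the antidiagonal perturbation $U_k-V_k$. Because the perturbation is antidiagonal, the $\psi^{(j)}$ alternate: diagonal for $j$ even, antidiagonal for $j$ odd (Lemma~\ref{lem:psistructure}). The recursion \eqref{eq:psijforward}--\eqref{eq:simple22} then lets one bound order $j{+}1$ from order $j$: the $22$--entry of each even order is controlled deterministically via the geometric decay of $\psi^{(0)}_{22}$ (this is where hyperbolicity enters), and the remaining entries by martingale concentration applied both forward and backward. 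Iterating this two--step induction yields successive improvements by a factor $N^{-\delta/2}$ and the bound $\|\psi^{(>0)}_{N,1}\|\le 2N^{15\alpha/2-1+\epsilon+3\delta/2}$ (Proposition~\ref{prop:planar}).

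Your single--pass bootstrap does not close. You correctly observe that the second row of $E_n$ equilibrates at a smaller scale than the first, but you then estimate the first--row martingale increments using the crude $\|E_{k-1}\|\le r_N$, which discards that scale separation. Even if you repair this, the \emph{drift} in the first row, namely $\sum_k \E[\eta_{k,12}]\,c_{k-1}$, is bounded by $N\cdot N^{2\alpha-1}\cdot\sup_k|c_{k-1}|$. With the best one--shot high--probability bound $|c_{k-1}|\lesssim N^{(3\alpha-1+\delta)/2}$ (the analogue of the paper's Step~1 bound on $\psi^{(1)}$), this gives $N^{(7\alpha-1+\delta)/2}$, which for every $\alpha<1/8$ is strictly larger than the target $N^{15\alpha/2-1}$. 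So the stopping time fires and the argument stalls. To beat this drift you would have to iterate: feed the first--row bound back into the recursion for $c_n$ (via the $1+a_{j-1}$ factor and the coupled $d_n\leftrightarrow b_n$ pair), improve the second row, improve the first row again, and so on. That iteration --- with its alternating use of hyperbolic decay for the $22$--type entries and martingale bounds for the rest --- is precisely the paper's induction on the order $j$, and it is the missing ingredient in your sketch.
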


The proof of Proposition~\ref{prop:rec}  is given in Section \ref{sec:rmp}. 
It relies on a perturbative expansion of $\prod_{k=p}^n U_k$ for $N\ge n > p \ge 1$  for which we can get control of the successive terms by induction. This control is achieved by decomposing the terms in the expansion as martingales and exploiting the moderate deviation estimates from Section~\ref{sec:moddev}. 
The key input lies in that the transfer matrices have a \emph{hyperbolic} character since $|\rho_k| \le 1- c N^{-\alpha} $  and the noise is sufficiently small.

\medskip

To apply Proposition~\ref{prop:rec}  in the context of Theorem~\ref{thm:planar}, by formula \eqref{rec2}, we must verify that the noise $\eta_{k,ij}$ satisfies the conditions \eqref{cond:eta} uniformly for all $z\in\mathscr{P}$. 
Note that the almost sure estimates only hold after truncating the random variables \eqref{def:XY} by conditioning on an event $\mathscr{T}_{N^\delta}$ of overwhelming probability, defined in \eqref{truncation}. Then, we verify in Appendix~\ref{sect:est} that these conditions are satisfied -- see Lemma~\ref{lem:planarest}. 
We obtain the following corollary of  formula \eqref{rec2} and Proposition~\ref{prop:rec}\footnote{We apply this proposition with $\alpha= 1/9$ and choosing $\delta = 2\epsilon/3 = 1/45$ to control the probability of failure of $\A$.}.

\begin{corollary} \label{cor:transfer}
With $\alpha = 1/9$ and $\delta= 1/45$, there exists an event $\A \subset \mathscr{T}_{N^\delta}$ with  $\P[\A^c] \le e^{-cN^\delta}$ such that on the event $\A$, it holds uniformly for all $z\in\mathscr{P}$,
\begin{equation*}
\begin{pmatrix} \Phi_{N}(z) \\ \Phi_{N-1}(z) \end{pmatrix}
= \biggl[ \prod_{k=2}^{N}  \lambda_+(\tfrac{k-1}N)\big(1- \delta_k - \eta_{k,11} \big) \biggr]  V_{N+1} \left[\begin{pmatrix} 1 & 0 \\ 0 & 0 \end{pmatrix}  + \O\big(N^{-\frac{1}{15}}\big) \right] V_2^{-1} \begin{pmatrix}   z- \frac{b_1}{ 2\sqrt{N\beta}}   \\ 1\end{pmatrix} . 
\end{equation*}
\end{corollary}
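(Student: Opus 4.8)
The plan is to read off Corollary~\ref{cor:transfer} directly from the exact identity~\eqref{rec2} by substituting the conclusion of Proposition~\ref{prop:rec} for the product $\prod_{k=1}^N U_k$, once the hypotheses of that proposition are checked to hold uniformly for $z\in\mathscr{P}$. First I would fix the parameters as in the footnote, namely $\alpha=1/9$, $\epsilon=1/30$ and $\delta=1/45$, and note that these are admissible for Proposition~\ref{prop:rec} since $\delta+\epsilon=\tfrac1{45}+\tfrac1{30}=\tfrac1{18}=\tfrac\alpha2$; moreover the error exponent produced by the proposition evaluates to
\[
  \tfrac{15\alpha}{2}-1+\epsilon+\tfrac{3\delta}{2}=\tfrac56-1+\tfrac1{30}+\tfrac1{30}=-\tfrac1{10},
\]
so that on the good event one has $\bigl\|\prod_{k=1}^N U_k-\bigl(\begin{smallmatrix}1&0\\0&0\end{smallmatrix}\bigr)\bigr\|\le 3N^{-1/10}=\O(N^{-1/10})$. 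Plugging this into~\eqref{rec2} and carrying along the deterministic factors $V_{N+1}$, $V_2^{-1}$ and $\prod_{k=2}^{N}\lambda_+(\tfrac{k-1}N)(1-\delta_k-\eta_{k,11})$ unchanged yields the displayed identity.

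The substance is therefore to verify, for every $z\in\mathscr{P}$, the hyperbolicity bound $|\rho_k|\le 1-cN^{-\alpha}$ for $1\le k\le N$ together with the three conditions~\eqref{cond:eta} on the entries of~\eqref{def:eta1}--\eqref{def:eta2}. For the first, I would use~\eqref{Jlambda}: $\rho_k=J(z/\sqrt{k/N})^2$, and since $|J(w)|\le 1-c\,\mathrm{dist}(w,[-1,1])$ away from $\pm1$ and $|J(w)|\le 1-c\sqrt{\mathrm{dist}(w,[-1,1])}$ near $\pm1$, the definition~\eqref{def:P} of $\mathscr{P}$ (either $|\Im z|\ge N^{-\alpha}$ or $|\Re z|\ge 1+N^{-2\alpha}/2$) forces $\mathrm{dist}(z^2,[0,1])\gtrsim N^{-2\alpha}$, whence already the worst case $k=N$ gives $|\rho_k|\le|J(z)|^2\le 1-cN^{-\alpha}$, and decreasing $k$ only helps. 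For~\eqref{cond:eta}, the same lower bound $\mathrm{dist}(z^2,[0,1])\gtrsim N^{-2\alpha}$ controls the prefactor $(Nz^2-k)^{-1/2}\lesssim N^{\alpha-1/2}$ and the denominators $1-\delta_k-\eta_{k,11}\gtrsim N^{-\alpha}$ in~\eqref{def:eta2}, while the randomness enters only linearly through $X_k,Y_k$ of~\eqref{def:XY}, whose mean, variance and moderate‑deviation tails are provided by Lemma~\ref{lem:XY}; combining these gives exactly the mean, $L^2$ and almost‑sure bounds in~\eqref{cond:eta}. This computation is precisely the content of Lemma~\ref{lem:planarest}, which I would invoke here.

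Two bookkeeping points remain. The almost‑sure bound in~\eqref{cond:eta} is only valid after truncating the $X_k,Y_k$, so I would work on the event $\mathscr{T}_{N^\delta}$ on which $\max_{k\le N}(|X_k|\vee|Y_k|)$ stays below the relevant $N^{\O(\epsilon)}$ threshold; by Lemma~\ref{lem:XY} this has $\P[\mathscr{T}_{N^\delta}^c]\le e^{-cN^\delta}$, on it the $\eta_{k,ij}$ coincide with their truncated versions, and truncation perturbs the means and second moments only by an exponentially small amount, so Proposition~\ref{prop:rec} applies to the truncated noise. Finally, Proposition~\ref{prop:rec} is pointwise in $z$ whereas the corollary is uniform over $\mathscr{P}$: I would upgrade by a standard net argument, applying the proposition along a net of $\mathscr{P}$ of polynomial cardinality (for $|z|$ large the estimate is immediate from $|\rho_k|\to0$), taking a union bound that preserves $\P[\A^c]\le e^{-cN^\delta}$ because $N^{\O(1)}e^{-cN^\delta}\le e^{-c'N^\delta}$, and interpolating to all $z$ using that $z\mapsto\prod_{k=p}^n U_k(z)$ is, on the good event, Lipschitz with a polynomially bounded constant — here one uses that the proof of Proposition~\ref{prop:rec} controls \emph{all} partial products $\prod_{k=p}^nU_k$, not just the full one. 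Taking $\A$ to be the intersection of $\mathscr{T}_{N^\delta}$ with these net events gives $\A\subset\mathscr{T}_{N^\delta}$ with $\P[\A^c]\le e^{-cN^\delta}$, as required.

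The hard part is the uniform verification of~\eqref{cond:eta}, i.e.\ Lemma~\ref{lem:planarest}: one must simultaneously track how close $z/\sqrt{k/N}$ can come to the cut $[-1,1]$ across all $1\le k\le N$ — which governs both the prefactor $(Nz^2-k)^{-1/2}$ and the near‑degeneracy of the denominators in~\eqref{def:eta2} — and the moderate‑deviation behaviour of the $Y_k$ in the scale $N^{\alpha-1/2+\epsilon}$. By comparison, the truncation and the net/interpolation step needed for uniformity in $z$ are routine.
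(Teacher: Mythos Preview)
Your proposal is correct and follows essentially the same approach as the paper: the paper derives the corollary directly from formula~\eqref{rec2} and Proposition~\ref{prop:rec}, after invoking Lemma~\ref{lem:planarest} to verify the hypotheses~\eqref{cond:eta} and the hyperbolicity bound uniformly in $z\in\mathscr{P}$ under the truncated measure $\P_{N^\delta}$, with exactly the parameter choice $\alpha=1/9$, $\delta=2\epsilon/3=1/45$. You are in fact more careful than the paper on one point: the paper simply asserts that the conclusion holds uniformly for $z\in\mathscr{P}$ once the \emph{hypotheses} of Proposition~\ref{prop:rec} hold uniformly, but since the good event produced by that proposition is built from martingales that depend on $z$ through the matrices $U_k(z)$, some additional argument (such as your net-plus-Lipschitz step, or an appeal to analyticity in $z$) is indeed needed to pass from pointwise to uniform control; the paper leaves this implicit.
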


This reduces the problem to a scalar one. Indeed, since $\varphi_N(z) = \Phi_{N}(z) $, upon extracting the first entry from the above formula, we obtain that on the event $\A$,  uniformly for $z \in K\cap \mathscr{P}$,
\begin{equation} \label{charpoly1}
\varphi_N(z) =  \biggl[ \prod_{k=1}^{N}  \lambda_+(\tfrac{k}N)\big(1- \delta_k - \eta_{k,11} \big) \biggr]  \left( 1 + \O\big(N^{-\frac{1}{15}}\big) \right) ,
\end{equation}
where $K\subset \C$ is any fixed compact set. 
We have used that by \eqref{def:V},    $V_{N+1} = \O(1)$ and, 
\begin{equation} \label{bc}
V_2^{-1} \left(\begin{smallmatrix}   z- \frac{b_1}{ 2\sqrt{N\beta}}   \\ 1\end{smallmatrix} \right)   
=
\frac{1}{2\sqrt{z^2 - \tfrac{1}{N}}}
\left(
\begin{smallmatrix}
  2 & -z + \sqrt{z^2 - \tfrac{1}{N}} \\
  -2 & z + \sqrt{z^2 - \tfrac{1}{N}} \\
\end{smallmatrix}
\right)
 \left(\begin{smallmatrix}   z- \frac{b_1}{ 2\sqrt{N\beta}}   \\ 1\end{smallmatrix} \right)
=
 \begin{pmatrix} 1 \\ 0
 \end{pmatrix} + \O\biggl(\frac{1+ |b_1|}{|Nz^2|^{1/2}}\biggr) 
    \end{equation}
uniformly in $K\cap\big\{|z| \ge 2N^{-1/2}\big\}$.   

 Thus, to complete the proof of Theorem~\ref{thm:planar},  it remains to obtain asymptotics of the product on the RHS of \eqref{charpoly1}.  
 First, let us remark that in the case where there is no noise ($\eta_{k,11}=0$ for all $k$), according to \eqref{eq:hermite}, we recover the Hermite polynomial asymptotics. This is the content from Proposition~\ref{prop:Hermite} where we show that our approximation is consistent with the classical Plancherel--Rotach expansion (see e.g. \cite{Deiftuni, Deiftstrong}). 
 The remainder of the proof relies on the semi--explicit coupling of the process $\{ \eta_{k,11}(z) \}_{k=1}^N$ with the GAF $  \mathfrak{g}_t(z).$

\begin{proposition}\label{prop:coupling1}
  With $\delta=1/45,$ there is an event $\A$ with $\P[\A^c] \le 2e^{-N^\delta}$ for all $N$ sufficiently large such that on $\A,$
  \[
 \prod_{k=1}^{N}  \lambda_+(\tfrac{k}N)\big(1- \delta_k - \eta_{k,11} \big)=    \pi_N(z)  \exp\left( -\sqrt{\frac{2}{{\beta}}}\mathrm{W}(z) - \frac{1}{\beta} \E\left[ \mathrm{W}(z)^2\right] \right)  \left( 1+ \O\big(N^{-\frac{1}{15}}\big) \right) . 
\]
where the error is uniform for all  $z \in K\cap  \mathscr{P}$ where $K\subset \C$ is compact.
\end{proposition}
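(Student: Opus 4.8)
The plan is to establish the coupling by first handling the deterministic part, then coupling the noise, and finally controlling the product of the error corrections.

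\textbf{Step 1: Deterministic part.}  First I would treat the case where all $\eta_{k,11}=0$.  By \eqref{eq:hermite} and the diagonalization in Lemma~\ref{lem:diagonalization}, the product $\prod_{k=1}^N \lambda_+(\tfrac kN)(1-\delta_k)$ should reproduce the Hermite polynomial $\pi_N(z)$ exactly, or up to a controllable multiplicative error $1+\O(N^{-1/10})$ uniform on $K\cap\mathscr{P}$.  The factor $1-\delta_k$ precisely accounts for the $V_{k+1}^{-1}V_k$ conjugation, so telescoping the eigenvector matrices leaves only eigenvalue products and boundary terms; one checks this matches the Plancherel--Rotach asymptotics (this is Proposition~\ref{prop:Hermite} cited in the text).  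Write $\pi_N(z) = \prod_{k=1}^N \lambda_+(\tfrac kN)(1-\delta_k)\,(1+\O(N^{-1/10}))$.

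\textbf{Step 2: Coupling the noise to $\mathrm{W}$.}  Next I would expand
\[
\log\prod_{k=1}^N \frac{1-\delta_k-\eta_{k,11}}{1-\delta_k} = \sum_{k=1}^N \log\Bigl(1-\frac{\eta_{k,11}}{1-\delta_k}\Bigr) = -\sum_{k=1}^N \frac{\eta_{k,11}}{1-\delta_k} - \frac12\sum_{k=1}^N \frac{\eta_{k,11}^2}{(1-\delta_k)^2} + \cdots,
\]
valid since $|\eta_{k,11}|\le C_\beta N^{\alpha-1/2+\epsilon}$ is small and higher-order terms sum to $\O(N^{\cdot})$ with a negative exponent.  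Using \eqref{def:eta1}, $\eta_{k,11} = \sqrt{\frac{1/2\beta}{Nz^2-k}}(X_k + Y_k J(z\sqrt{N/(k-1)}))$, so the linear term is $-\sqrt{\frac{1}{2\beta}}\sum_k \frac{X_k + J(z\sqrt{N/(k-1)})Y_k}{\sqrt{Nz^2-k}}$.  Comparing with \eqref{eq:Wtz}, after the rescaling $u=k/N$, $\d\mathbf{X}_u \leftrightarrow X_k/\sqrt N$, $\d\mathbf{Y}_u \leftrightarrow Y_k/\sqrt N$, and $\sqrt{z^2-u}=\sqrt{(Nz^2-k)/N}$, this Riemann sum is $\sqrt{\frac{2}{\beta}}$ times a discretization of $\mathbf{W}_1(z)=\mathrm{W}(z)$.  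Here I would invoke the strong embedding: the partial sums $\sum_{j\le n}X_j$ equal $\widehat{\mathbf{X}}_n$ exactly, and $\sum_{j\le n}Y_j$ is within $\O(\log N)$ of $\widehat{\mathbf{Y}}_n$ with exponential moment control.  Summation by parts converts the weighted sums into integrals against $\widehat{\mathbf{X}},\widehat{\mathbf{Y}}$ plus a correction controlled by the modulus of continuity of the weights $\tfrac{d}{du}(z^2-u)^{-1/2}$ and the embedding error; the weights have derivatives of size $\O(N^{2\alpha})$ on $\mathscr{P}$ (the worst case being $|z^2-u|\gtrsim N^{-2\alpha}$), so choosing $\alpha=1/9$ keeps the total error $\O(N^{-1/10})$.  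The quadratic term $-\frac12\sum_k \eta_{k,11}^2/(1-\delta_k)^2$ concentrates around its mean by Theorem~\ref{thm:bernstein} (it is a sum of independent sub-exponential variables after truncation), and its mean is, by \eqref{eq:Wtzmagic}, a Riemann sum for $\frac1\beta\int_0^1 \frac{1+J(z/\sqrt u)^2}{4(z^2-u)}\d u$, which one identifies with $\frac1\beta\E[\mathrm{W}(z)^2] = -\frac1\beta\log(1-J(z)^2)$ via the covariance formula \eqref{eq:Wdef} at $w=z$.  Uniformity in $z$ over the compact set $K\cap\mathscr{P}$ follows by a net argument: the quantities are Lipschitz in $z$ with polynomial constants, so a net of spacing $N^{-C}$ and a union bound over $e^{-cN^\delta}$-probability events suffices.

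\textbf{Step 3: Assemble.}  Combining Steps 1 and 2 gives $\prod_{k=1}^N\lambda_+(\tfrac kN)(1-\delta_k-\eta_{k,11}) = \pi_N(z)\exp(-\sqrt{2/\beta}\,\mathrm{W}(z) - \frac1\beta\E[\mathrm{W}(z)^2])(1+\O(N^{-1/10}))$ on the intersection of the good events, whose complement has probability at most $2e^{-N^\delta}$ for $\delta=1/45$.

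\textbf{Main obstacle.}  The hard part is Step 2: controlling the error in replacing the discrete weighted sum of the noise by the continuous stochastic integral $\mathbf{W}_1(z)$, \emph{uniformly} over $z$ close to (but quantitatively separated from) $[-1,1]$.  This is where the singularity of the integrand $1/\sqrt{z^2-u}$ near the turning point $u=z^2$ forces the constraint on $\alpha$, where the KMT embedding error for the $Y_k$ enters, and where one must carefully track that the discretization, embedding, and Taylor-remainder errors all stay below $N^{-1/10}$ simultaneously for every $z$ in the net — this balancing of several competing error sources against the hyperbolicity margin $|\rho_k|\le 1-cN^{-\alpha}$ is the technical crux.
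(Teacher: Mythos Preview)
Your proposal is correct and follows essentially the same three-step approach as the paper: deterministic Hermite part via applying the no-noise case of the transfer-matrix estimate (the paper does this by invoking Corollary~\ref{cor:transfer} with $\eta\equiv 0$ rather than Proposition~\ref{prop:Hermite}, which is stated for $\mathscr{D}_H$, but the idea is the same), then Taylor-expand the log, couple the linear term to $\mathrm{W}$ via the KMT embedding and summation by parts (Proposition~\ref{prop:integralapprox}), and handle the quadratic term by Bernstein plus a Riemann-sum identification of its mean with $\tfrac1\beta\E[\mathrm{W}(z)^2]$. The only cosmetic difference is that the paper obtains uniformity in $z$ directly---because the random quantities (the KMT error, the Brownian increments) are $z$-independent and the $z$-dependence enters through deterministic weights bounded on $\mathscr{P}$---rather than via the net argument you sketch.
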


The details of the proof of Proposition~\ref{prop:coupling1} are given in Section~\ref{sect:coupling1}. 
By combining Proposition~\ref{prop:coupling1} with the asymptotics \eqref{charpoly1}, we obtain that uniformly for $z \in K\cap \mathscr{P}$,
\begin{equation*}
\varphi_N(z) = \pi_N(z) \frac{\exp\left( - \sqrt{\frac{2}{{\beta}}} \mathrm{W}(z)\right)}{\E \big[ \exp\left( - \sqrt{\frac{2}{{\beta}}} \mathrm{W}(z) \right) \big]}  \left( 1 + \O\big(N^{-\frac{1}{15}}\big) \right) ,
\end{equation*}
where we have used that since $\mathrm{W}$ is a Gaussian process, 
$\exp\left( \E[\mathrm{W}(z)^2]/\beta\right) = \E \big[ \exp\left( -\sqrt{\frac{2}{{\beta}}}\mathrm{W}(z) \right) \big]$. 
This completes the proof of  Theorem~\ref{thm:planar}. 

\medskip

The proof of Theorem~\ref{thm:real} relies on similar ideas, but there is major difference in that for $z\in[-1,1]$,  the transfer matrices $U_k$ loose their hyperbolic character for $k \approx Nz^2$, near the turning point. In conjunction, the noise also does not satisfy the conditions \eqref{cond:eta} uniformly along the recurrence. 
Nevertheless, refining the method from the proof of Proposition~\ref{prop:rec}, we obtain the following approximation result which will be crucial to deduce the asymptotics of the Gaussian $\beta$-ensemble recurrence near the turning point. 

\begin{theorem} \label{thm:main}
  Let $N\in\N,$ $R > 0$ and $\omega_N= N^{1/3}(\Omega \log N)^{2/3}$ with  $\Omega = o(N^{1/15}/\log N)$. 
Assume $\{U_k\}_{k=1}^N$ are independent random matrices as in  \eqref{def:eta1}--\eqref{def:eta2}  where $\rho_k \in\C$ is deterministic,  $|\rho_k| \le  \exp\left( - c_0 \sqrt{\frac{\omega_N + \hat{k}}{N}}\right)$ with $c_0>1$  and the random variables $\{ \eta_{k,ij}\}_{i,j \in\{1,2\}}$ satisfy for all $k\in\{1,\dots, N\}$, 
\[
  |\Exp \eta_{k,ij}| \leq \frac{C}{\omega_N + \hat k},
  \quad
  \Var(\eta_{k,ij}) \leq \frac{C}{(\omega_N + \hat k)},
  \quad
  \text{and}
  \quad
  |\eta_{k,ij}| \leq  \sqrt{\frac{R\Omega \log N}{\omega_N + \hat k}},
\]
for some $C > 0$ and where $\hat{k} = N - k. $ 
Then, there exists a small constant $c_R>0$ such that if $N$ is sufficiently large $($depending on $C,R$ and $\Omega)$, it holds for any $\varepsilon \ge (\Omega \log N)^{-1/2}$, 
\begin{equation} \label{eq:main}
\P\left[ \bigg\| \prod_{k=1}^ N U_k - \begin{pmatrix} 
1 &   0 \\   0 &  0
\end{pmatrix} \bigg\| \ge  \varepsilon
\right] \le N^{-c_R\varepsilon^2\Omega} + c_R^{-1} N^{4-R\Omega}  .
\end{equation}
\end{theorem}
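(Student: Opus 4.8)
\emph{Strategy.} The proof will follow the blueprint of Proposition~\ref{prop:rec} --- a perturbative expansion of the matrix product whose terms are controlled by martingale and moderate-deviation estimates --- except that everything must now be weighted by $\hat k = N-k$, since both the hyperbolicity and the magnitude of the noise degenerate as $k\to N$. I would write $U_k = D_k + \mathcal{E}_k$, where $D_k$ is deterministic (the diagonal part $\diag(1,\rho_k)$ together with the small means, $|\E\eta_{k,ij}| \le C/(\omega_N + \hat k)$) and $\mathcal{E}_k$ is the centered fluctuation, set $\Pi_{p,q} = \prod_{j=p}^q D_j$ (with $\Pi_{p,q} = \Id$ when $p>q$), and expand
\[
\prod_{k=1}^N U_k = \sum_{m \ge 0}\ \sum_{N \ge k_1 > \cdots > k_m \ge 1} \Pi_{k_1+1,N}\,\mathcal{E}_{k_1}\,\Pi_{k_2+1,k_1-1}\,\mathcal{E}_{k_2}\cdots \mathcal{E}_{k_m}\,\Pi_{1,k_m-1}.
\]
The $m=0$ term $\Pi_{1,N}$ equals $e_1e_1^\top$ up to a negligible error, since $\|\Pi_{1,N}e_2\| \le \prod_k|\rho_k| \le \exp\bigl(-c_0\sum_k\sqrt{(\omega_N+\hat k)/N}\,\bigr) = e^{-\Theta(N)}$; the plan is to control the $m=1$ term (which turns out to dominate) and to bound the higher orders by a geometrically decaying series.

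\emph{Propagator estimates.} First I would record the deterministic bounds $\|\Pi_{p,q}\| \lesssim 1$ and $\|\Pi_{p,q} - e_1e_1^\top\| \lesssim (\omega_N + \hat q)^{-1} + e^{-\Theta(N)}$, together with the key damping estimate $\|\Pi_{p,q}e_2\| \le \exp\bigl(-c_0\sum_{p \le k \le q}\sqrt{(\omega_N+\hat k)/N}\,\bigr)$: the only non-contracting direction is $e_1 \mapsto e_1$, and the $e_2$-block is damped. Using $\omega_N = N^{1/3}(\Omega\log N)^{2/3}$, this damping is of order one only over the last $\ell := \lceil\sqrt{N/\omega_N}\,\rceil = N^{1/3}(\Omega\log N)^{-1/3}$ indices and decays geometrically beyond them; hence, up to $e^{-\Theta(N)}$, the whole product is governed by the block $\{k : \hat k \lesssim \ell\}$ --- which is precisely where the noise is largest, $|\eta_{k,ij}| \lesssim \sqrt{R\Omega\log N/\omega_N}$.

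\emph{Martingale concentration.} For each $m\ge 1$, isolating the outermost insertion $k_1$ expresses the order-$m$ term as a $2\times 2$ matrix martingale $\sum_{k_1}\Pi_{k_1+1,N}\mathcal{E}_{k_1}R_{k_1}$, where $R_{k_1}$ is the order-$(m-1)$ sum over indices below $k_1$; Theorem~\ref{thm:M} bounds it by its increment size $\lesssim \alpha\,\sup_{k_1}\|R_{k_1}\|$ and predictable variation $\lesssim V\,\sup_{k_1}\|R_{k_1}\|^2$, and an induction in the style of Proposition~\ref{prop:rec} then shows each order-$m$ contribution to be a geometrically small multiple of the order-$1$ one, so everything reduces to the basic ($m=1$) martingale $\sum_k \eta_{k,21}\,\Pi_{k+1,N}e_2$ feeding the $(2,1)$ entry of $\prod_k U_k - e_1e_1^\top$ (the other entries being of the same order or smaller; the off-diagonal $\eta_{k,12},\eta_{k,21}$ being what couple the $e_1$- and $e_2$-blocks). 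Its predictable quadratic variation is
\[
V \ \lesssim \ \sum_{\hat k \ge 0}\Var(\eta_{k,21})\,\|\Pi_{k+1,N}e_2\|^2\ \lesssim\ \sum_{\hat k \ge 0}\frac{\exp\bigl(-2c_0\hat k\sqrt{\omega_N/N}\,\bigr)}{\omega_N+\hat k}\ \asymp\ \frac{\ell}{\omega_N}\ =\ \frac{1}{\Omega\log N},
\]
the damping restricting the sum to the last $\ell$ indices. Taking $\Sigma^2 \asymp V$ in Theorem~\ref{thm:M} gives sub-Gaussian norm $\lesssim (\Omega\log N)^{-1/2}$ on the good event, hence the tail $N^{-c_R\varepsilon^2\Omega}$; the almost sure increment bound being $\alpha \lesssim \sqrt{R\Omega\log N/\omega_N}$, one checks $(\Sigma/\alpha)^2 \asymp V/\alpha^2 \asymp \omega_N/(R(\Omega\log N)^2) \gtrsim R\Omega\log N$ exactly when $\omega_N \gtrsim R^2(\Omega\log N)^3$, i.e.\ under the standing hypothesis $\Omega = o(N^{1/10}/\log N)$, so each Freedman--Tropp bad event has probability $\le N^{-R\Omega}$ and a union bound over the $\lesssim N^4$ insertion positions and segment endpoints (with the expansion truncated at a suitable order, absorbed into constants) produces the $c_R^{-1}N^{4-R\Omega}$ term. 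The constraint $\varepsilon \ge N^{-R}$ is used only to keep the truncation order and the various thresholds polynomial in $N$.

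\emph{Main obstacle.} The genuinely delicate part, which is the content of Section~\ref{sec:linearized}, is making all of this uniform over the last block $\{\hat k \lesssim \omega_N\}$, where $|\rho_k|$ is no longer bounded away from $1$ and the noise bound is at its worst, while tracking how errors compound through the induction. One must verify that $\omega_N = N^{1/3}(\Omega\log N)^{2/3}$ is the exact threshold at which the two competing effects balance: the accumulated $e_2$-damping over that block, $\exp\bigl(-\Theta(\omega_N^{3/2}/\sqrt N)\bigr) = N^{-\Theta(\Omega)}$, must beat the accumulated noise, whose scale $\alpha^2 \asymp R\Omega\log N/\omega_N$ must stay below $V/(R\Omega\log N) \asymp 1/(R(\Omega\log N)^2)$. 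Carrying out all of these estimates simultaneously, uniformly in the $\lesssim N^4$ structural parameters and with the precise exponents of \eqref{eq:main}, is the main technical burden.
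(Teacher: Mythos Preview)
Your variance computation $V\asymp(\Omega\log N)^{-1}$ is correct and is indeed what drives the tail $N^{-c_R\varepsilon^2\Omega}$. The gap is the sentence ``an induction in the style of Proposition~\ref{prop:rec} then shows each order-$m$ contribution to be a geometrically small multiple of the order-$1$ one.'' That induction does not close here. Concretely, feeding $a_k\asymp(\omega_N+\hat k)^{-1}$, $b_p\asymp\sqrt{\omega_N/N}$, $\mathrm S=\sqrt{R\Omega\log N}$ and $c''_k\asymp N^{1/4}/\omega_N^{3/4}$ into the deterministic bound \eqref{control2} gives, already at $j=1$,
\[
  |\psi^{(2)}_{N,p,22}|\ \lesssim\ \sqrt{R\Omega\log N}\cdot\frac{N^{1/4}}{\omega_N^{3/4}}\cdot\frac{1}{\sqrt{\omega_N}}\cdot\sqrt{\frac{N}{\omega_N}}\ \asymp\ \frac{\sqrt R\,N^{1/6}}{(\Omega\log N)^{2/3}},
\]
which diverges. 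The paper says this explicitly below the statement of Theorem~\ref{thm:main}: the argument ``differs significantly from the proof of Proposition~\ref{prop:rec} in that it is substantially sharper than the induction argument in Section~\ref{sect:ind}.'' Your concluding paragraph acknowledges the difficulty but does not supply a mechanism to overcome it.

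What the paper does instead is abandon the order-by-order expansion for $m\ge 2$. Taking $V_k=\diag(U_k)$ (random, so $U_k-V_k$ is purely anti-diagonal and the alternating structure of Lemma~\ref{lem:psistructure} is preserved), it uses the single identity
\[
  \psi^{(>1)}_{N,n}\!\begin{pmatrix}1&0\\0&0\end{pmatrix}\;=\;\sum_{k=n}^{N}\psi^{(>0)}_{N,k+1}\!\begin{pmatrix}0&0\\1&0\end{pmatrix}\eta_{k,21},
\]
a martingale in $\hat n$ whose increments involve the \emph{full} random tail $\psi^{(>0)}_{N,k+1}$ rather than your deterministic $\Pi_{k+1,N}$. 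Bounding its quadratic variation $Q=\sum_k\Var(\eta_{k,21})\,\|\psi^{(>0)}_{N,k+1}e_2\|^2$ then requires a priori control of $\|\psi^{(>0)}_{N,k+1}\|$ on \emph{all} segments, and this is obtained by a two-scale bootstrap with no analogue in your outline: first Proposition~\ref{prop:ta} on short blocks of length $\le\sqrt{N/\omega_N}$ (Section~\ref{sec:shortblock}), then gluing blocks via submultiplicativity and the improved hyperbolicity of the coarse-grained matrices to bound $\sup_{p<n}\|U_n\cdots U_p\|$ uniformly (Section~\ref{sec:longblock}), and finally a dependency-graph Bernstein argument (Theorem~\ref{thm:Svante}) over the blocked decomposition to show $Q\lesssim(\Omega\log N)^{-1}$ with the required probability (Section~\ref{sect:extraproofs}).
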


{Let us emphasize that in the formulation of Theorem~\ref{thm:main}, the parameter $\Omega>0$ is allowed to vary with $N$ while $C, R, c_R$ are fixed constants.}
The proof of Theorem~\ref{thm:main} is the central technical contribution of this paper, and it is given in Section~\ref{sec:linearized}. For the convenience of the readers, the general strategy of the proof is explained in Section~\ref{sect:strategy}. 
This strategy also relies on the perturbative expansion developed in  Section~\ref{sect:gf} but it differs significantlly from the proof of Proposition~\ref{prop:rec} in that it is substantially sharper than the induction argument in Section~\ref{sect:ind}.  This improvement is needed to get Theorem~\ref{thm:main} to hold to optimal scales.

\medskip

For applications to Gaussian $\beta$-ensembles, we need to truncate the noise to apply Theorem~\ref{thm:main}. 
This truncation procedure relies on the fact that the random variables $\chi_{\alpha}$ from \eqref{def:trimatrix} have uniform exponential tails and it is explained in the Appendix~\ref{sect:est}.
In particular, it relies on  Lemma~\ref{lem:noise} with $\mathrm{S}= R \Omega \log N$ and $R \Omega \ge r_\beta$. 
Hence, by formula \eqref{rec2}, Theorem~\ref{thm:main} with $N = N_H(z)$ implies that with overwhelming probability, for any $z\in\mathscr{D}_H$, 
\begin{equation} \label{charpoly4}
\begin{pmatrix} \Phi_{N_H}(z) \\ \Phi_{N_H-1}(z) \end{pmatrix}
= \biggl[ \prod_{k=2}^{N_H}  \lambda_+(\tfrac{k-1}N)\big(1- \delta_k - \eta_{k,11} \big) \biggr]  
\begin{pmatrix}  \lambda_+(\frac{N_H}{N}) \\ 1   \end{pmatrix} \big(1+\O(\varepsilon) \big) .  
\end{equation}
By \eqref{Rtrunc}, adjusting the constant $R$, our control of the error term $\O(\varepsilon)$ is exactly the same as \eqref{eq:main} and it is uniform for all $z\in\mathscr{D}_H$. 
In particular, note that the condition $\Omega \le N^{\delta/6}$ from Definition~\ref{def:hyper} implies that $\Omega = o\big(\frac{N_H^{1/15}}{\log N_H} \big) $ uniformly for all $z\in\mathscr{D}_H$; it also implies that $\varepsilon \geq (\log N)^{-1/2}N^{-\delta/12}$. 
As for the initial condition, we have from \eqref{bc} that for $z\in \mathscr{D}_H$
\[
V_2^{-1} \left(\begin{smallmatrix}   z- \frac{b_1}{ 2\sqrt{N\beta}}   \\ 1\end{smallmatrix} \right)   
=
 \begin{pmatrix} 1 \\ 0
 \end{pmatrix} + \O\biggl( N^{-\delta/4}\biggr) 
=
 \begin{pmatrix} 1 \\ 0
 \end{pmatrix} + \O\biggl( \varepsilon\biggr) 
\]
with probability $1-e^{-N^{\delta/4}}.$  As $|\lambda_-| \le |\lambda_+|$ uniformly for $z\in \mathscr{D}_H$ (see \eqref{def:V}--\eqref{def:lambda}) so that 
\[
 V_{N_H+1} \left[\begin{pmatrix} 1 & 0 \\ 0 & 0 \end{pmatrix}  + \O(\varepsilon) \right] V_2^{-1} \begin{pmatrix}   z- \frac{b_1}{ 2\sqrt{N\beta}}   \\ 1\end{pmatrix}  = \begin{pmatrix}  \lambda_+(\frac{N_H}{N}) \\ 1   \end{pmatrix} \big(1+\O(\varepsilon) \big) .
\]

Like for $z \in \mathscr{P}$,  we then show that this scalar process is well-approximated by $  \mathfrak{g}_t(z).$
In Section~\ref{sect:coupling2}, we obtain the following approximation.\footnote{We can also obtain a coupling as processes indexed by $\big(z\in \mathscr{D}_H, n\in\{1,\dots, N_H(z)\}\big)$. For simplicity, we only state our result for $n=N_H(z)$.}. 


\begin{proposition}\label{prop:coupling2}
  There is an event $\mathscr{G}$ with $\P[\mathscr{G}^c] \le 2e^{-N^\epsilon}$ for all $N$ sufficiently large and for a small $\epsilon>0$ $($depending on $\delta>0)$ such that on this event, it holds uniformly for $z\in \mathscr{D}_H$, 
\begin{equation}   \label{charpoly7}
\biggl[ \prod_{k=2}^{N_H}  \lambda_+(\tfrac{k-1}N)\big(1- \delta_k - \eta_{k,11} \big) \biggr] \begin{pmatrix}  \lambda_+(\frac{N_H}{N}) \\ 1   \end{pmatrix}
=  \frac{\exp\big(\sqrt{\frac{2}{{\beta}}}   \mathfrak{g}_t(z) \big)}{\E\big[\exp\big(\sqrt{\frac{2}{{\beta}}}  \mathfrak{g}_t(z)\big)\big]}
\begin{pmatrix} \pi_{N_H}(z) \\  \pi_{N_H-1}(z)  \end{pmatrix}   \left(1+  \O\left(N^{-\epsilon} \right) \right)
\end{equation}
with $t(z)= N_H(z)/N$. 
\end{proposition}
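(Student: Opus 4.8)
The plan is to adapt the two--step scheme used for Proposition~\ref{prop:coupling1} to a product truncated at $N_H$ and to the vector appearing in \eqref{charpoly4}--\eqref{charpoly7}. Write $t=t(z)=N_H(z)/N$ and factor the scalar prefactor on the left of \eqref{charpoly7} as
\[
\prod_{k=2}^{N_H}\lambda_+(\tfrac{k-1}N)\bigl(1-\delta_k-\eta_{k,11}\bigr)
=\underbrace{\prod_{k=2}^{N_H}\lambda_+(\tfrac{k-1}N)(1-\delta_k)}_{P_{\det}}
\cdot
\underbrace{\prod_{k=2}^{N_H}\Bigl(1-\frac{\eta_{k,11}}{1-\delta_k}\Bigr)}_{P_{\rm noise}} .
\]
It then suffices to establish, uniformly for $z\in\mathscr D_H$ and off an event of probability $\le 2e^{-N^\epsilon}$, a \emph{deterministic} identity $P_{\det}\bigl(\lambda_+(t),1\bigr)^{\top}=\bigl(\pi_{N_H}(z),\pi_{N_H-1}(z)\bigr)^{\top}(1+\O(N^{-\epsilon}))$, together with the \emph{probabilistic} identity $P_{\rm noise}=\exp\!\bigl(-\sqrt{2/\beta}\,\mathbf W_t(z)\bigr)\big/\Exp\!\bigl[\exp\!\bigl(-\sqrt{2/\beta}\,\mathbf W_t(z)\bigr)\bigr]\cdot(1+\O(N^{-\epsilon}))$; multiplying the two and using $\begin{pmatrix}\pi_{N_H}(z)\\\pi_{N_H-1}(z)\end{pmatrix}=\pi_{N_H-1}(z)\begin{pmatrix}\lambda_+(t)\\1\end{pmatrix}(1+\O(N^{-\epsilon}))$ gives \eqref{charpoly7}.

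\textbf{Deterministic factor.} The first identity is the ``no--noise'' version of the statement, i.e.\ Proposition~\ref{prop:Hermite}: $P_{\det}$ is an explicit product of scalars with $\log P_{\det}=\sum_{k=2}^{N_H}\bigl(\log\lambda_+(\tfrac{k-1}N)+\log(1-\delta_k)\bigr)$, and one matches this --- together with the ratio $\pi_{N_H-1}(z)/\pi_{N_H}(z)=\lambda_+(t)^{-1}(1+\O(N^{-\epsilon}))$ --- against the Plancherel--Rotach asymptotics of the Hermite polynomials recorded in Appendix~\ref{sect:properties}. The point requiring care is that the comparison must hold uniformly for $z\in\mathscr D_H$ up to $k=N_H$, i.e.\ only a mesoscopic distance $\omega_N$ before the turning point $N_p\approx Nz^2$, and down to $\Re z=N^{\delta-1/2}$; here one relies on the bounds $|\delta_k|\lesssim(\omega_N+\hat k)^{-1}$ and $|\rho_k|\le\exp\bigl(-c_0\sqrt{(\omega_N+\hat k)/N}\bigr)$ available on $\mathscr D_H$, which keep $P_{\det}$ and the Hermite asymptotics from entering the Airy window.

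\textbf{Noise factor.} For $P_{\rm noise}$ I would pass to logarithms and Taylor expand,
\[
\log P_{\rm noise}
=-\sum_{k=2}^{N_H}\frac{\eta_{k,11}}{1-\delta_k}
-\frac12\sum_{k=2}^{N_H}\Bigl(\frac{\eta_{k,11}}{1-\delta_k}\Bigr)^2
+\O\!\Bigl(\sum_{k=2}^{N_H}|\eta_{k,11}|^3\Bigr).
\]
By \eqref{def:eta1}, \eqref{def:XY} and the strong embedding of Section~\ref{sect:coupling} --- under which $\sum_{j\le k}X_j=\widehat{\mathbf X}_k$ exactly while $\max_{k\le N}\bigl|\sum_{j\le k}Y_j-\widehat{\mathbf Y}_k\bigr|=\O(\log N)$ with sub--exponential fluctuations --- the first sum equals $\tfrac12\sqrt{2/\beta}\int_0^t\frac{\d\mathbf X_u+J(z/\sqrt u)\,\d\mathbf Y_u}{\sqrt{z^2-u}}=\sqrt{2/\beta}\,\mathbf W_t(z)$ up to two errors: a discretisation term, which is a martingale with quadratic variation $\O(\omega_N^{-2})$ since $u\mapsto(z^2-u)^{-1/2}$ and $u\mapsto J(z/\sqrt u)(z^2-u)^{-1/2}$ are $C^1$ on $[0,t]$ with derivative $\O(\omega_N^{-3/2})$ (controlled via Theorem~\ref{thm:M} or Theorem~\ref{thm:hoeffding}); and the embedding error on the $Y$--walk, which after summation by parts is at most $\log N$ times the total variation of $k\mapsto\sqrt{\tfrac{1/2\beta}{Nz^2-k}}\,J(z\sqrt{N/(k-1)})$, hence $\O\bigl((\log N)\,\omega_N^{-1/2}\bigr)$. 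The second sum concentrates about its mean by Theorem~\ref{thm:bernstein}, and since $\Exp[\eta_{k,11}^2]=\tfrac{1/2\beta}{Nz^2-k}\bigl(1+J(z\sqrt{N/(k-1)})^2\bigr)$ that mean is a Riemann sum for $-\tfrac1{2\beta}\cdot\tfrac12\int_0^t\frac{1+J(z/\sqrt u)^2}{z^2-u}\,\d u=-\tfrac1\beta\Exp[\mathbf W_t(z)^2]$ (via the It\^o isometry and \eqref{eq:Wtzmagic}); the cubic remainder is $\O\bigl(\sum_k(\omega_N+\hat k)^{-3/2}(R\Omega\log N)^{3/2}\bigr)=\O(N^{-\epsilon})$. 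Exponentiating and using $\Exp[\exp(-\sqrt{2/\beta}\,\mathbf W_t(z))]=\exp(\tfrac1\beta\Exp[\mathbf W_t(z)^2])$ gives the stated form of $P_{\rm noise}$.

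\textbf{Uniformity, and the main obstacle.} The per--$z$ estimates above must be upgraded to a bound uniform over the uncountable set $\mathscr D_H$. I would take a grid of spacing $N^{-M}$ ($M$ a large constant) in $\mathscr D_H$, combine the per--point bounds by a union bound --- this is where the exponential--moment refinement of the embedding is used, so that $\mathrm{poly}(N)$ points cost only $e^{-N^\epsilon}$ in probability --- and interpolate using that both sides of \eqref{charpoly7} are holomorphic in $z$ with $\mathrm{poly}(N)$--bounded logarithmic derivatives (for $P_{\rm noise}$ via $\partial_z\eta_{k,11}$, for $\mathbf W_t$ via the modulus of continuity of its underlying GAF). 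I expect the main difficulty to be the window $k\in(N_p-2\omega_N,\,N_H]$ adjacent to the turning point: there the transfer matrices are only marginally hyperbolic ($|\rho_k|$ within $\O(\sqrt{\omega_N/N})$ of $1$) and the step weights $|Nz^2-k|^{-1}\asymp\omega_N^{-1}$ are largest, so the deterministic Hermite comparison is closest to the Airy transition and the noise sums carry their heaviest tails; showing that, after accumulating over the $\asymp\log N$ effective scales, every error term still loses a fixed power $N^{-\epsilon}$, uniformly down to $\Re z=N^{\delta-1/2}$, is the crux. Combining the deterministic and noise factors then yields \eqref{charpoly7}.
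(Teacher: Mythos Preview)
Your proposal is correct and follows essentially the same route as the paper's proof in Section~\ref{sect:coupling2}: the deterministic factor is exactly Proposition~\ref{prop:Hermite} (which already gives the vector identity $P_{\det}\bigl(\begin{smallmatrix}\lambda_+(t)\\1\end{smallmatrix}\bigr)=\bigl(\begin{smallmatrix}\pi_{N_H}\\\pi_{N_H-1}\end{smallmatrix}\bigr)(1+\O(N^{-\epsilon}))$, so your separate ratio step is unnecessary), and for the noise factor the paper Taylor-expands $\log P_{\rm noise}$ to second order, compares the linear part to $\mathbf W_t(z)$ via the strong embedding packaged in Proposition~\ref{prop:integralapprox} (working under $\P_S$ with $S=N^\epsilon$ rather than $S=R\Omega\log N$), controls the centred quadratic part by Bernstein and its mean by a Riemann sum, and bounds the cubic remainder almost surely. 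The paper does not carry out a grid-plus-interpolation argument for uniformity in $z$; its estimates are effectively per-$z$, which is all that is used downstream in Theorem~\ref{thm:real} (whose conclusion is of the form $\max_{z\in\mathscr D_H}\P[\cdot]$).
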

Hence, combining the asymptotics \eqref{eq:main}--\eqref{charpoly4} and Proposition~\ref{prop:coupling2}, this concludes the proof of Theorem~\ref{thm:real}.  

\section{Moderate deviations for perturbations products of random matrices} 
\label{sec:rmp}

\subsection{General framework} \label{sect:gf}

In what follows, we suppose that $\left\{ X_n = (U_n, V_n) \right\}$ is a sequence of independent random variables, where each $U_n$ and $V_n$ is a $\d \times \d$ random matrix.    Let $\filt_{n} = \sigma\big(X_1, \dots, X_n\big)$ for any $n\ge 1$.  We think of $U_n$ as a random perturbation of $V_n$ and we would like to compare $\prod_{n=1}^NU_n $ with $\prod_{n=1}^NV_n $. To this end, we develop successive approximations for $\prod_{n=1}^NU_n $ for which we obtain good moderate deviations control by using martingale arguments. 
We first give general estimates (Proposition~\ref{prop:ta}) which are of independent interest. Then, in the context of Proposition~\ref{prop:rec}, we obtain more specific estimates (Proposition~\ref{prop:induction})  by using the hyperbolicity of $U_k$ and the smallness of the noise $\eta_{k,ij}$.

\medskip

For any $n \geq p \geq 1$ and $j \in \{0,1,2,\dots\}$,  define 
\begin{equation}\label{eq:psi}
  \psi^{(j)}_{n,p} = \sum_{|S|=j}\prod_{k=p}^n \left\{ (U_k - V_k)\one_{k \in S} + V_k \one_{k \not\in S} \right\},
\end{equation}
where the sum is over all subsets $S$ of $\left\{ p,p+1, \dots, n \right\}$ having cardinality $|S|=j.$
It follows that $\psi^{(0)}_{n,p} = \prod_{j=p}^n V_j$ and we obtain a perturbative expansion for the the product of $U_k$, 
\begin{equation*}\label{eq:psiexpansion}
  \prod_{k=p}^n U_k
  = \sum_{S}\prod_{j=p}^n \left\{ (U_j - V_j)\one_{j \in S} + V_j \one_{j \not\in S} \right\}
  =\sum_{k=0}^\infty \psi^{(k)}_{n,p} ,
\end{equation*}
with the first sum over all subsets $S$ of $\left\{ p, \dots, n \right\}.$ 
We will also use the shorthand $\psi^{(>j)}_{n,p} = \sum_{\ell=j+1}^\infty \psi^{(\ell)}_{n,p}$ for any  $j \in \{0,1,2,\dots\}$. This allows us to express the product of $U_k$  for any degree of accuracy $j \in \N$  as
\begin{equation}\label{eq:psiTaylor}
  \prod_{k=p}^n U_k
  =\sum_{k=0}^j \psi^{(k)}_{n,p}
  +
  \psi^{(>j)}_{n,p}.
\end{equation}

Then, we can write a recurrence that holds for any $0 \leq \ell \leq j$, 
\begin{equation}\label{eq:psi1step}
  \begin{aligned}
  &\psi^{(j+1)}_{n,p} = \sum_{k=p}^n 
  \psi^{(j-\ell)}_{n,k+1} (U_k - V_k) \psi^{(\ell)}_{k-1,p}, \\
  &\psi^{(>j)}_{n,p} = \sum_{k=p}^n 
  \psi^{(\geq j-\ell)}_{n,k+1} (U_k - V_k) \psi^{(\ell)}_{k-1,p},
  \end{aligned}
\end{equation}
which follows from decomposing \eqref{eq:psi} according to the location $k$ of the $(\ell+1)$ largest element of $S$. In particular, taking $\ell=j$, we can use \eqref{eq:psi1step} to control the error in approximation $\psi^{(>j)}$ in terms of $\psi^{(j)}$ provided that we have an a priori control of the norm of $U_n \cdots U_{k+1}$. The following proposition allows  us to quantify this control. 
This general proposition is directly used in the context of the proof of Theorem~\ref{thm:real} in Section~\ref{sec:shortblock}.

\begin{proposition} \label{prop:ta}
  Fix $n \in \N$ and $j \in \{0,1,2,\dots\}$.  Define the following deterministic quantities:
  \[
    \begin{aligned}
      u = \max_{1 \leq p \leq r \leq n} \|  \textstyle{\prod_{k=p}^r \Exp U_k} \|
      \quad
      &\text{and}
      \quad
      \mu = \sum_{k=1}^n \| \Exp(U_k - V_k) \|. \\
    \end{aligned}
  \]
  We shall suppose that $u\mu < \frac{1}{2}$ and choose $\Delta$ and $\sigma$ such that
  \[
    \begin{aligned}
      \Delta \geq \max_{1 \leq p \leq n} \max\{ \| U_p - \Exp U_p \|, \| { V_p -\Exp V_p}\| \} \ , 
      \qquad
      \sigma^2 \geq \sum_{p=1}^n \Exp(\| U_p - \Exp U_p\|^2 + \| U_p - V_p \|^2) \ ,
    \end{aligned}
  \]
  and $  8 u^2 \sigma^2 +  \Delta u   \leq \frac{1}{8\log n}$. 
  Then, there is  an event $\A \in  \F_n$
  such that for any event $\mathcal{E} \in \F_n$ on which  
$\big\VERT\max_{1 \leq p \leq n} \| \psi^{(j)}_{p,1} \| \one_\mathcal{E}\big\VERT_2 \leq D,$
we have
  \[
    \big\VERT \psi^{(>j)}_{n,1} \one_{\A \cap \mathcal{E}} \big\VERT_1 \lesssim_\d  Du(\sigma+\mu), \quad
    \text{and}
    \quad
    \Pr\left[ \A^c \cap \mathcal{E} \right] \le  6 \d \exp\left(-\frac{1}{256 u^2\sigma^2} \wedge   \frac{ \sigma^2}{\Delta^2}  \right) .  \]
\end{proposition}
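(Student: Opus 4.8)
The plan is to prove Proposition~\ref{prop:ta} by first establishing an a priori bound on $\max_{1\le p\le r\le n}\|\prod_{k=p}^r U_k\|$ on a good event, and then using the recurrence \eqref{eq:psi1step} with $\ell = j$ to write $\psi^{(>j)}_{n,1}$ as a sum over $k$ of terms of the shape $(\text{product of } U\text{'s}) \cdot (U_k - V_k) \cdot \psi^{(j)}_{k-1,1}$, which we then estimate by a martingale concentration argument via Theorem~\ref{thm:M}.

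First I would control the products $\prod_{k=p}^r U_k$. Write $U_k = \Exp U_k + (U_k - \Exp U_k)$ and expand $\prod_{k=p}^r U_k = \sum_{T\subseteq\{p,\dots,r\}} \prod (\cdots)$ separating the centered factors; the term with $|T| = m$ centered factors is a sum over ordered locations, and between consecutive centered factors one has a block $\prod \Exp U_k$ which is bounded by $u$. The centered factors, assembled as an $\F$-martingale increment sequence, can be controlled via Theorem~\ref{thm:M}: with per-step bound $\Delta$ and predictable quadratic variation bounded by $\sigma^2$, one gets an event $\A_0$ on which each such martingale sum is $\lesssim_\d \Sigma$ for a suitable $\Sigma$, with $\Pr[\A_0^c] \le 2\d e^{-(\Sigma/\Delta)^2}$. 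Summing the geometric-type series in $m$ (each extra centered factor costs a factor $\lesssim u\Sigma$ or $u\sigma$ and there is a combinatorial $\binom{r-p}{m}$ that is absorbed by choosing $\Sigma$ a constant or using the smallness hypothesis $8u^2\sigma^2 + \Delta u \le \frac{1}{16\log n}$), one obtains that on $\A_0$, $\max_{p\le r}\|\prod_{k=p}^r U_k\| \lesssim_\d 1 + u\mu \lesssim_\d 1$, using $u\mu < 1/2$. The factor $\log n$ in the hypothesis is exactly what lets a union bound over the $O(n^2)$ pairs $(p,r)$, or rather over $n$ stopping times, be absorbed: one needs $\Sigma^2/\Delta^2 \gtrsim \log n$, i.e. the condition $\Delta^2 \lesssim \sigma^2/\log n$ hidden in the smallness assumption, so that $\Pr[\A_0^c] \le \text{poly}(n)\cdot e^{-\Sigma^2/\Delta^2}$ collapses to the stated bound.

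Next I would bound $\psi^{(>j)}_{n,1}$. From \eqref{eq:psi1step} with $\ell = j$,
\[
\psi^{(>j)}_{n,1} = \sum_{k=1}^n \psi^{(\ge 0)}_{n,k+1}(U_k - V_k)\psi^{(j)}_{k-1,1} = \sum_{k=1}^n \Big(\textstyle\prod_{i=k+1}^n U_i\Big)(U_k - V_k)\psi^{(j)}_{k-1,1}.
\]
On $\mathcal{E}$ we have $\VERT \max_p\|\psi^{(j)}_{p,1}\|\one_\mathcal{E}\VERT_2 \le D$, and on $\A_0$ the leading product is $\O_\d(1)$. The terms $U_k - V_k = \Exp(U_k-V_k) + (U_k - V_k - \Exp(U_k - V_k))$ split into a deterministic drift part, contributing $\lesssim_\d Du\mu$ summed over $k$ (actually $\lesssim_\d D\mu$ since the product is $\O_\d(1)$; the factor $u$ appears if one is more careful about where the drift sits), and a martingale part. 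For the martingale part, $M_m := \sum_{k\le m} (\prod_{i>k}U_i)(U_k - V_k - \Exp(U_k-V_k))\psi^{(j)}_{k-1,1}$ — wait, the product $\prod_{i>k}U_i$ is not $\F_k$-measurable, so I would instead freeze it: split $\prod_{i=k+1}^n U_i = \prod_{i=k+1}^n \Exp U_i + (\text{centered remainder})$ and handle the remainder by absorbing it into a higher-order version of the same expansion, or alternatively process the sum from the right so that what multiplies the new increment is $\F_k$-measurable. Concretely, reorganize as $\psi^{(>j)}_{n,1} = \sum_k (\prod_{i>k}\Exp U_i) (U_k - V_k)\psi^{(j)}_{k-1,1} + \text{h.o.t.}$, so the main martingale has increments of norm $\lesssim_\d u\cdot\Delta\cdot\max_p\|\psi^{(j)}_{p,1}\|$ with quadratic variation $\lesssim_\d u^2\sigma^2 D^2$ on $\mathcal{E}$; Theorem~\ref{thm:M} then gives an event $\A_1$ with $\VERT M_n\one_{\A_1\cap\mathcal{E}}\VERT_2 \lesssim_\d Du\sigma$ and $\Pr[\A_1^c\cap\mathcal{E}] \le 2\d e^{-1/(128u^2\sigma^2)}$ after choosing the threshold $\Sigma^2 = c D^2 u^2\sigma^2$; the other tail $e^{-\sigma^2/\Delta^2}$ comes from the step-size side of the Freedman--Tropp inequality. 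Setting $\A = \A_0\cap\A_1$ and combining, $\VERT\psi^{(>j)}_{n,1}\one_{\A\cap\mathcal{E}}\VERT_1 \lesssim_\d Du(\sigma + \mu)$, with $\VERT\cdot\VERT_1$ rather than $\VERT\cdot\VERT_2$ because the martingale estimate and the bound on $\max_p\|\psi^{(j)}_{p,1}\|$ each live in $\VERT\cdot\VERT_2$ and we multiply them via Young's inequality as recorded in Section~\ref{sec:concentration}.

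The main obstacle I expect is the non-adaptedness just flagged: the factor $\prod_{i=k+1}^n U_i$ multiplying the $k$-th increment from the left is measurable with respect to the "future" $\sigma(X_{k+1},\dots,X_n)$, so the naive partial sums do not form a martingale. Resolving this cleanly — either by the right-to-left reorganization, by replacing $U_i$ with $\Exp U_i$ in the leading factor and iterating the expansion to swallow the error, or by a doubly-indexed induction on both $j$ and the block length — is the technical heart of the argument, and getting the bookkeeping so that all the combinatorial factors $\binom{n}{\ell}$ are genuinely absorbed by the smallness hypothesis $8u^2\sigma^2 + \Delta u \le \frac{1}{16\log n}$ (this is where the $\log n$ is spent) is the delicate point. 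Everything else is an application of Theorem~\ref{thm:M} together with the $\VERT\cdot\VERT_p$ calculus from Section~\ref{sec:concentration}.
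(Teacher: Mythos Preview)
Your proposal circles the right issue (non-adaptedness of the factor $\prod_{i>k}U_i$) but misses the clean resolution the paper uses, and your Step~1 is both unnecessary and not obviously feasible under the stated hypotheses.

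The paper never bounds $\max_{p\le r}\|\prod_{k=p}^r U_k\|$. Instead it uses the one-step recursion
\[
\psi^{(>j)}_{p,1} = U_p\,\psi^{(>j)}_{p-1,1} + (U_p-V_p)\,\psi^{(j)}_{p-1,1},
\]
multiplies through by the \emph{deterministic} scalar $\Exp[U_n\cdots U_{p+1}]$, and observes that the resulting expression telescopes:
\[
\Exp[U_n\cdots U_{p+1}]\,\psi^{(>j)}_{p,1} \;-\; \Exp[U_n\cdots U_{p}]\,\psi^{(>j)}_{p-1,1}
\;=\; \Exp[U_n\cdots U_{p+1}]\Bigl((U_p-\Exp U_p)\psi^{(>j)}_{p-1,1} + (U_p-V_p)\psi^{(j)}_{p-1,1}\Bigr).
\]
Summing in $p$ gives $\psi^{(>j)}_{n,1}=M_n+A_n$ exactly, with no ``higher-order terms'' to iterate away. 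The price is that the martingale increment involves $\psi^{(>j)}_{p-1,1}$ itself; this self-reference is resolved by introducing a threshold $S$ and stopping at $T = T_j\wedge T_{>j}$, where $T_j,T_{>j}$ are the first times $|\psi^{(j)}|,|\psi^{(>j)}|$ exceed $S$. Freedman--Tropp applied to the stopped martingale (with $\alpha\lesssim u\Delta S$, $\Sigma^2\lesssim u^2\sigma^2 S^2$) controls $\Pr[T_{>j}=p,\,T_j\ge p]$ uniformly in $p$; summing over $p$ costs a factor $n$, and this is precisely where the hypothesis $8u^2\sigma^2+\Delta u\le\frac{1}{16\log n}$ is spent. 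Finally one optimizes over $S$ (the paper takes $S^2 = Dt/(4u\sigma)$) to convert the $S$-dependent Freedman tail into the claimed $\VERT\cdot\VERT_1$ bound in terms of $D$.

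Your Step~1 attempts to bound the random products directly via an expansion in the number of centered factors; the combinatorics there are genuinely problematic (you need to control $\binom{r-p}{m}$ many terms with nested martingale arguments, and the hypotheses give no hyperbolicity to make the series geometric), and in any case the paper shows it is unneeded. Your Young's-inequality explanation for why the answer lands in $\VERT\cdot\VERT_1$ is also not how the argument goes: the $\VERT\cdot\VERT_1$ arises from the Freedman tail bound in $t$ after the $S$-optimization, not from multiplying two $\VERT\cdot\VERT_2$ quantities.
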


\begin{proof}
  We begin by giving a martingale decomposition for $\psi^{(>j)}_{n,1}$. For any $1 \leq p \leq n$, we define
  \begin{equation} \label{AM1}
    \begin{aligned}
    A_p &= \sum_{k=1}^p \Exp\left[ U_n \dots U_{k+1} \right] \Exp[ U_k - V_k] \psi^{(j)}_{k-1,1}
=  A_{p-1} + \Exp\left[ U_n \dots U_{p+1}\right]\Exp[ U_p - V_p] \psi^{(j)}_{p-1,1} , \\
    M_{p} &= M_{p-1} + \Exp\left[ U_n \dots U_{p+1}\right]\left( (U_p - \Exp[U_p])\psi^{(>j)}_{p-1,1} + (U_p-V_p-\Exp[U_p-V_p])\psi^{(j)}_{p-1,1} \right),
    \end{aligned}
  \end{equation}
  where $M_0 = A_0= 0.$  Then $M_p$ is an $\filt_p$--martingale, and $A_p$ is $\filt_p$--predictable. By decomposing the sets $S$ in \eqref{eq:psi} according to whether or not $p \in S$, we may write
  \[
    \psi_{p,1}^{(>j)} = U_p\psi^{(>j)}_{p-1,1} + (U_p-V_p)\psi^{(j)}_{p-1,1} 
  \]
so that
  \[
    M_p + A_p = M_{p-1} + A_{p-1} + \Exp\left[ U_n \dots U_{p+1}\right]\left( \psi^{(>j)}_{p,1} - \Exp[U_p]\psi^{(>j)}_{p-1,1} \right).
  \]
 By independence of $U_k$, this makes a telescoping sum, from which we conclude that for all $p=1,\dots, n$
 \begin{equation} \label{AM2}
   M_p + A_p  =  \Exp\left[ U_n \dots U_{p+1}\right] \psi^{(>j)}_{p,1} . 
 \end{equation}
 
   For $S > 0$, let us introduce the stopping times 
  \[
    T_j = \inf\left\{ p \ge 1 : \|\psi^{(j)}_{p,1}\| \geq S \right\} , 
    \qquad
    T_{>j} = \inf\left\{p \ge 1  : \|\psi^{(>j)}_{p,1}\| \geq S \right\} , 
  \]
 and let $M^T_p = M_{p \wedge T}$, $A^T_p = A_{p \wedge T}$ for $p \in \{1,\dots, n\}$ with  {$T =    T_j  \wedge    T_{>j}$}.   Then, we verify from \eqref{AM1} that for all $p \in \{1,\dots, n\}$, 
  \[
    \| M^T_{p} - M^{T}_{p-1} \| \leq  3 u\Delta S,
  \]
  and
  \[
    \sum_{p=1}^n \Exp\left[ \| M_p^T - M_{p-1}^T\|^2 \vert \filt_{p-1} \right] \leq {4} u^2\sigma^2S^2 . 
  \]
 By applying the estimate \eqref{eq:FT} to the martingale $\{ M_p^T \}_{p= 1}^n$ with $\Sigma^2 = 4 u^2\sigma^2S^2$ and $\alpha=3 u\Delta S$,  we obtain for all $ t,S \geq 0$, 
  \begin{equation} \label{eq:MTtail}
    \Pr\left[ \|M_n^T\|   \ge t    \right]
    \le 2\d \exp\left(- \frac{t^2/(2 S u)}{ 4 u \sigma^2S +  \Delta t  } \right) . 
  \end{equation}
 
Since we assume that $u\mu\le \frac 12$, we can also bound the predictable part uniformly by 
  \[
    \| A_n^{T} \| \leq u\mu S \leq S/2.
  \]
   Hence applying \eqref{eq:MTtail} and using that by \eqref{AM2}, $M_n^T + A_n^T = \psi^{(>j)}_{n,1}$ on the event $\{T_{>j} = n, T_j \geq n\}$,  we obtain 
  \[
    \Pr\left[  T_{>j} = n, T_j \geq n  \right]
    \leq \Pr\left[  \|M_n^T\|   \geq S/2 \right]
    \leq 2\d \exp\left(-\rho^{-1}/4\right) . 
  \]
  where $ \rho=  8 u^2 \sigma^2 +  \Delta u  $.
  Furthermore, we can use that the previous bound is uniform in $n \in\N$ to conclude that
  \begin{equation*} 
  \begin{aligned}
    \Pr\left[ T_j \geq n, T_{>j} < n \right]
    & \leq 
    \sum_{p=1}^{n-1}
    \Pr\left[ T_j \geq p, T_{>j} = p \right]
    \leq 2\d n  \exp\left(-\rho^{-1}/4\right)  \\
    & \le 2\d   \exp\left(-\rho^{-1}/8\right)  
    \end{aligned}
  \end{equation*}
where we have used that the condition $\rho^{-1} \ge 8 \log n$ for the last bound. 
By assumption, the event $\mathcal{E}$ satisfies $\VERT\max_{1 \leq p \leq n} \| \psi^{(j)}_{p,1} \| \one_\mathcal{E}\VERT_2 \leq D$ so that  by \eqref{eq:Xtail}, we obtain
\[  \begin{aligned}
 \Pr[ \{T <n\} \cap   \mathcal{E}] 
 &\le \Pr\left[ T_j \geq n, T_{>j} < n \right] +   \Pr\left[ \{ T_j < n \} \cap  \mathcal{E}\right] \\
  &\le  2\d   \exp\left( - \tfrac{\rho^{-1}}{8}\right)   +  2 \exp\left( -\tfrac{S^2}{D^2} \right)  
  \end{aligned}\]

  The RHS of the estimate \eqref{eq:MTtail} is increasing as a function of $S$, and therefore for any $t \geq 0$ and for any $S\ge 2 t$, we have that
  \[
     \exp\left( - \frac{\rho^{-1}}{8}\right)
     =
 \exp\left(- \frac{t^2/(4 S u)}{ 4 u \sigma^2S +  \Delta t  } \right) \bigg|_{S=2t}
     \leq
     \exp\left(- \frac{t^2/(4 S u)}{ 4 u \sigma^2S +  \Delta t  } \right).
  \]
  This implies that for any choice of $t \geq 0$ and for any $S\ge 2 t$ 
\[\begin{aligned}
  \Pr\left[ \{ \| M_n \| \geq t \} \cap   \mathcal{E}\right]  
  &\le  \Pr\left[ \| M_n^T \| \ge t \right]  +  \Pr\left[ \{ T< n \} \cap  \mathcal{E}\right] \\
  & 
  \leq4\d \exp\left(- \frac{t^2/(4 S u)}{ 4 u \sigma^2S +  \Delta t  } \right)  +  2 \exp\left( -\frac{S^2}{D^2} \right).
\end{aligned}\]
We now pick $S^2 = \frac{Dt}{4u\sigma}$ and restrict $0 \leq t \leq  \frac{D}{16 u\sigma} \wedge   \frac{4^2 u \sigma^3 D}{\Delta^2}$, so that  $\Delta S \le 2  \sigma D $ and 
\begin{equation}
  \label{eq:MTtail2}
  \begin{aligned}
  \Pr\left[ \{ \| M_n \| \geq t \} \cap   \mathcal{E}\right]  
  & \le  
  4\d \exp\left(- \frac{t^2/4}{4 u^2 S^2 \sigma^2 + 2 u\sigma D t } \right)  +  2 \exp\left( -\frac{t/4}{4 u\sigma D} \right)\\
  & \le  
  4\d \exp\left(- \frac{\alpha t/4}{3 D u \sigma } \right)  +  2 \exp\left( -\frac{t/4}{4 u\sigma D} \right) \\
  & \le  
  (4\d +  2)\exp\left( -\frac{t}{4^2u\sigma D}\right).
\end{aligned}
\end{equation}
Let $\A =\left\{ \| M_n \| \le  \frac{D}{16 u\sigma} \wedge   \frac{16 u \sigma^3 D}{\Delta^2} \right\} $. The estimate   \eqref{eq:MTtail2} implies that  $\VERT M_n \one_{\mathcal{A} \cap \mathcal{E}}\VERT_1 \lesssim_\d Du\sigma$ 
and that
\[
    \Pr[  \mathcal{A}_n^c \cap \mathcal{E} ]  \le  6 \d \exp\left(-\frac{ \frac{D}{ 4^2 u\sigma} \wedge   \frac{ 16 u \sigma^3 D}{\Delta^2}  }{4^2 u\sigma D} \right)  = 6 \d \exp\left(-\frac{1}{256 u^2\sigma^2} \wedge   \frac{ \sigma^2}{\Delta^2}  \right) . 
\]

  Finally by \eqref{AM1}, \eqref{AM2}  and  since the predictable part satisfies $\|A_n \| \le u \mu  \max_{1 \leq p \leq n} \| \psi^{(j)}_{p,1}\| $,  we conclude that
  \[ \begin{aligned}
    \VERT \psi_{n,1}^{(>j)} \one_{\mathcal{A} \cap \mathcal{E}}\VERT_1 
    & \leq    \VERT M_n \one_{\mathcal{A} \cap \mathcal{E}}\VERT_1
    +\VERT A_n \one_{ \mathcal{E}}\VERT_1 \\ 
  &  \lesssim_\d Du\sigma + u\mu \VERT \max_{1 \leq p \leq n} \| \psi^{(j)}_{p,1} \|\one_{\mathcal{E}} \VERT_2,
\end{aligned}  \]
  where we used that if $|X|\le Y$ almost surely, then $\VERT X \VERT_1 \lesssim \VERT Y\VERT_2$. This completes the proof. 
\end{proof}

\subsection{Proof of Proposition~\ref{prop:rec}} \label{sect:ind}

Let us now apply the formalism from Section~\ref{sect:gf} to the case where the transfer matrices $U_k$ are of the form \eqref{def:eta1} and the noise $\eta_{k,ij}$ is sufficiently small. We allow $\eta_{k,ij}$ to grow along the sequence, but we must have a diffusive scaling.

\begin{assumption}\label{diffusive:ass}
  The sequence of random matrices $\left\{ U_k :1 \leq k \leq N \right\}$ are independent,
  and there is $\mathrm{S} \ge 0$ and  a sequence $a_k \geq 0$ so that   for all $1 \leq k \leq N,$
  \[
    |\Exp \eta_{k,ij}| \leq a_k
    \quad
    \Exp |\eta_{k,ij}|^2 \leq a_k
    \quad
    \text{and}
    \quad
    |\eta_{k,ij}| \leq \mathrm{S} \sqrt{ a_k} \, \As . 
  \]
\end{assumption}

It is crucial to use that the random matrices $\{U_k\}$  have a hyperbolic character -- they map a small ball around $\left( \begin{smallmatrix} 1 \\ 0 \end{smallmatrix} \right)$ in projective space into itself.\footnote{In effect, we have $U_k \approx \left( \begin{smallmatrix} 1 & 0 \\ 0 & 0 \end{smallmatrix} \right)$. Such ``invariant cone conditions'' are common in the literature on Lyapunov exponents, appearing implicitly in early works like \cite{Furstenberg} and much more explicitly \cite{Hafouta, Dubois08, Dubois09, Sturman}.} 
 We measure the hyperpolic character of the transfer matrices $U_k$  by asking that the product of the $U_{k,22}$ entries tends to be small.
 We introduce $\filt_{n,p}$ for the $\sigma$--algebra $\sigma( U_k : p \leq k \leq n)$  with $N\ge n\ge p\ge 0$. 
Let $b_k \geq 0$ be a decreasing sequence and define for any integers $N\ge n>p \ge 1$, the events
 \begin{equation}\label{hyperbolic:ass}
  \mathscr{B}_{n,p}
  =
  \left\{ 
    |\psi^{(0)}_{\ell,k,22}| 
    \leq C\exp(-(\ell-k)b_k)
    : p \leq k \leq \ell \leq n
  \right\} . 
  \end{equation}

\paragraph{Recurrence.} 
We take $V_k = \diag(U_k)$ so that for all $k \ge 1$, 
\begin{equation} \label{antidiag}
U_k - V_k = 
\begin{pmatrix} 0 & \eta_{k,12} \\
  \eta_{k,21} & 0 \\ \end{pmatrix} . 
\end{equation}
This makes $U_k - V_k$ small and also give the matrices $\psi^{(j)}$ defined in \eqref{eq:psi} an \emph{alternating structure}, which we summarize in the following lemma.  
\begin{lemma}\label{lem:psistructure}
  For any integer $j \geq 0,$
  \begin{equation}
  \psi^{(2j+1)}_{n,p}
  =
  \begin{bmatrix}
    0 & \psi^{(2j+1)}_{n,p,12} \\
    \psi^{(2j+1)}_{n,p,21} & 0 
  \end{bmatrix}
  \quad\text{and}\quad
  \psi^{(2j)}_{n,p}
  =
  \begin{bmatrix}
    \psi^{(2j)}_{n,p,11} & 0\\
    0 & \psi^{(2j)}_{n,p,22}
  \end{bmatrix}.
  \label{eq:HBpsi}
\end{equation}
For the case of $j=0,$ we further have
\[
  \psi^{(0)}_{n,p,11} = 1 
  \quad
  \text{and}
  \quad
  \psi^{(0)}_{n,p,22} = \textstyle{\prod_{k=p}^n} U_{k,22}.
\]
\end{lemma}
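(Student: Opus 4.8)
The plan is to prove Lemma~\ref{lem:psistructure} by a direct induction on $j$, using the one-step recurrence \eqref{eq:psi1step} together with the special choice $V_k = \diag(U_k)$ made in \eqref{antidiag}. The key structural observation is that each factor in \eqref{eq:psi} is either $V_k$, which is diagonal, or $U_k - V_k$, which is purely anti-diagonal by \eqref{antidiag}. Since a product of $2 \times 2$ matrices in which exactly $m$ of the factors are anti-diagonal and the rest are diagonal is itself diagonal when $m$ is even and anti-diagonal when $m$ is odd (because anti-diagonal matrices square to diagonal ones, i.e.\ the set of diagonal and anti-diagonal matrices forms a $\Z/2$-graded algebra), every summand of $\psi^{(j)}_{n,p}$ is diagonal when $j$ is even and anti-diagonal when $j$ is odd. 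Summing preserves the pattern, which gives \eqref{eq:HBpsi}.

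Concretely, I would first record the elementary fact that for $2\times2$ matrices, $\begin{bmatrix} * & 0 \\ 0 & * \end{bmatrix}\begin{bmatrix} * & 0 \\ 0 & * \end{bmatrix}$ and $\begin{bmatrix} 0 & * \\ * & 0 \end{bmatrix}\begin{bmatrix} 0 & * \\ * & 0 \end{bmatrix}$ are diagonal, while $\begin{bmatrix} * & 0 \\ 0 & * \end{bmatrix}\begin{bmatrix} 0 & * \\ * & 0 \end{bmatrix}$ and $\begin{bmatrix} 0 & * \\ * & 0 \end{bmatrix}\begin{bmatrix} * & 0 \\ 0 & * \end{bmatrix}$ are anti-diagonal. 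Then I would argue that in the definition \eqref{eq:psi}, each set $S$ of cardinality $j$ marks exactly $j$ factors as $(U_k - V_k)$ (anti-diagonal) and the remaining $n-p+1-j$ factors as $V_k$ (diagonal); applying the parity fact inductively along the product shows the whole product has parity equal to that of $j$. Since \eqref{eq:HBpsi} is a statement about the zero pattern only, summing over all such $S$ yields the claim. Alternatively — and perhaps cleaner to write up — one can induct on $j$ directly using \eqref{eq:psi1step} with $\ell = 0$: $\psi^{(j+1)}_{n,p} = \sum_{k=p}^n \psi^{(j)}_{n,k+1}(U_k - V_k)\psi^{(0)}_{k-1,p}$, where $\psi^{(0)}_{k-1,p} = \prod V_j$ is diagonal and $(U_k - V_k)$ is anti-diagonal, so if $\psi^{(j)}$ has parity $j$ then $\psi^{(j+1)}$ has parity $j+1$; the base case $\psi^{(0)}_{n,p} = \prod_{k=p}^n V_k$ is diagonal since each $V_k$ is.

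For the final assertions about $j = 0$, the diagonal matrix $\psi^{(0)}_{n,p} = \prod_{k=p}^n V_k = \prod_{k=p}^n \diag(U_k)$ has $(1,1)$ entry $\prod_{k=p}^n U_{k,11}$ and $(2,2)$ entry $\prod_{k=p}^n U_{k,22}$. By the explicit form \eqref{def:eta1} of $U_k$, we have $U_{k,11} = 1$ for every $k$, so the first product collapses to $1$, giving $\psi^{(0)}_{n,p,11} = 1$ and $\psi^{(0)}_{n,p,22} = \prod_{k=p}^n U_{k,22}$.

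I do not anticipate a genuine obstacle here: the lemma is essentially a bookkeeping statement about the $\Z/2$-grading of $2\times2$ matrices into diagonal and anti-diagonal parts, and the only mild care needed is to confirm that the product convention $\prod_{j=p}^n M_j = M_n M_{n-1}\cdots M_p$ and the index shifts in \eqref{eq:psi1step} are handled consistently, and that the entries $\psi^{(j)}_{n,p,12}$ etc.\ in the statement are correctly identified as the surviving off-diagonal entries. The closest thing to a subtlety is simply making sure the induction uses the correct instance of \eqref{eq:psi1step} (taking $\ell=0$ so that the left factor is the general $\psi^{(j)}$ and the right factor is the diagonal $\psi^{(0)}$), but this is immediate.
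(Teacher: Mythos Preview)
Your proposal is correct and follows essentially the same approach as the paper: induction on $j$ via the recurrence \eqref{eq:psi1step} with $\ell=0$, using that $\psi^{(0)}$ is diagonal and $U_k - V_k$ is anti-diagonal by \eqref{antidiag}, together with the explicit form \eqref{def:eta1} to read off $U_{k,11}=1$ for the $j=0$ case. Your additional direct argument from the definition \eqref{eq:psi} via the $\Z/2$-grading is a pleasant variant but not materially different.
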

\begin{proof}
Since $V_k = \diag(U_k)$, the matrix $\psi^{(0)}_{n,p} = \prod_{k=p}^n V_k$ are diagonal.
  The key point is that by \eqref{eq:psi1step} with $\ell=0$, we can express for any $j,p,n \geq 0,$
  \begin{equation} \label{eq:psijstart}
    \psi^{(j+1)}_{n,p} = \sum_{k=p}^n 
    \psi^{(j)}_{n,k+1} (U_k - V_k) \psi^{(0)}_{k-1,p}.
  \end{equation}
  Then, using    \eqref{antidiag},  the proof follows by induction on $j \in \{0,1,2,\dots\}$.   
\end{proof}

By \eqref{eq:psijstart}, upon extracting the first column, we obtain
  \[
    \psi^{(j+1)}_{n,p}\begin{pmatrix} 1 \\ 0 \end{pmatrix}
    =
    \sum_{k=p}^n 
    \psi^{(j)}_{n,k+1}\begin{pmatrix} 0 \\ 1 \end{pmatrix} \eta_{k,21}.
  \]
  This shows that  $\psi^{(j+1)}_{n,p}\left(\begin{smallmatrix} 1 \\ 0 \end{smallmatrix}\right)$ has a single nonzero entries,  the location of which varies according to parity (see Lemma \ref{lem:psistructure}) -- the same holds for  $\psi^{(j+1)}_{n,p}\left(\begin{smallmatrix} 0 \\ 1 \end{smallmatrix}\right)$ with reversed parity.  Letting $\psi^{(j)}_{n,p,*k}$ be the nonzero entry in the $k$--th column for $k=1,2$, we have
  \begin{equation}\label{eq:psijbackward}
    \psi^{(j+1)}_{n,p,*1}
    =
    \sum_{k=p}^n 
    \psi^{(j)}_{n,k+1,*2}
    \eta_{k,21}.
  \end{equation}
In a similar fashion, if we extract the first (or second) row of $ \psi^{(j)}_{n,p} $ and let $\psi^{(j)}_{n,p,k*}$ be the single nonzero entry in the $k$--th row for $k=1,2$, we obtain
  \begin{equation}\label{eq:psijforward}
    \psi^{(j+1)}_{n,p,1*}
    =
    \sum_{k=p}^n 
    \eta_{k,12}\psi^{(j)}_{k-1,p,2*} . 
  \end{equation}

  In the case that $j$ is even, we therefore can express both nonzero entries of $\psi^{(j+1)}$ through\eqref{eq:psijforward} and \eqref{eq:psijbackward}.  In the case that $j$ is odd, this gives two expressions for the first entry and we also have 
  \begin{equation}\label{eq:simple22}
    \psi^{(j+1)}_{n,p,22} 
    = 
    \sum_{k=p}^n 
    \psi^{(j)}_{n,k+1,21}\eta_{k,12}\psi^{(0)}_{k-1,p,22} . 
  \end{equation} 

Employing \eqref{eq:psijbackward}--\eqref{eq:simple22}, our next proposition provide some simple moderate deviation bounds for $\psi^{(j+1)}$ which are effective once control has been given on $\psi^{(j)}.$


 \begin{proposition}\label{prop:induction}
  Assume \ref{diffusive:ass} and let us fix integers  $j \geq 0$ and $n > p \ge 1$.   Let $\left\{ c_k \right\} ,\{ c_k' \} $  be non--negative sequences and $\mathscr{C}_1 = \left\{ |\psi^{(j)}_{k-1,p,2*}| \le c_k ,\  |\psi^{(j)}_{n,k+1,*2}|  \leq c_k' : p \le k \le n \right\}.$ There is an absolute constant $\varkappa >0$ so that for any $R > 0,$ there is an event $\A_1 \in \filt_{n,p}$ for which  $\Pr[ \A_1^c ] \geq 4e^{-R}$ and on which
  \begin{equation} \label{control1}
    \Big\VERT  \sup_{p \leq k \leq n}  | \psi^{(j+1)}_{k,p,1*}|\one_{\A_1 \cap \mathscr{C}_1} \Big\VERT_2 
    \leq \varkappa \Sigma + \sum_{k=p}^n a_kc_k 
     \quad
     \text{and}
     \quad
        \Big\VERT  \sup_{p \leq k \leq n} | \psi^{(j+1)}_{k,p,*1}|  \one_{\A_1 \cap \mathscr{C}_1} \Big\VERT_2 
    \leq \varkappa \Sigma' + \sum_{k=p}^n a_kc_k'
  \end{equation}
  where  $\Sigma = \Sigma(c_k)  = \max \biggl\{ 2R\mathrm{S}  \max_{p \leq k \leq n} \left( \sqrt{a_{k}}c_k \right), \sqrt{{\textstyle \sum_{k=p}^{n} } a_{k} c_k^2 } \biggr\}$ and $\Sigma '= \Sigma(c_k')$.
  
Moreover using hyperbolicity, we have  for odd $j,$ with 
$ \mathscr{C}_2 =  \left\{ |\psi^{(j)}_{n,k+1,21}| \leq c_k'' : p < k < n \right\}$ and $C>0$ as in \eqref{hyperbolic:ass}, the deterministic bound 
    \begin{equation} \label{control2}
    |\psi^{(j+1)}_{n,p,22}|\one_{\mathscr{C}_2 \cap \mathscr{B}_{n,p}} \leq 
  C \mathrm{S} \sum_{k=p+1}^n c_k'' \sqrt{a_k} e^{-(k-p-1)b_p}.
  \end{equation}
\end{proposition}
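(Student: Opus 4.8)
The plan is to derive \eqref{control1}--\eqref{control2} by substituting the a priori bounds encoded in $\mathscr{C}_1,\mathscr{C}_2,\mathscr{B}_{n,p}$ into the explicit one-step recursions \eqref{eq:psijforward}, \eqref{eq:psijbackward}, \eqref{eq:simple22} for the entries of $\psi^{(j+1)}$, and then controlling the resulting sums with the martingale machinery of Section~\ref{sec:moddev}. The bound \eqref{control2} is purely deterministic: for odd $j$, \eqref{eq:simple22} writes $\psi^{(j+1)}_{n,p,22} = \sum_{k=p}^n \psi^{(j)}_{n,k+1,21}\,\eta_{k,12}\,\psi^{(0)}_{k-1,p,22}$, and on $\mathscr{C}_2 \cap \mathscr{B}_{n,p}$ one has $|\psi^{(j)}_{n,k+1,21}| \le c_k''$ and $|\psi^{(0)}_{k-1,p,22}| \le Ce^{-(k-p-1)b_p}$, while Assumption~\ref{diffusive:ass} gives $|\eta_{k,12}| \le \mathrm{S}\sqrt{a_k}$ pointwise. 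Substituting these and noting that the $k=n$ summand vanishes by the alternating structure of odd-degree $\psi$'s (Lemma~\ref{lem:psistructure}), with the $k=p$ summand bounded directly, yields \eqref{control2}.

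For \eqref{control1} I would first treat the $1*$-entry. By \eqref{eq:psijforward}, $\psi^{(j+1)}_{k,p,1*} = \sum_{\ell=p}^k \eta_{\ell,12}\,\psi^{(j)}_{\ell-1,p,2*}$, which is a partial-sum process adapted to $\{\filt_{m,p}\}_m$ because $\psi^{(j)}_{\ell-1,p,2*}$ is $\filt_{\ell-1,p}$-measurable while $\eta_{\ell,12}$ is independent of $\filt_{\ell-1,p}$. Writing $\eta_{\ell,12} = \Exp\eta_{\ell,12} + (\eta_{\ell,12}-\Exp\eta_{\ell,12})$ gives $\psi^{(j+1)}_{k,p,1*} = A_k + M_k$ with $A_k = \sum_{\ell=p}^k \Exp[\eta_{\ell,12}]\,\psi^{(j)}_{\ell-1,p,2*}$ predictable and $M_k$ an $\{\filt_{m,p}\}$-martingale (the conditional mean of each increment vanishes by the same independence). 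On $\mathscr{C}_1$ the predictable part is controlled pathwise, $\sup_{k\le n}|A_k|\,\one_{\mathscr{C}_1} \le \sum_{\ell=p}^n |\Exp\eta_{\ell,12}|\,c_\ell \le \sum_{\ell=p}^n a_\ell c_\ell$, which produces the $\sum_k a_k c_k$ term.

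For the martingale $M_k$ I would insert a stopping time $\tau = \inf\{\ell \ge p : |\psi^{(j)}_{\ell,p,2*}| > c_{\ell+1}\}$, so that $\tau \ge n$ on $\mathscr{C}_1$ and $M^\tau$ agrees with $M$ up to time $n$ there, while on $\{\ell \le \tau\}$ Assumption~\ref{diffusive:ass} (via $|\eta_{\ell,12}-\Exp\eta_{\ell,12}| \le 2\mathrm{S}\sqrt{a_\ell}$ and $\Var(\eta_{\ell,12}) \le a_\ell$) bounds the increments of $M^\tau$ a.s.\ by $\alpha := 2\mathrm{S}\max_{p\le\ell\le n}(\sqrt{a_\ell}\,c_\ell)$ and its predictable quadratic variation a.s.\ by $\sum_{\ell=p}^n a_\ell c_\ell^2$. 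Applying the maximal Freedman--Tropp inequality (Theorem~\ref{thm:M} with $\d=1$) with variance parameter $\Sigma$ as in the statement --- chosen so that $(\Sigma/\alpha)^2 \gtrsim R$ --- produces an event $\A_1^{\mathrm{fwd}} \in \filt_{n,p}$ with $\Pr[(\A_1^{\mathrm{fwd}})^c] \le 2e^{-R}$ on which $\VERT \sup_{k\le n}|M_k^\tau|\,\one_{\A_1^{\mathrm{fwd}}}\VERT_2 \lesssim \Sigma$; since $M_k^\tau = M_k$ for $k \le n$ on $\mathscr{C}_1$, combining with the bound on $A_k$ gives the first inequality in \eqref{control1}. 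The estimate for the $*1$-entry is the mirror image: by \eqref{eq:psijbackward} it is a reverse partial sum in its lower index, handled with the reversed filtration $\sigma(U_\ell,\dots,U_n)$ and the bounds $|\psi^{(j)}_{n,\ell+1,*2}| \le c_\ell'$ from $\mathscr{C}_1$, yielding $\A_1^{\mathrm{bwd}}$ with $\Pr[(\A_1^{\mathrm{bwd}})^c] \le 2e^{-R}$; one sets $\A_1 = \A_1^{\mathrm{fwd}} \cap \A_1^{\mathrm{bwd}}$.

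The only step requiring genuine care is the interaction between the conditioning events $\mathscr{C}_1$, which are $\filt_{n,p}$-measurable but not adapted, and the martingale concentration, which demands almost-sure control of increments and quadratic variation; inserting $\tau$ and checking that it dominates $n$ on $\mathscr{C}_1$ is precisely what reconciles the two. Everything else is bookkeeping: propagating the a priori sequences $\{c_k\},\{c_k'\},\{c_k''\}$ through the sums and verifying that the stated $\Sigma$ makes the Freedman--Tropp exponent of order $R$.
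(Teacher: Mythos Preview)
Your proposal is correct and follows essentially the same route as the paper: the deterministic bound \eqref{control2} is obtained by direct substitution into \eqref{eq:simple22}, and the two estimates in \eqref{control1} come from the Doob decomposition of the forward/backward sums \eqref{eq:psijforward}--\eqref{eq:psijbackward}, with a stopping time localizing to $\mathscr{C}_1$ and Theorem~\ref{thm:M} applied to the stopped martingale with $\alpha = 2\mathrm{S}\max_k(\sqrt{a_k}c_k)$ and predictable variation $\le \sum_k a_k c_k^2$. Your remark that the stopping time is precisely what reconciles the non-adapted event $\mathscr{C}_1$ with the a.s.\ increment control needed for Freedman--Tropp is exactly the point.
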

\begin{proof}
By \eqref{eq:psijforward}, the process $n \mapsto \psi^{(j+1)}_{n,p,1*}$ is adapted to the filtration $\{ \filt_{n,p} \}$ and it has a simple martingale decomposition  $ \psi^{(j+1)}_{n,p,1*} = M_n + A_n$  with
  \[
    \begin{aligned}
    &M_{n} =\sum_{k=p}^n 
    (\eta_{k,12}-\Exp \eta_{k,12})\psi^{(j)}_{k-1,p,2*}  \, , \qquad
    A_{n} =\sum_{k=p}^n 
    \Exp(\eta_{k,12}) \psi^{(j)}_{k-1,p,2*}.
    \end{aligned}
  \]
  Let $T = \inf\left\{ k\ge 1 : |\psi^{(j)}_{k-1,p,2*}| > c_k \right\}$.  Then the stopped process $A_{n \wedge T}$ satisfies $|A_{n\wedge T}| \leq \sum_{k=p}^n a_k c_{k}$.
 Moreover, by Theorem~\ref{thm:M} with $\alpha = 2 \mathrm{S} \max_{p \leq k \leq n} (\sqrt{a_k}c_k)$  and $V_n \le {\textstyle \sum_{k=p}^{n} } a_{k} c_k^2$ applied to the stopped martingale $M_{n \wedge T},$  we conclude that with 
  \[
    \Sigma =  \max \biggl\{ 2R\mathrm{S}  \max_{p \leq k \leq n} \left( \sqrt{a_{k}}c_k \right), \sqrt{{\textstyle \sum_{k=p}^{n} } a_{k} c_k^2 } \biggr\} , 
  \]
  there is an event $\A$ measurable with respect to $\filt_{n,p}$ and an absolute constant $\varkappa$ such that
  \[
    \Big\VERT  \sup_{p \leq k \leq n}| M_{k \wedge T} | \one_{\A} \Big\VERT_2 \leq \varkappa \Sigma
    \quad
    \text{and}
    \quad
    \Pr(\A^c) \le 4 e^{-R}.
  \]
Since $ \psi^{(j+1)}_{n,p,1*} = M_n + A_n$ and using the almost sure bound for $|A_{n\wedge T}|$, this implies that on the event  $\mathscr{C}_1 = \left\{ |\psi^{(j)}_{k-1,p,2*}| \vee  |\psi^{(j)}_{n,k+1,*2}|  \leq c_k : p < k < n \right\},$
  \[
     \Big\VERT  \sup_{p \leq k \leq n}| \psi^{(j+1)}_{k,p,1*}| \one_{\A \cap \mathscr{C}_1}  \Big\VERT_2 
    \leq \varkappa \Sigma+ \sum_{k=p}^n a_kc_k  . 
  \]
  Using the representation \eqref{eq:psijbackward}, we can perform nearly the same argument, now using that 
    \( p \mapsto \psi^{(j+1)}_{n,p,*1} \)
    is adapted to the reversed filtration $\{ \filt_{n,p} \}$.   This gives the estimate \eqref{control1}. 
    
   \medskip
    
   As for the deterministic bound, using \eqref{eq:simple22}, we obtain  on the event $  \mathscr{B}_{n,p} \cap   \mathscr{C}_2$, 
   \[
     |\psi_{n,p,22}^{(j+1)}|
     \leq 
     \sum_{k=p+1}^n 
     |\psi^{(j)}_{n,k+1,21}\eta_{k,12}\psi^{(0)}_{k-1,p,22}|
     \le C \mathrm{S}
     \sum_{k=p+1}^n 
     c_k'' \sqrt{a_k} e^{-(k-p-1)b_p}.
   \]
\end{proof}

\paragraph{Application.}
We can use  iteratively the moderate deviation estimates from Proposition~\ref{prop:induction} to control the norm of the matrices $\psi^{(j)}$ for all $j\ge 1$ in the perturbative expansion \eqref{eq:psiTaylor}.
On a similar basis as a Taylor expansion, if the noise if sufficiently small, we show in the proof of the next proposition that on an event of overwhelming probability, the corrections $\psi^{(j)}$ become smaller as $j$ increases. 
The proof starts by using hyperbolicity (i.e.\ the events \eqref{hyperbolic:ass} hold almost surely) to get a priori control of the norm of $\psi^{(1)}.$ Then, it proceeds by induction from $2j$ to $2(j+1)$. This 2--step induction occurs because the estimates  \eqref{control1}--\eqref{control2} work together. 
It turns out that the gain from hyperbolicity comes solely from the estimate \eqref{control2}, so that this strategy is suboptimal.  Nonetheless, it turns out to be sufficient in the context of Theorem~\ref{thm:planar}.

\begin{proposition}\label{prop:planar}
Let us assume that the matrices $\{U_k\}$ satisfies the assumptions from Proposition~\ref{prop:rec} with $\alpha \in (0, \frac 19]$ and $\epsilon>0$.
For any $\delta>0$ such that $\delta+\epsilon \le \alpha/2$,  there is an event $\A_N$ and a $c>0$ such that for all $N$ sufficiently large
  \[
    \| \psi^{(>0)}_{N,1} \|  \one_{\A_N} \leq 2  N^{\frac{15\alpha}{2} -1+ \epsilon + \frac{3\delta}{2}}
    \quad
    \text{and}
    \quad
    \Pr[\A_N] \geq 1- e^{-cN^\delta}. 
  \]
  
\end{proposition}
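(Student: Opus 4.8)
The plan is to estimate every term of the (finite) perturbative expansion $\prod_{k=1}^N U_k = \sum_{j=0}^{N}\psi^{(j)}_{N,1}$ from \eqref{eq:psiTaylor}, using that $\psi^{(j)}_{N,1}=0$ for $j>N$. Since by Lemma~\ref{lem:psistructure} $\psi^{(0)}_{N,1}=\diag\big(1,\prod_{k=1}^N U_{k,22}\big)$ is the ``trivial'' term, it suffices to bound $\psi^{(>0)}_{N,1}=\sum_{j=1}^N\psi^{(j)}_{N,1}$. I would first fix the constants for Assumption~\ref{diffusive:ass} and \eqref{hyperbolic:ass}: from \eqref{cond:eta} one takes $a_k\equiv a:=C_\beta N^{2\alpha-1}$, and since the a.s.\ bound in Assumption~\ref{diffusive:ass} reads $\mathrm{S}\sqrt{a_k}=C_\beta N^{\alpha-\frac12+\epsilon}$ one has $\mathrm{S}\lesssim N^{\epsilon}$; moreover $|U_{k,22}|\le|\rho_k|+|\eta_{k,22}|\le 1-cN^{-\alpha}+C_\beta N^{\alpha-\frac12+\epsilon}\le e^{-b}$ with $b:=\tfrac{c-1}{2}N^{-\alpha}>0$ for $N$ large (here $c>1$ and $2\alpha+\epsilon<\tfrac12$, which follows from $\delta+\epsilon\le\frac\alpha2$, are used). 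Hence the events $\mathscr{B}_{n,p}$ of \eqref{hyperbolic:ass} hold \emph{surely} with $C=1$ and $b_k\equiv b$, and in particular $|\psi^{(0)}_{n,p,22}|\le e^{-(n-p)b}$ deterministically.

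The heart of the argument is a two-step induction on $j$, carried out on a good event and built from Proposition~\ref{prop:induction}. I would maintain, over the relevant ranges of $(n,p)$, bounds of two shapes: a \emph{flat} amplitude $p_j$ controlling $|\psi^{(2j)}_{\cdot,\cdot,11}|,|\psi^{(2j+1)}_{\cdot,\cdot,12}|,|\psi^{(2j+1)}_{\cdot,\cdot,21}|$, and a \emph{decaying} amplitude $q_j$ controlling $|\psi^{(2j)}_{n,p,22}|$ through a profile of effective width $b^{-1}\asymp N^{\alpha}$, with base values $p_0=q_0=1$. In the step $2j\to 2j+1$ one applies the forward/backward estimates \eqref{control1} with $c_k$ the level-$2j$ profile of $\psi^{(2j)}_{\cdot,\cdot,22}$; because this profile decays, both the martingale term $\Sigma$ and the predictable term $\sum a_kc_k$ localize on a window of width $b^{-1}$, giving $p_{j}^{(\mathrm{odd})}\lesssim q_j\sqrt{a/b}\,N^{o(1)}$. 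In the step $2j+1\to 2j+2$ one applies \eqref{control1} once more to $\psi^{(2j+2)}_{11}$ — here one first peels off an extra order, substituting $\psi^{(2j+1)}_{\cdot,m+1,21}=\sum_{m'}\psi^{(2j)}_{\cdot,m'+1,22}\eta_{m',21}$ so that the decay of $\psi^{(2j)}_{22}$ re-localizes the resulting double sum and the crude predictable bound is not lossy — and one applies the \emph{deterministic} hyperbolic estimate \eqref{control2} to $\psi^{(2j+2)}_{22}$, where the weight $e^{-(k-p-1)b}$ collapses the sum and supplies the genuine gain, yielding the new amplitudes. Inserting $a=C_\beta N^{2\alpha-1}$, $b\asymp N^{-\alpha}$, $\mathrm{S}\lesssim N^{\epsilon}$ one obtains coupled scalar recursions $p_{j+1}\lesssim N^{\epsilon+\frac{7\alpha}{2}-1}\,p_j$ and $q_{j+1}\lesssim N^{\epsilon+\frac{7\alpha}{2}-1}\,q_j$; this is exactly where $\alpha\le\frac19$ and $\delta+\epsilon\le\frac\alpha2$ enter, to force the common ratio to be a negative power of $N$, so that the sequences are eventually geometrically decreasing and the largest term over all $j\ge1$ is of size $N^{\frac{15\alpha}{2}-1+2\epsilon+o(1)}$.

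It then remains to sum: on the intersection of the good events, $\|\psi^{(>0)}_{N,1}\|\le\sum_{j=1}^N\|\psi^{(j)}_{N,1}\|\le 2N^{\frac{15\alpha}{2}-1+2\epsilon+\frac{3\delta}{2}}$ for $N$ large, the extra $\frac{3\delta}{2}$ in the exponent and the $N^{o(1)}$ losses above being produced by the parameter $R=cN^{\delta}$ used in each invocation of Theorem~\ref{thm:M}/Proposition~\ref{prop:induction}. Since each such invocation (one per level $j$, and $O(1)$ per level) fails with probability $\le C\,e^{-c'N^{2\delta}}$ — the ratio $\Sigma/\alpha\ge R$ holding uniformly in $j$ by the choice of $\Sigma$ — a union bound over the at most $O(N)$ levels gives an event $\A_N$ with $\Pr[\A_N^c]\le N\,C\,e^{-c'N^{2\delta}}\le e^{-cN^{\delta}}$, as required. (Alternatively, one may truncate at a fixed large level $J$ and control the tail $\psi^{(>J)}_{N,1}$ by Proposition~\ref{prop:ta} with $j=J$, once the smallness hypothesis there is checked from hyperbolicity and \eqref{cond:eta}.)

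The main obstacle is precisely the bookkeeping in this two-step induction: one must propagate a genuinely decaying profile for the $22$-entries so that \eqref{control2} acts on an effective window of width $b^{-1}$, and wherever Proposition~\ref{prop:induction}'s crude predictable bound $\sum_k a_kc_k$ would be lossy against a flat $c_k$ one must peel off one additional order of the expansion to re-localize the sum; then one has to verify that the resulting coupled recursion for $(p_j,q_j)$ contracts, a somewhat delicate optimization that is what pins down the admissible range of $\alpha$. A secondary point is keeping the stopping-time/conditioning structure of \eqref{control1}--\eqref{control2} consistent across the two half-steps so that the martingale concentration applies at each level with the same parameter $R$.
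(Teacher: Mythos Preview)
Your strategy matches the paper's: a two-step induction on $j$ built on Proposition~\ref{prop:induction}, with \eqref{control1} feeding into \eqref{control2}, then summation over $j$ and a union bound. The gap is your assumption that a \emph{decaying} profile for $|\psi^{(2j)}_{n,p,22}|$ can be propagated for all $j\ge 0$. This holds only for $j=0$: the deterministic estimate \eqref{control2}, applied with the flat $c_k''$ that is all you have for $\psi^{(2j+1)}_{21}$, returns a flat bound $q_{j+1}\lesssim \mathrm{S}\sqrt{a}\,b^{-1}p_j^{(\mathrm{odd})}$. Your proposed remedy --- peeling off one more order via $\psi^{(2j+1)}_{\cdot,m+1,21}=\sum_{m'}\psi^{(2j)}_{\cdot,m'+1,22}\eta_{m',21}$ --- does not help for $j\ge 1$, since the $\psi^{(2j)}_{22}$ you substitute is itself already flat and nothing re-localizes. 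Consequently your claimed ratio $N^{\epsilon+7\alpha/2-1}$ is valid only for the first step; for $j\ge 1$ the step $2j\to 2j+1$ must use flat $c_k=q_j$, the predictable sum is $\sum a_kc_k\lesssim N^{2\alpha}q_j$ rather than $N^{3\alpha-1}q_j$, and the genuine two-step ratio is only $N^{\epsilon+4\alpha-\frac12+\frac\delta2}\le N^{-\delta/2}$.

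The paper simply accepts this. It separates out the privileged first pass (its Steps~1--2, which exploit the decay of $\psi^{(0)}_{22}$ to reach $\gamma_1=\tfrac{7\alpha+\delta}2+\epsilon$) and then runs the induction with flat bounds and the weaker gain $\gamma_{j+1}=\gamma_j-\tfrac\delta2$; this weaker ratio is precisely what forces the constraint $\epsilon+4\alpha+\delta\le\tfrac12$ and hence $\alpha\le\tfrac19$. The $11$-entry is handled separately at the end by one more flat application of \eqref{control1}, costing an additional $N^{2\alpha+\delta/2}$ and producing the dominant $j=1$ contribution of order $N^{15\alpha/2-1+\epsilon+3\delta/2}$. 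Two small corrections: the union bound must also run over all pairs $(n,p)$ (another harmless factor $N^2$), and the per-invocation failure probability from Proposition~\ref{prop:induction} is $\lesssim e^{-R}=e^{-N^\delta}$, not $e^{-N^{2\delta}}$.
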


\begin{proof}
  Recalling the assumptions in Proposition \ref{prop:rec}, the noise satisfies Assumption \ref{diffusive:ass}  with $a_k = N^{2\alpha -1}$ and $\mathrm{S} = N^{\epsilon}$ for any $k\in\{1,\dots, N\}$.  Likewise, by assumption there is a $c > 1$ so that
  \[
    |\rho_k - \eta_{k,22}| 
    \le 1- cN^{-\alpha} + N^{\alpha-1/2+\epsilon}
    \le 1- N^{-\alpha},
  \]
  for all $N$ sufficiently large,
  since $\alpha+\epsilon <1/4.$
  Thus in the context of \eqref{hyperbolic:ass}, since 
$\psi^{(0)}_{\ell,k,22} =   \textstyle{\prod_{k=p}^n}  ( \rho_k - \eta_{k,22})$ and  $|\rho_k - \eta_{k,22}| \le 1- N^{-\alpha} $, if we choose $C=1$ and 
$b_k = - \log(1- N^{-\alpha}) >0 $ for $k\in\{1,\dots, N\}$, 
it holds that $\Pr( \mathscr{B}_{N,1}) = 1.$

\medskip

\noindent\underline{Step 1:}\ We apply Proposition \ref{prop:induction} to estimate $\|\psi^{(1)}_{n,p}\|$, with $j=0$, $R=N^{\delta}$, $c_k = (1- N^{-\alpha})^{k-1-p}\1_{k>p}$ and $c_k' = (1- N^{-\alpha})^{n-k-1}\1_{n>k}$.
With these choices, $\P[\mathscr{C}_1]=1$ and
\[
  {\textstyle  \sum_{k=p}^n }  a_k c_k^2 \le   {\textstyle  \sum_{k=p}^n }  a_k c_k \leq N^{3\alpha - 1}.
\]
This implies, using $\delta+\epsilon \le \alpha/2$,
\[
\Sigma \vee \Sigma' \le\max \biggl\{ 2 N^{\delta+\epsilon + \alpha -1/2},  N^{3\alpha/2 - 1/2}  \biggr\} 
\le 2  N^{\frac{3\alpha - 1}{2}} 
\]
for all $N.$
Hence Proposition \ref{prop:induction} part \eqref{control1} implies there is an absolute constant $\varkappa > 0$ so 
that for all $N \ge p \ge 1$ there exists an event $\A_1(N,p)$ such that 
\begin{equation} \label{control3}
 \Big\VERT  \sup_{p \leq n \leq N} \| \psi^{(1)}_{n,p}\| \one_{\A_1(N,p)} \Big\VERT_2 
  \leq \varkappa N^{\frac{3\alpha - 1}{2}} , \qquad 
  \Pr[ \A_1^c(N,p) ] \leq 4e^{-N^\delta} . 
\end{equation}
From the estimate \eqref{control3}, we deduce that there is a constant $\varkappa' \in (0,1),$ depending only on $\varkappa,$ such that 
  \begin{equation*}
    \Pr[  \mathscr{G}_1^c ]\leq  8Ne^{-\varkappa' N^\delta}  \qquad \text{where}\qquad  \mathscr{G}_1 = \bigg\{ \sup_{1 \leq p \leq n \leq N} \|\psi^{(1)}_{n,p}\| \le N^{\frac{3\alpha - 1 + \delta}{2}} \bigg\}
 \end{equation*} 
 by doing a union bound over all $1 \leq p \leq N$.

\noindent\underline{Step 2:}\ 
This also allows us to get control over $\|\psi^{(2)}_{n,p}\|$. 
If we apply Proposition \ref{prop:induction} part \eqref{control2} with $j=1$ and $c_k'' =  N^{\frac{3\alpha - 1 +\delta}{2}}$ for all  $k\in\{1,\dots, N\}$, we obtain that on the event $ \mathscr{G}_1$, 
 \[
     |\psi^{(2)}_{n,p,22}| \leq N^{\frac{5\alpha+ \delta}{2}+\epsilon-1} \sum_{k=p+1}^n  (1- N^{-\alpha})^{k-p-1}   \leq  N^{\frac{7\alpha+ \delta}{2}+\epsilon-1}  .
 \]

\noindent\underline{Induction:}\ 
  We  proceed by induction on $j\in\N$, again applying Proposition \ref{prop:induction}. 
     Fix $j \ge 1$, $\gamma_j \in \R$ and suppose on an event $ \mathscr{G}_j$ it holds   
     \begin{equation} \label{control4}
    \sup_{1 \leq p \leq n \leq N} |\psi^{(2j)}_{n,p,22}| \leq N^{\gamma_j-1} ,\qquad \Pr[  \mathscr{G}^c_j ]\leq  8jN e^{-\varkappa' N^\delta} . 
     \end{equation}
  If we apply the estimate \eqref{control1} with $c_k = c_k' = N^{\gamma_j - 1}$, we have as  $\delta+\epsilon < 1/2$,
   \[
 \Sigma =   \max \biggl\{ 2 N^{\delta+\epsilon + \alpha+\gamma_j - 3/2}, N^{\alpha+ \gamma_j-1}\biggr\}
 \le 2N^{\alpha+ \gamma_j-1}
 \quad\text{and}\quad
   \sum_{k=p}^n a_kc_k  \le N^{2\alpha+ \gamma_j-1}.
 \]
 Thus we again conclude as in \underline{Step 1} that there is an event $\A_{2j+1}(N,p)$ with $\Pr(\A_{2j+1}^c(N,p)) \leq 4e^{-N^{\delta}}$ for which
 \[
   \Big\VERT  \sup_{p \leq n \leq N} \| \psi^{(2j+1)}_{n,p}\| \one_{\A_{2j+1}(N,p)} \Big\VERT_2 
  \leq \varkappa N^{{2\alpha + \gamma_j - 1}}.
 \]
 Thus also as in \underline{Step 1} we obtain from a union bound that with the same constant $\varkappa' \in (0,1)$,   
  \begin{equation*}
    \Pr[  \mathscr{G}_{j+1}^c \cap  \mathscr{G}_j ]\leq  8Ne^{-\varkappa' N^\delta}  \qquad \text{where}\qquad  \mathscr{G}_{j+1} = \bigg\{ \sup_{1 \leq p \leq n \leq N} \|\psi^{(2j+1)}_{n,p}\| \le N^{2\alpha+ \gamma_j-1 + \frac{\delta}{2}} \bigg\}.
 \end{equation*}
Moreover, exactly as in \underline{Step 2} with $c_k'' = N^{2\alpha+ \gamma_j-1 + \frac{\delta}{2}} $, it holds on the event $ \mathscr{G}_{j+1}$, 
\[
     \sup_{1 \leq p \leq n \leq N}    |\psi^{(2j+2)}_{n,p,22}|  \leq  N^{\epsilon+ 4\alpha+ \gamma_j- 3/2 + \frac{\delta}{2}} . 
\]

This shows that as the parameters satisfy $\epsilon+ 4\alpha+  \delta \le 1/2$ (because $\alpha \le 1/9$ and  $\delta+\epsilon \le \alpha/2$), then the condition \eqref{control4} is fulfilled at step $j+1$ with $\gamma_{j+1} =  \gamma_j - \frac\delta2 $ and we have   
\[ 
  \Pr[  \mathscr{G}_{j+1}^c] \le \Pr[\mathscr{G}_j^c ] + 8N e^{-\varkappa' N^\delta} \le 8N(j+1)e^{-\varkappa' N^\delta} .  
\]

\noindent\underline{Conclusion:}\
 Let us define $\mathscr{J}_N = \bigcap_{j=1}^{N/2} \mathscr{G}_j $  and observe  that is follows from the previous estimate that 
     \begin{equation} \label{control5}
\Pr[  \mathscr{J}_N^c] \le {\textstyle \sum_{j=1}^{N/2}  } \Pr[  \mathscr{G}_{j}^c] \lesssim  N^3 e^{-\varkappa' N^\delta}  .
\end{equation}
Hence, if we apply the induction step starting with $\gamma_1 = \frac{7\alpha+ \delta}{2}+\epsilon$ (see  \underline{Step 2}),  we obtain that 
\[
\mathscr{J}_N \subseteq \left\{
\sup_{1 \leq p \leq n \leq N} \big( \|\psi^{(2j-1)}_{n,p}\| \vee  |\psi^{(2j)}_{n,p,22}|  \big) \le N^{ 2\alpha+ \gamma_j-1 + \frac{\delta}{2}} :  j \in \big\{1,\dots, \lceil N/2 \rceil\big\}   \right\} . 
\]

To complete the proof, it remains to obtain a similar (uniform) large deviation bound for  $|\psi^{(2j)}_{n,p,11}|$.  
 If we apply Proposition \ref{prop:induction} \eqref{control1} with $c_k =  N^{ 2\alpha+ \gamma_j-1 + \frac{\delta}{2}} $ for all  $k\in\{1,\dots, N\}$, we obtain for  all $N\ge n \ge p \ge 1$ and $j\in\{1,\dots, \lceil N/2 \rceil\}$ that there exists  an event $\A_1^j(n,p)$ such that $\Pr[ \A_1^j(n,p)^c ] \leq 4e^{-N^\delta}$ and 
 \[
     \Big\VERT  \sup_{p \leq k \leq n}  | \psi^{(2j)}_{k,p,11}|\one_{\A_1^j(n,p) \cap \mathscr{J}_N} \Big\VERT_2 
    \lesssim  N^{ 4\alpha+ \gamma_j-1 + \frac{\delta}{2}}   .
 \]
We have used that  as  $\delta+\epsilon < 1/2$,
 \[
 \Sigma  \le 2 N^{ 3\alpha+ \gamma_j-1 + \frac{\delta}{2}} 
 \quad\text{and}\quad
   \sum_{k=p}^n a_kc_k  \le N^{ 4\alpha+ \gamma_j-1 + \frac{\delta}{2}}  . 
 \]
 
Consequently, by taking a union bound over all the events $ \big\{\A_1^j(n,p)\big\}$, this implies that  if $N$ is sufficiently large, 
\[
    \Pr[  \mathscr{V}_N^c  \cap \mathscr{J}_N ]\leq  e^{-cN^\delta}  \qquad \text{where}\qquad \mathscr{V}_N= \bigg\{ \sup_{1 \leq p \leq n \leq N}  | \psi^{(2j)}_{n,p,11}| \le   N^{ 4\alpha+ \gamma_j-1 + \delta} :   j \in \big\{1,\dots, \lceil N/2 \rceil\big\}   \bigg\} .
\]

Thus, if we let $\A_N =  \mathscr{V}_N  \cap \mathscr{J}_N $,  since $ \psi^{(>0)}_{N,1} = \sum_{j=1}^{N}  \psi^{(j)}_{N,1} $ and  $\gamma_j = \frac{7\alpha}{2}+ \delta+\epsilon - \frac{j\delta}{2}$,  the previous estimates show that on the event  $\A_N$, 
\[
\| \psi^{(>0)}_{N,1} \| = \sum_{j=1}^{N} \| \psi^{(j)}_{N,1}  \| \le  \sum_{j=1}^{N/2} N^{ 4\alpha+ \gamma_j-1 + \delta}\lesssim  N^{\frac{15\alpha}{2} -1+ \epsilon + \frac{3\delta}{2}}
\]
and that $\Pr[  \mathscr{A}_N^c ]\leq  e^{-cN^\delta}$ provided that $N$ is sufficiently large (by \eqref{control5}).
\end{proof}

 We are now ready to complete the proof of Proposition~\ref{prop:rec}.

\begin{proof}[Proof of Proposition~\ref{prop:rec}]
By \eqref{eq:psiTaylor}, we have 
$  \prod_{k=1}^N U_k = \psi^{(0)}_{N,1} + \psi^{(>0)}_{N,1}$ 
where $ \psi^{(0)}_{N,1} =  \left(\begin{matrix} 1 & 0 \\ 0 & \prod_{k=1}^N U_{k,22} \end{matrix}\right)$. 
Moreover, under the assumptions of Proposition~\ref{prop:rec}, we have for $N$  is sufficiently large,
$| U_{k,22}| = |\rho_k - \eta_{k,22}| \le 1- N^{-\alpha} $ for all  $k\in\{1,\dots, N\}$ almost surely   (because $c>1$ and $\alpha+\epsilon <1/4$). This  implies that 
\[
{\textstyle  \prod_{k=1}^N} |U_{k,22}| \le \big( 1- N^{-\alpha}  \big)^N \le e^{-N^{1-\alpha}} .
\]
We conclude that on the event $\A_N$ from Proposition~\ref{prop:planar},  if $N$ sufficiently large,  then 
  \[
  \left\| {\textstyle \prod_{k=1}^N U_k } - \left(\begin{smallmatrix} 1 & 0 \\ 0 & 0 \end{smallmatrix}\right)  \right\| \le 
  3  N^{\frac{15\alpha}{2} -1+ \epsilon + \frac{3\delta}{2}}   \le 3  N^{\frac16-\epsilon -\frac{3\delta}{2} }  
  \]
  where we used that $\alpha \le 1/9$. 
  This completes the proof.
\end{proof}

\section{Proof of Theorem \ref{thm:main}}  
\label{sec:linearized}

\subsection{Overview of the proof}
\label{sect:strategy} 

Let us recall that we have a sequence of independent random matrices of the form
\begin{equation*} 
U_k = \begin{pmatrix} 
1 &   \eta_{k,12} \\   \eta_{k,21} & \rho_k - \eta_{k,22} 
\end{pmatrix} , 
\end{equation*}
whose entries satisfy for all $1 \leq k \leq N$,  with $\hat{k}=N-k$, 
\begin{equation} \label{recall:eta}
|\eta_{k,ij} | \le  \sqrt{\frac{ R\Omega\log N}{\omega_N + \hat{k}}}, 
\quad
\big| \E \eta_{k,ij} \big|  \le \frac{c}{\omega_N +\hat{k}},
\quad
\text{and}
\quad
\E \big| \eta_{k,ij}\big|^2 \le   \frac{c}{\omega_N +\hat{k}}  , 
\end{equation}
where $\omega_N =  N^{1/3}(\Omega\log N)^{2/3}$ and  $c,R>0$ are constants.  
Moreover, the complex scalar $\rho_k$ satisfies with $c_0>1$ for all $1 \leq k \leq N$, 
\begin{equation} \label{recall:rho_k}
  |\rho_k| \le e^{-c_0 \sqrt{\frac{\omega_N + \hat k}{N}}}  . 
\end{equation}
All the statements in Section~\ref{sec:linearized} holds for $N$ sufficiently large, $\Omega = o\big( \frac{N^{1/15}}{\log N}\big)$ and $R \Omega\ge r_\beta$ for a fixed sufficiently large $r_\beta$. 
In particular, we assume that $\omega_N \in[N^{1/3},N]$ and $\max_{i,j,1\le k\le N} |\eta_{k,ij} | \le 1/10\ \As$
We use the formalism from Section~\ref{sect:gf} and our goal is to obtain a tail estimate for $\prod_{k=1}^N U_k - \prod_{k=1}^N V_k$ where $V_k  =  \diag(U_k)$. 
Recall that we  denote $\filt_{n,p}$ for the $\sigma$--algebra $\sigma( U_k : p \leq k \leq n)$  with $N\ge n\ge p\ge 0$. 

\medskip 

Let us now go swiftly over the strategy of the proof of Theorem~\ref{thm:main}.
Our ultimate goal is to show that  $\prod_{k=1}^NU_{k}  \simeq \psi_{N,1}^{(0)}$ and that $\psi^{(0)}_{N,1, 22}$ is small with overwhelming probability.
We rely on the  perturbative expansion \eqref{eq:psiTaylor} and the equations \eqref{eq:psi1step} to control the errors.

\smallskip
\noindent\underline{Section~\ref{sect:MD0}.}\
The first step consists in exploiting the hyperbolic character of the transfer matrices $U_k$ to obtain moderate deviation estimates for 
$\psi^{(0)}_{n,p, 22}$ for all $N\ge n>p\ge 1$.
The argument relies on the decay of $|\rho_k|$ away from the turning point (which corresponds to $\hat{k}=0$ by convention) and an application of Bernstein's inequality. 

\smallskip
\noindent\underline{Section~\ref{sect:MD1}.}\ Then, we can deduce moderate deviation estimates for the matrices $\psi^{(1)}_{n,p}$ for all $N\ge n>p\ge 1$.
The proof relies on Proposition~\ref{prop:induction} and it is analogous to \underline{Step 1} in the proof of Proposition~\ref{prop:planar}. 
Note that our estimates for $\psi^{(1)}_{n,p,12}$ and  $\psi^{(1)}_{n,p,21}$ are not symmetric. 

\smallskip
\noindent\underline{Section~\ref{sec:exptransrec}.}
This part is independent from the rest of the proof and it deals with deterministic estimates. 
We give a general lemma about the stability of $2\times2$ hyperbolic matrices under  off--diagonal perturbations that we use to control $\| \Exp[ \prod_{k=p}^n U_k ]\|$ for   all $N\ge n>p\ge 1$.

\smallskip
\noindent\underline{Section~\ref{sec:shortblock}.}
Using  Proposition \ref{prop:ta} together with the estimates from Sections~\ref{sect:MD1} and~\ref{sec:exptransrec}, we obtain  moderate deviation estimates for the matrices $\psi^{(>1)}_{n,p}$  on short blocks, meaning when $(n-p) \leq \sqrt{N/\omega_N}.$ 
These estimates do not rely on \emph{hyperbolicity} and therefore cannot hold as such on longer blocks. 
After the blocking, the coarse-grained transfer matrices have better hyperbolicity properties.

\smallskip
\noindent\underline{Section~\ref{sec:longblock}.}
We now use the improved \emph{hyperbolicity} from Section~\ref{sec:shortblock} to make estimates on longer blocks. In this Section we focus on giving a uniform  bound for the norm $\| U_n \cdots U_p\|$ which holds with overwhelming probability for all $N\ge n>p\ge 1$. 

\smallskip
\noindent\underline{Section~\ref{sec:final}.}
Having controlled $\| U_n \cdots U_p\|$ and $\psi^{(1)}_{n,p}$ over all $N \geq n > p \geq 1$ we use
\eqref{eq:psi1step} with $\ell=j=1$ to control $\psi^{(> 1)}_{n,p}$
using the estimates from Sections~\ref{sect:MD0},~\ref{sect:MD1} and~\ref{sec:longblock}.
This allows us to conclude that  $\prod_{k=1}^NU_{k}  \simeq \psi_{N,1}^{(0)}$ with overwhelming probability.  The estimates on $\psi^{(\geq 1)}_{n,p}$ are just as good as that for $\psi^{(1)}_{n,p}$ we obtained in  Section~\ref{sect:MD1}.

On a technical note, we treat 
$\psi_{n,p}^{(>1)} \left( \begin{smallmatrix} 0 & 0 \\ 0 &1\end{smallmatrix} \right)$ 
and
$\psi_{n,p}^{(>1)} \left( \begin{smallmatrix} 1 & 0 \\ 0 &0\end{smallmatrix} \right) $
separately as these processes behave very differently (see the equations~\eqref{psidecomp}). 
The first term can be handled directly and it is small because of \emph{hyperbolicity}. As for the second term, it has a \emph{regenerative structure} and we perform a martingale decomposition (see \eqref{eq:Zinc}) to control it.
This allows us to take advantage of the independence of $U_k$ and use Freedman--Tropp's inequality (Theorem~\ref{thm:M}) to obtain  moderate deviation estimates for $\psi_{n,p}^{(>1)} \left( \begin{smallmatrix} 1 & 0 \\ 0 &0\end{smallmatrix} \right).$ 

\smallskip
\noindent\underline{Section~\ref{sect:extraproofs}.}
This is the last step of the proof. We obtain estimate for the martingale part in the decomposition of 
$\psi_{n,p}^{(>1)} \left( \begin{smallmatrix} 1 & 0 \\ 0 &0\end{smallmatrix} \right)$ and its quadratic variation. 
These estimates are required in Section~\ref{sec:final} to apply Freedman--Tropp's inequality.

\subsection{Moderate deviation bounds for $\psi^{(0)}$ }
\label{sect:MD0}

\begin{proposition} \label{prop:1}
There exists $C>0$ such that for any $R>0$ and all $1 \le p< n \le N$, there is an event $\A^1_{n,p}$ measurable\footnote{The events $\{ \A^1_{n,p} \}_{1 \le p< n \le N}$ depend on the constant $R$ and the parameters $\Omega, N$ even though it is not emphasized. The same holds for the events $\{ \A^\ell_{n,p} \}_{1 \le p< n \le N}$ defined below and we also have  $\A^\ell_{n,p} \subset \A^{\ell-1}_{n,p}$ for any  $\ell\in\{2,3,4\}$ and all ${1 \le p< n \le N}$.} with respect to $\F_{n,p}$ with $\P[\A^{1,c}_{n,p}] \le N^{-R\Omega}$ so that 
\[ 
| \psi^{(0)}_{n,p, 22}| \1_{\A^1_{n,p}} \le C_R  \exp\left(- \frac{\hat{p}-\hat{n}}{4}\sqrt{\frac{\omega_N+\hat{p}}{N}} \right) \, , \qquad C_R = e^{CR} . 
\]
Moreover, if $(n-p) \ge 12 R^{-1}\sqrt{N/\omega_N}$, 
\[
  \P\big[| \psi^{(0)}_{n,p, 22}| \ge e^{-1/R}  \big] \le \exp\left( - \frac{{\omega_N}\sqrt{\omega_N+ \hat{p}}}{C R\sqrt{N}} \right) . 
\]
\end{proposition}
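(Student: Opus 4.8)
Since $\psi^{(0)}_{n,p}=\prod_{k=p}^n V_k$ with $V_k=\operatorname{diag}(U_k)$, we have $\psi^{(0)}_{n,p,22}=\prod_{k=p}^n U_{k,22}$ with $U_{k,22}=\rho_k-\eta_{k,22}$, and by independence $\E|\psi^{(0)}_{n,p,22}|^{2q}=\prod_{k=p}^n\E|U_{k,22}|^{2q}$ for any integer $q\ge1$. Write $\tau_k=\sqrt{(\omega_N+\hat k)/N}$; then $\tau_k^2\le 1+\omega_N/N\le 1.01$ for $N$ large, $\tau_k\ge\sqrt{\omega_N/N}$, and from $\omega_N^{3/2}=\sqrt N\,\Omega\log N$ one has $\Omega\log N/\sqrt{\omega_N}=\omega_N/\sqrt N$. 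The whole proof rests on the following per‑step estimate: there is a constant $\kappa=\kappa(c_0,c)>0$ so that, setting $q=q_N:=\lfloor\kappa\,\Omega\log N\rfloor$, one has
\[
\E|U_{k,22}|^{2q}\le e^{-q\tau_k}\qquad\text{for all }1\le k\le N,
\]
once $N$ is large enough. I would then obtain both assertions by multiplying this bound over $k$ and applying Markov's inequality.

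\textbf{Proof of the per‑step bound.}
If $|\rho_k|<1/2$ (in particular if $\rho_k=0$), then $|U_{k,22}|\le|\rho_k|+\max_{i,j}|\eta_{k,ij}|\le 1/2+1/10=3/5$ almost surely, so $\E|U_{k,22}|^{2q}\le(3/5)^{2q}=e^{-2q\log(5/3)}\le e^{-q\tau_k}$ because $2\log(5/3)>1.01\ge\tau_k$. If $|\rho_k|\ge1/2$, put $w=\eta_{k,22}/\rho_k$ (so $|w|\le 2|\eta_{k,22}|\le 2s_k$ with $s_k=\sqrt{R\Omega\log N/(\omega_N+\hat k)}$) and $Y=|1-w|^2-1=-2\Re w+|w|^2$, so that, using $(1+Y)^q\le e^{qY}$ and $|\rho_k|\le e^{-c_0\tau_k}$,
\[
\E|U_{k,22}|^{2q}=|\rho_k|^{2q}\,\E\big[(1+Y)^q\big]\le e^{-2qc_0\tau_k}\,\E\big[e^{qY}\big].
\]
Because $\Omega\log N=o(N^{1/10})$, one checks $q_N s_k=\kappa\sqrt R(\Omega\log N)^{7/6}N^{-1/6}(1+o(1))\to0$, hence $|qY|\le 3q s_k\le1$ for $N$ large, so $e^{qY}\le 1+qY+(qY)^2$ and $\E[e^{qY}]\le 1+q\E Y+q^2\E Y^2$. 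Using $|\rho_k|\ge1/2$ and the hypotheses $|\E\eta_{k,22}|,\ \E|\eta_{k,22}|^2\le c/(\omega_N+\hat k)$ gives $\E Y\le 8c/(\omega_N+\hat k)$ and $\E Y^2\le 9\E|w|^2\le 36c/(\omega_N+\hat k)$, whence $\E[e^{qY}]\le \exp\!\big(44q^2c/(\omega_N+\hat k)\big)$ and
\[
\E|U_{k,22}|^{2q}\le \exp\!\Big(-2qc_0\tau_k+\tfrac{44q^2c}{\omega_N+\hat k}\Big).
\]
This is $\le e^{-q\tau_k}$ as soon as $44qc\sqrt N\le(2c_0-1)(\omega_N+\hat k)^{3/2}$; since $\omega_N+\hat k\ge\omega_N$ and $\omega_N^{3/2}=\sqrt N\,\Omega\log N$, this holds for all $k$ provided $\kappa\le(2c_0-1)/(44c)$, which fixes the choice of $\kappa$. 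I would accordingly set $C:=44c/(2c_0-1)$ (enlarging it if needed below).

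\textbf{Assembling the two assertions.}
Multiplying gives $\E|\psi^{(0)}_{n,p,22}|^{2q}\le e^{-q\sum_{k=p}^n\tau_k}$, and $\sum_{k=p}^n\tau_k=\tfrac1{\sqrt N}\sum_{j=\hat n}^{\hat p}\sqrt{\omega_N+j}\ge\tfrac{2}{3\sqrt N}\big((\omega_N+\hat p)^{3/2}-(\omega_N+\hat n)^{3/2}\big)\ge\tfrac23(\hat p-\hat n)\sqrt{(\omega_N+\hat p)/N}$, where the last step uses $b^{3/2}-a^{3/2}\ge(b-a)\sqrt b$ for $0\le a\le b$. For the first assertion, define the $\F_{n,p}$‑measurable event $\A^1_{n,p}=\big\{|\psi^{(0)}_{n,p,22}|\le C_R\exp(-\tfrac{\hat p-\hat n}{4}\sqrt{(\omega_N+\hat p)/N})\big\}$ with $C_R=e^{CR}$; Markov's inequality and $\sum\tau_k\ge\tfrac12(\hat p-\hat n)\sqrt{(\omega_N+\hat p)/N}$ give $\P[(\A^1_{n,p})^c]\le C_R^{-2q}=e^{-2CRq}\le N^{-CR\kappa\Omega}\le N^{-R\Omega}$, using $q\ge\tfrac12\kappa\,\Omega\log N$ for $N$ large and $C\kappa\ge1$. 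For the second assertion, $n-p\ge 12R^{-1}\sqrt{N/\omega_N}$ forces $\sum\tau_k\ge 8R^{-1}\sqrt{(\omega_N+\hat p)/\omega_N}\ge 8/R$, so $\sum\tau_k-2/R\ge\tfrac34\sum\tau_k$ and Markov at level $e^{-1/R}$ yields
\[
\P\big[|\psi^{(0)}_{n,p,22}|\ge e^{-1/R}\big]\le e^{-q(\sum\tau_k-2/R)}\le e^{-\frac34 q\sum\tau_k}\le \exp\!\Big(-\tfrac{6}{R}q\sqrt{(\omega_N+\hat p)/\omega_N}\Big);
\]
inserting $q\ge\tfrac12\kappa\,\Omega\log N$ and the identity $\Omega\log N\cdot\sqrt{(\omega_N+\hat p)/\omega_N}=\omega_N\sqrt{\omega_N+\hat p}/\sqrt N$ turns the exponent into $3\kappa R^{-1}\omega_N\sqrt{\omega_N+\hat p}/\sqrt N$, which is the claimed bound after enlarging $C$ so that $3\kappa C\ge1$.

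\textbf{Main obstacle.} The delicate point is the per‑step estimate near the turning point ($\hat k$ small): there $|\rho_k|\le e^{-c_0\tau_k}$ with $\tau_k=\Theta(N^{-1/3}(\Omega\log N)^{1/3})$ tiny, and neither the crude almost‑sure bound $|U_{k,22}|\le|\rho_k|+\tfrac1{10}$ nor the first‑moment bound $\E|U_{k,22}|\le|\rho_k|+O((\omega_N+\hat k)^{-1/2})$ is below $1$; one must retain the second moment, hence a genuinely quadratic error term $44q^2c/(\omega_N+\hat k)$ in the exponential‑moment expansion, and then verify that the moment order can be pushed all the way up to $q\asymp\Omega\log N$ without this quadratic term swallowing the decay $2qc_0\tau_k$. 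This forces the simultaneous constraints $\kappa\le(2c_0-1)/(44c)$ (validity of the per‑step bound), $C\kappa\ge1$ (first part), and $3C\kappa\ge1$ (second part, where the threshold constant $12$ is exactly what makes $\sum\tau_k$ large enough); keeping track of these against each other is the part that needs care.
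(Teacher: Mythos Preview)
Your proof is correct, but it proceeds by a genuinely different route from the paper's. The paper writes $\log|\psi^{(0)}_{n,p,22}|=\sum_{k=p}^n\log|\rho_k-\eta_{k,22}|$, splits the index set according to whether $|\rho_k|$ is bounded away from~$1$, Taylor-expands the logarithm on the ``good'' set, and then controls the fluctuation $X_S$ of the centered sum via Freedman's inequality \eqref{eq:FT}; both conclusions follow from the resulting tail bound on $X_S$ combined with a deterministic mean estimate. You instead exploit independence to factor the $2q$-th moment and prove a uniform per-step bound $\E|U_{k,22}|^{2q}\le e^{-q\tau_k}$, after which everything reduces to Markov's inequality with $q\asymp\Omega\log N$. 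Your approach is more elementary (no martingale inequality, no explicit $S/S^c$ split) and handles the two regimes $|\rho_k|\gtrless 1/2$ in a single per-step lemma; the paper's log-linearization makes the role of the drift $-c_0\tau_k$ versus the fluctuations more transparent and fits the martingale machinery used throughout Section~\ref{sec:linearized}. Both arrive at the same rates, and your tracking of the constants (in particular the compatibility $C\kappa\ge 1$ needed for the first part, automatically giving $3C\kappa\ge 1$ for the second) is clean.
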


\begin{proof}
  From Lemma \ref{lem:psistructure}, 
  \[
    \log | \psi^{(0)}_{n,p, 22}|  = \sum_{k=p}^n  \log| \rho_k - \eta_{k,22}|,
  \]
  which is a sum of independent random variables with 
  \[
    \log|\rho_k| \leq - c_0 \sqrt{ \tfrac{\omega_N + \hat k}{N}}.
  \]
  Let $S$ be the set of $k\in [p,n]$ such that
  \(
    \log|\rho_k| \geq -2.
  \)
  For any $k \in S^c,$ by \eqref{recall:eta}, it holds  
  \[
    \log|\rho_k - \eta_{k,22}|
    \leq \log \left(e^{-2} + 1/10 \right)
    \leq -\sqrt{2} ,
  \]
so that $ \displaystyle \sum_{k \in S^c}\log| \rho_k - \eta_{k,22}| \leq  -\sum_{k \in S^c} \sqrt{ \tfrac{\omega_N + \hat k}{N}} \quad \As$

\medskip

On the other hand, if we define $\xi_k$  for $k \in S$ by
\begin{equation}\label{eq:2nddumpkin}
  \log|\rho_k - \eta_{k,22}|
  =\log|\rho_k| - \Re\left[ \frac{\eta_{k,22}}{\rho_k} + \frac{\eta_{k,22}^2}{2\rho_k^2}\right] + \xi_k,
\end{equation}
then as $\big|\log(1-z) + z + z^2/2\big| \leq |z|^3/(1-|z|),$ we have that $|\xi_k| \lesssim \left( \frac{ \Omega \log N}{\omega_N +\hat k}\right)^{3/2} \As$
  Using \eqref{recall:eta}--\eqref{recall:rho_k}, since $c_0>1$, it follows that for $k \in [1,N] $ and for all $N$ sufficiently large (depending on $\Omega$),
  \[
    \Exp\log| \rho_k - \eta_{k,22}|
    \leq - \sqrt{ \frac{\omega_N + \hat k}{N}}.
  \]
  Here we have used that $\frac{\omega_N^{3/2}}{\sqrt{N}}  = \Omega \log N$ to control the error --  a fact that we use several times in Section~\ref{sec:linearized}.  
  Observe that
  \begin{equation*}
    \sum_{\hat{k}=\hat{n}}^{\hat{p}} \sqrt{\frac{\omega_N+\hat{k}}{N}}  
    \ge \sum_{\hat{k}=\frac{\hat{n}+\hat{p}}{2}}^{\hat{p}}  \sqrt{\frac{\omega_N+\hat{k}}{N}}  
    \ge \frac{\hat{p}-\hat{n}}{2}  \sqrt{\frac{\omega_N+\frac{\hat{n}+\hat{p}}{2}}{N}} 
    \ge  \frac{\hat{p}-\hat{n}}{2\sqrt{2}}  \sqrt{\frac{\omega_N+\hat{p}}{N}} ,
  \end{equation*}
  so we conclude that
  \begin{equation}\label{eq:logrhomean}
    \sum_{k \in S} \Exp\log| \rho_k - \eta_{k,22}| + \sum_{k \in S^c} \log| \rho_k - \eta_{k,22}| 
    \leq  - \sum_{\hat{k}=\hat{n}}^{\hat{p}} \sqrt{\frac{\omega_N+\hat{k}}{N}} 
    \le   - \frac{\hat{p}-\hat{n}}{2\sqrt{2}}  \sqrt{\frac{\omega_N+\hat{p}}{N}} \quad \As
  \end{equation}

  We must also estimate the fluctuations of the terms in $S$.  
  From \eqref{eq:2nddumpkin} and\eqref{recall:eta} again, it follows easily that for $k \in S,$
  \[
    \Var\left( \log| \rho_k - \eta_{k,22}| \right)
    \lesssim \frac{1}{\omega_N + \hat k}
  \]
  and
  \[
 \max_{k\le n} \big|  \log|\rho_k - \eta_{k,22}| - \E  \log|\rho_k - \eta_{k,22}|  \big| \lesssim  \sqrt{\frac{R\Omega\log N}{\omega_N+ \hat{n}}} . 
  \]
Then,  
\[
  \sum_{k \in S} \Var\left[ \log| \rho_k - \eta_{k,22}| \right] \lesssim \log\left(\frac{\omega_N+\hat{p}}{\omega_N + \hat{n}}\right) \le \frac{\hat{p}-\hat{n}}{\omega_N+\hat{n}}.
\]
Hence, by setting $X_S = \sum_{k \in S} \log| \rho_k - \eta_{k,22}| - \Exp \log| \rho_k - \eta_{k,22}|$
and using  Freedman's inequality \eqref{eq:FT} with
$\Sigma^2 = \frac{C}{16}\frac{\hat{p}-\hat{n}}{\omega_N}$ and $\alpha = C  \sqrt{\frac{ R\Omega  \log N}{\omega_N}}$ for a  sufficiently large constant $C>0,$
\begin{equation} \nonumber 
\P[X_S \ge t] 
\leq
\exp\left( 
-\frac{t^2/2}{\alpha t/3+ \Sigma^2}  
\right).
\end{equation}
For any $R \geq 0$ and any $t \ge \frac{\hat{p}-\hat{n}}{16}  \sqrt{\frac{\omega_N+\hat{p}}{N}} +  R,$ we have for  $N$ is sufficiently large (depending on $\Omega$),
\[
\frac{t^2}{\alpha t/3+ \Sigma^2}  
\ge \min\left\{ \frac{3t}{\alpha} , \frac{t^2}{\Sigma^2} \right\} 
\ge \frac{2R}{C} \min\left\{   \sqrt{\frac{\omega_N}{ R\Omega  \log N}} ,   \omega_N \sqrt{\frac{\omega_N+\hat{p}}{N}} \right\}
\ge \frac{2R}{C}  \frac{\omega_N^{3/2}}{\sqrt{N}},
\]
 so we conclude that
\begin{equation} \label{eq:firstfreeman}
\P[X_S \ge t] 
\le \exp\left(-\frac{R}{C}  \frac{\omega_N^{3/2}}{\sqrt{N}} \right)  = 
N^{- \frac{R \Omega}{C} } . 
\end{equation}

As $ \log | \psi^{(0)}_{n,p, 22}|  = X_S  +  \sum_{k \in S} \Exp \log| \rho_k - \eta_{k,22}|  +   \sum_{k \in S^c}\log| \rho_k - \eta_{k,22}|$, using the deterministic estimate \eqref{eq:logrhomean}, we obtain that for any  $t \le \frac{\hat{p}-\hat{n}}{4}  \sqrt{\frac{\omega_N+\hat{p}}{N}} - R $, 
\begin{equation}\label{est:psi0}
 \begin{aligned}
\P\big[| \psi^{(0)}_{n,p, 22}|  \ge e^{-t} \big]
& \le   \P\big[  X_S  \ge -t - {\textstyle  \sum_{k \in S} \Exp \log| \rho_k - \eta_{k,22}|  -   \sum_{k \in S^c}\log| \rho_k - \eta_{k,22}|} \big] \\
& \le \P\big[  X_S  \ge \tfrac{\hat{p}-\hat{n}}{16}  \sqrt{\tfrac{\omega_N+\hat{p}}{N}}  +R\big]
\le N^{- \frac{R \Omega}{C} } . 
\end{aligned}
\end{equation}

Hence, if we set $\A^1_{n,p} = \left\{ | \psi^{(0)}_{n,p, 22}|  \le C_R e^{- \frac{\hat{p}-\hat{n}}{4}  \sqrt{\frac{\omega_N+\hat{p}}{N}} }\right\}$ with $C_R=e^{CR}$,  after adjusting $R$, we obtain $\P\big[ \A^{1,c}_{n,p}\big] \le N^{-R\Omega} $
and $| \psi^{(0)}_{n,p, 22}| \1_{\A^1_{n,p}} \le  C_R  \exp\left(- \frac{\hat{p}-\hat{n}}{4} \sqrt{\frac{\omega_N+\hat{p}}{N}} \right) $. 

\medskip

For the second claim, 
we use \eqref{eq:firstfreeman}, with $t=2 R = \frac{\hat{p}-\hat{n}}{12}  \sqrt{\frac{\omega_N+\hat{p}}{N}}$ to conclude in the same way as \eqref{est:psi0}  that
\[
  \P\big[| \psi^{(0)}_{n,p, 22}|  \ge e^{-\frac{\hat{p}-\hat{n}}{12}  \sqrt{\frac{\omega_N+\hat{p}}{N}}} \big]
  \leq 
  \P\big[  X_S  \ge \tfrac{\hat{p}-\hat{n}}{4}  \sqrt{\tfrac{\omega_N+\hat{p}}{N}} \big]
  \leq 
  \exp\left(-\frac{\hat{p}-\hat{n}}{12C}\sqrt{\frac{\omega_N+\hat{p}}{N}}\frac{\omega_N^{3/2}}{\sqrt{N}} \right).
\]
If $n-p \geq 12 M^{-1}\sqrt{N/\omega_N}$ for a constant $M>0,$ then
$\frac{\hat{p}-\hat{n}}{12}  \sqrt{\frac{\omega_N+\hat{p}}{N}} \ge M^{-1} $ and we conclude that
\[
  \P\big[| \psi^{(0)}_{n,p, 22}|  \ge e^{-M^{-1}} \big]
  \leq
  \exp\left(-\frac{\omega_N}{CM}\sqrt{\frac{\omega_N+\hat{p}}{N}}
  \right),
\]
which gives the second claim of the Proposition.
\end{proof}

\subsection{Moderate deviation bounds for $\psi^{(1)}$}  
\label{sect:MD1}

\begin{proposition} \label{prop:2}
For any $R>0$ and for all $1 \le p< n \le N$, there is an event $\A^2_{n,p} \subset \bigcap_{p \leq j < k \leq n} \A^1_{k,j}$ measurable wrt $\F_{n,p}$ with $\Pr[ \A^{2,c}_{n,p} ] \lesssim N^{2-R\Omega}$ so that 
\[
  \begin{aligned}
  &\big\VERT \max_{p \leq  k \leq n}|\psi^{(1)}_{k,p,12}| \1_{\A^2_{n,p}} \big\VERT_2  \le  \frac{C_R N^{1/4}}{(\omega_N + \hat{p})^{5/8} \omega_N^{1/8}}
  \quad
  \text{and}
  \quad \\
  &\big\VERT \max_{p \leq j < k \leq n}|\psi^{(1)}_{k,j,21}| \1_{\A^2_{n,p}} \big\VERT_2  \le \frac{ C_R N^{1/4}}{(\omega_N + \hat{n})^{5/8} \omega_N^{1/8}} .
  \end{aligned}
\]
\end{proposition}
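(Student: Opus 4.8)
The plan is to apply Proposition~\ref{prop:induction} with $j=0$ to the two scalar recurrences governing the single nonzero entry of the anti-diagonal matrix $\psi^{(1)}$ in each row and column, feeding the $\psi^{(0)}$ control of Proposition~\ref{prop:1} in as the a priori bounds $\mathscr{C}_1$ required there. Concretely, since $V_k=\diag(U_k)$, Lemma~\ref{lem:psistructure} gives $\psi^{(0)}_{n,p}=\diag\bigl(1,\prod_{j=p}^{n}U_{j,22}\bigr)$, and the forward and backward identities \eqref{eq:psijforward}--\eqref{eq:psijbackward} specialize to
\[
  \psi^{(1)}_{k,p,12}=\sum_{\ell=p}^{k}\eta_{\ell,12}\,\psi^{(0)}_{\ell-1,p,22},
  \qquad
  \psi^{(1)}_{k,j,21}=\sum_{\ell=j}^{k}\psi^{(0)}_{k,\ell+1,22}\,\eta_{\ell,21}.
\]
In the first, $k\mapsto\psi^{(1)}_{k,p,12}$ is adapted to the forward filtration $\{\F_{m,p}\}$; in the second, $j\mapsto\psi^{(1)}_{k,j,21}$ (for fixed upper index $k$) is adapted to the reversed filtration $\{\F_{k,m}\}$. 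Each therefore splits into an $L^{2}$ martingale part plus a predictable part built from $\E\eta_{\ell,ij}$, which is exactly the structure Proposition~\ref{prop:induction}\eqref{control1} is set up to handle.

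On the event $\bigcap_{p\le j<k\le n}\A^1_{k,j}$, Proposition~\ref{prop:1} provides the bounds $|\psi^{(0)}_{\ell-1,p,22}|\le c_\ell:=C_R\exp\bigl(-\tfrac{\ell-1-p}{4}\sqrt{(\omega_N+\hat p)/N}\bigr)$ and $|\psi^{(0)}_{k,\ell+1,22}|\le c'_\ell:=C_R\exp\bigl(-\tfrac{k-\ell-1}{4}\sqrt{(\omega_N+\widehat{\ell+1})/N}\bigr)$, i.e.\ the event $\mathscr{C}_1$. The asymmetry between the two claimed estimates is inherited from these two sequences: in the first the decay is governed by the \emph{fixed} left endpoint $p$, so the bound is stated in $\hat p$; in the second it is governed by the \emph{running} upper index, and since $j,k$ range up to $n$ one applies Proposition~\ref{prop:induction} for each $k\in[p,n]$ and then observes $\omega_N+\hat k\ge\omega_N+\hat n$, whence the $\hat n$. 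Applying \eqref{control1} with $a_\ell=c/(\omega_N+\hat\ell)$, $\mathrm{S}\asymp\sqrt{R\Omega\log N}$ (so Assumption~\ref{diffusive:ass} holds by \eqref{recall:eta}), and the free parameter there set to $\asymp R\Omega\log N$ (so that the resulting $\F_{n,p}$--event $\A_1$ obeys $\P[\A_1^c]\lesssim N^{-R\Omega}$), gives $\big\VERT\max_{p\le k\le n}|\psi^{(1)}_{k,p,12}|\,\1_{\A_1\cap\mathscr{C}_1}\big\VERT_2\lesssim\Sigma(c_\ell)+\sum_\ell a_\ell c_\ell$, and symmetrically for the column with $c'_\ell$.

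It remains to evaluate the resulting geometric sums. The key elementary observation is that $c_\ell$ is essentially supported on a window of length of order $\sqrt{N/(\omega_N+\hat p)}$, and since $\omega_N\ge N^{1/3}$ this window has length $\le\omega_N+\hat p$, so the weight $a_\ell$ varies over it by at most a bounded factor. Hence $\sum_\ell a_\ell c_\ell\asymp C_R\sqrt N/(\omega_N+\hat p)^{3/2}$, $\sum_\ell a_\ell c_\ell^2\asymp C_R^2\sqrt N/(\omega_N+\hat p)^{3/2}$, and $\max_\ell\sqrt{a_\ell}\,c_\ell\asymp C_R/\sqrt{\omega_N+\hat p}$; substituting into $\Sigma(c_\ell)=\max\{\,2(R\Omega\log N)\mathrm{S}\max_\ell\sqrt{a_\ell}c_\ell,\ \sqrt{\sum_\ell a_\ell c_\ell^2}\,\}$ shows that the variance term alone already yields the bound $C_R N^{1/4}/(\omega_N+\hat p)^{3/4}$, which is no larger than the target $C_R N^{1/4}/\bigl((\omega_N+\hat p)^{5/8}\omega_N^{1/8}\bigr)$; the slack factor $\bigl((\omega_N+\hat p)/\omega_N\bigr)^{1/8}$ together with the smallness of $\Omega$ absorbs the predictable part and the $\mathrm{S}$, $R\Omega\log N$ factors carried by the other term. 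Finally one sets $\A^2_{n,p}=\A_1\cap\bigcap_{p\le j<k\le n}\A^1_{k,j}$; this is $\F_{n,p}$--measurable, contained in $\bigcap\A^1_{k,j}$, and $\P[\A^{2,c}_{n,p}]\le\P[\A_1^c]+\sum_{p\le j<k\le n}\P[\A^{1,c}_{k,j}]\lesssim N^{-R\Omega}+N^2N^{-R\Omega}\lesssim N^{2-R\Omega}$; the analogous union over $k\in[p,n]$ for the second estimate is absorbed in the same way.

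The main obstacle is the bookkeeping in the geometric-sum step: one has to keep careful track of how $\omega_N+\hat\ell$ varies across the effective support of the exponential cutoff (where $\omega_N\ge N^{1/3}$, equivalently $\omega_N^{3/2}/\sqrt N=\Omega\log N\ge1$, is used), and then check that the $\sqrt{R\Omega\log N}$ and $R\Omega\log N$ factors generated by Theorem~\ref{thm:M} inside Proposition~\ref{prop:induction} are swallowed by the gap between the bracket-driven denominator $(\omega_N+\hat p)^{3/4}$ and the sharper denominator $(\omega_N+\hat p)^{5/8}\omega_N^{1/8}$ claimed here. Propagating the correct $\hat p$-versus-$\hat n$ endpoint dependence of the two entries — which stems directly from the asymmetry in Proposition~\ref{prop:1} — also requires some care.
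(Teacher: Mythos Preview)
Your overall strategy is exactly the paper's: feed the $\psi^{(0)}$ control of Proposition~\ref{prop:1} into Proposition~\ref{prop:induction} at level $j=0$, compute the resulting geometric sums, and assemble the event $\A^2_{n,p}$ by intersecting with $\bigcap\A^1_{k,j}$. The first bound (on $\psi^{(1)}_{k,p,12}$) goes through precisely as you sketch, with the same sums and the same slack analysis.

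There is, however, a gap in your handling of the double maximum $\max_{p\le j<k\le n}|\psi^{(1)}_{k,j,21}|$. Applying Proposition~\ref{prop:induction} separately for each fixed upper index $k$ and then intersecting the resulting events does correctly control the \emph{probability} of the bad event (the extra union over $k$ is absorbed in $N^{2-R\Omega}$, as you say). But it does not yield the claimed sub-Gaussian norm on the double max: if $\big\VERT\max_j|\psi^{(1)}_{k,j,21}|\,\1_{\A(k)}\big\VERT_2\le\sigma$ for each of $O(N)$ values of $k$, a union-bound argument only gives $\big\VERT\max_{j,k}|\psi^{(1)}_{k,j,21}|\,\1_{\cap_k\A(k)}\big\VERT_2\lesssim\sigma\sqrt{\log N}$, and this extra $\sqrt{\log N}$ is \emph{not} absorbed by the slack $\bigl((\omega_N+\hat n)/\omega_N\bigr)^{1/8}$, which is $1$ when $\hat n=0$.

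The paper sidesteps this by reducing the double max to a single one via the algebraic identity (obtained by splitting $\psi^{(1)}_{k,p}$ at the index $j$)
\[
  \psi^{(1)}_{k,j,21}\;=\;\psi^{(1)}_{k,p,21}\;-\;\psi^{(0)}_{k,j,22}\,\psi^{(1)}_{j-1,p,21}.
\]
On $\bigcap_{p\le j<k\le n}\A^1_{k,j}$ one has $|\psi^{(0)}_{k,j,22}|\le C_R$, so $|\psi^{(1)}_{k,j,21}|\le 2C_R\max_{p\le\ell\le n}|\psi^{(1)}_{\ell,p,21}|$, and the right-hand side is controlled by a \emph{single} application of Proposition~\ref{prop:induction}. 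This is the missing ingredient in your sketch; once you insert it, the rest of your argument is the paper's proof.
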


\begin{proof}
  We apply Proposition \ref{prop:induction} with $j=1$ and $R=\mathrm{S}^2= R\Omega\log N$.
   Let $C_R = e^{CR}$ as in Proposition \ref{prop:1}. We begin with the estimate for $\psi^{(1)}_{k,p,12}.$  By \eqref{recall:eta}, we take $a_k = \frac{1}{\omega_N + \hat k}$, and 
  \[
    c_k \coloneqq C_R \exp\left(- \frac{\hat{p}-\hat{k}}{4} \sqrt{\frac{\omega_N+\hat{p}}{N}} \right).
  \]
   First, we verify that $\max_{p \leq k \leq n}( \sqrt{a_k} c_k ) \le \frac{C_R}{\sqrt{\omega_N + \hat p}} $ because the maximum is attained when $k = p$ (here we used  that $\omega_N^{3/2} \ge \sqrt{N}$). 
   Second, by bounding the summands on blocks of length $\sqrt{N}/\sqrt{\omega_N + \hat p},$  we obtain 
    \[
    \sum_{k=p}^n a_kc_k^2 = \sum_{\hat k = \hat n}^{\hat p }
    C_R^2
    \frac{
      \exp\left(- \frac{\hat{p}-\hat{n}}{2} \sqrt{\frac{\omega_N+\hat{p}}{N}} \right)
    }{{\omega_N + \hat k}}
    \lesssim
    \frac{C_R^2\sqrt{N}}{ (\omega_N + \hat p)^{3/2}}.
  \]
  Hence, we conclude that if $N$ is sufficiently large (depending on $\Omega$), then for any $1 \le p< n \le N$,
  \[
  \Sigma  = \max \biggl\{ 2\mathrm{S}^3  \max_{p \leq k \leq n} \left( \sqrt{a_{k}}c_k \right), \sqrt{{\textstyle \sum_{k=p}^{n} } a_{k} c_k^2 } \biggr\} \lesssim  \frac{C_R N^{1/4}}{ (\omega_N + \hat p)^{5/8} \omega_N^{1/8}}
  \]
  
  Since we also have $ \sum_{k=p}^n a_kc_k \lesssim     \frac{C_R\sqrt{N}}{ (\omega_N + \hat p)^{3/2}} \le \frac{C_R}{\sqrt{\Omega\log N}}  \frac{N^{1/4}}{ (\omega_N + \hat p)^{5/8} \omega_N^{1/8}}$, by Proposition \ref{prop:induction}, this implies that there is an event $\A^2_{n,p},$ which we may assume lies within $\bigcap_{p \leq j < k \leq n} \A^1_{k,j}$ for which
  \[
     \big\VERT  \sup_{p \leq k \leq n}| \psi^{(1)}_{k,p,12}| \one_{\A^2_{n,p}} \big\VERT_2 
     \lesssim 
     \frac{ C_R N^{1/4}}{ \omega_N^{1/8} (\omega_N + \hat p)^{5/8}}
     \quad
     \text{and}
     \quad
     \Pr[ \A^{2,c}_{n,p} ] \lesssim N^{2-R\Omega} .  
  \]
  Note that the previous estimate for the probability of the event $\A^{2,c}_{n,p}$ comes from the fact that
  $\A^2_{n,p} \subset\bigcap_{p \leq j < k \leq n} \A^1_{k,j}$,  Proposition \ref{prop:1}  and a union bound. 

  We turn to the estimate for $\psi^{(1)}_{k,p,21},$ which will develop on the estimate for $\psi^{(0)}_{n,k+1,22}.$  The argument is essentially identical, save for that we let
  \[
    c'_k \coloneqq C_R \exp\left(- \frac{\hat{k}-\hat{n}}{4} \sqrt{\frac{\omega_N+\hat{k}}{N}} \right).
  \]
  Then, by bounding the summands on blocks of length $\sqrt{N}/\sqrt{\omega_N + \hat n},$ we now have
    \[ 
    \sum_{k=p}^n a_kc_k^{\prime 2} \le  \sum_{\hat k = \hat n}^{\hat p }
    C_R^2
    \frac{
       \exp\left(- \frac{\hat{k}-\hat{n}}{2}  \sqrt{\frac{\omega_N+\hat{n}}{N}} \right)
    }{{\omega_N + \hat k}}
    \lesssim
    \frac{C_R^2\sqrt{N}}{ (\omega_N + \hat n)^{3/2}},
  \]
  which differs from the bound in the previous case.
  Also differing is that $\max_{p \leq k \leq n}( \sqrt{a_k} c_k' ) \le \frac{C_R}{\sqrt{\omega_N + \hat n}}$, so that  if $N$ is sufficiently large (depending on $\Omega$), 
\[
\Sigma' \lesssim  \frac{C_R N^{1/4}}{ (\omega_N + \hat n)^{5/8} \omega_N^{1/8}} . 
\]
Then, by Proposition \ref{prop:induction}, it holds under the same event  event $\A^2_{n,p},$ 
  \[
     \big\VERT  \sup_{p \leq k \leq n}| \psi^{(1)}_{k,p,12}| \one_{\A^2_{n,p}} \big\VERT_2 
     \lesssim 
     \frac{C_RN^{1/4}}{ \omega_N^{1/8} (\omega_N + \hat n)^{5/8}}
  \]

  To complete the second part of the proof, let us observe that for $p \leq j \leq k \leq n,$
  \[
    \psi^{(1)}_{k,j}\psi^{(0)}_{j-1,p} = \psi^{(1)}_{k,p} - \psi^{(0)}_{k,j}\psi^{(1)}_{j-1,p}\ ,
  \]
  and hence by Lemma~\ref{lem:psistructure},
  \[
    \psi^{(1)}_{k,j,21} = \psi^{(1)}_{k,p,21} - \psi^{(0)}_{k,j,22}\psi^{(1)}_{j-1,p,21}.
  \]
  On $\A^2_{n,p} \subset \bigcap_{p \leq j < k \leq n}^n \A^1_{k,j},$ we have $|\psi^{(0)}_{k,j,22}| \leq C_R$ and so
  \[
    |\psi^{(1)}_{k,j,21}|\one_{\A^2_{n,p}} \leq 2C_R \max_{p < \ell \leq n} |\psi^{(1)}_{\ell,p,12}|,
  \]
  which completes the proof (after adjusting the constant $C>0$).  \end{proof}

%

\subsection{Estimates for the expected transfer matrix recurrence.} \label{sec:exptransrec}

Before we can turn to estimating $\psi^{(>1)}$, we need a priori estimates on the expected transfer matrix recurrence, see Corollary~\ref{cor:summability} below. 
These estimates are easy to deduce from the conditions \eqref{recall:eta}--\eqref{recall:rho_k} and the following (deterministic) lemma which shows that the norm of a hyperbolic matrix remains basically insensitive to  off--diagonal perturbations.

\begin{lemma}
  Suppose that $d_1,d_2,\epsilon_1,\epsilon_2$ are complex numbers and that $D_i = \sqrt{ |d_i|^2 + |\epsilon_i|^2}$ for $i=1,2.$  Suppose without loss of generality that $D_1 \geq D_2.$  Let $\epsilon = \max\left\{ |\epsilon_1|,|\epsilon_2| \right\}$
\[
  \left\|
  \begin{pmatrix}
    d_1 & \epsilon_1 \\
    \epsilon_2 & d_2 \\
  \end{pmatrix}
  \right\|
  \leq \min\left\{ D_1\biggl(1+\frac{2\epsilon^2}{D_1^2-D_2^2}\biggr), \max\{|d_1|, |d_2|\} + \epsilon \right\}
\]  
  \label{lem:normbound}
\end{lemma}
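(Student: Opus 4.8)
The plan is to establish the two bounds appearing in the minimum separately, since they are of a rather different nature, and then combine them.

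The bound $\|M\|\le \max\{|d_1|,|d_2|\}+\epsilon$, where $M=\left(\begin{smallmatrix} d_1 & \epsilon_1\\ \epsilon_2 & d_2\end{smallmatrix}\right)$, is immediate from the triangle inequality once one splits $M$ into its diagonal part $\operatorname{diag}(d_1,d_2)$ and its anti-diagonal part $N=\left(\begin{smallmatrix} 0 & \epsilon_1\\ \epsilon_2 & 0\end{smallmatrix}\right)$: the first has operator norm $\max\{|d_1|,|d_2|\}$, while $N^{*}N=\operatorname{diag}(|\epsilon_2|^2,|\epsilon_1|^2)$, so $\|N\|=\max\{|\epsilon_1|,|\epsilon_2|\}=\epsilon$.

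For the bound $\|M\|\le D_1\big(1+\tfrac{2\epsilon^2}{D_1^2-D_2^2}\big)$ we may assume $D_1>D_2$, as otherwise the right-hand side is infinite. Here I would pass to the Gram matrix $MM^{*}$, which a direct computation shows to equal $\left(\begin{smallmatrix} D_1^2 & w\\ \overline w & D_2^2\end{smallmatrix}\right)$ with $w=d_1\overline{\epsilon_2}+\epsilon_1\overline{d_2}$. Since $\|M\|^2$ is the top eigenvalue of this $2\times2$ positive Hermitian matrix, the explicit eigenvalue formula gives $\|M\|^2=\tfrac{D_1^2+D_2^2}{2}+\sqrt{\big(\tfrac{D_1^2-D_2^2}{2}\big)^2+|w|^2}$. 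The only estimate needed is for the off-diagonal entry, and the crude bound $|w|\le |d_1||\epsilon_2|+|\epsilon_1||d_2|\le \epsilon(|d_1|+|d_2|)\le 2\epsilon D_1$ (using $|d_i|\le D_i\le D_1$) turns out to be exactly enough.

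It then remains to check a purely scalar inequality. Setting $s=D_1^2$, $t=D_2^2$ with $s>t\ge 0$ and using $|w|^2\le 4\epsilon^2 s$, one must verify
\[
  \tfrac{s+t}{2}+\sqrt{\big(\tfrac{s-t}{2}\big)^2+4\epsilon^2 s}\;\le\; s\Big(1+\tfrac{2\epsilon^2}{s-t}\Big)^2 .
\]
Moving $\tfrac{s+t}{2}=s-\tfrac{s-t}{2}$ to the right, this reduces to $\sqrt{\big(\tfrac{s-t}{2}\big)^2+4\epsilon^2 s}\le \tfrac{s-t}{2}+\tfrac{4\epsilon^2 s}{s-t}+\tfrac{4\epsilon^4 s}{(s-t)^2}$, whose right-hand side is positive; upon squaring, the single cross term $2\cdot\tfrac{s-t}{2}\cdot\tfrac{4\epsilon^2 s}{s-t}=4\epsilon^2 s$ already absorbs the excess on the left-hand side and the remaining terms are nonnegative, which finishes the argument. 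I do not anticipate any genuine obstacle: the whole proof is a two-line diagonalization of $MM^{*}$ followed by an elementary manipulation; the one point requiring mild attention is not to throw away too much when estimating $|w|$, and the bound $|w|\le 2\epsilon D_1$ is the natural choice that survives the final inequality.
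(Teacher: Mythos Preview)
Your proof is correct and follows essentially the same approach as the paper: both split the second bound by the triangle inequality, compute the top singular value via the Gram matrix, and estimate the off-diagonal entry by $2\epsilon D_1$. The only cosmetic difference is that the paper bounds the square root twice via concavity ($\sqrt{a^2+b}\le a+\tfrac{b}{2a}$) rather than rearranging and squaring as you do.
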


\noindent The usefulness of this bound lies in the case where diagonal entries of the matrix are well--separated and the off--diagonal entries are perturbatively small.  Then, the  perturbation increases the norm by $O(\epsilon^2)$ as opposed to the usual $O(\epsilon).$
\begin{proof}
  The second inequality is simply subadditivity of the norm, on splitting it into the matrix of $\{d_i\}$ and the matrix of $\{\epsilon_i\}.$ For the first, we explicitly compute the largest singular value $\sigma$, which gives
  \[
    2\sigma^2 = D_1^2 + D_2^2 + \sqrt{ (D_1^2 - D_2^2)^2 + 4U^2 },
  \]
  where $U = |\epsilon_2 \bar d_1 + d_2 \bar \epsilon_1|.$  Bounding the square root using concavity gives
  \[
    \sigma^2 \leq D_1^2 + \frac{U^2}{D_1^2 - D_2^2},
  \]
  again taking square roots and bounding using concavity,
  \[
    \sigma \leq D_1 + \frac{U^2}{2D_1(D_1^2 - D_2^2)}.
  \]
  On estimating $U$ by $2D_1\epsilon,$ the claimed bound follows.
\end{proof}

As a consequence, the norm of an expected $\Exp U_n$ matrix is closer to $1$ than would be expected from a direct estimate.
\begin{lemma}
  There is a constant $C>0$ so that for any $1\le k \leq N,$ 
  \[
    \| \Exp  U_k  \| \leq 1 +  \frac{C\sqrt{N}}{(\omega_N + \hat k)^{5/2}},
    \quad
    \text{ and }
    \quad
    \| U_k \| \leq 1  + \frac{C\sqrt{N}}{(\omega_N + \hat k)^{5/2}} + \frac{C\sqrt{R\Omega\log N}}{(\omega_N + \hat k)^{1/2}} \quad\As
  \]
  \label{lem:randomnormbounds}
\end{lemma}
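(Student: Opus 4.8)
The plan is to deduce both bounds from the deterministic Lemma~\ref{lem:normbound}, applied to $\Exp U_k$ directly for the first bound, and via $\|U_k\|\le\|\Exp U_k\|+\|U_k-\Exp U_k\|$ for the second. Write
\[
  \Exp U_k=\begin{pmatrix} 1 & \Exp\eta_{k,12}\\ \Exp\eta_{k,21} & \rho_k-\Exp\eta_{k,22}\end{pmatrix},
\]
and take, in the notation of Lemma~\ref{lem:normbound}, $d_1=1$, $d_2=\rho_k-\Exp\eta_{k,22}$, $\epsilon_1=\Exp\eta_{k,12}$, $\epsilon_2=\Exp\eta_{k,21}$. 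By \eqref{recall:eta} the off-diagonal part is small, $\epsilon:=\max\{|\epsilon_1|,|\epsilon_2|\}\le c/(\omega_N+\hat k)$, so $1\le D_1=\sqrt{1+|\epsilon_1|^2}\le 1+\tfrac{c^2}{2(\omega_N+\hat k)^2}$ and $D_2=\sqrt{|d_2|^2+|\epsilon_2|^2}\le|\rho_k|+\tfrac{2c}{\omega_N+\hat k}$, using $|\rho_k|\le1$.

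The crucial point is a uniform lower bound on the spectral gap. From \eqref{recall:rho_k}, the elementary inequality $1-e^{-x}\ge x/(1+x)$ for $x\ge0$, and $\sqrt{(\omega_N+\hat k)/N}\le\sqrt2$ (since $\omega_N\le N$), one gets $1-|\rho_k|^2\ge 1-e^{-2c_0\sqrt{(\omega_N+\hat k)/N}}\ge c_1\sqrt{(\omega_N+\hat k)/N}$ for a constant $c_1=c_1(c_0)>0$. On the other hand $D_1^2-D_2^2\ge 1-|\rho_k|^2-\tfrac{2c}{\omega_N+\hat k}-\tfrac{2c^2}{(\omega_N+\hat k)^2}$, and since $\omega_N+\hat k\ge\omega_N=N^{1/3}(\Omega\log N)^{2/3}$ one has $\tfrac1{\omega_N+\hat k}\le\tfrac1{\Omega\log N}\sqrt{(\omega_N+\hat k)/N}$; as $\Omega\log N\ge r_\beta$ with $r_\beta$ chosen large enough in terms of $c$ and $c_0$, the two correction terms are each at most $\tfrac{c_1}{4}\sqrt{(\omega_N+\hat k)/N}$ for $N$ large, so that $D_1\ge D_2$ and
\[
  D_1^2-D_2^2\ \ge\ \tfrac{c_1}{2}\sqrt{\tfrac{\omega_N+\hat k}{N}}.
\]
Inserting this into the first (quadratically small) bound of Lemma~\ref{lem:normbound}, using $\epsilon^2\le c^2/(\omega_N+\hat k)^2$ and $\omega_N+\hat k\le 2N$ to absorb the $D_1$ factor, yields $\|\Exp U_k\|\le D_1\bigl(1+\tfrac{2\epsilon^2}{D_1^2-D_2^2}\bigr)\le 1+C\sqrt N/(\omega_N+\hat k)^{5/2}$, which is the first assertion.

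For the second assertion, the matrix $U_k-\Exp U_k$ has entries $\eta_{k,ij}-\Exp\eta_{k,ij}$ (and $0$ in the $(1,1)$ entry); by \eqref{recall:eta} each is at most $|\eta_{k,ij}|+|\Exp\eta_{k,ij}|\le\sqrt{R\Omega\log N/(\omega_N+\hat k)}+\tfrac{c}{\omega_N+\hat k}\le 2\sqrt{R\Omega\log N/(\omega_N+\hat k)}$ almost surely (using $\tfrac1{\omega_N+\hat k}\le\sqrt{R\Omega\log N/(\omega_N+\hat k)}$, valid since $R\Omega\log N\ge1$), so by $\|M\|\le c_d\sup_{i,j}|M_{ij}|$ for $2\times2$ matrices we obtain $\|U_k-\Exp U_k\|\le C\sqrt{R\Omega\log N}/(\omega_N+\hat k)^{1/2}$ almost surely; adding the bound on $\|\Exp U_k\|$ gives the claim.

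The main obstacle is the spectral-gap estimate near the turning point $\hat k\approx0$: there the mean corrections to the diagonal entries and the off-diagonal entries are only of size $(\omega_N+\hat k)^{-1}$, which is smaller than the bare hyperbolic gap, of order $\sqrt{(\omega_N+\hat k)/N}$, only by the factor $\Omega\log N$, so one genuinely needs that $\Omega\log N\ge r_\beta$ is a large constant to keep these corrections from swamping the gap; away from the turning point the gap is of order $1$ and there is nothing to check. Everything else is a routine application of Lemma~\ref{lem:normbound} together with the hypotheses \eqref{recall:eta}--\eqref{recall:rho_k}.
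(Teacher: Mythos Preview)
Your proof is correct and follows essentially the same approach as the paper's: apply the quadratic bound of Lemma~\ref{lem:normbound} to $\Exp U_k$, using \eqref{recall:eta} for $\epsilon$ and \eqref{recall:rho_k} for the spectral gap, and then the triangle inequality plus entrywise bounds for $\|U_k\|$. You supply more explicit detail on the gap estimate (the paper just states $D_1-D_2\gtrsim\sqrt{(\omega_N+\hat k)/N}$ and that $\tfrac{1}{\omega_N+\hat k}=o\bigl(\sqrt{(\omega_N+\hat k)/N}\bigr)$), but the argument is the same.
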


\begin{proof}
  For the first inequality, we apply the quadratic bound in Lemma~\ref{lem:normbound}, observing that by Proposition~\ref{prop:Hyper} and \eqref{recall:eta}
  for $ \Exp  U_k$, we have $D_1 = 1+ \O(\epsilon^2)$, $D_1-D_2 \gtrsim \sqrt{(\omega_N + \hat k)/N}$
  and  $\epsilon =\O( \frac{1}{\omega_N +\hat{k}})$ --  here we used again that $\frac{1}{\omega_N +\hat{k}} = o\big(\sqrt{\tfrac{\omega_N + \hat k}{N}}\big)$ for all $1\le k \leq N$. 
 For the second inequality, we just use the triangle inequality
  \[
    \| U_k  \|
    \leq
    \| \Exp U_k  \|
    +
    \| U_k  -  \Exp[U_k ] \|,
  \]
  and entrywise bounds on the second term.
\end{proof}

As a corollary, we obtain:
\begin{corollary}\label{cor:summability}
  There is a constant $C>0$ so that for all $1 \leq p < n \leq N,$
  \[
    \| \Exp[ U_n U_{n-1} \dots U_p ] \| \leq 1 + \frac{C}{\Omega \log N}.
  \]
\end{corollary}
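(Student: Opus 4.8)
The plan is to exploit the telescoping identity relating the expected transfer matrix product to a single "slowly drifting" hyperbolic matrix, exactly mirroring the deterministic analysis of $\widetilde{T}_k$ in Section~\ref{sect:diag}. First I would decompose $\Exp[U_k] = \Exp[V_k] + (\Exp[U_k] - \Exp[V_k])$ where $V_k = \diag(U_k)$; the off--diagonal correction $\Exp[U_k]-\Exp[V_k]$ has entries bounded by $|\Exp\eta_{k,ij}| \leq C/(\omega_N + \hat k)$ by \eqref{recall:eta}, while the diagonal part has $(1,1)$--entry $1$ and $(2,2)$--entry $\rho_k - \Exp\eta_{k,22}$ of modulus at most $e^{-c_0\sqrt{(\omega_N + \hat k)/N}} + C/(\omega_N + \hat k)$ by \eqref{recall:rho_k}. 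The key structural point, already encapsulated in Lemma~\ref{lem:normbound} and Proposition~\ref{prop:Hyper}, is that for a $2\times 2$ matrix whose diagonal entries are separated by a gap $g_k \gtrsim \sqrt{(\omega_N + \hat k)/N}$ and whose off--diagonal entries are $\O(1/(\omega_N + \hat k))$, the operator norm exceeds $1$ only by $\O(\epsilon_k^2/g_k)$ where $\epsilon_k = \O(1/(\omega_N + \hat k))$. This is precisely the content of Lemma~\ref{lem:randomnormbounds}, giving $\|\Exp U_k\| \leq 1 + C\sqrt{N}/(\omega_N + \hat k)^{5/2}$.

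The second and main step is that the individual $\O(\sqrt{N}/(\omega_N+\hat k)^{5/2})$ bounds are \emph{summable}, and their sum is the claimed $\O(1/(\Omega\log N))$. Indeed,
\[
  \sum_{k=1}^N \frac{C\sqrt{N}}{(\omega_N + \hat k)^{5/2}}
  = C\sqrt{N}\sum_{\hat k = 0}^{N-1} \frac{1}{(\omega_N + \hat k)^{5/2}}
  \lesssim \sqrt{N} \int_0^\infty \frac{dt}{(\omega_N + t)^{5/2}}
  \lesssim \frac{\sqrt{N}}{\omega_N^{3/2}}
  = \frac{1}{\Omega\log N},
\]
using the identity $\omega_N^{3/2} = \sqrt{N}\,\Omega\log N$ that is invoked repeatedly throughout Section~\ref{sec:linearized}. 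Then a naive submultiplicative bound $\|\Exp[U_n\cdots U_p]\| \leq \prod_{k=p}^n \|\Exp U_k\|$ would give $\prod_{k=p}^n (1 + C\sqrt{N}/(\omega_N+\hat k)^{5/2}) \leq \exp\big(C\sum_k \sqrt{N}/(\omega_N+\hat k)^{5/2}\big) \leq \exp(C/(\Omega\log N)) \leq 1 + C'/(\Omega\log N)$ for $N$ large, which already yields the corollary.

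I expect the main subtlety, if one wants the cleaner constant or if the crude submultiplicative estimate is not quite sharp enough downstream, to be handling the accumulation of the off--diagonal terms \emph{coherently} rather than entry by entry: one should conjugate $\Exp[U_k]$ by the (slowly varying) eigenvector matrices of $\diag(\Exp[U_k])$, just as $V_{k+1}^{-1}V_k$ appears in \eqref{eq:delta}, so that the product becomes (scalar)$\times$(a product of matrices each within $\Id + \O(1/(\omega_N+\hat k))$ of $\left(\begin{smallmatrix} 1 & 0\\ 0 & \rho_k'\end{smallmatrix}\right)$ with the $\rho_k'$ contracting). Because the contracting coordinate decays geometrically in $\hat p - \hat k$ by the estimate $|\rho_k'| \leq e^{-c\sqrt{(\omega_N+\hat k)/N}}$, the off--diagonal entries cannot resonate, and a Gronwall/geometric-series argument controls the product expansion with the same $\O(1/(\Omega\log N))$ total. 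But since the corollary only asks for a bound of this exact order and the exponentiated sum above already delivers it, the honest obstacle is merely bookkeeping: verifying that the hypotheses of Lemma~\ref{lem:randomnormbounds} (the gap lower bound from Proposition~\ref{prop:Hyper}, and $1/(\omega_N+\hat k) = o(\sqrt{(\omega_N+\hat k)/N})$, valid since $\omega_N \geq N^{1/3}$) hold uniformly in $k$, and then invoking the displayed summation.
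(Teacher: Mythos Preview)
Your proposal is correct and follows essentially the same argument as the paper: use independence to factor $\Exp[U_n\cdots U_p] = \prod_k \Exp U_k$, apply submultiplicativity together with Lemma~\ref{lem:randomnormbounds}, and sum $\sqrt{N}/(\omega_N+\hat k)^{5/2}$ to get $\sqrt{N}/\omega_N^{3/2} = 1/(\Omega\log N)$. The conjugation refinement you sketch in the third paragraph is not needed (as you yourself note), and the paper does not pursue it; the only step you leave implicit is the use of independence to pass from $\|\Exp[\prod U_k]\|$ to $\prod\|\Exp U_k\|$, which is immediate from the standing assumptions.
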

\begin{proof}
By submultiplicativity  and using independence, 
\[
 \| \Exp[ U_n U_{n-1} \dots U_p ] \|
 \leq 
 \| \Exp U_n \|
 \cdots
 \| \Exp U_p \|,
\]
and apply Lemma \ref{lem:randomnormbounds}  to obtain $ \| \Exp[ U_n U_{n-1} \dots U_p ] \| \le 1+ \frac{C\sqrt{N}}{\omega_N^{3/2}}$. The final bound follows from the fact that $\omega_N =  N^{1/3} (\Omega \log N)^{2/3}$. 
\end{proof}

\subsection{Moderate deviation for  $\psi^{(>1)}$ on short blocks}
\label{sec:shortblock}

In this section, we give estimates for the moderate deviations for the error $\psi^{(>1)}_{np}$ in the perturbative expansion of products of G$\beta$E transfer matrices $U_n \cdots U_p$ under the  assumption that $(n-p)$ is small, meaning $(n-p) \leq \sqrt{N/\omega_N}.$ 
These estimates follow from a direct application of our general Proposition~\ref{prop:ta} and therefore do not rely on hyperbolicity. 

\begin{proposition} \label{prop:3}
There exists $C>0$ such that  for any $R>0$ and for all $1 \le p< n \le N$ with  $(n-p) \le\frac{\sqrt{N/\omega_N}}{CR}$, there is an event $\A^3_{n,p} \subset \A^2_{n,p} $ measurable with respect to $\F_{n,p}$ with $\P[\A^{3,c}_{n,p}] \lesssim   N^{2-R\Omega} $ so that 
\[
\big\VERT \psi^{(>1)}_{n,p} \1_{\A^3_{n,p}} \big\VERT_1 \le \frac{ C_R\sqrt{N}}{(\omega_N + \hat n)^{9/8}\omega_N^{3/8}}.
\]
\end{proposition}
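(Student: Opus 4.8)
The plan is to apply the general Proposition~\ref{prop:ta} with $j=1$ to the block $\{p,p+1,\dots,n\}$, using the control of $\psi^{(1)}$ from Proposition~\ref{prop:2} as the auxiliary event $\mathcal E$ and the bound on expected products from Corollary~\ref{cor:summability}. After the harmless reindexing $\widetilde U_i=U_{p-1+i}$ we may assume the block is $\{1,\dots,n-p+1\}$, so Proposition~\ref{prop:ta} applies as stated. Throughout I would use the short-block hypothesis $(n-p)\le \tfrac{1}{CR}\sqrt{N/\omega_N}$ in two forms: $\omega_N+\hat k\asymp\omega_N+\hat n$ for every $k$ in the block (this makes each sum below collapse to one term), and $\tfrac{n-p}{\omega_N+\hat n}\lesssim\tfrac{1}{CR\,\Omega\log N}$.

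First I would fix the parameters of Proposition~\ref{prop:ta}. By Corollary~\ref{cor:summability}, $u=\max\|\prod\E U_k\|\le 1+C/(\Omega\log N)\le 2$ for $N$ large. From $|\E\eta_{k,ij}|\le c/(\omega_N+\hat k)$ one gets $\mu=\sum_{k=p}^n\|\E(U_k-V_k)\|\lesssim\tfrac{n-p}{\omega_N+\hat n}\lesssim\tfrac{1}{CR\,\Omega\log N}$, hence $u\mu<\tfrac12$; the a.s.\ bound on $|\eta_{k,ij}|$ gives entrywise $\Delta\lesssim\sqrt{R\Omega\log N/(\omega_N+\hat n)}$. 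For $\sigma$ I would \emph{not} take the bare variance sum but rather $\sigma^2=\tfrac{C'(n-p)}{\omega_N+\hat n}+\Delta^2 R\Omega\log N$: the first summand already dominates $\sum_k\E(\|U_k-\E U_k\|^2+\|U_k-V_k\|^2)$ via $\E|\eta_{k,ij}|^2\le c/(\omega_N+\hat k)$, while the second summand is inserted precisely so that $\sigma^2/\Delta^2\ge R\Omega\log N$. The hypothesis $8u^2\sigma^2+\Delta u\le\tfrac1{16\log n}$ is then checked directly, since $\sigma^2\lesssim\tfrac1{CR\,\Omega\log N}+\tfrac{R^2(\Omega\log N)^{4/3}}{N^{1/3}}$ and $\Delta\lesssim N^{-1/6}(\Omega\log N)^{1/6}$ are both $o(1/\log N)$ once $C$ is large, using $\Omega=o(N^{1/10}/\log N)$ and $\omega_N=N^{1/3}(\Omega\log N)^{2/3}$.

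With $\mathcal E=\A^2_{n,p}$, Proposition~\ref{prop:2} together with the alternating structure of Lemma~\ref{lem:psistructure} gives $\VERT\max_{p\le k\le n}\|\psi^{(1)}_{k,p}\|\,\1_{\A^2_{n,p}}\VERT_2\le D$ with $D\lesssim C_R N^{1/4}/\big((\omega_N+\hat n)^{5/8}\omega_N^{1/8}\big)$, where short-blockness lets us replace $\omega_N+\hat p$ by $\omega_N+\hat n$. Proposition~\ref{prop:ta} then produces an event $\A\in\F_{n,p}$, and I would set $\A^3_{n,p}:=\A\cap\A^2_{n,p}\subset\A^2_{n,p}$. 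Plugging $D$ and $\sigma+\mu\lesssim\sigma\lesssim\big((N/\omega_N)^{1/4}+R\Omega\log N\big)/\sqrt{\omega_N+\hat n}$ into $\VERT\psi^{(>1)}_{n,p}\,\1_{\A^3_{n,p}}\VERT_1\lesssim_\d Du(\sigma+\mu)$, the $(N/\omega_N)^{1/4}$ term reproduces the claimed $C_R\sqrt N/\big((\omega_N+\hat n)^{9/8}\omega_N^{3/8}\big)$ exactly, and the $R\Omega\log N$ term is absorbed into it because $R(\Omega\log N)^{7/6}\le N^{1/6}$ for $N$ large, all absolute constants going into $C$ and $C_R=e^{CR}$. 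For the probability, by construction both arguments of the $\wedge$ in Proposition~\ref{prop:ta} are $\ge R\Omega\log N$ once $C$ is large, so $\P[\A^c\cap\mathcal E]\lesssim N^{-R\Omega}$, and combined with $\P[\A^{2,c}_{n,p}]\lesssim N^{2-R\Omega}$ from Proposition~\ref{prop:2} we get $\P[\A^{3,c}_{n,p}]\lesssim N^{2-R\Omega}$.

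The step I expect to be the main obstacle is the probability bound on the very shortest blocks: with the bare choice $\sigma^2=\sum_k\E(\cdots)$ one has $\sigma^2/\Delta^2\asymp(n-p)/(R\Omega\log N)$, far too small to beat $N^{-R\Omega}$ when $n-p$ is of order one, so the inflation of $\sigma^2$ by $\Delta^2 R\Omega\log N$ is essential; one then has to confirm that this inflation damages neither the smallness hypothesis of Proposition~\ref{prop:ta} nor the final $\VERT\cdot\VERT_1$ estimate, and both checks succeed only because of the quantitative room left by $\omega_N=N^{1/3}(\Omega\log N)^{2/3}$ and $\Omega=o(N^{1/10}/\log N)$.
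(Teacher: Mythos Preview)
Your proposal is correct and follows essentially the same route as the paper: apply Proposition~\ref{prop:ta} with $j=1$, take $u\le 2$ from Corollary~\ref{cor:summability}, take $D$ from Proposition~\ref{prop:2} on the event $\mathcal E=\A^2_{n,p}$, and control the probability via $\tfrac{1}{u^2\sigma^2}\wedge\tfrac{\sigma^2}{\Delta^2}\gtrsim R\Omega\log N$.

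The only difference is cosmetic, in the choice of $\sigma^2$. The paper simply sets $\sigma^2=C\mu=\tfrac{C}{R'}\tfrac{\sqrt{N/\omega_N}}{\omega_N+\hat n}$, i.e.\ it inflates $\sigma^2$ all the way to the value corresponding to the \emph{maximal} allowed block length $(n-p)=\tfrac{1}{CR}\sqrt{N/\omega_N}$, regardless of the actual $n-p$. This automatically gives $\sigma^2/\Delta^2=\tfrac{N^{1/3}}{R'R(\Omega\log N)^{4/3}}$, which is uniformly large, so the short-block issue you flagged never even arises. Your alternative---keeping the bare $(n-p)/(\omega_N+\hat n)$ and adding a floor $\Delta^2 R\Omega\log N$---achieves the same end with a slightly more complicated $\sigma$, and your absorption of the extra $R\Omega\log N$ term into the main bound via $R(\Omega\log N)^{7/6}\le N^{1/6}$ is correct. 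Both choices satisfy the smallness hypothesis of Proposition~\ref{prop:ta} and yield the same final estimate; the paper's is just a line shorter.
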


\begin{proof}
To apply Proposition \ref{prop:ta},  we must estimate the parameters $u,\mu, \Delta,$ and $\sigma^2.$ Recall that $V_k = \diag(U_k),$ and therefore using the conditions \eqref{recall:eta}, we obtain the bounds  for any $k\in [1,N]$, 
  \[
    \|U_k - \E U_k\|^2 ,     \|V_k - \E V_k\|^2\lesssim \frac{R\Omega\log N}{\omega_N + \hat k} \As,
    \quad
    \Exp\|U_k - \E U_k\|^2 \lesssim \frac{1}{\omega_N + \hat k} 
    \quad
    \text{and}
    \quad
   \Exp  \|U_k -V_k\|^2 \lesssim \frac{1}{\omega_N + \hat k}.
  \]
  From Corollary \ref{cor:summability}, we have $u \le 2$ if $N$ is sufficiently large.  Since $\|\Exp U_k - \Exp V_k\| \lesssim \frac{1}{\omega_N + \hat k}$ and we assume that $(n-p) \le \frac{\sqrt{N/\omega_N}}{R'}$ for a constant $R'$ to be chosen below, we have
    \[
  \mu = \sum_{k=p}^n \| \Exp(U_k - V_k) \| \lesssim \log\left(\frac{\omega_N+\hat{p}}{\omega_N + \hat{n}}\right) \le \frac{\hat{p}-\hat{n}}{\omega_N+\hat{n}} \le  \frac{\sqrt{N/\omega_N}}{R'(\omega_N+\hat{n})}
  \]
  In particular this shows that $  \mu \lesssim \frac{\sqrt{N}}{R'\omega_N^{3/2}} = \frac{1}{R'\Omega\log N}$ so that $u\mu < 1/2$ as required if $N$ is sufficiently large.  For the other parameters, we can take
  \[
   \sigma^2  = C\mu =  \frac{C}{R'} \frac{\sqrt{N/\omega_N}}{\omega_N+\hat{n}}
    \qquad
    \text{and}
    \qquad
    \Delta^2 = C  \frac{R\Omega \log N}{\omega_N + \hat n} 
  \]
  for a large constant $C \ge 1$. 
Moreover, from Proposition \ref{prop:2}, there is an event $\A^2= \A^2_{n,p}$ on which 
  \[
    \big\VERT \max_{p \leq k \leq n} \| \psi^{(1)}_{k,p}\| \one_{\A^2} \big\VERT_2 \leq  D = \frac{C_RN^{1/4}}{(\omega_N + \hat n)^{5/8}\omega_N^{1/8}} . 
  \]
  Therefore by Proposition \ref{prop:ta}, there is an event $\A^3 \subset \A^2$ which is $\F_{n,p}$ measurable such that
  \[ 
    \VERT \psi^{(>1)}_{n,p} \one_{\A^3} \VERT_1
    \lesssim \frac{C_R}{\sqrt{R'}}\frac{\sqrt{N}}{(\omega_N + \hat n)^{9/8}\omega_N^{3/8}}.
  \]
  Finally,  it holds for $N$ sufficiently large (depending on $\Omega$ and adjusting $C$ if required),  
  \[\begin{aligned}
  \Pr[\A^{2}\setminus \A^{3}]  \lesssim \exp\left(-\frac{1}{128 u^2\sigma^2} \wedge   \frac{ \sigma^2}{\Delta^2}  \right)  \le N^{- \frac{R'\Omega}{C} } 
 \end{aligned} \]
 Here we have used that $u\le 2$,  $\sigma^2 \le  \frac{C}{R'\Omega\log N}$ and $\frac{\sigma^2}{\Delta^2} = \frac{N^{1/3}}{R'R(\Omega\log N)^{4/3}}$. \
 By choosing $R' = C R$ and using Proposition \ref{prop:2} to control $ \Pr[ \A^{2,c} ]$,  this completes the proof.
\end{proof}

\begin{remark} \label{rk:3}
Let $X$ be a random variable such that  $\VERT X\VERT_1 \le \sigma$ and define the event
$\mathscr{E}_{r_\beta} = \big\{|X| \le \mathrm{A} \sigma \log N\big\}$. Then, it follows from the discussion in Section~\ref{sec:concentration}, that $\VERT X\1_{\mathscr{E}_{\mathrm{A}}}\VERT_2 \lesssim \sigma \sqrt{\mathrm{A}\log N}$  and that 
$\P[\mathscr{E}_{r_\beta}^c] \le 2 N^{-\mathrm{A}}$. 
\end{remark}

By  Proposition~\ref{prop:3} and Remark~\ref{rk:3},  we can also control the sub--Gaussian norm of the matrix $ \psi^{(>1)}_{n,p}$ on the event  $\A^4_{n,p}= \A^3_{n,p} \cap \Big\{\| \psi^{(>1)}_{n,p}\| \le  \frac{\Omega RC_R\sqrt{N}\log N}{(\omega_N + \hat n)^{9/8}\omega_N^{3/8}}\Big\}$. 

\begin{corollary} \label{cor:3}
There is an event $\A^4_{n,p} \subset \A^3_{n,p} $ measurable with respect to $\F_{n,p}$ with $\P[\A^{4,c}_{n,p}] \lesssim   N^{2-R\Omega} $ so that if $(n-p) \le\frac{\sqrt{N/\omega_N}}{CR}$, 
\vspace{-.5cm}
\[
\big\VERT \psi^{(>1)}_{n,p} \1_{\A^4_{n,p}} \big\VERT_2 \le \frac{ C_R\sqrt{\Omega N \log N}}{(\omega_N + \hat n)^{9/8}\omega_N^{3/8}}.
\]
\end{corollary}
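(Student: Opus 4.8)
This corollary is a routine upgrade of Proposition~\ref{prop:3} from an $L^1$-bound to an $L^2$-bound, obtained by truncating $\psi^{(>1)}_{n,p}$ on a high-probability event. The mechanism is exactly the one recorded in Remark~\ref{rk:3}: a random variable whose sub-exponential norm is at most $\sigma$ automatically satisfies, after intersecting with the event $\{|X|\le \mathrm{A}\sigma\log N\}$, the sub-Gaussian bound $\VERT X\one\VERT_2 \lesssim \sigma\sqrt{\mathrm{A}\log N}$, at the cost of a failure probability $\le 2N^{-\mathrm{A}}$.

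\begin{proof}
  Fix $1 \le p < n \le N$ with $(n-p) \le \frac{\sqrt{N/\omega_N}}{CR}$, and let $\A^3_{n,p}$ be the event from Proposition~\ref{prop:3}, so that
  \[
    \sigma \coloneqq \big\VERT \psi^{(>1)}_{n,p}\one_{\A^3_{n,p}} \big\VERT_1 \le \frac{C_R \sqrt{N}}{(\omega_N + \hat n)^{9/8}\omega_N^{3/8}}.
  \]
  Set $\mathrm{A} = R\Omega$ and define
  \[
    \A^4_{n,p} = \A^3_{n,p} \cap \Big\{ \|\psi^{(>1)}_{n,p}\| \le \mathrm{A}\,\sigma \log N \Big\},
  \]
  which is measurable with respect to $\F_{n,p}$ since $\A^3_{n,p}$ is and $\psi^{(>1)}_{n,p}$ is $\F_{n,p}$-measurable. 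Applying the tail bound \eqref{eq:Xtail} to the random variable $X = \psi^{(>1)}_{n,p}\one_{\A^3_{n,p}}$, whose sub-exponential norm is at most $\sigma$, we get
  \[
    \P\big[\A^{3}_{n,p}\setminus\A^{4}_{n,p}\big]
    = \P\big[ \|\psi^{(>1)}_{n,p}\|\one_{\A^3_{n,p}} > \mathrm{A}\sigma\log N\big]
    \le 2\exp(-\mathrm{A}\log N) = 2N^{-R\Omega}.
  \]
  Combining this with the bound $\P[\A^{3,c}_{n,p}] \lesssim N^{2-R\Omega}$ from Proposition~\ref{prop:3}, a union bound gives $\P[\A^{4,c}_{n,p}] \lesssim N^{2-R\Omega}$. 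Finally, since $\|\psi^{(>1)}_{n,p}\|\one_{\A^4_{n,p}} \le \mathrm{A}\sigma\log N$ deterministically, the equivalence \eqref{eq:normup} (with $s$ comparable to $\mathrm{A}\sigma\log N$) yields
  \[
    \big\VERT \psi^{(>1)}_{n,p}\one_{\A^4_{n,p}} \big\VERT_2
    \lesssim \mathrm{A}\,\sigma \log N
    = R\Omega\log N \cdot \frac{C_R\sqrt{N}}{(\omega_N+\hat n)^{9/8}\omega_N^{3/8}}.
  \]
  Absorbing the polynomial factor $R\Omega\log N$ into $\sqrt{\Omega\log N}$ is of course too lossy as stated; instead one invokes Remark~\ref{rk:3} directly with $\mathrm{A}$ chosen as an absolute multiple of $1$ rather than $R\Omega$, so that $\P[\mathscr{E}^c_{\mathrm{A}}]\le 2N^{-\mathrm{A}}$ is still dominated by $N^{2-R\Omega}$ after adjusting constants, and the resulting bound is $\VERT\psi^{(>1)}_{n,p}\one_{\A^4_{n,p}}\VERT_2 \lesssim \sigma\sqrt{\mathrm{A}\log N} \le \frac{C_R\sqrt{\Omega N\log N}}{(\omega_N+\hat n)^{9/8}\omega_N^{3/8}}$, which is the claim. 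The definition of $\A^4_{n,p}$ in the statement matches this choice on renaming constants.
\end{proof}

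The only genuine subtlety is bookkeeping the constants: the factor we pick up from truncation must be $\sqrt{\log N}$ (not $\log N$), which forces the truncation level to be an absolute multiple of $\sigma$ rather than $R\Omega\sigma$, and then one must check that the correspondingly weaker failure probability $N^{-\mathrm{A}}$ is still absorbed by the $N^{2-R\Omega}$ already present. This is exactly the content of Remark~\ref{rk:3}, so there is no real obstacle here — the corollary is a formal consequence of Proposition~\ref{prop:3} together with the sub-Gaussian/sub-exponential formalism of Section~\ref{sec:concentration}.
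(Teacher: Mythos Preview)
Your overall strategy is right --- this corollary is indeed a direct application of Remark~\ref{rk:3} to Proposition~\ref{prop:3} --- but the execution has a genuine gap in the bookkeeping that you yourself flag and then mis-repair.

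In your first pass you take $\mathrm{A}=R\Omega$ and then bound $\VERT \psi^{(>1)}_{n,p}\one_{\A^4_{n,p}}\VERT_2$ by the \emph{deterministic} truncation level $\mathrm{A}\sigma\log N$. That ignores the whole point of Remark~\ref{rk:3}: when $\VERT X\VERT_1\le\sigma$, the sub-exponential tail $\Pr[|X|\ge t]\le 2e^{-t/\sigma}$ combined with the cutoff $|X|\le \mathrm{A}\sigma\log N$ gives the much sharper sub-Gaussian bound $\VERT X\one_{\mathscr{E}_{\mathrm{A}}}\VERT_2\lesssim \sigma\sqrt{\mathrm{A}\log N}$, not merely $\mathrm{A}\sigma\log N$. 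With $\mathrm{A}=R\Omega$ this yields $\sigma\sqrt{R\Omega\log N}$, and absorbing $\sqrt{R}$ into $C_R$ gives exactly the stated bound. The failure probability $2N^{-R\Omega}$ is then trivially $\lesssim N^{2-R\Omega}$. This is precisely what the paper does: it defines $\A^4_{n,p}=\A^3_{n,p}\cap\{\|\psi^{(>1)}_{n,p}\|\le \Omega R C_R\sqrt{N}\log N/((\omega_N+\hat n)^{9/8}\omega_N^{3/8})\}$, i.e.\ the cutoff $\mathrm{A}\sigma\log N$ with $\mathrm{A}=R\Omega$.

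Your attempted fix --- switching to $\mathrm{A}=O(1)$ --- does not work: the failure probability becomes $2N^{-\mathrm{A}}$ with $\mathrm{A}$ an absolute constant, and this is \emph{not} dominated by $N^{2-R\Omega}$ once $R\Omega$ exceeds $\mathrm{A}+2$ (recall $R>0$ is arbitrary in the statement). No amount of ``adjusting constants'' repairs this, since the implicit constants in $\lesssim$ are not allowed to depend on $N$. So keep $\mathrm{A}=R\Omega$ and apply Remark~\ref{rk:3} correctly to get $\sqrt{\mathrm{A}\log N}$ rather than $\mathrm{A}\log N$.
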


\subsection{Moderate deviations for long blocks} 
\label{sec:longblock}

In this section, we use the hyperbolic character of the transfer matrices $U_k$ to bootstrap the estimates from Proposition~\ref{prop:3} from short blocks to long blocks. 
Although, our result does not give us yet enough control to show that the error $ \psi^{(>1)}_{n,p}$ are small for all $N\ge n\ge p\ge 0$. 
Instead, we obtain uniform  bound for the norm $\| U_n \cdots U_p\|$ which holds with overwhelming probability.

\begin{proposition} \label{prop:4}
There exists a constant $C>0$ such that for any $R>0$, it holds
\[
\P\left[ \max_{1\le p <n \le N}\| U_n \cdots U_p\|  \ge C_R' \right] \lesssim_R N^{5-R\Omega} \, , \qquad C_R' = e^{e^{CR}} . 
\]
\end{proposition}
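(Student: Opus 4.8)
The plan is to bootstrap from the short-block control in Proposition~\ref{prop:3}/Corollary~\ref{cor:3} to a uniform norm bound on \emph{all} blocks by a multiplicative (chaining) argument, using the hyperbolic decay of $\psi^{(0)}_{n,p,22}$ from Proposition~\ref{prop:1} to prevent the block errors from compounding. Fix a scale $L = L(N) \asymp \sqrt{N/\omega_N}$ (up to the constant $CR$ appearing in Proposition~\ref{prop:3}), so that a block of length $\le L$ is "short''. Partition $\{1,\dots,N\}$ into consecutive blocks $I_1, I_2, \dots, I_m$ of length $L$ (with $m \le N/L + 1 = O(N)$). On the intersection of the events $\A^4_{I_j}$ (over all $j$) together with the events $\A^1_{n,p}$, a union bound gives a failure probability $\lesssim m^2 N^{2-R\Omega} \lesssim N^{4-R\Omega}$, and on this good event each block satisfies, by Lemma~\ref{lem:psistructure} and \eqref{eq:psiTaylor},
\[
  {\textstyle\prod_{k \in I_j}} U_k = \begin{pmatrix} 1 & 0 \\ 0 & \rho^{(j)} \end{pmatrix} + E_j, \qquad
  |\rho^{(j)}| \le e^{-c_1 |I_j| \sqrt{\omega_N/N}} \le e^{-c_1} < 1, \qquad
  \|E_j\| \le \frac{C_R\sqrt{\Omega N \log N}}{(\omega_N + \hat n)^{9/8}\omega_N^{3/8}},
\]
where $\hat n$ is the value of $N-k$ at the right end of $I_j$, and the $\rho^{(j)}$ factor has modulus bounded away from $1$ because each block has length $\asymp L$. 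Note $E_j$ is genuinely small for blocks away from the turning point and at worst of size $O(C_R)$ for the last few blocks near $\hat n \approx \omega_N$; in fact one checks $\sum_j \|E_j\| \lesssim C_R \sqrt{\Omega \log N}\sum_{\hat n} \sqrt{N}(\omega_N+\hat n)^{-9/8}\omega_N^{-3/8} \lesssim C_R$, a bounded geometric-type sum (this is exactly where the exponents $9/8$ and $3/8$ are engineered to close).

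Next, I would set $P_j = \left(\begin{smallmatrix} 1 & 0 \\ 0 & \rho^{(j)}\end{smallmatrix}\right)$ and write the full product $U_n \cdots U_p$ as a product of (partial) block matrices $P_j + E_j$, expanding the telescoping product: $\prod_j (P_j + E_j) = \prod_j P_j + \sum_j (\prod_{j' > j} (P_{j'}+E_{j'})) E_j (\prod_{j'<j} P_{j'})$, and more generally one collects terms by how many $E_j$'s appear. The key structural point, inherited from Lemma~\ref{lem:psistructure}, is that $\prod_j P_j = \left(\begin{smallmatrix} 1 & 0 \\ 0 & \prod_j \rho^{(j)}\end{smallmatrix}\right)$ has norm $\le 1$, and that each time two consecutive $P$-blocks act on the $(2,2)$-entry they contribute a contraction factor $|\rho^{(j)}| \le e^{-c_1}$. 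Because the $E_j$'s are summable with a bound of the form $\sum_j \|E_j\| \le C_R$, the standard estimate for a product of "nearly contracting-in-one-direction'' matrices gives
\[
  \|U_n \cdots U_p\| \le \Big(\prod_{j}(1 + \|E_j\|)\Big) \cdot \big(\text{bounded hyperbolic amplification}\big) \le \exp\!\Big(\sum_j \|E_j\|\Big) \cdot K \le e^{C_R}\cdot K
\]
for an absolute constant $K$ coming from the worst-case $2\times 2$ hyperbolic block geometry (this is where Lemma~\ref{lem:normbound} is useful: the off-diagonal perturbations only cost $O(\|E_j\|^2)$ in the direction where they matter, and the geometric series in $\rho^{(j)}$ keeps the transient amplification bounded). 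Taking a double exponential $C_R' = e^{e^{CR}}$ absorbs the constant $e^{C_R} = e^{e^{CR}/e}$ and any further loss from reassembling $U_n \cdots U_p$ from complete blocks plus two partial blocks at the ends (the partial blocks of length $< L$ contribute a further bounded factor, again via Lemma~\ref{lem:randomnormbounds} applied entrywise and the short-block estimate).

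The main obstacle is controlling the \emph{transient} amplification near the turning point: for blocks with $\hat n$ of order $\omega_N$ the hyperbolic decay of $|\rho^{(j)}|$ is only marginal and the errors $\|E_j\|$ are of order $C_R$ rather than genuinely small, so one cannot treat these blocks perturbatively in the naive way. The resolution is that there are only $O(\sqrt{N/\omega_N}/L) = O(1)$ such "bad'' blocks (since the turning-point window has width $\omega_N$ and each block has length $\asymp \sqrt{N/\omega_N}$... more precisely the number of blocks with $\hat n \lesssim \omega_N$ is $\omega_N / L \asymp \sqrt{\omega_N^3/N} = \sqrt{\Omega \log N}$, which is $N^{o(1)}$), so their contribution to the amplification is at most $(1 + C_R)^{N^{o(1)}}$, which is still absorbed by the double exponential $C_R'$ after adjusting $C$ — this is precisely why the statement allows such a wildly large constant. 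For the remaining blocks, $\|E_j\|$ decays like $(\omega_N + \hat n)^{-9/8}$ and $|\rho^{(j)}| \le e^{-c_1} < 1$ uniformly, so the product over those blocks is controlled by $\exp(\sum \|E_j\|) \le e^{O(C_R)}$ as above. Carefully booking the union bound over the $O(N^2)$ pairs $(p,n)$ and the $O(N)$ blocks, and over the truncation events from Remark~\ref{rk:3}, yields the stated probability $\lesssim_R N^{5-R\Omega}$.
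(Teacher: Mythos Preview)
Your blocking strategy is the right starting point and matches the paper's approach, but there is a genuine gap in the key estimate. You set $E_j = \prod_{k\in I_j}U_k - P_j$ with $P_j$ diagonal, so by Lemma~\ref{lem:psistructure} you have $E_j = \psi^{(1)}_{I_j} + \psi^{(>1)}_{I_j}$. The bound you quote for $\|E_j\|$, namely $\frac{C_R\sqrt{\Omega N\log N}}{(\omega_N+\hat n)^{9/8}\omega_N^{3/8}}$, is the Corollary~\ref{cor:3} bound for $\psi^{(>1)}$ \emph{only}; the dominant contribution to $E_j$ is $\psi^{(1)}$, which by Proposition~\ref{prop:2} is of order $\frac{C_R N^{1/4}}{(\omega_N+\hat n)^{5/8}\omega_N^{1/8}}$. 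With this correct size, your sum $\sum_j \|E_j\|$ is \emph{not} bounded: converting the block sum to an integral with spacing $L\asymp\sqrt{N/\omega_N}$ gives
\[
\sum_j \|\psi^{(1)}_{I_j}\| \;\asymp\; \frac{C_R N^{1/4}}{\omega_N^{1/8}}\cdot\frac{1}{L}\int_0^N\frac{d\hat n}{(\omega_N+\hat n)^{5/8}} \;\asymp\; C_R\,N^{1/8}\omega_N^{3/8},
\]
which diverges. So the inequality $\prod_j(1+\|E_j\|)\le e^{\sum_j\|E_j\|}\le e^{C_R}$ fails, and no choice of doubly-exponential constant can absorb it.

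The repair---which is precisely what the paper does---is to exploit the \emph{anti-diagonal} structure of $\psi^{(1)}$: on the good event where $|\psi^{(0)}_{I_j,22}|$ is bounded away from $1$, Lemma~\ref{lem:normbound} gives the quadratic improvement $\|\psi^{(\le 1)}_{I_j}\|\le 1+O(\|\psi^{(1)}_{I_j}\|^2)$, and the sum $\sum_j\|\psi^{(1)}_{I_j}\|^2$ \emph{does} converge (to $O(C_R^2)$, via Bernstein). You gesture at this (``off-diagonal perturbations only cost $O(\|E_j\|^2)$''), but your actual estimate still uses the linear bound. The paper then has to separately handle the complementary ``bad'' events where hyperbolicity fails, showing they are rare enough (via the second claim of Proposition~\ref{prop:1} and an exponential-moment argument on the number of bad blocks) that their total contribution is also $O(C_R)$. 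Your treatment of the near-turning-point blocks is also off: the number of blocks with $\hat n\lesssim\omega_N$ is $\omega_N/L\asymp\omega_N^{3/2}/\sqrt N=\Omega\log N$ (not $\sqrt{\Omega\log N}$), so a crude $(1+C_R)^{\#\text{bad}}$ bound would again blow up.
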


\begin{proof}
Throughout the proof, let $C_R = e^{CR}$ as in Proposition~\ref{prop:3} and we condition on the event  $\bigcap_{j=1}^\chi \A^3_{\kappa_{j+1}, \kappa_j}$ (conditionally on this event, the blocks $(U_{\kappa_{j+1}} \cdots U_{\kappa_j})_{j=1}^\chi$ remain independent). 
By submultiplicativity, 
\[
\log \| U_n \cdots U_p\|  \le \sum_{j=1}^\chi  \log \| U_{\kappa_{j+1}} \cdots U_{\kappa_j}\|  , 
\]
where $\kappa_1= p$, $\kappa_{\chi+1}=n$ ($p<n$ are fixed for now) and we impose the conditions: $ \frac{\sqrt{N/\omega_N}}{2CR}  \le \kappa_{j+1} - \kappa_j \le \frac{\sqrt{N/\omega_N}}{CR} $.
Observe that for any $x, y >0$, we have $\log(x+y) \le y+ \log_+(x)  $. 
By definition, this implies that for any $j=1, \dots, \chi$, 
\begin{equation} \label{eq:logU}
 \log \| U_{\kappa_{j+1}} \cdots U_{\kappa_j}\|  \le \| \psi^{(>1)}_{\kappa_{j+1}, \kappa_j} \| + \log_+\big(\| \psi^{(\le 1)}_{\kappa_{j+1}, \kappa_j} \| \big) . 
\end{equation}
So $\log \| U_n \cdots U_p\|  \le \sum_{j=1}^\chi  \| \psi^{(>1)}_{\kappa_{j+1}, \kappa_j} \| + \sum_{j=1}^\chi  \log_+\big(\| \psi^{(\le 1)}_{\kappa_{j+1}, \kappa_j} \| \big)$ 
and we will estimate both terms separately. 

For the first term, we have by Proposition~\ref{prop:3}, 
\[ \begin{aligned}
\sum_{j=1}^\chi \VERT \psi^{(>1)}_{\kappa_{j+1}, \kappa_j} \VERT_1
& \le  \frac{C_R \sqrt{N}}{\omega_N^{3/8}}  \sum_{j=1}^{\chi}  \frac{1}{(\omega_N + \widehat{\kappa_{j+1}})^{9/8}}\\
& \lesssim  RC_R \omega_N^{1/4} \sum_{\hat{k}=1}^{+\infty}  \frac{1}{(\omega_N + \hat{k})^{9/8}} \\
\end{aligned}\]
since the block size $ \kappa_{j+1} - \kappa_j \ge \frac{\sqrt{N/\omega_N}}{2CR}$. By \eqref{eq:Xmoment} and adjusting $C>0$, this shows that 
\begin{equation} \label{eq:psichi1}
\sum_{j=1}^\chi \VERT \psi^{(>1)}_{\kappa_{j+1}, \kappa_j} \VERT_1\lesssim C_R
\qquad\text{so that}\qquad
\E\big[\textstyle{\sum_{j=1}^\chi \|  \psi^{(>1)}_{\kappa_{j+1}, \kappa_j} \|}\big] \lesssim C_R .
\end{equation}
Then, to show that  the random variable $\sum_{j=1}^\chi \|  \psi^{(>1)}_{\kappa_{j+1}, \kappa_j} \|$ is concentrated around its mean, we use the Bernstein's inequality \eqref{eq:actualbernstein}.  By a similar computation, we have 
\[ \begin{aligned}
\sum_{j=1}^\chi \VERT \psi^{(>1)}_{\kappa_{j+1}, \kappa_j} \VERT_1^2
& \lesssim  \frac{C_R^2N}{ \omega_N^{3/4}}  \sum_{j=1}^{\chi}  \frac{1}{(\omega_N + \widehat{\kappa_{j+1}})^{9/4}}\\
&\lesssim \frac{RC_R^2\sqrt{N}}{\omega_N^{1/4} } \sum_{\hat{k}=1}^{+\infty}  \frac{1}{(\omega_N + \hat{k})^{9/4}}
 \lesssim \frac{C_R^2\sqrt{N}}{\omega_N^{3/2}}  = \frac{C_R^2}{  \Omega \log N}. 
\end{aligned}\]
Moreover, since $\max_{j \le \chi}\VERT \psi^{(>1)}_{\kappa_{j+1},\kappa_j}\VERT_1 \le\frac{C_R\sqrt{N}}{\omega_N^{3/2}} =  \frac{C_R}{ \Omega \log N}$, 
by \eqref{eq:Xcentering} and \eqref{eq:actualbernstein} with $t= RC_R$, this implies that 
\[
    \Pr\left[ \left| \textstyle{ \sum_{j=1}^\chi \|  \psi^{(>1)}_{\kappa_{j+1}, \kappa_j} \|} - \E\big[\textstyle{\sum_{j=1}^\chi \|  \psi^{(>1)}_{\kappa_{j+1}, \kappa_j} \|}\big] \right| \geq R C_R \right] \leq 2\exp\left( - c^{-1} R \Omega \log N  \right).
\]
After adjusting $R$ and $C$, by \eqref{eq:psichi1},  this shows that so that with probability (at least) $1-2N^{-R\Omega}$, 
\begin{equation} \label{est3}
\sum_{j=1}^\chi \| \psi^{(>1)}_{\kappa_{j+1}, \kappa_j}\| \lesssim C_R . 
\end{equation}

Now, we have to deal with the second term on the RHS of \eqref{eq:logU}. With a constant $M\ge 2$, let us define for $j=1, \dots , \chi$, the events
\begin{equation} \label{def:eventEj}
\mathcal{E}_j = \left\{ |\psi^{(0)}_{\kappa_{j+1},\kappa_j, 22}| \le 1- M^{-2} ,  \| \psi^{(1)}_{\kappa_{j+1}, \kappa_j}\|^2 \le M^{-2}/2 \right\} .
\end{equation}
Using the first bound from Lemma \ref{lem:normbound}, we have  conditionally on ${\mathcal{E}_j }$, 
\[
\| \psi^{(\le 1)}_{\kappa_{j+1}, \kappa_j} \|  \le  D_1\biggl(1+\frac{2 \| \psi^{(1)}_{\kappa_{j+1}, \kappa_j}\|^2}{D_1^2-D_2^2}\biggr)  \le 1+ c M^2 \| \psi^{(1)}_{\kappa_{j+1}, \kappa_j}\|^2, 
\]
where we used  $\diag(\psi^{(\le 1)}_{\kappa_{j+1}, \kappa_j} ) = \psi^{(0)}_{\kappa_{j+1}, \kappa_j} $  so that $1 \le D_1 \le 1+ \| \psi^{(1)}_{\kappa_{j+1}, \kappa_j}\|^2/2 $ and $D_2^2 \le 1- M^{-2}/2$. 
This shows that
\begin{equation} \label{est7}
\sum_{j=1}^\chi  \log_+\big(\| \psi^{(\le 1)}_{\kappa_{j+1}, \kappa_j} \| \big) \1_{\mathcal{E}_j } \lesssim M^2 \sum_{j=1}^\chi \| \psi^{(1)}_{\kappa_{j+1}, \kappa_j}\|^2 . 
\end{equation}

Then, observe that by the Cauchy--Schwartz inequality and by Proposition~\ref{prop:2}, 
 \[
  \sum_{j=1}^\chi \big\VERT \| \psi^{(1)}_{\kappa_{j+1}, \kappa_j}\|^2 \big\VERT_1 \le   \sum_{j=1}^\chi \big\VERT  \psi^{(1)}_{\kappa_{j+1}, \kappa_j}  \big\VERT_2^2
  \lesssim  \frac{\sqrt{N}}{\omega_N^{1/4}} \sum_{j=1}^\chi  \frac{C_R^2}{(\omega_N +  \widehat{\kappa_{j+1}})^{5/4}} .
 \]
 Since the block size $ \kappa_{j+1} - \kappa_j \ge \frac{\sqrt{N/\omega_N}}{2CR}$, this shows that 
  \[
  \sum_{j=1}^\chi \big\VERT \| \psi^{(1)}_{\kappa_{j+1}, \kappa_j}\|^2 \big\VERT_1 \lesssim  \omega_N^{1/4} \sum_{\hat{k}=1}^{+\infty}  \frac{R C_R^2}{(\omega_N +  \hat{k})^{5/4}} \lesssim  C_R^2. 
  \]
  In particular, this implies that $  \E\big[\sum_{j=1}^\chi \| \psi^{(1)}_{\kappa_{j+1}, \kappa_j}\|^2\big] \lesssim  C_R^2$. Moreover, by a similar argument, we also have 
 \[
  \sum_{j=1}^\chi \big\VERT \| \psi^{(1)}_{\kappa_{j+1}, \kappa_j}\|^2 \big\VERT_1^2  
  \lesssim  \frac{N}{\sqrt{\omega_N}}\sum_{j=1}^\chi  \frac{C_R^4}{(\omega_N +  \widehat{\kappa_{j+1}})^{5/2}} 
    \lesssim R C_R^4 \frac{\sqrt{N}}{\omega_N^{3/2}}  =  \frac{ C_R^4}{\Omega\log N}. 
 \]  
 This shows that the  random variable $\sum_{j=1}^\chi \| \psi^{(1)}_{\kappa_{j+1}, \kappa_j}\|^2$ is concentrated around its mean, which is of order $1$. Namely, since the summands are independent and $ \max_{j \le \chi}\VERT \psi^{(1)}_{\kappa_{j+1}, \kappa_j} \VERT_2 \lesssim C_R^2   \frac{\sqrt{N}}{\omega_N^{3/2} }=  \frac{C_R^2}{\Omega\log N}$,   by Bernstein's inequality \eqref{eq:actualbernstein} with $t= R C_R^2$, this implies that 
 \[
    \Pr\left[ \left| \textstyle{ \sum_{j=1}^\chi \|  \psi^{(1)}_{\kappa_{j+1}, \kappa_j} \|^2} - \E\big[\textstyle{\sum_{j=1}^\chi \|  \psi^{(1)}_{\kappa_{j+1}, \kappa_j} \|^2}\big] \right| \geq R C_R^2 \right] \leq 2\exp\left( - c^{-1} R \Omega \log N\right) .
\]
After adjusting $R$ and $C$, hese bounds show that $\sum_{j=1}^\chi \|  \psi^{(1)}_{\kappa_{j+1}, \kappa_j} \|^2 \lesssim  C_R$  with probability at least $1-2N^{R\Omega}$. 
By \eqref{est7}, this implies that with the same (overwhelming) probability 
\begin{equation} \label{est2}
\sum_{j=1}^\chi  \log_+\big(\| \psi^{(\le 1)}_{\kappa_{j+1}, \kappa_j} \| \big) \1_{\mathcal{E}_j } \lesssim C_R
\end{equation}

To complete the proof, it remains to show that with overwhelming probability, the random variable 
$\sum_{j=1}^\chi  \log_+\big(\| \psi^{(\le 1)}_{\kappa_{j+1}, \kappa_j} \| \big) \1_{\mathcal{E}_j^c}$ remains bounded by a constant. 
The idea is that the events $\mathcal{E}_j^c$ -- see \eqref{def:eventEj} -- are independent with a small probability to occur. 
Using the second bound form Lemma \ref{lem:normbound}, we have for $j=1, \dots , \chi$, 
\[
\big\| \psi^{(\le 1)}_{\kappa_{j+1}, \kappa_j} \big\|   \le  |\psi^{(0)}_{\kappa_{j+1}, \kappa_j, 22} |  \vee 1+ \|  \psi^{(1)}_{\kappa_{j+1}, \kappa_j} \| . 
\]
By Proposition~\ref{prop:1}, $\max_{j\le \chi} |\psi^{(0)}_{\kappa_{j+1}, \kappa_j, 22} | \le C_R$ with  (at least) probability $1- N^{1-R\Omega}$. 
Moreover, by Proposition~\ref{prop:2},
\[
   \max_{j\le \chi} 
\big\VERT \psi^{(1)}_{\kappa_{j+1}, \kappa_j} \big\VERT_2  \lesssim  C_R \tfrac{N^{1/4}}{\omega_N^{3/4}}  = \tfrac{C_R}{\sqrt{\Omega\log N}} . 
\]

Hence, $\max_{j\le \chi}\|  \psi^{(1)}_{\kappa_{j+1}, \kappa_j} \| \lesssim \sqrt{R} C_R$ with probability $1- N^{1-R\Omega}$. 
This implies that  
$\max_{j\le \chi} \big\| \psi^{(\le 1)}_{\kappa_{j+1}, \kappa_j} \big\|  \le C_R$ with probability $1- 2N^{1-R\Omega}$. Thus, if we set
  $\mathrm{X} := \sum_{j=1}^\chi \1_{\mathcal{E}_j^c}$, as $C_R = e^{CR}$, it holds for any $t>0$
\begin{equation} \label{est5}
\P\left[ \bigg\{ \sum_{j=1}^\chi \log_+\| \psi^{(\le 1)}_{\kappa_{j+1}, \kappa_j} \| \1_{\mathcal{E}_j^c}  \ge CRt  \bigg\} \bigcap_{j=1}^\chi \A^3_{\kappa_{j+1}, \kappa_j} \right] \le  \P\left[ \big\{ \mathrm{X}  \ge t \big\} \cap_{j=1}^\chi \A^3_{\kappa_{j+1}, \kappa_j}  \right]  + 2N^{1-R\Omega} . 
\end{equation}
Observe that $\mathrm{X}$ is a sum of independent Bernoulli random variables whose mean satisfies 
\[
\P[\mathcal{E}_j^c \cap  \A^3_{\kappa_{j+1}, \kappa_j}] \le \P[|\psi^{(0)}_{\kappa_{j+1},\kappa_j, 22}| > 1- M^{-2}] + \P\left[\big\{ \| \psi^{(1)}_{\kappa_{j+1}, \kappa_j}\|^2 > M^{-2}/2 \big\} \cap  \A^3_{\kappa_{j+1}, \kappa_j} \right] .
\]
Then, using the second bound from Proposition~\ref{prop:1} together with Proposition~\ref{prop:2} and \eqref{eq:Xtail} with $p=1$, this shows that
\[ \begin{aligned}
\P[\mathcal{E}_j^c \cap  \A^3_{\kappa_{j+1}, \kappa_j}]  & \le
 \exp\left( - \frac{\sqrt{\omega_N}(\omega_N+ \widehat{\kappa_{j+1}})}{CM^2\sqrt{N}} \right)
 +  \exp\left( -  \frac{\omega_N^{1/4}(\omega_N + \widehat{\kappa_{j+1}})^{5/4}}{ C M^2C_R^2\sqrt{N}} \right)  \\
&\le 2  \exp\left( -  \frac{\omega_N^{3/2} / \sqrt{N}+(\chi-j)}{C_R'}\right)
 \end{aligned}\]
where $C_R' = 2C^2M^2 R C_R^2$  and we used that by assumptions: $ \widehat{\kappa_{j+1}} \ge (\chi-j) \frac{\sqrt{N/\omega_N}}{2CR}   $ for all $j=1,\dots, \chi$. 

\medskip

  Now, we use the previous estimate, to obtain concentration for the random variable $\mathrm{X}$. Observe that with $p_N = \exp\left( -  \frac{\omega_N^{3/2}}{C_R'\sqrt{N}} \right) = N^{-\Omega/C_R'}$, it holds for any $\lambda>0$, 
 \[\begin{aligned}
 \E\bigg[e^{\lambda \mathrm{X} } \1_{\bigcap_{j=1}^\chi \A^3_{\kappa_{j+1}, \kappa_j}}\bigg] & \le  \prod_{j=0}^{+\infty} \left( 1+ 2(e^\lambda-1) p_N e^{- j/C_R'} \right) \\
 &\le \exp\left( C_R''(e^\lambda-1) p_N \right)
\end{aligned} \]
with $C_R'' = 2(1- e^{- 1/C_R'} )^{-1}$. 
 By Markov's inequality, this shows that by picking $\lambda = \log(p_N^{-1})$, then for any $t>0$, 
 \[
 \P\left[ \big\{ \mathrm{X}  \ge t \big\} \cap_{j=1}^\chi \A^3_{\kappa_{j+1}, \kappa_j}  \right] \le \exp\left( C_R'' (e^\lambda-1) p_N - \lambda t \right) = e^{C_R''} p_N^{t} . 
 \]
Choosing $t= R C_R'$ and adjusting $C$,  we deduce from \eqref{est5} that 
 \begin{equation} \label{est4}
 \P\left[ \bigg\{ \sum_{j=1}^\chi \log_+\| \psi^{(\le 1)}_{\kappa_{j+1}, \kappa_j} \| \1_{\mathcal{E}_j^c}  \ge C_R' R^2    \bigg\} \bigcap_{j=1}^\chi \A^3_{\kappa_{j+1}, \kappa_j} \right] \lesssim e^{C_R''} N^{-R\Omega} . 
 \end{equation}

\medskip 

In the end, we can choose $M=2$, so that by adjusting $C$ again,  both $R^2C_R' , C_R'' \lesssim C_R$. 
By combining the estimates  \eqref{est3}, \eqref{est2}, \eqref{est4} with \eqref{eq:logU}, we conclude that conditionally on the event  $\bigcap_{j=1}^\chi \A^3_{\kappa_{j+1}, \kappa_j}$, it holds  with probability (at least) $ 1- 3e^{C_R}N^{-R\Omega}$, 
 \begin{equation} \label{eq:goodUevent}
\log \| U_n \cdots U_p\|  \le \sum_{j=1}^\chi  \log \| U_{\kappa_{j+1}} \cdots U_{\kappa_j}\|  \lesssim  C_R. 
 \end{equation}
 To complete the proof, let us recall that according to Proposition~\ref{prop:3} and by a union bound, 
$\P\left[\bigcap_{1\le p <n \le N}\bigcap_{j=1}^\chi \A^3_{\kappa_{j+1}, \kappa_j}\right] \ge 1-  N^{5-R\Omega}$. 
\end{proof}

\subsection{Final comparison}
\label{sec:final}

In this section, we conclude the proof of Theorem \ref{thm:main} by showing that   $\prod_{k=1}^NU_{k}  \simeq \psi_{N,1}^{(0)}$ with overwhelming probability.
Note that by Proposition~\ref{prop:2}, we already have control of $\psi_{N,1}^{(1)}$ (by \eqref{eq:Xtail}, this proposition implies  that for any small $\epsilon>0$, we have $\| \psi_{N,1}^{(1)}\|  \lesssim \epsilon$  with probability at least $1- N^{-\epsilon^2\Omega/C_R^2}$).
Hence, according to the perturbative expansion \eqref{eq:psiTaylor}, we would like to establish some moderate deviation control for  $\psi^{(>1)}$ knowing that by Proposition~\ref{prop:4}, the product of the matrices $U_k$ remains bounded with overwhelming probability. 

\medskip

Let us observe that by Lemma~\ref{lem:psistructure} and \eqref{eq:psi1step}, we have  for any $1\le  n<m \le N$, 
\begin{equation} \label{psidecomp}
  \begin{aligned}
    &\psi_{m,n}^{(>1)} \left( \begin{smallmatrix} 0 & 0 \\ 0 &1\end{smallmatrix} \right) 
    = \sum_{k= n}^{m} U_{m} \dots U_{k+1} \left( \begin{smallmatrix} 0 & 0 \\ 0 &1\end{smallmatrix} \right)  \eta_{k,21}  \psi^{(1)}_{k-1,n,12}, \\
    &\psi_{m,n}^{(>1)} \left( \begin{smallmatrix} 1 & 0 \\ 0 &0\end{smallmatrix} \right) 
    = \sum_{k= n}^{m} \psi^{(>0)}_{m,k+1} \left( \begin{smallmatrix} 0 & 0 \\ 1 &0\end{smallmatrix} \right)  \eta_{k,21}  \psi^{(0)}_{k-1,n,11}  . 
 \end{aligned}
\end{equation}
Each term will be control separately, as they behave substantially differently.  

The first term is easy to handle since it depends only on $\psi^{(1)}_{k,n,12}$ which converges fast to 0 as $(n-k) \to \infty$ (see the proof of  Proposition~\ref{prop:8} below). 
The second  term is more difficult to control and it tends to be larger.  However, the process $\psi^{(1)}_{m,k,21},$ which is the leading contribution to the second term, tends to regenerate as $m$ is held fixed and $k$ decreases.  
This leads to better concentration properties, which salvages the estimate.  

\medskip

Recall that  $\psi^{(0)}_{k,n,11} = 1,$   $n = N- \hat{n}$ and let $\hat \filt_{\hat n}$ denotes  the $\sigma$--algebra generated by $\left\{ \eta_{k,ij} : \hat{k}\le \hat{n} \right\}$. 
By definitions, $\psi_{m,\hat n}^{(>1)}$ is adapted to $\hat \filt_{\hat n}$ as a process in $\hat n$ with $m$ held fixed.
Thus we can  perform a Doob decomposition $\psi_{m,n}^{(>1)} \left( \begin{smallmatrix} 1 & 0 \\ 0 &0\end{smallmatrix} \right)  =  Z_{\hat{n},\hat{m}} + \Upsilon_{\hat{n}, \hat{m}},$  
where for any $1\le n<m \le N$, 
\begin{equation}\label{eq:Zinc}
  \begin{aligned}
    Z_{\hat{n},\hat{m}} &=  \sum_{k=n}^{m} \psi^{(>0)}_{m,k+1} \left( \begin{smallmatrix} 0 & 0 \\ 1 &0\end{smallmatrix} \right)  (\eta_{k,21} - \Exp \eta_{k,21}), \\
    \Upsilon_{\hat{n},\hat{m}} &=  \sum_{k=n}^{m} \psi^{(>0)}_{m,k+1} \left( \begin{smallmatrix} 0 & 0 \\ 1 &0\end{smallmatrix} \right)  \Exp \eta_{k,21}. \\
  \end{aligned}
\end{equation}
Recall that 
$\psi^{(>0)}_{m,n} = \psi^{(1)}_{m,n} + \psi^{(>1)}_{m,n} $, 
so that by combining Proposition~\ref{prop:2} with Corollary~\ref{cor:3}, we obtain the estimate valid for all $1\le  n<m \le N$ such that $(m-n) \le \frac{\sqrt{N/\omega_N}}{C R}$, 
\[ \begin{aligned}
\left\VERT \psi^{(>0)}_{m,n}  \1_{ \A^4_{m,n}} \right\VERT_2
& \le \left\VERT \psi^{(1)}_{m,n}  \1_{\A^2_{m,n}} \right\VERT_2 +\left\VERT \psi^{(>1)}_{m,n}  \1_{\A^4_{m,n}} \right\VERT_2 \\
& \le \frac{C_RN^{1/4}}{(\omega_N + \hat{n})^{5/8} \omega_N^{1/8}}  
\left(1+ \frac{N^{1/4}\sqrt{R\Omega\log N}}{ \omega_N^{3/4}}  \right) . 
\end{aligned}\]
Note that we used that $\A^4_{m,n} \subset \A^2_{m,n}$. 
As $\frac{N^{1/4}\sqrt{R\Omega\log N}}{ \omega_N^{3/4}}= \sqrt{R}$, by adjusting the constant $C$, this shows that on short blocks,
\vspace{-.5cm}
\begin{equation} \label{est:psi>0}
\left\VERT \psi^{(>0)}_{m,n}  \1_{ \A^4_{m,n}} \right\VERT_2 \le \frac{C_R N^{1/4}}{(\omega_N + \hat{n})^{5/8} \omega_N^{1/8}}   . 
\end{equation}
%

In the following, we wish to apply the Freedman--Tropp's inequality from Theorem~\ref{thm:M} to obtain a tail bound for the martingale $Z_{\hat{n},\hat{m}}$ when $\hat{m}=0$.  To this end, we need an a priori estimate for its quadratic variation. 

\begin{proposition} \label{prop:5}
Let $Q$ be the (total) quadratic variation of the martingale $(Z_{\hat{n},0})_{\hat{n}=0}^{N} :$
\begin{equation*} 
Q =  \sum_{\hat{n} = 0}^{N-1}\E\left[  \| Z_{\widehat{n+1},0}- Z_{\hat{n},0} \|^2 \middle| \hat \filt_{\hat n} \right]  .
\end{equation*}
For any $R>0$,  there exists a constant $c_R>0$ and an event $\mathscr{G}_1$ with $\P\left[\mathscr{G}_1^c \right] \lesssim_R N^{5-R\Omega}  $ so that 
\[
\big\VERT \big( Q - \tfrac{c_R}{\Omega \log N} \big)_+  \1_{\mathscr{G}_1 } \big\VERT_2^2
 \le  \frac{c_R}{(\Omega\log N)^{3}} . 
\]
\end{proposition}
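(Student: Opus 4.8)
The plan is to reduce the estimate on the quadratic variation to a weighted sum of the squared norms $\|\psi^{(>0)}_{N,p}e_2\|^2$ (with $e_1=\left(\begin{smallmatrix}1\\0\end{smallmatrix}\right)$, $e_2=\left(\begin{smallmatrix}0\\1\end{smallmatrix}\right)$) and then to control that sum in mean and in $L^2$--fluctuation. \emph{Step 1.} The martingale $(Z_{\hat n,0})_{\hat n=0}^N$ of \eqref{eq:Zinc} gains exactly one summand at each step, so writing $p=N-\hat n$ the increment is the rank--one matrix $Z_{\widehat{n+1},0}-Z_{\hat n,0}=\psi^{(>0)}_{N,p}\left(\begin{smallmatrix}0&0\\1&0\end{smallmatrix}\right)(\eta_{p-1,21}-\Exp\eta_{p-1,21})$. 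Since $\psi^{(>0)}_{N,p}$ is $\hat\filt_{\hat n}$--measurable while $\eta_{p-1,21}$ is independent of $\hat\filt_{\hat n}$, taking conditional expectations gives $\Exp[\|Z_{\widehat{n+1},0}-Z_{\hat n,0}\|^2\mid\hat\filt_{\hat n}]=\|\psi^{(>0)}_{N,p}e_2\|^2\,\Var(\eta_{p-1,21})$; with the variance bound in \eqref{recall:eta} and discarding the negligible term $p=N$ (where $\psi^{(>0)}_{N,N}e_2=\eta_{N,12}e_1$), this yields the deterministic domination $Q\le c\sum_{p=1}^{N-1}(\omega_N+\hat p)^{-1}\|\psi^{(>0)}_{N,p}e_2\|^2$.

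\emph{Step 2 (the main step).} By Lemma~\ref{lem:psistructure}, $\psi^{(>0)}_{N,p}e_2=\psi^{(1)}_{N,p,12}\,e_1+\psi^{(>1)}_{N,p}e_2$; the first summand is bounded directly by Proposition~\ref{prop:2}. For the second I would iterate \eqref{eq:psi1step} with $\ell=0$ to get the coupled identities
\[
\psi^{(>1)}_{N,p}e_2=\sum_{k=p}^N\psi^{(0)}_{k-1,p,22}\,\eta_{k,12}\,\psi^{(>0)}_{N,k+1}e_1,
\qquad
\psi^{(>0)}_{N,k+1}e_1=\psi^{(1)}_{N,k+1,21}\,e_2+\sum_{j=k+1}^N\eta_{j,21}\,\psi^{(>0)}_{N,j+1}e_2,
\]
and close this renewal--type recursion using the exponential decay of $|\psi^{(0)}_{k-1,p,22}|$ from Proposition~\ref{prop:1} (which localises each sum to a window of length $\O(\sqrt{N/(\omega_N+\hat p)})$), the uniform norm bound $\|U_n\cdots U_p\|\le C_R'$ of Proposition~\ref{prop:4}, the short--block estimates \eqref{est:psi>0} and Corollary~\ref{cor:3}, and the smallness \eqref{recall:eta} of the noise. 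It is essential to use that $\eta_{k,12}$ (resp.\ $\eta_{j,21}$) is independent of the $\psi^{(0)}$ and $\psi^{(>0)}$ factors it multiplies, so that the inner sums are estimated through their variances rather than by the triangle inequality---otherwise one only obtains the far too lossy bound $\|\psi^{(>0)}_{N,k+1}e_1\|\lesssim C_R'$. On the event $\mathscr{G}_1$ defined as the intersection of the favorable events $\A^\bullet_{n,p}$ from Propositions~\ref{prop:1}--\ref{prop:3} over all $1\le p<n\le N$ with the event of Proposition~\ref{prop:4} (so $\Pr[\mathscr{G}_1^c]\lesssim_R N^{5-R\Omega}$ by a union bound), this produces $\VERT\psi^{(>0)}_{N,p}e_2\,\one_{\mathscr{G}_1}\VERT_2\lesssim C_R N^{1/4}(\omega_N+\hat p)^{-5/8}\omega_N^{-1/8}$, with the $\psi^{(>1)}$ part strictly subdominant.

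\emph{Step 3: mean and fluctuation.} Taking expectations in Step~1 and summing $\sum_{\hat p\ge0}(\omega_N+\hat p)^{-9/4}\lesssim\omega_N^{-5/4}$ gives $\Exp[Q\,\one_{\mathscr{G}_1}]\lesssim C_R^2\sqrt N\,\omega_N^{-3/2}=C_R^2(\Omega\log N)^{-1}$; choosing $c_R$ large enough we may arrange $\Exp[Q\,\one_{\mathscr{G}_1}]\le\tfrac12 c_R(\Omega\log N)^{-1}$, so that $(Q-c_R(\Omega\log N)^{-1})_+\one_{\mathscr{G}_1}\le|Q\one_{\mathscr{G}_1}-\Exp[Q\one_{\mathscr{G}_1}]|$ pointwise. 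For the $L^2$ norm of the right side I would run a Doob--martingale decomposition of $Q$ along $(\hat\filt_{\hat n})$: the increment revealing $U_\ell$ ($\ell=N-\hat n$) affects only the summands with $p\le\ell$, and through the explicit representation $\psi^{(1)}_{N,p,12}=\sum_k\eta_{k,12}\psi^{(0)}_{k-1,p,22}$ it does so only via the \emph{centred} coordinates of $U_\ell$ paired with a windowed sum of decaying $\psi^{(0)}_{\cdot,22}$--weights; hence, by the moment bounds of Proposition~\ref{prop:2}, each increment has $L^2$ norm decaying like a sufficiently high negative power of $\omega_N+\hat\ell$, and summing the squares (using $\omega_N^{3}=N(\Omega\log N)^2$ and $\Omega\le N^{\delta/6}$) gives $\Var(Q\,\one_{\mathscr{G}_1})\ll c_R(\Omega\log N)^{-3}$. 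Combining the two displays proves the proposition.

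\emph{Main obstacle.} The delicate point is Step~2: propagating the short--block estimates of Corollary~\ref{cor:3} to the full long range $p\in\{1,\dots,N\}$ without the catastrophic loss coming from the a priori bound $\|U_n\cdots U_p\|\le C_R'$, which forces the coupled renewal recursion above and the simultaneous use of hyperbolicity (decay of $\psi^{(0)}_{\cdot,22}$) and of the martingale cancellation built into the noise. A secondary difficulty is that the fluctuation bound in Step~3 must gain a full factor $(\Omega\log N)^{-1}$ over the mean, which is exactly what the single--coordinate structure of the Doob increments provides and which rules out any cruder union--bound argument.
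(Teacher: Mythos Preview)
Your Step~1 matches the paper. The substantive divergence is in Steps~2--3, and both contain real gaps.

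\textbf{Step~2.} The paper does \emph{not} attempt to close your coupled recursion. Instead it blocks: with $\gamma_N=\lfloor\sqrt{N/\omega_N}/CR\rfloor$ one writes
\[
\psi^{(>0)}_{N,n}\left(\begin{smallmatrix}0&0\\1&0\end{smallmatrix}\right)
=\sum_{\ell=0}^{\chi_n}\bigl(U_N\cdots U_{n+(\ell+1)\gamma_N+1}\bigr)\,Y^{(\ell)}_n\,\psi^{(0)}_{n+\ell\gamma_N,n,22},
\qquad
Y^{(\ell)}_n\coloneqq\psi^{(>0)}_{n+(\ell+1)\gamma_N,\,n+\ell\gamma_N}\left(\begin{smallmatrix}0&0\\1&0\end{smallmatrix}\right).
\]
Each $Y^{(\ell)}_n$ lives on a short block where \eqref{est:psi>0} applies directly; the factor $\psi^{(0)}_{n+\ell\gamma_N,n,22}$ decays like $e^{-\ell/4CR}$ by Proposition~\ref{prop:1}; and the remaining $U$--product is $\le C_R'$ on the event of Proposition~\ref{prop:4}. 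Jensen then gives $\|\psi^{(>0)}_{N,n}e_2\|^2\one_{\mathscr{C}_n}\lesssim_R\sum_\ell\widetilde\alpha^{\ell}\|Y^{(\ell)}_n\|^2$ with $\widetilde\alpha<1$, whence $Q\one_{\mathscr{C}}\lesssim_R\sum_\ell\widetilde\alpha^{\ell} Q_\ell$ with $Q_\ell=\sum_n(\omega_N+\hat n)^{-1}\|Y^{(\ell)}_n\|^2\one_{\mathscr{B}^{(\ell)}_n}$. Your recursion runs into the difficulty that the $e_1$--identity $\psi^{(>0)}_{N,k+1}e_1=\psi^{(1)}_{N,k+1,21}e_2+\sum_{j>k}\eta_{j,21}\,\psi^{(>0)}_{N,j+1}e_2$ has \emph{no} localisation in $j$; if you estimate that sum ``through its variance'' as you suggest, the predictable quadratic variation you must control is $\sum_{j}\Var(\eta_{j,21})\|\psi^{(>0)}_{N,j+1}e_2\|^2$, which is $Q$ itself. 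So closing the recursion this way is circular unless you supply an independent bound on that partial quadratic variation, and that is exactly what the blocking provides without any recursion.

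\textbf{Step~3.} This is the more serious gap. The statement asks for the \emph{sub--Gaussian} norm $\VERT\cdot\VERT_2$, not the $L^2$ norm: Proposition~\ref{prop:6} subsequently uses a tail bound of the form $\Pr[(Q-\Sigma^2/2)\one_{\mathscr{G}_1}>\Sigma^2/2]\le2\exp\bigl(-\Sigma^4(\Omega\log N)^3/4c_R\bigr)$, which a variance estimate alone cannot deliver. Your Doob--martingale plan would need bounded--difference control $|\Delta_\ell|\le a_\ell$ a.s.\ with $\sum_\ell a_\ell^2\lesssim(\Omega\log N)^{-3}$, and the indirect dependence of every $\psi^{(0)}_{k-1,p,22}$ on $\eta_{\ell,22}$ for $p\le\ell<k$ makes this far from obvious. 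The paper's mechanism is different: after blocking, the summands $\{\|Y^{(\ell)}_n\|^2:1\le n\le N\}$ in $Q_\ell$ have a dependency graph of maximal degree $\gamma_N$ (since $Y^{(\ell)}_n$ involves only $U_{n+\ell\gamma_N},\dots,U_{n+(\ell+1)\gamma_N}$), so the dependency--graph Bernstein inequality Theorem~\ref{thm:Svante} applies and yields $\VERT(Q_\ell-\Exp Q_\ell)\one_{\mathscr{E}_\ell}\VERT_2^2\lesssim\gamma_N^2\sigma^2\lesssim N^{3/2}\omega_N^{-9/2}=(\Omega\log N)^{-3}$ together with the required control on $\Pr[\mathscr{E}_\ell^c]$. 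Summing the geometric series in $\ell$ then gives the proposition.
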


Unfortunately,  the estimates \eqref{est:psi>0} are only valid on short blocks and they are not precise enough for our application to Proposition~\ref{prop:5}. 
To get our bound for quadratic variation  $Q$, we have to exploit the independence of the matrix $U_k$ by using a blocking argument. The details of the proof are given in the next section.
By combining this bound  with the Freedman--Tropp's inequality from Theorem~\ref{thm:M}, we also deduce in Section~\ref{sect:extraproofs}  a tail bound for the martingale $Z_{\hat{n},0}$. 

\begin{proposition} \label{prop:6}
For any $R>0$,   there exists a constant $c_R>0$ and an event $\mathscr{G}_2$ with $\P\left[\mathscr{G}_2^c \right] \lesssim_R  N^{5-R\Omega}$ so that 
\[
  \big\VERT \max_{\hat{n}=0,\dots, N-1}\big\|Z_{\hat{n},0}\big\| \1_{\mathscr{G}_2} \big\VERT_2 \le   \frac{c_R}{ \sqrt{\Omega\log N}} . 
\]
\end{proposition}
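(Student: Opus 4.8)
The plan is to recognize $(Z_{\hat n,0})_{\hat n=0}^{N}$ as a $2\times 2$ matrix martingale for the reversed filtration $(\hat\filt_{\hat n})$ and to apply the Freedman--Tropp inequality of Theorem~\ref{thm:M}, feeding in the quadratic--variation estimate of Proposition~\ref{prop:5} together with an almost--sure bound on the martingale increments coming from Proposition~\ref{prop:4}. By \eqref{eq:Zinc} the increment of $Z_{\cdot,0}$ from step $\hat n$ to $\hat n+1$ equals $\psi^{(>0)}_{N,n}\left(\begin{smallmatrix}0&0\\1&0\end{smallmatrix}\right)(\eta_{n-1,21}-\E\eta_{n-1,21})$ with $n=N-\hat n$; since $\psi^{(>0)}_{N,n}$ is $\hat\filt_{\hat n}$--measurable while $\eta_{n-1,\cdot}$ is independent of $\hat\filt_{\hat n}$, the martingale property is immediate, and the conditional second moment of the increment is $\|\psi^{(>0)}_{N,n}\|^{2}\,\E|\eta_{n-1,21}-\E\eta_{n-1,21}|^{2}=\O\big(\|\psi^{(>0)}_{N,n}\|^{2}/(\omega_N+\hat n)\big)$ by \eqref{recall:eta}. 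Everything therefore hinges on a uniform bound for $\|\psi^{(>0)}_{N,n}\|$.

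First I would bound $\|\psi^{(>0)}_{N,n}\|$. Since $\psi^{(>0)}_{N,n}=\prod_{k=n}^{N}U_k-\psi^{(0)}_{N,n}$ and $\|\psi^{(0)}_{N,n}\|=\max\{1,|\psi^{(0)}_{N,n,22}|\}$, on the event $\mathscr{A}_U:=\big\{\max_{1\le p<n\le N}\|U_n\cdots U_p\|\le C_R'\big\}\cap\bigcap_{n=1}^{N}\A^1_{N,n}$ — which has probability $1-\O_R(N^{5-R\Omega})$ by Propositions~\ref{prop:4} and~\ref{prop:1} — one has $\|\psi^{(>0)}_{N,n}\|\le C_R'+C_R=:b_R$ for every $n$. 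Because Theorem~\ref{thm:M} requires the increment bound to hold almost surely, I would work with the stopped martingale $\widetilde Z_{\hat n}:=Z_{\hat n\wedge\tau,0}$, where $\tau$ is the first $\hat n$ with $\|\psi^{(>0)}_{N,N-\hat n}\|>b_R$; this is an $(\hat\filt_{\hat n})$--stopping time since each $\psi^{(>0)}_{N,m}$ with $m\ge n$ is $\hat\filt_{\hat n}$--measurable. By \eqref{recall:eta} the increments of $\widetilde Z$ are then bounded a.s.\ by $\alpha:=2b_R\sqrt{R\Omega\log N/\omega_N}$, its predictable quadratic variation is dominated by the quantity $Q$ of Proposition~\ref{prop:5}, and $\widetilde Z=Z_{\cdot,0}$ on $\mathscr{A}_U$ (where $\tau>N$).

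Next I would apply Theorem~\ref{thm:M} to $\widetilde Z$ with $\Sigma^{2}:=(c_R+\sqrt{c_R R})/(\Omega\log N)$, where $c_R$ is the constant of Proposition~\ref{prop:5}. On $\mathscr{G}_1$, the sub--Gaussian tail bound \eqref{eq:Xtail} applied to $(Q-c_R/(\Omega\log N))_+\one_{\mathscr{G}_1}$, whose sub--Gaussian norm squared is at most $c_R/(\Omega\log N)^{3}$, yields $\P\big[\{Q>\Sigma^{2}\}\cap\mathscr{G}_1\big]\le 2N^{-R\Omega}$; together with $\P[\mathscr{G}_1^{c}]\lesssim_R N^{5-R\Omega}$ this makes $\{Q\le\Sigma^{2}\}$ an event of overwhelming probability, hence so is $\{\text{the quadratic variation of }\widetilde Z\le\Sigma^{2}\}$. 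Setting $\mathscr{G}_2:=\mathscr{A}_U\cap\mathscr{G}_1\cap\mathscr{A}$ with $\mathscr{A}$ the good event of Theorem~\ref{thm:M} for $\widetilde Z$ and this $\Sigma$, the theorem gives $\big\VERT\max_{\hat n}\|\widetilde Z_{\hat n}\|\one_{\mathscr{A}}\big\VERT_2\lesssim\Sigma$, and since $Z_{\cdot,0}=\widetilde Z$ on $\mathscr{G}_2\subseteq\mathscr{A}_U$ and $\mathscr{G}_2\subseteq\mathscr{A}$ we conclude $\big\VERT\max_{\hat n}\|Z_{\hat n,0}\|\one_{\mathscr{G}_2}\big\VERT_2\lesssim_R(\Omega\log N)^{-1/2}$, which is the asserted bound after renaming $c_R$.

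The one delicate point is the exponential term $2de^{-(\Sigma/\alpha)^{2}}$ in Theorem~\ref{thm:M}: one must check that $(\Sigma/\alpha)^{2}\asymp_R N^{1/3}/(\Omega\log N)^{4/3}$ is at least $R\Omega\log N$ for $N$ large, so that this term is $\le N^{-R\Omega}$. This is exactly where the standing assumptions $\omega_N=N^{1/3}(\Omega\log N)^{2/3}$ and $\Omega=o(N^{1/10}/\log N)$ enter, since they give $(\Omega\log N)^{7/3}=o(N^{7/30})=o(N^{1/3})$. Collecting the failure probabilities of $\mathscr{A}_U$, $\mathscr{G}_1$ and $\mathscr{A}$ then yields $\P[\mathscr{G}_2^{c}]\lesssim_R N^{5-R\Omega}$. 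I expect the genuine obstacle not to lie in this martingale argument but in establishing the quadratic--variation bound of Proposition~\ref{prop:5}, which must be proved first and requires a blocking argument to exploit the independence of the $U_k$; that is carried out in the next subsection.
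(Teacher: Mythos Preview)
Your approach is correct and follows the same skeleton as the paper's proof: stop the martingale to get an almost--sure increment bound, apply Theorem~\ref{thm:M} with $\Sigma^2\asymp_R(\Omega\log N)^{-1}$ from Proposition~\ref{prop:5}, and verify $(\Sigma/\alpha)^2\asymp N^{1/3}/(\Omega\log N)^{4/3}\gg R\Omega\log N$ using $\Omega=o(N^{1/10}/\log N)$.

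The one noteworthy difference is your stopping time. You simply stop when $\|\psi^{(>0)}_{N,n}\|$ exceeds the constant $b_R=C_R'+C_R$, using the trivial bound $\|\psi^{(>0)}_{N,n}\|\le\|U_N\cdots U_n\|+\|\psi^{(0)}_{N,n}\|$ together with Propositions~\ref{prop:4} and~\ref{prop:1}. The paper instead recycles the block decomposition \eqref{est6} from the proof of Proposition~\ref{prop:5} and stops when one of the short--block quantities $\|Y^{(\ell)}_n\|^2\one_{\mathscr{B}^{(\ell)}_n}$ exceeds $C_R\Omega\log N\cdot\sqrt N/\omega_N^{3/2}$. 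Since $\Omega\log N\cdot\sqrt N/\omega_N^{3/2}=1$, both routes give the \emph{same} increment bound $\alpha^2\asymp_R\Omega\log N/\omega_N=(\Omega\log N)^{1/3}/N^{1/3}$, hence the same $(\Sigma/\alpha)^2$. Your version is cleaner and avoids re--introducing the blocking machinery; the paper's version has the minor advantage that the stopping time and the quadratic--variation event $\mathscr{G}_1$ are built from the same ingredients, so no separate appeal to Proposition~\ref{prop:1} is needed.
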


Returning to our original considerations, we are now ready to provide a tail bound for  the random variable
$\psi_{N,n}^{(>1)} \left( \begin{smallmatrix} 1 & 0 \\ 0 &0\end{smallmatrix} \right)$. 
The proof relies on the notation from Section~\ref{sect:extraproofs}, in particular on the decomposition  \eqref{eq:psialphadecomposition} below.

\begin{proposition} \label{prop:7}
For any $R>0$,  there exists a constant $c_R>0$ and an event $\mathscr{G}_3$ with $\P\left[\mathscr{G}_3 \right] \lesssim_R N^{5-R\Omega}$ so that 
\[
  \big\VERT \max_{n=1,\dots, N-1}\big\|\psi_{N,n}^{(>1)} \left( \begin{smallmatrix} 1 & 0 \\ 0 &0\end{smallmatrix} \right)\big\| \1_{\mathscr{G}_3} \big\VERT_2 \le   \frac{c_R}{\sqrt{\Omega\log N}} . 
\]
\end{proposition}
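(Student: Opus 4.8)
The plan is to use the Doob decomposition already set up in \eqref{eq:Zinc}. Since $\psi^{(0)}_{k-1,n,11}=1$ by Lemma~\ref{lem:psistructure}, the second line of \eqref{psidecomp} reads
\[
  \psi_{N,n}^{(>1)}\left( \begin{smallmatrix} 1 & 0 \\ 0 &0\end{smallmatrix} \right)
  = \sum_{k=n}^{N}\psi^{(>0)}_{N,k+1}\left( \begin{smallmatrix} 0 & 0 \\ 1 &0\end{smallmatrix} \right)\eta_{k,21},
\]
which, as a process in $\hat n=N-n$ with $N$ fixed, is adapted to the reversed filtration $(\hat\filt_{\hat n})$; its Doob decomposition is exactly $\psi_{N,n}^{(>1)}\left( \begin{smallmatrix} 1 & 0 \\ 0 &0\end{smallmatrix} \right)=Z_{\hat n,0}+\Upsilon_{\hat n,0}$ with $Z,\Upsilon$ as in \eqref{eq:Zinc}. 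The martingale part is precisely what Proposition~\ref{prop:6} controls, so on the event furnished there we already have $\big\VERT\max_{\hat n}\|Z_{\hat n,0}\|\1_{\mathscr{G}_2}\big\VERT_2\le c_R/\sqrt{\Omega\log N}$. The whole remaining task is to show the predictable drift $\Upsilon$ lives at the same scale.

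For $\Upsilon$, the summands have nonnegative norm, so by the triangle inequality $\max_{1\le n\le N-1}\|\Upsilon_{\hat n,0}\|\le\sum_{k=1}^{N}\|\psi^{(>0)}_{N,k+1}\left( \begin{smallmatrix} 0 & 0 \\ 1 &0\end{smallmatrix} \right)\|\,|\Exp\eta_{k,21}|$ uniformly in $n$, and $|\Exp\eta_{k,21}|\le C/(\omega_N+\hat k)$ by \eqref{recall:eta}. I would split $\psi^{(>0)}=\psi^{(1)}+\psi^{(>1)}$. For the first piece, $\|\psi^{(1)}_{N,k+1}\left( \begin{smallmatrix} 0 & 0 \\ 1 &0\end{smallmatrix} \right)\|=|\psi^{(1)}_{N,k+1,12}|$ by Lemma~\ref{lem:psistructure}, and Proposition~\ref{prop:2} gives $\big\VERT\psi^{(1)}_{N,k+1,12}\1_{\A^2_{N,k+1}}\big\VERT_2\lesssim C_RN^{1/4}(\omega_N+\widehat{k+1})^{-5/8}\omega_N^{-1/8}$; intersecting these events over $1\le k\le N$ (a union bound costing $\lesssim N^{3-R\Omega}$) and applying the $\VERT\cdot\VERT_2$ triangle inequality summand by summand,
\[
  \Big\VERT\sum_{k=1}^{N}\frac{|\psi^{(1)}_{N,k+1,12}|}{\omega_N+\hat k}\,\1_{\cdots}\Big\VERT_2
  \lesssim\frac{C_RN^{1/4}}{\omega_N^{1/8}}\sum_{\hat k\ge0}\frac1{(\omega_N+\hat k)^{13/8}}
  \lesssim\frac{C_RN^{1/4}}{\omega_N^{3/4}}=\frac{C_R}{\sqrt{\Omega\log N}},
\]
using $\omega_N=N^{1/3}(\Omega\log N)^{2/3}$. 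For the second piece, $\|\psi^{(>1)}_{N,k+1}\left( \begin{smallmatrix} 0 & 0 \\ 1 &0\end{smallmatrix} \right)\|=\|\psi^{(>1)}_{N,k+1}\left( \begin{smallmatrix} 0 & 0 \\ 0 &1\end{smallmatrix} \right)\|$ (same nonzero column, different position), which is the ``first term'' of \eqref{psidecomp}; expanding via that formula and using Proposition~\ref{prop:4} to bound $\|U_N\cdots U_{j+1}\|$, Proposition~\ref{prop:1} for the decay of $\psi^{(0)}_{\cdot,\cdot,22}$ and Proposition~\ref{prop:2} for the decay of $\psi^{(1)}_{\cdot,\cdot,12}$, one obtains a bound on $\|\psi^{(>1)}_{N,k+1}\left( \begin{smallmatrix} 0 & 0 \\ 0 &1\end{smallmatrix} \right)\|$ no larger than the one for $|\psi^{(1)}_{N,k+1,12}|$ above (this is the content of Proposition~\ref{prop:8}), so the same summation gives $\lesssim c_R/\sqrt{\Omega\log N}$. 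Taking $\mathscr{G}_3=\mathscr{G}_2\cap\bigcap_{k=1}^{N}\A^2_{N,k+1}\cap\{\text{Prop.~\ref{prop:4} event}\}$, combining the three estimates and converting $\VERT\cdot\VERT_2$-control to a tail bound via \eqref{eq:Xtail}, one gets $\P[\mathscr{G}_3^c]\lesssim_R N^{5-R\Omega}$ and the claimed bound.

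The genuinely hard work is all upstream: Proposition~\ref{prop:6} (hence Proposition~\ref{prop:5}, the quadratic-variation estimate) and the first-term bound Proposition~\ref{prop:8}, which are where hyperbolicity of the $U_k$ enters through the blocking/regeneration argument and Freedman--Tropp's inequality. Within the proof of Proposition~\ref{prop:7} itself, the only real obstacle is ensuring the drift $\Upsilon$ does not wreck the $1/\sqrt{\Omega\log N}$ scale: this forces us to exploit the decay of $\psi^{(1)}_{\cdot,\cdot,12}$ and $\psi^{(0)}_{\cdot,\cdot,22}$ away from the turning point instead of a crude uniform bound, to intersect all a priori events \emph{before} union-bounding so that the $\VERT\cdot\VERT_2$ triangle inequality can be applied term by term, and to track the exponents carefully so that the final sum produces exactly $N^{1/4}\omega_N^{-3/4}=(\Omega\log N)^{-1/2}$ and not a positive power of $N$.
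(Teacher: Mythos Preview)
Your overall architecture is correct and matches the paper: write $\psi_{N,n}^{(>1)}\left(\begin{smallmatrix}1&0\\0&0\end{smallmatrix}\right)=Z_{\hat n,0}+\Upsilon_{\hat n,0}$, invoke Proposition~\ref{prop:6} for the martingale, and then show the drift lives at the same scale $1/\sqrt{\Omega\log N}$. Your treatment of the $\psi^{(1)}$ contribution to $\Upsilon$ is also correct and is exactly the summation the paper performs.

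The gap is in the $\psi^{(>1)}$ piece of the drift. Your claim that ``one obtains a bound on $\|\psi^{(>1)}_{N,k+1}\left(\begin{smallmatrix}0&0\\0&1\end{smallmatrix}\right)\|$ no larger than the one for $|\psi^{(1)}_{N,k+1,12}|$ (this is the content of Proposition~\ref{prop:8})'' is not justified. Proposition~\ref{prop:8} is stated and proved only for the starting point $n=1$; its argument relies essentially on the fact that $\hat 1=N-1$, so that $|\psi^{(0)}_{p,1,22}|\lesssim_R e^{-p/4}$ decays at a \emph{macroscopic} rate and one can freeze $\psi^{(1)}_{\cdot,1,12}$ after $p=O(\log N)$ steps. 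For a general left endpoint $k+1$ with $\hat k$ small, the decay rate of $\psi^{(0)}_{\cdot,k+1,22}$ is only $\sqrt{\omega_N/N}$, and the freezing scale blows up to $\sim\log N\cdot\sqrt{N/\omega_N}$. If instead you try the crude route---bounding $\|\psi^{(>1)}_{N,k+1}\left(\begin{smallmatrix}0&0\\0&1\end{smallmatrix}\right)\|\le C_R'\sum_{j>k}|\eta_{j,21}|\,|\psi^{(1)}_{j-1,k+1,12}|$ using Proposition~\ref{prop:4} and the uniform-in-$j$ bound from Proposition~\ref{prop:2}---the inner sum over $j$ contributes $\sqrt{\Omega\log N}\cdot\sqrt{\omega_N+\hat k}$ and the final bound on $\Upsilon$ becomes of order $N^{1/6}(\Omega\log N)^{1/3}$, which diverges.

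The paper avoids this entirely by \emph{not} splitting $\psi^{(>0)}=\psi^{(1)}+\psi^{(>1)}$. Instead it reuses the blocking decomposition \eqref{eq:psialphadecomposition} already built for Proposition~\ref{prop:5}: on $\mathscr{C}_m$ one has
\[
  \Bigl\|\psi^{(>0)}_{N,m}\left(\begin{smallmatrix}0&0\\1&0\end{smallmatrix}\right)\Bigr\|
  \lesssim_R\sum_{\ell=0}^{\chi_m}e^{-\ell/4CR}\,\|Y^{(\ell)}_m\|\,\1_{\mathscr{B}^{(\ell)}_m},
\]
where each $Y^{(\ell)}_m$ is $\psi^{(>0)}$ on a \emph{short} block and is therefore controlled by the short-block estimate \eqref{est:psi>0}, giving $\VERT Y^{(\ell)}_m\1\VERT_2\lesssim_R N^{1/4}(\omega_N+\hat m)^{-5/8}\omega_N^{-1/8}$ uniformly in $\ell$. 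Summing the geometric series in $\ell$ and then $\sum_m(\omega_N+\hat m)^{-13/8}$ yields exactly $N^{1/4}\omega_N^{-3/4}=1/\sqrt{\Omega\log N}$. In other words, the right object to bound pointwise is $\psi^{(>0)}$ on short blocks (where \eqref{est:psi>0} applies), not $\psi^{(>1)}$ on the full interval $[k+1,N]$; the exponential factor from $\psi^{(0)}$ decay between blocks is what makes the long-block sum converge.
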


\begin{proof}
  Let us recall that $\psi_{N,n}^{(>1)} \left( \begin{smallmatrix} 1 & 0 \\ 0 &0\end{smallmatrix} \right)  =  Z_{\hat{n},0} + \Upsilon_{\hat{n}, 0} $ for any $n=1,\dots, N$ where $Z$ and $\Upsilon$ are defined in \eqref{eq:Zinc}. 
By Proposition~\ref{prop:6}, we already have good control of the martingale part $ Z_{\hat{n},0}$, so it suffices to establish that  there also exists an event $\mathscr{C}$ with 
$\P[\mathscr{C}^c] \lesssim N^{5-R\Omega}$ such that 
\begin{equation}\label{Upsilon:est} 
  \big\VERT \max_{\hat{n}=0,\dots, N-1}\big\|\Upsilon_{\hat{n},0}\big\| \1_{\mathscr{C}} \big\VERT_2 \le   \frac{c_R}{\sqrt{\Omega\log N}} . 
\end{equation}
 
 We rely on the blocking argument and the notation from the proof of Proposition~\ref{prop:5}. 
 Recall from \eqref{eq:Zinc} that
\[ \begin{aligned}
 \Upsilon_{\hat{n}, 0}  
 & =  \sum_{m=n}^{N} \E[\eta_{m,21}]   \psi^{(>0)}_{N, m+1} \left( \begin{smallmatrix}  0 & 0 \\ 1 &0 \end{smallmatrix} \right).
\end{aligned}\]
Then, using the decomposition  \eqref{eq:psialphadecomposition} below with $\alpha=1$ in this case and the triangle inequality, we obtain that for all $1\le m <N$, 
\[
\left\| \psi^{(>0)}_{N,m}  \right\| \1_{\mathscr{C}_m}
\lesssim_R  \sum_{\ell= 0}^{\chi_{m}} e^{-\ell/ 4CR}  \big\| Y^{(\ell)}_{m}  \big\| \1_{\mathscr{B}^{(\ell)}_{m}} .
\]
The vents $\mathscr{C}_m$ are as in \eqref{P:C} and theexponential factor comes from the estimate \eqref{est9}.
Hence, exactly like \eqref{Q:decomposition}, since $\big| \E \eta_{m,21}  \big| \lesssim \frac{1}{\omega_N+\hat{m}} $ for $m=1,\dots, N$,  this shows that  with $\mathscr{C}= \cap_{m = 1}^{N} \mathscr{C}_m $,
\begin{equation} \label{est10}
\max_{\hat{n}=0,\dots, N-1}\|   \Upsilon_{\hat{n}, 0} \| \1_{\mathscr{C}}  \lesssim_R \sum_{\ell= 0}^{\chi} e^{-\ell/ 4CR} \bigg(\sum_{m = 1}^{N} \frac{\big\| Y^{(\ell)}_{m}  \big\| \1_{\mathscr{B}^{(\ell)}_{m}}  }{\omega_N+\hat{m}}  \1_{\chi_m \ge \ell} \bigg). 
\end{equation}
Moreover, using the estimate \eqref{Y:est} and the triangle inequality, we obtain uniformly  for all $\ell= 0, \dots, \chi$, 
\vspace{-.5cm}
\[
\Bigg\VERT\sum_{m = 1}^{N} \frac{\big\| Y^{(\ell)}_{m}  \big\| \1_{\mathscr{B}^{(\ell)}_{m}}  }{\omega_N+\hat{m}}  \1_{\chi_m \ge \ell}  \Bigg\VERT_2
\lesssim_R  \sum_{m = 1}^{N}  \frac{N^{1/4}}{(\omega_N + \hat{m})^{13/8} \omega_N^{1/8}}  
\lesssim_R \frac{N^{1/4}}{\omega_N^{3/4}}  = \frac{1}{\sqrt{\Omega\log N}} . 
\]
By \eqref{est10} and since the probability of the event $\mathscr{C} =  \mathscr{C}_{1}$ is given by \eqref{P:C}, this completes the proof of bound \eqref{Upsilon:est}. 
\end{proof}

This concludes the control of the difficult part of $\psi^{(>1)}_{N,1},$ and we turn to the easier part.

\begin{proposition} \label{prop:8}
  For any $R>0$,  there exists a constant $c_R>0$ and an event $\mathscr{G}_4$ with $\P\left[\mathscr{G}_4^c \right] \lesssim_R  N^{5-R\Omega}$ so that
    \[
    \big\VERT \psi_{N,1}^{(>1)} \left( \begin{smallmatrix} 0 & 0 \\ 0 &1\end{smallmatrix}  \right) \one_{\mathscr{G}_4} \big\VERT_2 \le c_R \Omega (\log N)   N^{-3/8} .
  \]
\end{proposition}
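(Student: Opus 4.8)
The plan is to use the first identity in \eqref{psidecomp}, namely
\[
  \psi_{N,1}^{(>1)} \left( \begin{smallmatrix} 0 & 0 \\ 0 &1\end{smallmatrix} \right)
  = \sum_{k= 1}^{N} U_{N} \dots U_{k+1} \left( \begin{smallmatrix} 0 & 0 \\ 0 &1\end{smallmatrix} \right)  \eta_{k,21}  \psi^{(1)}_{k-1,1,12},
\]
and to exploit that the scalar factor $\psi^{(1)}_{k-1,1,12}$ decays very rapidly as $k-1$ increases (this is precisely the content of the estimate for $\psi^{(1)}_{k-1,1,12}$ in Proposition~\ref{prop:2}, which gives $\|\psi^{(1)}_{k-1,1,12}\|_2 \lesssim_R N^{1/4}(\omega_N + \widehat{k-1})^{-5/8}\omega_N^{-1/8}$ on the event $\A^2_{k-1,1}$). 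First I would record the relevant good events: intersect $\mathscr{G}_4$ with the event $\{\max_{1\le p<n\le N}\|U_n\cdots U_p\| \le C_R'\}$ from Proposition~\ref{prop:4}, the events $\A^2_{k-1,1}$ over all $k$, and the truncation event of Remark~\ref{rk:3} applied to the relevant sub-exponential norms; all of these fail with probability $\lesssim_R N^{5-R\Omega}$ after a union bound, which matches the claimed bound.

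On the good event, each summand is bounded in $\VERT\cdot\VERT_2$ by (operator-norm bound on the prefix product) $\times$ (sub-exponential norm of $\eta_{k,21}$, using \eqref{recall:eta} and the truncation of Remark~\ref{rk:3}) $\times$ (the $\VERT\cdot\VERT_2$ bound on $\psi^{(1)}_{k-1,1,12}$). The product-prefix norm is $\le C_R'$ uniformly, $\VERT\eta_{k,21}\VERT_1 \lesssim (\omega_N+\hat k)^{-1/2}$ up to a $\sqrt{\Omega\log N}$ truncation loss, and $\psi^{(1)}_{k-1,1,12}$ carries the decay $(\omega_N+\widehat{k-1})^{-5/8}\omega_N^{-1/8}N^{1/4}$. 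The key point is the exponent in $(\omega_N + \hat k)$: summing $(\omega_N+\hat k)^{-1/2-5/8} = (\omega_N+\hat k)^{-9/8}$ over $\hat k \ge 0$ gives $\lesssim \omega_N^{-1/8}$, so the total is of order $N^{1/4}\omega_N^{-1/4}\cdot(\text{polylog})$. Since $\omega_N = N^{1/3}(\Omega\log N)^{2/3}$, we have $N^{1/4}\omega_N^{-1/4} = N^{1/4-1/12}(\Omega\log N)^{-1/6} = N^{1/6}(\Omega\log N)^{-1/6}$ — but I should double-check: $N^{1/4}/(\omega_N^{1/8}\cdot\omega_N^{1/8})$ with one $\omega_N^{1/8}$ from each of the $\eta$-truncation sum and the $\psi^{(1)}$ factor... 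Actually the clean way is: $\VERT\text{summand}\VERT_2 \lesssim_R C_R' \sqrt{\Omega\log N}\,(\omega_N+\hat k)^{-1/2}\cdot N^{1/4}(\omega_N+\widehat{k-1})^{-5/8}\omega_N^{-1/8}$, then triangle-inequality over $k$, pull out $N^{1/4}\omega_N^{-1/8}\sqrt{\Omega\log N}$, and bound $\sum_{\hat k\ge0}(\omega_N+\hat k)^{-9/8}\lesssim \omega_N^{-1/8}$, yielding $\lesssim_R N^{1/4}\omega_N^{-1/4}\sqrt{\Omega\log N}$. Substituting $\omega_N^{-1/4} = N^{-1/12}(\Omega\log N)^{-1/6}$ gives $\lesssim_R N^{1/6}(\Omega\log N)^{1/3}$, which is far larger than the claimed $c_R\Omega(\log N)N^{-3/8}$; so I must be misreading an exponent and the decay in $\psi^{(1)}_{k-1,1,12}$ or in $\eta$ is stronger — most likely one uses the \emph{full} length $\hat k$ decay from $k=1$, i.e.\ $\psi^{(1)}_{k-1,1,12}$ is exponentially small in $(\hat 1 - \widehat{k-1})\sqrt{(\omega_N+\hat 1)/N}$ rather than merely polynomially, so that the sum over $k$ is dominated by $k$ near $N$ and collapses to a few terms near $\hat k \approx 0$, each of size $\lesssim_R N^{1/4}\omega_N^{-3/4}\sqrt{\Omega\log N} \cdot e^{-c\,k\sqrt{\omega_N/N}}$...

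The \textbf{main obstacle} is therefore getting the exponents exactly right: I need to verify which decay estimate on $\psi^{(1)}_{k-1,1,12}$ is available (the $\VERT\cdot\VERT_2$-bound in Proposition~\ref{prop:2} is stated for the running maximum, but the relevant quantity $\psi^{(1)}_{k-1,1,12}$ individually inherits the factor $c_{k-1} = C_R\exp(-\tfrac{\hat1-\widehat{k-1}}{4}\sqrt{(\omega_N+\hat1)/N})$ from the proof of that proposition), combine it with $\|U_N\cdots U_{k+1}\|\le C_R'$ and $\VERT\eta_{k,21}\VERT$, and check that $\sum_{k}$ of the product telescopes to the claimed $c_R\Omega(\log N)N^{-3/8}$. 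The structure of the write-up is then: (1) fix the good event $\mathscr{G}_4$ as the intersection above and bound $\P[\mathscr{G}_4^c]$; (2) on $\mathscr{G}_4$, bound the $k$-th summand's $\VERT\cdot\VERT_2$ norm by $C_R'\cdot\VERT\eta_{k,21}\one\VERT_2\cdot\VERT\psi^{(1)}_{k-1,1,12}\one\VERT_2$; (3) plug in the three factors and sum over $k$, using that the exponential (or sufficiently-fast polynomial) decay of $\psi^{(1)}_{k-1,1,12}$ makes the sum converge to something of order $N^{1/4}\omega_N^{-3/4}$ times a polylog factor $\lesssim \Omega\log N$; (4) simplify $N^{1/4}\omega_N^{-3/4} = N^{1/4}\cdot N^{-1/4}(\Omega\log N)^{-1/2} = (\Omega\log N)^{-1/2}$ — hmm, still not $N^{-3/8}$. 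Given this discrepancy I would, in the actual write-up, carefully re-derive the per-term bound from the martingale decomposition in the proof of Proposition~\ref{prop:2} rather than from its stated conclusion, since the stated conclusion bounds a maximum over $k$ and loses the $k$-dependence that is essential here; the genuinely delicate point is extracting the correct $(\omega_N+\hat k)$-power and confirming the final exponent $N^{-3/8}$ comes out, which it should once the decay of both $\psi^{(1)}_{k-1,1,12}$ (fast in $k$) and the prefix product norm is accounted for properly.
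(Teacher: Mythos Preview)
Your approach has a genuine gap: bounding the sum term-by-term via the triangle inequality cannot give the claimed $N^{-3/8}$, as your own computations show. First, you misread the exponent in Proposition~\ref{prop:2}: for $\psi^{(1)}_{k-1,1,12}$ the relevant index is $\hat p$ with $p=1$, i.e.\ $\hat 1 = N-1$, so the bound is in fact $\lesssim_R N^{1/4}N^{-5/8}\omega_N^{-1/8}\le C_R N^{-3/8}$ \emph{uniformly} in $k$. But even with this, summing $\sum_{\hat k}(\omega_N+\hat k)^{-1/2}\asymp \sqrt N$ against the a.s.\ bound on $|\eta_{k,21}|$ produces $\sqrt{\Omega\log N}\cdot N^{1/8}$, which is far too large. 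No refinement of the per-term bound fixes this: there are $N$ terms of roughly comparable size, so you need cancellation, not decay.

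The missing idea is that $\psi^{(1)}_{k-1,1,12}$ is essentially \emph{constant} in $k$ after a short initial stretch. Because $\hat 1 = N-1$, Proposition~\ref{prop:1} gives $|\psi^{(0)}_{p,1,22}|\lesssim_R e^{-p/4}$, so from the identity
\(
\psi^{(1)}_{k,1,12}=\psi^{(1)}_{k,p+1,12}\,\psi^{(0)}_{p,1,22}+\psi^{(1)}_{p,1,12}
\)
one has $\psi^{(1)}_{k-1,1,12}\approx\psi^{(1)}_{p,1,12}$ for all $k>p=O(\log N)$, with an error that is polynomially small in $N$. The paper then splits the sum into: (iii) the first $p$ terms, each bounded crudely by $C_R' \cdot 1 \cdot C_R N^{-3/8}$, contributing $\lesssim_R(\log N)N^{-3/8}$; (ii) the correction from replacing $\psi^{(1)}_{k-1,1,12}$ by $\psi^{(1)}_{p,1,12}$, which is negligible; and (i) the main term
\(
\bigl(\sum_{k>p}U_N\cdots U_{k+1}\left(\begin{smallmatrix}0&0\\0&1\end{smallmatrix}\right)\eta_{k,21}\bigr)\,\psi^{(1)}_{p,1,12}.
\)
The point is that the bracketed sum is now a martingale (in the reversed filtration) with quadratic variation $\lesssim_R\sum_k\Var[\eta_{k,21}]\lesssim\log N$ on the good event, so Freedman's inequality (Theorem~\ref{thm:M}) with $\Sigma^2\asymp\Omega\log N$ controls it by $\sqrt{\Omega\log N}$ rather than $\sqrt N$. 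Multiplying by $|\psi^{(1)}_{p,1,12}|\le C_R\Sigma N^{-3/8}$ (itself on a further good event) yields $\lesssim_R\Sigma^2 N^{-3/8}=\Omega(\log N)N^{-3/8}$. Without this stabilize-and-martingale step, the argument cannot close.
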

\begin{proof}
  We begin with recalling that by \eqref{psidecomp}, 
  \[
    \psi_{N,1}^{(>1)} \left( \begin{smallmatrix} 0 & 0 \\ 0 &1\end{smallmatrix} \right) 
    = \sum_{k=3}^{N} U_{N} \dots U_{k+1} \left( \begin{smallmatrix} 0 & 0 \\ 0 &1\end{smallmatrix} \right)  \eta_{k,21}  \psi^{(1)}_{k-1,1,12}. 
  \]
  We let $\mathscr{A} =  \left\{ \max_{1\le k <n \le N}\| U_n \cdots U_k\|  \le C_R'\right\}  \bigcap_{1\le p< n\le N} \A^3_{n,p}  $ as in Proposition \ref{prop:3} 
and by Proposition \ref{prop:4}, we have  $\P[\mathscr{A}^c] \lesssim_R   N^{5-R\Omega}$.
By Proposition \ref{prop:1} (using that $\hat{1} =N-1$), it holds conditionally on $\mathscr{A}$ for all $1\le p\le N$, 
  \[
    |\psi_{p,1,22}^{(0)}| \lesssim_R e^{-p/4} . 
  \]

  We also recall that for any $1 \leq p < k$, 
  \[
    \psi^{(1)}_{k,1,12}
    =
    \psi^{(1)}_{k,p+1,12}
    \psi^{(0)}_{p,1,22}
    +
    \psi^{(0)}_{k,p+1,11}
    \psi^{(1)}_{p,1,12}
    =
    \psi^{(1)}_{k,p+1,12}
    \psi^{(0)}_{p,1,22}
    +
    \psi^{(1)}_{p,1,12}
    ,
  \]
    by decomposing according to the location of the index of the single perturbing term (c.f.\ \eqref{eq:psiexpansion}).  
  Then, choosing $p = \lceil 4M \log N \rceil$ for a constant $M\ge 1$, we obtain
  \begin{equation}\label{eq:tinytiny}
    \big\VERT \max_{k=p+1,\dots, N}
    |\psi^{(1)}_{k,1,12}-\psi^{(1)}_{p,1,12}|
    \one_{\mathscr{A}}
    \big\VERT_2 
    =
    \big\VERT  |\psi^{(0)}_{p,1,22}|
    \max_{k=p+1,\dots, N}
    |\psi^{(1)}_{k,p+1,12}|
    \one_{\mathscr{A}}
    \big\VERT_2 
    \lesssim_R  N^{-3/8-M}
  \end{equation}
  where we have used that according to Proposition \ref{prop:2}, we have  for any $\ell \le p+1$,  
    \begin{equation}\label{eq:bonus}
    \big\VERT
     \max_{k=\ell+1,\dots, N} |\psi^{(1)}_{k,\ell,12}|
     \one_{\mathscr{A}}
    \big\VERT_2 \le C_R N^{-3/8} . 
  \end{equation}
  
This leads us to decompose  
  \begin{equation}\label{eq:tripod}
     \psi_{N,1}^{(>1)} \left( \begin{smallmatrix} 0 & 0 \\ 0 &1\end{smallmatrix} \right) 
    =
    \left\{
      {\begin{aligned}\textstyle
	&\sum_{k=p+1}^{N} U_{N} \dots U_{k+1} \left( \begin{smallmatrix} 0 & 0 \\ 0 &1\end{smallmatrix} \right)  \eta_{k,21}  \psi^{(1)}_{p,1,12} \\
	&+\sum_{k=p+1}^{N} U_{N} \dots U_{k+1} \left( \begin{smallmatrix} 0 & 0 \\ 0 &1\end{smallmatrix} \right)  \eta_{k,21} 
        (\psi^{(1)}_{k-1,1,12}
	-\psi^{(1)}_{p,1,12}) \\
	&+\sum_{k=1}^{p} U_{N} \dots U_{k+1} \left( \begin{smallmatrix} 0 & 0 \\ 0 &1\end{smallmatrix} \right)  \eta_{k,21}  \psi^{(1)}_{k-1,1,12} \\
      \end{aligned}}
    \right\}.
  \end{equation}
  Let $(i), (ii), (iii)$ be the three lines in the brackets respectively.  First,  by \eqref{eq:tinytiny} and \eqref{recall:eta},  the second line is controlled by submultiplicativity, 
  \[
    \VERT (ii) \VERT_2=
    \big\VERT {\textstyle
      \sum_{k=p}^{N} U_{N} \dots U_{k+1} \left( \begin{smallmatrix} 0 & 0 \\ 0 &1\end{smallmatrix} \right)  \eta_{k,21} 
      (\psi^{(1)}_{k-1,1,12}
      -\psi^{(1)}_{p,1,12}) \one_{\mathscr{A}}
    }\big\VERT_2 \lesssim_R  N^{5/8-M} .
  \]
  Likewise, using \eqref{eq:bonus} for the third line
  \[
    \VERT (iii) \VERT_2=
    \big\VERT{\textstyle
    \sum_{k=1}^{p} U_{N} \dots U_{k+1} \left( \begin{smallmatrix} 0 & 0 \\ 0 &1\end{smallmatrix} \right)  \eta_{k,21}  \psi^{(1)}_{k-1,1,12}
  }\one_{\mathscr{A}}\big\VERT_2
    \lesssim_R  (\log N)  N^{-3/8}.
  \]
  Notice that we have the Doob's decomposition $(i) =\big( M_{\hat{p}}+ A_{\hat{p}} \big) \psi^{(1)}_{p,1,12} $ where 
  \[
  M_{\hat{n}} = {\textstyle
      \sum_{k=n}^{N}} U_{N} \dots U_{k+1} \left( \begin{smallmatrix} 0 & 0 \\ 0 &1\end{smallmatrix} \right)  \big(\eta_{k,21}  -\E\eta_{k,21} \big)
 \]
  is a $\hat \filt_{\hat n}$--martingale with bounded increments (depending on $R$ and conditionally on the event $\A$) and  $A_{\hat{n}} =   \sum_{k=n}^{N} U_{N} \dots U_{k+1} \left( \begin{smallmatrix} 0 & 0 \\ 0 &1\end{smallmatrix} \right) \E\eta_{k,21}$  is a predictable process. 
 Then, we easily check that conditionally on $\mathscr{A}$, the quadratic variation of $M_{\hat{n}}$ is controlled uniformly by $ \sum_{k=1}^{N} \var[\eta_{k,21} ] \lesssim \log N$ and the predictable part is
 uniformly bounded by $\| A_{\hat{n}}\| \lesssim \sum_{k=1}^{N} |\E \eta_{k,21} |  \lesssim \log N$ (c.f. \eqref{recall:eta}). 
  Hence, by Theorem~\ref{thm:M} with $\Sigma^2 = C_R\Omega \log N$ for a sufficiently large constant $C_R>0$, 
  there exists an event $\mathscr{B} \subset \mathscr{A}$ such that 
  \[
  \VERT  M_{\hat{N}} \1_{\mathscr{B}}+ A_{\hat{N}} \VERT_2 \le \Sigma 
  \qquad\text{and}\qquad 
  \P[\mathscr{A}\setminus \mathscr{B}] \lesssim N^{-R\Omega} .
  \]
 By \eqref{eq:bonus}, this shows that
    \[
    \big\VERT (i)  \1_{\mathscr{B} \cap \big\{ |\psi^{(1)}_{p,1,12}|  \le C_R \Sigma   N^{-3/8}  \big\} } \big\VERT_2 \lesssim_R \Sigma^2  N^{-3/8} 
      \qquad\text{and}\qquad 
      \P\left[ |\psi^{(1)}_{p,1,12}|  \ge C_R \Sigma   N^{-3/8} \right]\lesssim N^{-R\Omega} .
    \]
   In all, if we set $\mathscr{G}_4= \mathscr{B} \cap \big\{ |\psi^{(1)}_{p,1,12}|  \le C_R \Sigma   N^{-3/8}  \big\}$, we conclude that
   $\P[\mathscr{G}_4^c] \lesssim \P[\mathscr{A}^c] \lesssim_R  N^{5-R\Omega}$ and
    \[
      \VERT ((i) + (ii) + (iii))\one_{\mathscr{G}_4} \VERT_2 \lesssim_R \Omega (\log N)  N^{-3/8} . 
    \]
    According to \eqref{eq:tripod}, this completes the proof.
\end{proof}

We are now ready to conclude the proof of our main result.  

\begin{proof}[Proof of Theorem \ref{thm:main}]
Combining Proposition~\ref{prop:7} and \ref{prop:8}, we have shown that for any $R>0$,   there exists a constant $c_R>0$ and an event $\mathscr{G}=\mathscr{G}_3 \cap \mathscr{G}_4$ with $\P\left[\mathscr{G}^c \right] \lesssim_R  N^{5-R\Omega}$ so that if $N$ is sufficiently large (depending on $\Omega$),
\[
  \big\VERT \psi_{N,1}^{(>1)}\1_{\mathscr{G}} \big\VERT_2 \le   \frac{c_R}{\sqrt{\Omega\log N}} . 
\]
Moreover as $\mathscr{G}\supset\A^2_{N,1}$,  by Proposition~\ref{prop:2}, we also have 
  \[
  \big\VERT \psi^{(1)}_{N,1} \1_{\mathscr{G}} \big\VERT_2  \le C_R   \frac{N^{1/4}}{\omega_N^{3/4}} = \frac{C_R}{\sqrt{\Omega\log N}} . 
\]
From the  perturbative expansion \eqref{eq:psiTaylor}, this implies that (after adjusting the constant $c_R$) if $N$ is sufficiently large, 
\begin{equation*}
\P\left[ \Big\{ \big\| {\textstyle \prod_{k=1}^ N U_k } - \psi^{(0)}_{N,1} \big\| \ge  \varepsilon \Big\}\cap \mathscr{G}
\right] \lesssim  N^{-c_R\varepsilon^2\Omega}  . 
\end{equation*}
Finally as $\mathscr{G}\supset\A^1_{N,1}$, according to Proposition~\ref{prop:1}, we also have that  on the event $\mathscr{G}$, 
\[ 
\big\|  \psi^{(0)}_{N,1} -  \left( \begin{smallmatrix} 1 & 0 \\ 0 &0\end{smallmatrix} \right)  \big\|
\le | \psi^{(0)}_{N,1, 22}| \le  C_R e^{-N/4}. 
\]
Altogether, this shows that if $ \varepsilon \ge N^{-R}$ and $N$ is sufficiently large, 
\begin{equation*}
\P\left[ \big\| {\textstyle \prod_{k=1}^ N U_k } -  \left( \begin{smallmatrix} 1 & 0 \\ 0 &0\end{smallmatrix} \right)\big\| \ge  \varepsilon
\right] \lesssim  N^{-c_R\varepsilon^2\Omega} + \P[\mathscr{G}^c] ,
\end{equation*} 
with $\P\left[\mathscr{G}^c \right] \lesssim_R N^{5-R\Omega}$. 
\end{proof}

\subsection{Proofs of Propositions~\ref{prop:5} and~\ref{prop:6}} 
\label{sect:extraproofs}

In this section, we give proofs of the two propositions that we used to control the moderate deviations of the martingale part in the decomposition of $\psi_{N,n}^{(>1)} \left( \begin{smallmatrix} 1 & 0 \\ 0 &0\end{smallmatrix} \right)$. 

\begin{proof}[Proof of Proposition~\ref{prop:5}]
Fix $R>0$ and let $\mathscr{C}_{n} = \left\{ \max_{n\le m<k \le N}\| U_m \cdots U_k\|  \le C_R' \right\} \bigcap_{n \le m<k\le N} \A^4_{k,m} $ where $C_R'$ is as in Proposition~\ref{prop:4} and  the events $\A^4_{k,m}$ are as in Corollary~\ref{cor:3}.
Then, the events $\mathscr{C}_{n}$ are increasing and  we have for all $1\le n  < N$, 
\begin{equation} \label{P:C}
\P[\mathscr{C}_n^c] \lesssim_R N^{5-R\Omega} . 
\end{equation}

 Let $\gamma_N = \lfloor \frac{\sqrt{N/\omega_N}}{CR} \rfloor$ with $C$ at least as  in Proposition~\ref{prop:3}  and let $e^{-1/4CR} <\alpha<1$.
 We may assume that $N$ is sufficiently large so that $\gamma_N \ge 2$.
  Since
$\psi^{(0)}_{k-1,n} = \psi^{(0)}_{k-1,n+j+1} \psi^{(0)}_{n+j,n}$ for any suitable integer $j\ge 0$, we have for all $1\le n < N$,
\[ \begin{aligned}
\psi^{(>0)}_{N,n} 
& = \sum_{k=n}^{N} U_N \dots U_{k+1} \left(\begin{smallmatrix}
    0 & \eta_{k,12} \\
    \eta_{k,21} & 0 
  \end{smallmatrix}\right)  \psi^{(0)}_{k-1,n} \\
 & =   \sum_{\ell= 0}^{\chi_n-1}   U_{N} \cdots U_{n+(\ell+1)\gamma_N+1} \psi^{(>0)}_{n+ (\ell+1) \gamma_N,  n+ \ell \gamma_N} \psi^{(0)}_{n+\ell \gamma_N,n} + \psi^{(>0)}_{N,n + \chi_n\gamma_N} \psi^{(0)}_{n+\chi_n \gamma_N,n}
\end{aligned}\]
where $\chi_n = \lfloor\frac{N-n}{\gamma_N} \rfloor$. 
Then,  if denote for any $0\le \ell<\chi_n$,
\begin{equation} \label{def:Y}
Y^{(\ell)}_{n} =\psi^{(>0)}_{n+(\ell+1) \gamma_N,  n +\ell  \gamma_N} \left( \begin{smallmatrix}  0 & 0 \\ 1 &0 \end{smallmatrix} \right)
\qquad\text{and} \qquad Y^{(\chi_n)}_{n} =  \psi^{(>0)}_{N,n + \chi_n\gamma_N} \left( \begin{smallmatrix}  0 & 0 \\ 1 &0 \end{smallmatrix} \right), 
\end{equation}
this shows  that 
\begin{equation} \label{eq:psialphadecomposition} \begin{aligned}
\psi^{(>0)}_{N,n}\left( \begin{smallmatrix}  0 & 0 \\ 1 &0 \end{smallmatrix} \right) 
& =  \sum_{\ell= 0}^{\chi_n-1} \alpha^\ell \left( \alpha^{-\ell}   U_{N} \cdots U_{n+(\ell+1)\gamma_N+1} Y^{(\ell)}_{n}  \psi^{(0)}_{n+\ell \gamma_N,n,22} \right) \\
&\qquad + \alpha^{\chi_n} \alpha^{-\chi_n}  Y^{(\chi_n)}_{n}\psi^{(0)}_{n+\chi_n \gamma_N,n,22} . 
\end{aligned}
\end{equation}

Using this decomposition, by applying Jensen's inequality, we obtain
\[ \begin{aligned}
\left\| \psi^{(>0)}_{N,n}\left( \begin{smallmatrix}  0 & 0 \\ 1 &0 \end{smallmatrix} \right)   \right\|^2     \le \frac{1}{1-\alpha} \Bigg( &\sum_{\ell= 0}^{\chi_n-1} \alpha^{-\ell}   \left\| U_{N} \cdots U_{n+(\ell+1)\gamma_N+1} Y^{(\ell)}_{n}  \right\|^2 \big| \psi^{(0)}_{n+\ell \gamma_N,n,22}\big|^2   \\
&\quad+  \alpha^{-\chi_n} \big\| Y^{(\chi_n)}_{n}  \big\|^2 \big| \psi^{(0)}_{n+\chi_n \gamma_N,n} \big|^2\Bigg) . 
\end{aligned}\]
Moreover, let us recall that by Proposition~\ref{prop:1}, 
it holds conditionally on the event $\mathscr{C}_n$, 
\begin{equation} \label{est9}
 \big| \psi^{(0)}_{n+\ell \gamma_N,n,22}\big|^2  \lesssim_R  \exp\left(- \frac{\ell \gamma_N}{2} \sqrt{\frac{\omega_N}{N}} \right) \le  C_R^2 e^{-\ell/ 4CR} .
\end{equation}
This implies that for all $1\le n < N$, 
 \begin{equation} \label{est6}
\left\| \psi^{(>0)}_{N,n} \left( \begin{smallmatrix}  0 & 0 \\ 1 &0 \end{smallmatrix} \right)   \right\|^2\1_{\mathscr{C}_n}
\lesssim_R \frac{1}{1-\alpha}  \sum_{\ell= 0}^{\chi_n} \widetilde{\alpha}^{\ell} \big\| Y^{(\ell)}_{n}  \big\|^2 \1_{\mathscr{B}^{(\ell)}_{n}} 
\end{equation}
where $\widetilde{\alpha} = e^{-1/4CR}/\alpha$ and we set
$\mathscr{B}^{(\ell)}_{n} = \A^4_{n+(\ell+1) \gamma_N,  n +\ell  \gamma_N}$ for $0\le \ell <\chi_n$
and $\mathscr{B}^{(\chi_n)}_{n} = \A^4_{N,n + \chi_n\gamma_N} $. 

\medskip

Now, observe that by formula \eqref{eq:Zinc}, we have
$\displaystyle Q =  \sum_{n = 1}^{N}  \var \big[\eta_{n,21}\big]\big\| \psi^{(>0)}_{N,n+1}  \left( \begin{smallmatrix}  0 & 0 \\ 1 &0 \end{smallmatrix} \right) \big\|^2$.
Then, by  \eqref{recall:eta} and \eqref{est6}, this implies that with  $\mathscr{C} =  \mathscr{C}_{1}$ and  adjusting the constant $C$,
\begin{equation} \label{Q:decomposition}
Q\1_{\mathscr{C}}\le C_R' \sum_{\ell= 0}^{\chi} \widetilde{\alpha}^{\ell}  Q_\ell
\qquad\text{with}\qquad
 Q_\ell = \sum_{n = 1}^{N} \frac{\big\| Y^{(\ell)}_{n}  \big\|^2  \1_{\mathscr{B}^{(\ell)}_{n}}  }{\omega_N+\hat{n}}  \1_{\chi_n \ge \ell} , \qquad \chi=\chi_{0} =  \lfloor\tfrac{N}{\gamma_N} \rfloor ,
\end{equation}
 and by assumption $0< \widetilde{\alpha} < 1$. 

By definition, $ Q_\ell $ are non--negative random variables. So, let us denote for $\ell=0,\dots, \chi$, 
\[
\widetilde{Q}_\ell =  Q_\ell -\E Q_\ell  
\qquad \text{and}\qquad X_n^{(\ell)} =  \frac{\big\| Y^{(\ell)}_{n}  \big\|^2  \1_{\mathscr{B}^{(\ell)}_{n}}  - \E\big[ \big\| Y^{(\ell)}_{n}  \big\|^2  \1_{\mathscr{B}^{(\ell)}_{n}}  \big] }{\omega_N+\hat{n}} \1_{\chi_n \ge \ell} 
\qquad\text{for}\quad n=1, \dots, N . 
\]

By construction, for any fixed integer $\ell \ge 0$, the dependency graph of the collection of random variables $\big(X^{(\ell)}_{n}\big)_{n=1}^{N}$ have degree bounded by $\gamma_N$ (c.f. \eqref{def:Y}). Hence by applying Theorem~\ref{thm:Svante} with $\mathcal{J}  =\{1, \dots , N \}$ and  $\mathcal{J}_k = \{ k+ j\gamma_N : j\ge 0 \} \cap \mathcal{J}$ for $k=1, \dots, \gamma_N$,    there exist some events $\mathscr{E}_\ell$ such that the following hold
\[
 \max_{\ell=0,\dots, \chi}\VERT  \widetilde{Q}_\ell  \1_{\mathscr{E}_\ell}\VERT_2 \lesssim  \gamma_N \sigma 
\qquad\text{and}\qquad
 \max_{\ell=0,\dots, \chi}\P[\mathscr{E}_\ell^c] \leq 2\gamma_N e^{-(\sigma/b)^2/C}
\]
where 
\vspace{-.5cm}
\[
b = \max_{\ell=0,\dots, \chi} \max_{n=1,\dots,N } \left\{ \frac{\big\VERT Y^{(\ell)}_{n} \1_{\mathscr{B}^{(\ell)}_{n}} \big\VERT_2^2}{\omega_N+\hat{n} } \right\}
\qquad\text{and}\qquad
\sigma^2 =  \max_{\ell=0,\dots, \chi} \max_{k=1,\dots, \gamma_N}\left\{ \sum_{n \in \mathcal{J}_k }
\frac{\big\VERT Y^{(\ell)}_{n}   \1_{\mathscr{B}^{(\ell)}_{n}}\big\VERT_2^4}{(\omega_N+\hat{n})^2} \right\} .
\]
Note that we used \eqref{eq:Xcentering} as well as the fact that for any random matrix $X$, we have $\VERT \|X\|^2 \VERT_1 \le \VERT X\VERT_2^2$. 
Using the estimate \eqref{est:psi>0}, we obtain for all $n=1,\dots, N$ and uniformly  for all $\ell= 0, \dots, \chi$, 
\begin{equation} \label{Y:est}
\frac{\big\VERT Y^{(\ell)}_{n} \1_{\mathscr{B}^{(\ell)}_{n}} \big\VERT_2^2}{\omega_N+\hat{n} }
\lesssim_R \frac{\sqrt{N}}{(\omega_N + \hat{n})^{9/4} \omega_N^{1/4}}   . 
\end{equation}
We have used above that the block size $\gamma_N \le \frac{\sqrt{N/\omega_N}}{CR}$. 
This shows that we can choose
\[
b^2 = C_R \frac{N}{\omega_N^{5}}
\qquad\text{and}\qquad
\sigma^2  =  C_R \frac{\gamma_N^{-1} N}{\omega_N^4} \ge  \gamma_N^{-1} \sum_{n=1}^{N} \frac{N}{(\omega_N+\hat{n})^{9/2}\sqrt{\omega_N}}  
\]
for a suitably large constant $C_R>0$.  This implies that
\begin{equation} \label{Q:est}
 \max_{\ell=0,\dots, \chi}\VERT   \widetilde{Q}_\ell  \1_{\mathscr{E}_\ell}\VERT_2^2 \lesssim_R  \frac{\gamma_N N}{\omega_N^4}  \lesssim_R \frac{N^{3/2}}{\omega_N^{9/2}} = \frac{1}{(\Omega\log N)^{3}}
 \end{equation}
and $\displaystyle \max_{\ell=0,\dots, \chi}\P[\mathscr{E}_\ell^c] \leq 2\gamma_N e^{- \frac{\omega_N}{\gamma_N}} \le  2\gamma_N  e^{- R\frac{\omega_N^{3/2}}{\sqrt{N}}}$. 

The previous estimate combined with \eqref{P:C} implies that  if $\mathscr{G}_1 := \mathscr{C}_1  \bigcap_{\ell=0}^\chi\mathscr{E}_\ell$, then 
\begin{equation} \label{P:E}
\P\left[\mathscr{G}_1^c \right] \lesssim_R \gamma_N \chi  e^{-R\Omega \log N}  + N^{5-R\Omega} \lesssim_R N^{5-R\Omega}  , 
\end{equation}
where we used that $\gamma_N \chi  \le N+1$. 
Recall that by \eqref{Q:decomposition}, we have
\[
Q \1_{\mathscr{G}_1 } \lesssim_R  \sum_{\ell= 0}^{\chi} \widetilde{\alpha}^{\ell}  \widetilde{Q}_\ell \1_{\mathscr{E}_\ell} +  \sum_{\ell= 0}^{\chi}  \widetilde{\alpha}^{\ell} \E[Q_\ell ] . 
\]
Moreover, using \eqref{eq:Xmoment} and  the estimate \eqref{Y:est},   we obtain
\[
 \E[Q_\ell ]  
 \lesssim  \sum_{n = 1}^{N} \frac{\big\VERT\big\| Y^{(\ell)}_{n}  \big\|^2  \1_{\mathscr{B}^{(\ell)}_{n}} \big\VERT_1}{\omega_N+\hat{n}} \1_{\chi_n \ge \ell}
  \lesssim_R  \sum_{n = 1}^{N} \frac{\sqrt{N}}{(\omega_N + \hat{n})^{9/4} \omega_N^{1/4}}  
  \lesssim \frac{\sqrt{N}}{\omega_N^{3/2}} . 
\] 
Since $\widetilde{\alpha} <1$, this shows that   $\sum_{\ell= 0}^{\chi}  \widetilde{\alpha}^{\ell}\E[Q_\ell ]   \lesssim_R \frac{(1-\widetilde{\alpha} )^{-1}}{\Omega \log N}$.  Hence, by \eqref{Q:est}, we conclude that for a suitable constant $c_R>0$, 
\[
\big\VERT \big( Q - \tfrac{c_R}{\Omega \log N} \big)_+  \1_{\mathscr{G}_1 } \big\VERT_2^2
\lesssim  \frac{1}{1-\widetilde{\alpha}} \sum_{\ell= 0}^{\chi} \widetilde{\alpha}^{\ell} \big\VERT \widetilde{Q}_\ell \1_{\mathscr{E}_\ell} \big\VERT_2^2 \le  \frac{c_R}{(\Omega\log N)^{3}}
\]
where we used Jensen's inequality (as $0<\widetilde{\alpha}<1$).
By \eqref{P:E}, this completes the proof.
\end{proof}

\begin{proof}[Proof of Proposition~\ref{prop:6}] We relie on the notation and estimates from Proposition~\ref{prop:5}. 
We also need to introduce the following stopping time: 
\[ \begin{aligned}
\widehat{T} : = \inf \Bigg\{ \hat{n} < N : &\left\{  \max_{n\le m<k \le N}\| U_m \cdots U_k\|  > C_R' \right\} \bigcup_{n \le m<k\le N}  \A^{4,c}_{k,m} \bigcup \Bigg\{ \max_{\ell=0,\dots, \chi_n}\big\| Y^{(\ell)}_{n} \big\|^2 \1_{\mathscr{B}^{(\ell)}_{n}} > \tfrac{ C_R \Omega \log N }{\omega_N^{3/2}/\sqrt{N} }  \Bigg\}  \Bigg\} 
\end{aligned}\]
with $ C_R'$ as in Proposition~\ref{prop:4} and  a constant $C_R$ to be chosen sufficiently large. 
By \eqref{def:Y},  the random variables $\big( Y^{(\ell)}_{n}\big)_{\ell=0}^{\chi_n}$ are $\hat \filt_{\hat n}$ measurable, so that $\widehat{T}$ is indeed a stopping time. 
We let $Z_{\hat{n},0}^{\widehat{T}}= Z_{\widehat{T}\wedge \hat{n},0}$ for $\hat{n} =0, \dots, N-1$. 
First let us observe that by \eqref{eq:Zinc} and \eqref{recall:eta}, it holds  
\[
\big\| Z_{\widehat{n+1},0}^{\widehat{T} }- Z_{\hat{n},0}^{\widehat{T} } \big\|^2  \lesssim  \frac{R\Omega\log N}{\omega_N}    \left\|\psi^{(>0)}_{N,n} \left( \begin{smallmatrix}  0 & 0 \\ 1 &0 \end{smallmatrix} \right) \right\|^2 \1_{\widehat{T}>\hat{n}} . 
\]
Since $\big\{\widehat{T}>\hat{n}\big\} \subset \mathscr{C}_n$, c.f. above \eqref{P:C}, using the decomposition \eqref{est6}, we obtain
\[
\big\| Z_{\widehat{n+1},0}^{\widehat{T} }- Z_{\hat{n},0}^{\widehat{T} } \big\|^2  \lesssim_R \frac{\Omega\log N}{\omega_N} 
\Bigg( \sum_{\ell= 0}^{\chi_n} \widetilde{\alpha}^{\ell} \big\| Y^{(\ell)}_{n}  \big\|^2 \1_{\mathscr{B}^{(\ell)}_{n}}  \Bigg)  \1_{\widehat{T}>\hat{n}}  .
\]
Then,  since $\big\{\widehat{T}>\hat{n}\big\} \subset \Bigg\{ {\displaystyle\max_{\ell=0,\dots, \chi_n}}\| Y^{(\ell)}_{n} \|^2 \1_{\mathscr{B}^{(\ell)}_{n}}  \le \frac{C_R \Omega \log N \sqrt{N} }{\omega_N^{3/2}}  \Bigg\}$ and $\widetilde{\alpha}<1$,  this implies that for all $\hat{n} =0, \dots, N-1$, 
\begin{equation*} 
\big\| Z_{\widehat{n+1},0}^{\widehat{T} }- Z_{\hat{n},0}^{\widehat{T} } \big\|^2   \lesssim_R 
 \frac{(\Omega\log N)^2\sqrt{N} }{\omega_N^{5/2}} . 
\end{equation*}

\medskip

Since the quadratic variation of the martingale  $\big(Z^{\widehat{T}}_{\hat{n},0}\big)_{\hat{n}=0}^{N}$ is bounded by $Q$, by Proposition~\ref{prop:5}, if we  apply Theorem~\ref{thm:M} with 
 $\Sigma^2 =  \frac{2c_R}{\Omega \log N}$ and $\alpha^2 =  c_R\frac{(\Omega\log N)^2\sqrt{N} }{\omega_N^{5/2}}$ (after increasing the constant $c_R>0$ from Proposition~\ref{prop:5} if necessary), we obtain that there exists an event $\A$ such that
\begin{equation} \label{Z:est}
  \big\VERT \sup_{\hat{n}=0,\dots, N-1}\big\|Z^{\widehat{T}}_{\hat{n},0}\big\| \1_{\A} \big\VERT_2 \lesssim   \sqrt{\tfrac{c_R}{\Omega\log N}}
\end{equation}
and by a union bound, $\P[\A^c]  \le 4 e^{-(\frac{\Sigma}{\alpha})^2} + \P\left[ Q > \Sigma^2 \right] $.
With our conventions, we have 
$\frac{\Sigma^2}{\alpha^2} =\frac{2\omega_N^{5/2}}{(\Omega\log N)^3\sqrt{N}}= \frac{2N^{1/3}}{(\Omega\log N)^{4/3}} \le R\Omega \log N $ if $N$ is sufficiently large (depending on $\Omega$).  
Moreover with the event $ \mathscr{G}_1$ from Proposition~\ref{prop:5} and if $c_R\ge R$, it holds  by \eqref{eq:Xtail},
\[
\P\left[ Q > \Sigma^2 , \mathscr{G}_1\right]   = \P\left[ (Q - \Sigma^2/2)\1_{\mathscr{G}_1}  > \Sigma^2/2 \right]  \le 2 \exp\left( - \tfrac{\Sigma^4 (\Omega\log N)^3}{4c_R}\right)
\le 2 \exp\left( - R \Omega \log N \right) . 
\]
Combining these estimates with the fact that   $\P\left[\mathscr{G}_1^c \right] \lesssim_R N^{5-R\Omega}  $, this implies that if $N$ is sufficiently large,
\begin{equation} \label{est8}
\P[\A^c] \lesssim_R N^{5-R\Omega}  . 
\end{equation}

By \eqref{Z:est}, in order to complete the proof, it suffices to show that $\widehat{T}  \ge N$ with overwhelming probability. By definition, we have
\[ \begin{aligned}
\P\left[ \widehat{T} \ge N \right] = \P\Bigg[ & \max_{1 \le m<k \le N}\| U_m \cdots U_k\|  \le C_R' , \bigcap_{n \le m<k\le N}  \A^{4}_{k,m} , \max_{n=1,\dots,N }\max_{\ell=0,\dots, \chi_n}\big\| Y^{(\ell)}_{n} \big\|^2 \1_{\mathscr{B}^{(\ell)}_{n}} \le \tfrac{ C_R \Omega \log N }{\omega_N^{3/2}/\sqrt{N} }  \ \bigg]
\end{aligned}\]
Since  $\big\VERT\big\| Y^{(\ell)}_{n} \big\|^2 \1_{\mathscr{B}^{(\ell)}_{n}} \big\VERT_1 \le \big\VERT Y^{(\ell)}_{n} \1_{\mathscr{B}^{(\ell)}_{n}} \big\VERT_2^2$, by using the estimate \eqref{Y:est} and  \eqref{eq:Xtail}, we obtain that if $C_R$ is sufficiently large, then 
\[
\P\left[ \big\| Y^{(\ell)}_{n} \big\|^2 \1_{\mathscr{B}^{(\ell)}_{n}} > \tfrac{ C_R \Omega \log N }{\omega_N^{3/2}/\sqrt{N} }  \  \right] \le  2 N^{-R \Omega} . 
\]
Hence, by Proposition~\ref{prop:4} and Corollary~\ref{cor:3}, we obtain 
\[
 \P\left[ \widehat{T}<  N \right]\lesssim_R N^{5-R\Omega} . 
\]
Thus, by \eqref{Z:est}, we  conclude that if we let $\mathscr{G}_2 = 
\A \cap \left\{ \widehat{T} \ge N \right\}$, then 
\[
  \big\VERT \sup_{\hat{n}=0,\dots, N-1}\big\|Z_{\hat{n},0}\big\| \1_{\mathscr{G}_2} \big\VERT_2 \lesssim_R  \tfrac{1}{\sqrt{\Omega\log N}}
\]
and by \eqref{est8}, $\P[\mathscr{G}_2^c] \lesssim_R N^{5-R\Omega}$. 
\end{proof}

\section{Coupling with a Gaussian log--correlated field} \label{sect:coupling}

\subsection{Proof of Proposition~\ref{prop:coupling1}} \label{sect:coupling1}

The argument is divided in two steps. First, we use again Corollary \ref{cor:transfer} to compute the \emph{Hermite contribution} coming from  $\biggl[ \prod_{k=1}^{N}  \lambda_+(\tfrac{k}N)\big(1- \delta_k \big) \biggr]$.   Then, we obtain an approximation between sums involving $\{\eta_{k,11}\}_{k=1}^N$ and integrals against the appropriate Brownian motions.
We let $\alpha=1/9$ and $\delta=1/45$ as in Corollary~\ref{cor:transfer}.

\medskip

\noindent\underline{1- Contribution from the mean:}\
Let us observe that if there is no noise (i.e. when $\left\{ (X_k,Y_k) \right\}_{k=1}^N= 0$), by Lemma~\ref{lem:diag} and formula \eqref{eq:hermite}, we have for any $n \ge 2$,
\begin{equation} \label{detrec}
  \begin{pmatrix}
    \pi_n(z) \\
    \pi_{n-1}(z) 
  \end{pmatrix}=
\prod_{k=2}^{n}  \lambda_+(\tfrac{k-1}N)\big(1- \delta_k \big)   V_{n+1}\prod_{k=2}^n \widetilde{U_k} V_2^{-1}   \begin{pmatrix}
  z \\ 1
  \end{pmatrix} ,
\end{equation}
where $\widetilde{U_k} = \E U_k$ for $k\in\{1,\dots, N\}$ and  $\{\pi_n\}$ are the monic Hermite polynomials scaled to be orthogonal with respect to the weight $e^{-2N x^2}$ on $\R$.
Moreover, we can also apply the estimate from Corollary~\ref{cor:transfer} in the case where there is no noise, this implies that uniformly for all $z\in\mathscr{P}$,
\begin{equation*} 
  \begin{pmatrix}
    \pi_N(z) \\
    \pi_{N-1}(z) 
  \end{pmatrix}=
\prod_{k=2}^{N}  \lambda_+(\tfrac{k-1}N)\big(1- \delta_k \big)   V_{N+1}
\left[\begin{pmatrix} 1 & 0 \\ 0 & 0 \end{pmatrix}  + \O\big(N^{-\frac{1}{15}}\big) \right] V_2^{-1} \begin{pmatrix}   z  \\ 1\end{pmatrix} . 
\end{equation*}
Now using the estimates \eqref{bc} with $b_1 = 0$ for $z \in \mathscr{P},$ 
\[
 V_2^{-1} \begin{pmatrix}   z  \\ 1\end{pmatrix} = 
 \begin{pmatrix} 1 \\ 0 \end{pmatrix} + \O(N^{2\alpha-1}).
\]
Thus we obtain that for any compact set $K\subset\C$, it holds uniformly for all $z\in K \cap\mathscr{P}$,  
\begin{equation} \label{charpoly2}
    \pi_N(z)  =  \prod_{k=1}^{N}  \lambda_+(\tfrac{k}N)\big(1- \delta_k \big)  \big( 1+ \O\big(N^{-\frac{1}{15}}\big) \big) . 
\end{equation}

\noindent\underline{2- Truncation and linearization:}\
Let us work under the probability measure $\P_\mathrm{S}$ from Definition~\ref{def:cond} with $\mathrm{S}=N^\delta$. 
Under this measure, the random variables $\{X_k\} $ and  $\{Y_k\}$ from \eqref{def:XY} are bounded by $N^{3\delta/2}.$ 
Recall that $\sup_{t\in[0,1]}|z^2-t|^{-1} \le N^{2\alpha}$ for $z\in\mathscr{P}$ (see \eqref{estimate1}).
By \eqref{eq:delta}, this implies that $|\delta_k| \le \tfrac14 N^{-1+2\alpha}$ for $k\in\{1,\dots, N\}$. 
As in  \eqref{def:eta1}, we let 
\begin{equation} \label{etatilde} 
{\eta}_{k,11} =    \sqrt{\frac{1/2\beta}{Nz^2-k}}\left( {X}_k +  {Y}_k J\big(z \sqrt{\tfrac{N}{k-1}}\big) \right). 
\end{equation}
Hence $|\eta_{k,11}| \lesssim N^{-1/2+\alpha+3\delta/2}.$
First, if $N$ is sufficiently large, we can assume that $|\eta_{k,11}| \le \tfrac14$ for $k\in\{1,\dots, N\}$, in which case
\[
 \sum_{k=1}^{N} \Bigg| \log\bigg(1- \frac{\eta_{k,11}}{1-\delta_k} \bigg)  - \log\big(1-  {\eta}_{k,11} \big) \Bigg|
  \lesssim   \sum_{k=1}^{N} \big|\delta_k\eta_{k,11}\big| 
  = \O\left(N^{-\frac 12+3\alpha+ \frac{3\delta}{2}} \right). 
\]
Second, we check that by expanding $\log(\cdot)$ that
\[
\bigg| \sum_{k=1}^{N} \log\big(1- {\eta}_{k,11} \big) + \sum_{k=1}^{N} \bigg( {\eta}_{k,11} + \frac12  {\eta}_{k,11}^2 \bigg) \bigg|  \lesssim
\sum_{k = 1}^N \left|{\eta}_{k,11} \right|^3  \lesssim N^{-\frac 12+3\alpha+\frac{9\delta}{2}} . 
\]
With $\alpha=1/9$ and $\delta=1/45$, we obtain
\begin{equation*} 
 \prod_{k=1}^{N}  \lambda_+(\tfrac{k}N)\big(1- \delta_k - \eta_{k,11} \big)= \prod_{k=1}^{N}  \lambda_+(\tfrac{k}N)\big(1- \delta_k \big)  \exp\left( - \sum_{k=1}^{N} \bigg( {\eta}_{k,11} + \frac12  {\eta}_{k,11}^2 \bigg)+ \O\big(N^{-\frac{1}{15}}\big) \right) . 
\end{equation*}
Hence, by combining these asymptotics with \eqref{charpoly2}, we obtain $\P_{N^\delta}$--almost surely,\begin{equation} \label{charpoly3}
 \prod_{k=1}^{N}  \lambda_+(\tfrac{k}N)\big(1- \delta_k - \eta_{k,11} \big)=    \pi_N(z)  \exp\left( - \sum_{k=1}^{N} \bigg( {\eta}_{k,11} + \frac12  {\eta}_{k,11}^2 \bigg) \right)   \left( 1+ \O\big(N^{-\frac{1}{15}}\big) \right) , 
\end{equation}
 uniformly for all $z \in K\cap  \mathscr{P}$. 

\medskip

\noindent\underline{3- Comparing to stochastic integrals:}\
Recall that we have used the coupling from
Theorem \ref{thm:shaoKMT}.  Hence we have
\begin{equation} \label{coupling} 
    \Big\VERT \max_{1 \leq n \leq N} \biggl| {\textstyle \frac{1}{\sqrt{N}}\sum_{j=1}^n {X}_j} - \mathbf{X}_{\frac nN} \biggr| \Big\VERT_1 \vee 
    \Big\VERT \max_{1 \leq n \leq N} \biggl| {\textstyle \frac{1}{\sqrt{N}}\sum_{j=1}^n {Y}_j} - \mathbf{Y}_{\frac nN} \biggr| \Big\VERT_1 \lesssim \frac{\log N}{\sqrt{N}}.
\end{equation}

By Proposition \ref{prop:integralapprox}, \eqref{coupling} and \eqref{etatilde},  we verify that for any $\delta>0$, it holds with probability at least $1-e^{-cN^{\delta}}$, 
\begin{equation} \label{fluctu1}
  \left| \sum_{k=1}^{N} {\eta}_{k,11} - \frac{1}{\sqrt{2\beta}} \int_0^1 \frac{\d \mathbf{X}_u+ J(z/\sqrt{u}) \d\mathbf{Y}_u}{\sqrt{z^2-u}} \right| \ \lesssim N^{-1/2} \max_{f=f_1, f_2} \left\{ \|f\|_{\operatorname{TV},1} N^{2\delta}  + Y_N(f) \right\}
\end{equation}
where $f_1(u) = \frac{1}{\sqrt{z^2-u}}$, $f_2(u)= \frac{J(z/\sqrt{u})}{\sqrt{z^2-u}}$,  for $u\in(0,1]$.
We already used that  $\sup_{u\in[0,1]}\|f_j\|_{\infty} \lesssim N^{\alpha}$  for $j=1,2$ and we verify by Lemma  \ref{lem:J} and \eqref{J:logderivative} that for any $u\in (0,1]$,
\[
|f_1'(u)|+|f_2'(u)| \lesssim \frac{1}{|z^2-u|^{3/2}}+  \frac{|zJ(z/\sqrt{u})|}{u|z^2-u|} \lesssim \frac{1}{|z^2-u|^{3/2}}+ \frac{1/\sqrt{u}}{|z^2-u|} . 
\] 
This gives the bounds: $\|f_j\|_{\operatorname{TV},1} \lesssim N^{\alpha}$ and $\VERT Y_N(f_j)\VERT_1 \lesssim  N^{\alpha}$  for $j=1,2$. 
By \eqref{fluctu1}, these estimates show that with probability at least $1-e^{-cN^{\delta}}$, it holds  uniformly for all $z \in\mathscr{P}$,
\begin{equation} \label{fluctu2}
  \left| \sum_{k=1}^{N} {\eta}_{k,11} - \frac{1}{\sqrt{2\beta}} \int_0^1 \frac{\d \mathbf{X}_u+ J(z/\sqrt{u}) \d\mathbf{Y}_u}{\sqrt{z^2-u}} \right| \ \lesssim N^{-1/3} . 
\end{equation}

Moreover, we have $\P_{S}$--almost surely for $k\ge 1$,
\[
\VERT  {\eta}_{k,11}^2 \VERT_{1} \le \VERT  {\eta}_{k,11} \VERT_{2}^2 \lesssim N^{-1+2\alpha} \big( \VERT {X}_k \VERT_{2}^2  + \VERT {Y}_k \VERT_{2}^2  \big) \lesssim N^{-1+2\alpha} , 
\] 
 so that $ \sum_{k = 1}^N\big\VERT  {\eta}_{k,11}^2 \big\VERT_{1}^2  \lesssim N^{-1+4\alpha}$. 
 By Bernstein's inequality \eqref{eq:actualbernstein} with the measure $\Pr_S$ this shows that with probability at least $1-e^{-cN^{\delta}}$, 
\begin{equation} \label{fluctu3}
  \left|  \sum_{k =  1}^N \big( {\eta}_{k,11}^2 - \E_S {\eta}_{k,11}^2 \big)  \right| \leq N^{-1/2+\alpha+\delta}  . 
\end{equation}

As for the means, we have for $k\ge 1$ $\E_S {\eta}_{k,11}^2 = \E {\eta}_{k,11}^2 + \O(e^{-cN^{-\delta}})$ and
\begin{equation} \label{meaneta}
\E {\eta}_{k,11}^2 
=  \frac{1/2\beta}{Nz^2-k}\left(  \E {X}_k^2 +  J\big(z \sqrt{\tfrac{N}{k-1}}\big)^2 \E {Y}_k^2 \right)
=   \frac{1}{2\beta N} \frac{ 1 +  J\big(z \sqrt{\tfrac{N}{k-1}}\big)^2 }{z^2-k/N} ,
\end{equation}
so  that by a Riemann sum approximation
\begin{equation} \label{fluctu4}
 \begin{aligned}
\sum_{k=1}^N \E {\eta}_{k,11}^2  & = \frac{1}{2\beta} \int_0^1 \frac{1+ J(z/\sqrt{u})^2}{z^2-u} du  
+\O\left( \sum_{k=1}^N  \frac{\sqrt{N/k}}{|Nz^2-k|^2} \right) \\
& = \E\left[ \left( \frac{1}{\sqrt{2\beta}}  \int_0^1 \frac{\d \mathbf{X}_u+ J(z/\sqrt{u}) \d\mathbf{Y}_u}{\sqrt{z^2-u}}\right)^2 \right] + \O\big(N^{-1+4\alpha}\big) . 
\end{aligned}
\end{equation} 
Here we have used that the estimate \eqref{estimate1} and that if $f(u) = \frac{1+ J(z/\sqrt{u})^2}{z^2-u}$, then by Lemma  \ref{lem:J} and \eqref{J:logderivative}, $|f'(u)| \lesssim  \frac{1/\sqrt{u}}{|z^2-u|^2} $  for $u\in[0,1]$ uniformly for all $z \in K\cap  \mathscr{P}$. 

\medskip

By combining the estimates \eqref{fluctu2}, \eqref{fluctu3} and \eqref{fluctu4}, we conclude that there exists an event $\A \subset \mathscr{T}_{N^\delta}$, such that $\P[\A^{c}] \le 2e^{-N^\delta}$ such that on  $\A$, it holds uniformly for all $z \in K\cap  \mathscr{P}$, 
\[
\sum_{k=1}^{N} \left(  {\eta}_{k,11} + \frac 12 {\eta}_{k,11}^2  \right)
 = \sqrt{\frac{2}{{\beta}}}  \mathrm{W}(z) + \frac{1}{\beta} \E\left[ \mathrm{W}(z)^2\right] + \O\big( N^{-1/3} \big) ,
\]
where we recall from \eqref{eq:Wtz}
\[
  \mathrm{W}(z) = \frac{1}{{2}} \int_0^1 \frac{\d \mathbf{X}_u+ J(z/\sqrt{u}) \d\mathbf{Y}_u}{\sqrt{z^2-u}}   .
\]
By combining these asymptotics with \eqref{charpoly3}, this completes the proof.
\hfill$\square$

\subsection{Proof of Proposition \ref{prop:coupling2}}  \label{sect:coupling2}

The argument is very similar to that of Section~\ref{sect:coupling1}.
We work under the probability measure $\Pr_S$ with $S={N^\epsilon}$ from Definition~\ref{def:cond} for a small $\epsilon>0$ to be chosen later.

\medskip

We fix a point  $z\in \mathscr{D}_H$ (see Definition~\ref{def:hyper}) and we let 
$\chi= |\Re z|^2- \Omega N^{-2/3}|\Re z|^{2/3}$ (in particular, we note that for all $t\in[0,\chi]$, we  have $\lfloor Nt  \rfloor \le N_H$). 
Like in Section~\ref{sect:coupling1}-3, we claim that if $\epsilon>0$ is sufficiently small compared to $\delta>0$,  then it holds $\P_{N^\epsilon}$--almost surely,  for all $n \in \{1,\dots, N_H\}$, 
\begin{equation} \label{truncation2}
\bigg| \sum_{k=1}^{n} \log\bigg(1- \frac{\eta_{k,11}}{1-\delta_k} \bigg) + \sum_{k=1}^{n} \bigg(  {\eta}_{k,11} + \frac12  {\eta}_{k,11}^2 \bigg) \bigg|  \lesssim N^{-\epsilon} . 
\end{equation}
This estimate follows from the fact that  $|Nz^2 - k|  \ge  \frac{\omega_N + \hat{k}}{\sqrt{2}}$  for all $k\in\{1,\dots, N_H\}$ (see Proposition~\ref{prop:Hyper}). Then, according to \eqref{eq:delta} and \eqref{def:eta1},  it holds $\P_{N^\epsilon}$--almost surely,  
 \begin{equation*}
    \sum_{k=1}^{N_H} \big|\delta_k\eta_{k,11}\big|  
    \lesssim N^{\epsilon/2} \omega_N^{-1/2}
    \qquad\text{ and }\qquad
    \sum_{k = 1}^{N_H} \left|{\eta}_{k,11} \right|^3  \lesssim   \sum_{k = 1}^{N_H}  \frac{N^{3\epsilon/2}}{|N z^2-k|^{3/2}}   \lesssim_\beta\, N^{3\epsilon/2} \omega_N^{-1/2}. 
 \end{equation*}
Since $\omega_N \ge  \Omega  N^{2\delta/3}$, the estimate \eqref{truncation2} implies that  for all $n \in \{1,\dots, N_H\}$, 
\begin{equation*} 
 \prod_{k=1}^{n}  \lambda_+(\tfrac{k}N)\big(1- \delta_k - \eta_{k,11} \big)= \prod_{k=1}^{n}  \lambda_+(\tfrac{k}N)\big(1- \delta_k \big)  \exp\left( -  \sum_{k=1}^{n} \bigg(  {\eta}_{k,11}  +  \frac 12  {\eta}_{k,11}^2 \bigg)   + \O\big( N^{-\epsilon}\big) \right) . 
\end{equation*}
Hence by \eqref{charpoly5}, this shows that uniformly for all $z\in \mathscr{D}_H$ and $n \in [N^\delta, N_H]$,  
\begin{equation} \label{charpol6}
\biggl[ \prod_{k=2}^{n}  \lambda_+(\tfrac{k-1}N)\big(1- \delta_k - \eta_{k,11} \big) \biggr]  \begin{pmatrix}  \lambda_+(\frac{n}{N}) \\ 1   \end{pmatrix} 
=  \exp\left( -  \sum_{k=1}^{n} \bigg(  {\eta}_{k,11}  +  \frac 12  {\eta}_{k,11}^2 \bigg)\right)
\begin{pmatrix} \pi_{n} \\  \pi_{n-1}  \end{pmatrix}   \left(1+  \O\left(N^{-\epsilon} \right) \right) .
\end{equation}

Now, by \eqref{coupling}, \eqref{etatilde} and  Proposition \ref{prop:integralapprox}, for any $n\le N_H$,  there exists an event $\A_n$ which is independent of $\F_{>n}$, such that $\P[\A_n^c] \le e^{-N^{\epsilon}}$ and  
\begin{equation} \label{fluc0}
  \left| \sum_{k=1}^{n} {\eta}_{k,11} - \frac{1}{\sqrt{2\beta}} \int_0^t \frac{\d \mathrm{X}_u+ J(z/\sqrt{u}) \d\mathrm{Y}_u}{\sqrt{z^2-u}} \right| \lesssim
 N^{-1/2} \max_{f=f_1, f_2} \left\{ \|f\|_{\operatorname{TV},\chi} N^{3\epsilon/2}  + Y_n(f) \right\}
\end{equation}
where $t=n/N$ and $f_1(u) = \frac{1}{\sqrt{z^2-u}}$, $f_2(u)= \frac{J(z/\sqrt{u})}{\sqrt{z^2-u}}$,  for $u\in[0,\chi]$.
To control the errors, observe that for $j=1,2,$
\[
\|f_j\|_{\infty} \lesssim \sup_{u\in[0,\chi]}\left( \tfrac{1}{\sqrt{|\Re z|^2-u}} \right)  \lesssim_\Omega\, N^{1/3} |\Re z|^{-1/3}
\le N^{1/2 - \delta/3} ,
\]
where we have used that for any $z\in  \mathscr{D}_H$,
$|z^2-u| \ge \frac{|\Re z|^2-u}{\sqrt 2}$ for $u\in[0,\chi]$ (see the proof of Proposition~\ref{prop:Hyper}) and $|\Re z| \ge N^{\delta - \frac{1}{2}}$.  
Similarly, we have 
\[
\int_0^\chi |f_1'(u)| du  \le  \int_0^\chi \frac{d u}{(|\Re z|^2-u)^{3/2}}  \lesssim_\Omega\, N^{1/3} |\Re z|^{-1/3}
\le N^{1/2 - \delta/3} 
\]
and by \eqref{J:logderivative}, 
\[
\int_0^\chi |f_2'(u)| du  \le \int_0^\chi |f_1'(u)| du  + |z|  \int_0^\chi \frac{|J(z/\sqrt{u})|}{|z^2-u|} \frac{d u}{u}
\] 
where by Lemma~\ref{lem:J} and a change of variable $s = u/|\Re z|^2$, we obtain
\[
\int_0^\chi \frac{|J(z/\sqrt{u})|}{|z^2-u|} \frac{d u}{u} \lesssim_\Omega |\Re z|^{-1}  \int_0^{1- N^{-\delta/3}} \frac{|J(1/\sqrt{s})|}{1-s} \frac{ds}{s} \lesssim_\Omega N^{1/2-\delta}   \log N . 
\]
The previous estimates imply that $  \VERT Y_n(f_j) \VERT_1 \lesssim_\Omega N^{1/2 - \delta/3}$ for $j=1,2$ so that  if $\epsilon>0$ is sufficiently small compared to $\delta>0$, we have $|Y_n(f_j) | \le  N^{1/2 -\epsilon}$ on the event $\A_n$. 
By \eqref{fluc0}, since we also have $\|f_j\|_{\operatorname{TV},\chi}  \lesssim_\Omega N^{1/2 - \delta/3}$ for $j=1,2$, this shows that on $\A_n$, 
\begin{equation} \label{fluc1}
  \left| \sum_{k=1}^{n} \eta_{k,11} - \frac{1}{\sqrt{2\beta}} \int_0^t \frac{\d \mathbf{X}_u+ J(z/\sqrt{u}) \d\mathbf{Y}_u}{\sqrt{z^2-u}} \right|  \lesssim 
 N^{-\epsilon} . 
\end{equation}

Moreover, we also verify that $\VERT  {\eta}_{k,11}^2 \VERT_{1} \lesssim  \frac{1}{\omega_N + \hat{k}}$ for all $k\in\{1, \dots, N_H\}$, 
 so that $ \sum_{k =1}^{N_H}\big\VERT  {\eta}_{k,11}^2 \big\VERT_{1}^2  \lesssim  \omega_N^{-1} \le  \Omega^{-1}  N^{-2\delta/3} $ for $z\in\mathscr{D}_H$. By Bernstein's inequality \eqref{eq:actualbernstein} and \eqref{Rtrunc},  this shows that with probability at least $1-e^{-c N^{\epsilon}}$, 
 \vspace*{-.5cm}
\begin{equation} \label{fluc2}
\sup_{n\le N_H}  \left|  \sum_{k =1}^{n} \big( {\eta}_{k,11}^2 - \E {\eta}_{k,11}^2 \big)  \right| \leq N^{-\epsilon}  . 
\end{equation}
As for the mean, by \eqref{meaneta}--\eqref{fluctu4}, we obtain  with $t=n/N \le \chi$, 
\begin{equation} \label{fluc3}
 \begin{aligned}
\sum_{k=1}^{n} \E {\eta}_{k,11}^2 
& = \E\left[ \left( \frac{1}{\sqrt{2\beta}}  \int_0^t \frac{\d \mathbf{X}_u+ J(z/\sqrt{u}) \d\mathbf{Y}_u}{\sqrt{z^2-u}}\right)^2 \right] + \O\big(\omega_N^{-1/2}\big) . 
\end{aligned}
\end{equation} 
where we have used that  by \eqref{J:logderivative} and Lemma~\ref{lem:J}, $\left| \frac{d}{du}\frac{1+ J(z/\sqrt{u})^2}{z^2-u}\right| \lesssim \frac{1}{(|\Re z|^2-u)^2} + \frac{1}{\sqrt{u}(|\Re z|^2-u)^{3/2}}   $
for $u\in (0,\chi]$. 
Hence by combining the estimates \eqref{fluc1},  \eqref{fluc2} and \eqref{fluc3},  this implies that there exists an event  $\mathscr{G}_n$ which is independent of $\F_{>n}$, such that $\P[\mathscr{G}_n^c] \le e^{- cN^{\epsilon}}$ and
\[
\sup_{n\le N_H , t=n/N}  \left\{ \left| \sum_{k=1}^{n} \bigg(  \eta_{k,11}  +  \frac12  \eta_{k,11}^2 \bigg) - 
\sqrt{\frac{2}{{\beta}}}  \mathfrak{g}_t(z) - \frac12  \E\left[ \left(\sqrt{\frac{2}{{\beta}}}  \mathfrak{g}_t(z)\right)^2 \right]    \right| \1_{\mathscr{G}_n} \right\}  \lesssim  4N^{-\epsilon} . 
\]
where we recall \eqref{eq:Wtz}, which states $\displaystyle   \mathfrak{g}_t(z) = \frac{1}{2}\int_0^t \frac{\d \mathbf{X}_u+ J(z/\sqrt{u}) \d\mathbf{Y}_u}{\sqrt{z^2-u}}$. 
%
\hfill$\square$


\appendix

\section{Properties of the inverse Joukowsky transform and asymptotics of Hermite polynomials}
\label{sect:properties}

In this section, we record a few basic properties of the  the inverse Joukowsky transform $J$ from \eqref{eq:J} which we will need for the proofs of Theorems~\ref{thm:planar} and~\ref{thm:real}; we also explain the relationship between the asymptotics from Section~\ref{sect:main} and the  Plancherel--Rotach expansion for the  Hermite polynomials.  

The Joukowsky map is chosen to be conformal in $\C \setminus [-1,1],$ and it maps $\C \setminus [-1,1]$ bijectively to $\D \setminus \{0\}.$  It can alternatively be defined in terms of the principal branch of $\sqrt{\cdot}$ by 
\[
  J(q) = q-\sqrt{q-1}\sqrt{q+1}.
\]
Geometrically, the preimages under $J$ of concentric circles for $r \in (0,1)$, i.e. 
\[
 \mathcal{E}_r = \left\{ q\in \C  :  |J(q)| = r \right\}
\]
are ellipses with foci $\pm 1$ and major semi--axis $\frac{r+r^{-1}}{2}.$  This extends to $r=1$ by taking $\mathscr{E}_1 =[-1,1],$ which is a degenerate ellipse.  Moreover, the map $J$ has boundary values on $[-1,1]$ which are either the upper or lower half of the unit disk, respectively, depending on if the interval is approached from the upper or lower half plane.


\begin{lemma} \label{lem:J}
We have for any $q \in \C$, $|J(q)| \le 1/|q|$ and $|J(q)| \le | J(\Re q) |$
Moreover, we also have for any $q \in [1,2]$,
 \[
0 \le  J(q)  \le \exp\left( - \frac 23 \sqrt{q^2-1} \right) . 
\] 
\end{lemma}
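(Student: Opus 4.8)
The plan relies on the functional equation $J(q)+J(q)^{-1}=2q$, valid for every finite $q$ (where $J(q)\neq 0$): rationalising gives $J(q)^{-1}=q+\sqrt{q^2-1}$ for the same branch of the square root, and adding the two expressions kills the radical. Combined with the fact that $|J(q)|\le 1$ for all $q\in\C$ — which holds because $J$ maps $\C\setminus[-1,1]$ conformally into $\D$ and $|J|\equiv 1$ on $[-1,1]$ by continuity — the first bound is immediate: multiplying the functional equation by $J(q)$ gives $2qJ(q)=1+J(q)^2$, whence $2|q|\,|J(q)|=|1+J(q)^2|\le 1+|J(q)|^2\le 2$, i.e.\ $|J(q)|\le 1/|q|$.

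For $|J(q)|\le|J(\Re q)|$ I would set $w=J(q)$ and take real parts in $2q=w+1/w$. Since $\Re(1/w)=\Re w/|w|^2$, this yields $2\Re q=\Re w\,(1+|w|^{-2})$, hence
\[
2|\Re q|\le |w|\Bigl(1+\tfrac{1}{|w|^2}\Bigr)=|w|+\frac{1}{|w|}.
\]
If $|\Re q|\le 1$ then $|J(\Re q)|=1\ge |w|=|J(q)|$. If $|\Re q|>1$, put $\rho=|w|$; necessarily $\rho<1$ (otherwise $q\in[-1,1]$ and $\Re q=q$). Using that $|J(x)|=|x|-\sqrt{x^2-1}$ for every real $x$ with $|x|\ge 1$, the desired inequality $|J(\Re q)|\ge \rho$ is equivalent to $|\Re q|-\rho\ge \sqrt{(\Re q)^2-1}$; the left side being positive, squaring shows this is in turn equivalent to $|\Re q|\le \tfrac12(\rho+\rho^{-1})$, which is exactly the displayed bound. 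This gives the second inequality.

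For the last statement, fix $q\in[1,2]$. There $J(q)=q-\sqrt{q^2-1}\in(0,1]$, so $J(q)\ge 0$ and $-\log J(q)=\log\!\bigl(q+\sqrt{q^2-1}\bigr)$; it therefore suffices to show that $g(q):=\log\!\bigl(q+\sqrt{q^2-1}\bigr)-\tfrac23\sqrt{q^2-1}\ge 0$ on $[1,2]$. Differentiating and using $\tfrac{d}{dq}\log(q+\sqrt{q^2-1})=(q^2-1)^{-1/2}$ gives $g'(q)=\dfrac{3-2q}{3\sqrt{q^2-1}}$, so $g$ is increasing on $(1,\tfrac32)$ and decreasing on $(\tfrac32,2)$, and its minimum over $[1,2]$ is attained at an endpoint. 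Since $g(1)=0$ and $g(2)=\log(2+\sqrt3)-\tfrac{2\sqrt3}{3}>0$ (equivalently $2+\sqrt3>e^{2\sqrt3/3}$), we conclude $g\ge 0$, i.e.\ $J(q)\le \exp(-\tfrac23\sqrt{q^2-1})$.

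The only point needing care is the square-root branch entering the middle step: the identity $|J(x)|=|x|-\sqrt{x^2-1}$ for real $|x|\ge 1$ (with the nonnegative square root on the right) requires recalling that the conformal branch is the one with $\sqrt{q^2-1}\sim q$ at infinity, so that $J(x)=x-\sqrt{x^2-1}>0$ for $x>1$ while $J(x)=x+\sqrt{x^2-1}<0$ for $x<-1$. Once this bookkeeping is settled, all three estimates are short; the identity $J(q)+J(q)^{-1}=2q$ and the bound $|J|\le 1$ are what make parts one and two essentially algebraic.
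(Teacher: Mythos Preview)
Your proof is correct. The approach differs from the paper's in parts one and three, though part two is essentially the paper's ellipse argument rewritten in coordinates.

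For the first bound, the paper applies the Schwarz lemma to $z\mapsto J(1/z)$ on $\D$, while you use the functional equation $2qJ(q)=1+J(q)^2$ together with $|J|\le 1$; your route is more elementary and avoids any complex-analytic machinery. For the second bound, the paper observes that $\{|J(q)|=r\}$ is an ellipse with foci $\pm1$ and major semi-axis $(r+r^{-1})/2$, so $\Re q$ lies in its interior; your derivation of $|\Re q|\le\tfrac12(\rho+\rho^{-1})$ from $2\Re q=\Re w\,(1+|w|^{-2})$ is precisely this ellipse bound, obtained algebraically rather than geometrically. For the third bound, the paper writes $J(q)=\exp\bigl(-\int_1^q(t^2-1)^{-1/2}\,dt\bigr)$ and bounds the integrand from below by $\tfrac{\sqrt{q+1}}{3}(t-1)^{-1/2}$ on $[1,q]\subset[1,2]$, whereas you analyse $g(q)=\log(q+\sqrt{q^2-1})-\tfrac23\sqrt{q^2-1}$ directly via its derivative and endpoint values. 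Both arguments are short; the paper's integral bound is perhaps more suggestive of how the constant $\tfrac23$ arises, while your endpoint check makes the sharpness at $q=1$ explicit.
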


\begin{proof}
  Consider the map $z\in\D \mapsto J(1/z) $. By definition, this map is holomorphic  and $|J(1/z)|<1$ for any $z\in\D$. Hence, by the Schwartz Lemma, $|J(1/z)| < |z|$ for all $z\in \D$. 
Since $|J(q)| \le 1$ for any $q\in\C$, this proves the first claim. 

\medskip

Since the closed ellipses $\{\mathcal{E}_s\}_{s \in [r,1]} $ are nested, we have $|J(q)| \ge r$ for any $q\in \cup_{s \in [r,1]} \mathcal{E}_s$. So, if $q\in  \mathcal{E}_r$ for a  $r \in (0,1]$, as $\Re q \in \cup_{s \in [r,1]} \mathcal{E}_s$, this proves the second claim. 

\medskip

For any $q>1$, we have $J(q)^{-1} >1$ and 
\begin{equation}  \label{J:logderivative}
\frac{d}{dq} \log\big(J(q)^{-1} \big)  = - \frac{J'(q)}{J(q)}= \frac{1}{\sqrt{q^2-1}} . 
\end{equation}
Since $J(1) =1$, this shows that for $q \ge 1$,
\[
J(q) = \exp\left( - \int_1^q  \frac{dt}{\sqrt{t^2-1}} \right) . 
\]
In particular for $q\in[1,2]$, we have
\[
0 < J(q) \le \exp\left( -\frac{\sqrt{q+1}}{3} \int_1^q  \frac{dt}{\sqrt{t-1}} \right) =  \exp\left( - \frac 23 \sqrt{q^2-1}\right) . 
\]
This implies the third claim. 
\end{proof}

\begin{proposition} \label{prop:Hyper}
Recall the definition \eqref{Jlambda} of $\rho_k$ as well as the Definition~\ref{def:hyper}. 
For any $z\in \mathscr{D}_H$ and for all $k=1,\dots, N_H(z)$, 
\[
|Nz^2 - k|  \ge  \frac{\omega_N + \hat{k}}{\sqrt{2}}
\qquad\text{and}\qquad
|\rho_k(z)| \le  \exp\left( - \frac{4}{3} \sqrt{\frac{\omega_N + \hat{k}}{N_H}}\right) , 
\]
where we take the convention that  $\hat{k}= N_H(z)- k $. 
\end{proposition}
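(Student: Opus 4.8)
\textbf{Proof plan for Proposition~\ref{prop:Hyper}.}
The plan is to reduce both estimates to statements about real numbers by comparing $z^2$ with $(\Re z)^2$, and then to use the properties of $J$ from Lemma~\ref{lem:J} together with the explicit choice of $\omega_N$ in Definition~\ref{def:hyper}. Write $x = \Re z$ and $y = \Im z$, so $z^2 = (x^2 - y^2) + 2\i xy$; on $\mathscr{D}_H$ we have $0 \le y \le 2x$. First I would observe that $|Nz^2 - k| \ge N x^2 - k$ whenever $Nx^2 \ge k$ — indeed $\Re(Nz^2 - k) = N(x^2 - y^2) - k$ may be negative, so instead I bound $|Nz^2 - k|$ below by a multiple of $N_p - k$, where $N_p = \lfloor Nx^2\rfloor$. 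The cleanest route: since $|z^2 - t| \ge \tfrac{1}{\sqrt 2}(x^2 - t)$ for all real $t \le x^2$ when $z \in \mathscr{D}_H$ (this is the elementary inequality ``$|z^2 - t| \ge \tfrac{1}{\sqrt2}|\Re(z^2) - t|$ fails, so one actually uses $|z^2-t|^2 = (x^2-y^2-t)^2 + 4x^2y^2 \ge \tfrac12(x^2-t)^2$, which I would check by expanding and using $y \le 2x$, hence $y^2 \le 2x(x) \le 2x^2$ so $x^2 - y^2 - t \ge -(x^2+t) \ge -(x^2 - t) - 2t$ ... let me instead just verify $(x^2-y^2-t)^2 + 4x^2y^2 \ge \tfrac12(x^2-t)^2$ directly as a quadratic in $y^2$), we get $|Nz^2 - k| \ge \tfrac{1}{\sqrt2}(Nx^2 - k) \ge \tfrac{1}{\sqrt2}(N_p - k)$. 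Then since $k \le N_H = (N_p - \omega_N)\wedge N$, we have $N_p - k \ge \omega_N$, and also $N_p - k = (N_p - N_H) + (N_H - k) \ge \omega_N + \hat k$ because $N_p - N_H \ge \omega_N$ (as $N_H \le N_p - \omega_N$) and $N_H - k = \hat k$. This gives the first inequality.

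For the second inequality, recall from \eqref{Jlambda} that $\rho_k(z) = J\big(z/\sqrt{k/N}\big)^2$, so $|\rho_k(z)| = |J(z/\sqrt{k/N})|^2$. By the second claim of Lemma~\ref{lem:J}, $|J(q)| \le |J(\Re q)|$, and here $\Re(z/\sqrt{k/N}) = x\sqrt{N/k}$, which is $\ge 1$ because $k \le N_H \le N_p \le Nx^2$. So $|\rho_k(z)| \le J\big(x\sqrt{N/k}\big)^2$. Next I would like to apply the third claim of Lemma~\ref{lem:J}, but that is stated only for argument in $[1,2]$; for argument $q > 2$ one has $J(q) \le 1/q < 1/2 < e^{-2\sqrt{q^2-1}/3}$ is false for large $q$, so instead I use the exact formula $J(q) = \exp\big(-\int_1^q dt/\sqrt{t^2-1}\big)$ valid for all $q \ge 1$, derived in the proof of Lemma~\ref{lem:J} from \eqref{J:logderivative}. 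From this, for $q = x\sqrt{N/k} \ge 1$ one has $\int_1^q dt/\sqrt{t^2-1} \ge \tfrac{1}{\sqrt{q+1}}\int_1^q dt/\sqrt{t-1} = \tfrac{2}{\sqrt{q+1}}\sqrt{q-1} \ge \tfrac23\sqrt{q^2-1}$ provided $q \le 2$, which handles the bounded case; and for $q \ge 2$ a direct estimate $\int_1^q dt/\sqrt{t^2-1} \ge c\log q$ together with $q^2 - 1 = x^2 N/k - 1$ being at most polynomial in $N$ shows the exponential decay still dominates. Actually the cleanest uniform statement is: $\int_1^q \frac{dt}{\sqrt{t^2-1}} \ge \frac23\sqrt{q^2 - 1}$ for $1 \le q \le 2$ and $\int_1^q \frac{dt}{\sqrt{t^2-1}} \ge \frac23\sqrt{3} + \log(q/2) \ge \frac{1}{3}q$ for $q \ge 2$; in both regimes $J(q)^2 = \exp(-2\int_1^q)$ is bounded by $\exp(-\tfrac43\sqrt{q^2-1} \cdot c')$ for a universal $c'$, and one then needs to produce the specific constant $4/3$ and the specific form $\sqrt{(\omega_N + \hat k)/N_H}$.

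The remaining task is to translate $q^2 - 1 = x^2 N/k - 1 = (Nx^2 - k)/k$ into $(\omega_N + \hat k)/N_H$. Since $k \le N_H$ we have $(Nx^2 - k)/k \ge (N_p - k)/N_H \ge (\omega_N + \hat k)/N_H$ by the computation above, so $\sqrt{q^2-1} \ge \sqrt{(\omega_N + \hat k)/N_H}$, giving $|\rho_k(z)| \le \exp\big(-\tfrac43\sqrt{(\omega_N+\hat k)/N_H}\big)$ once the integral lower bound $\int_1^q dt/\sqrt{t^2-1} \ge \tfrac23\sqrt{q^2-1}$ is established — and here I note that $q = x\sqrt{N/k}$ satisfies $q^2 - 1 = (N_p - k)/k + O(1/k) \le N/N^{2\delta} \cdot$(constant), which is $\le 4$ exactly when $\omega_N + \hat k$ is not too large; in the regime where $q$ is genuinely large the bound is even easier since $J$ decays like $1/q$. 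I expect the main obstacle to be purely bookkeeping: being careful about the two regimes $1 \le q \le 2$ versus $q \ge 2$ in the integral estimate and confirming the constant $4/3$ survives (rather than $2/3$), using that the leading behaviour near $q = 1$ of $\int_1^q dt/\sqrt{t^2-1}$ is $\sqrt{2}\sqrt{q-1} = \sqrt{2(q^2-1)/(q+1)} \ge \sqrt{q^2-1}$ for $q$ near $1$, which already beats $\tfrac23\sqrt{q^2-1}$, and the constant only improves away from $q=1$. No step is conceptually hard; the care is in the uniformity over $k \le N_H$ and $z \in \mathscr{D}_H$.
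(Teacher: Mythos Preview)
Your approach is essentially the same as the paper's. Two places where the paper is cleaner: for the first inequality, rather than expanding $|z^2-t|^2$ and checking the quadratic inequality in $y^2$, the paper uses the generic bound $|w| \ge (\Re w + \Im w)/\sqrt 2$ and the one--line identity $\Re(z^2)+\Im(z^2)=x^2+y(2x-y)\ge x^2$ on $\mathscr{D}_H$. For the second inequality, your concern about $q=x\sqrt{N/k}>2$ is handled in the paper simply by capping: since $J$ is decreasing on $[1,\infty)$, one has $J(x\sqrt{N/k})\le J\big(x\sqrt{N/k}\wedge 2\big)$, and then Lemma~\ref{lem:J} applies directly to the capped argument, yielding $|J|\le \exp\bigl(-\tfrac23\sqrt{(\omega_N+\hat k)/k}\bigr)\vee e^{-2/\sqrt 3}$. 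The extra $\vee\, e^{-2/\sqrt 3}$ is harmless because $(\omega_N+\hat k)/N_H\le 2$ (indeed $\hat k< N_H$ and $\omega_N\le N_H$, the latter since $N_p\ge 2\omega_N$), so the target bound $\exp\bigl(-\tfrac43\sqrt{(\omega_N+\hat k)/N_H}\bigr)$ is at least $e^{-4\sqrt 2/3}>J(2)^2$. This boundedness of the ratio is the observation you were circling in your last paragraph but did not state explicitly.
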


\begin{proof}

We have for any $z\in \mathscr{D}_H$ and  for any $k=1,\dots, N_H$, 
\begin{equation}  \label{est0}
|Nz^2 - k| 
\ge \frac{N\big( \Re(z^2) + \Im(z^2) \big) - k  }{\sqrt{2}}
= \frac{N\big( (\Re z)^2 - (\Im z)^2 + 2\Re z\Im z \big) - k  }{\sqrt{2}}
\ge  \frac{N( \Re z)^2-  N_H + \hat{k}}{\sqrt{2}} = \frac{\omega_N + \hat{k}}{\sqrt{2}} ,
\end{equation}
where we used that $\Re(z^2) + \Im(z^2) = (\Re z)^2 + \Im z\big( 2 \Re z-\Im z) \ge (\Re z)^2$ at the second step. 
Using the estimates from Lemma~\ref{lem:J}, we also have for any $z\in \mathscr{D}_H$ and  for any $k=1,\dots, N_H$, 
\[
\big| J( \tfrac{z}{\sqrt{k/N}})  \big| 
\le  J( \tfrac{\Re z}{\sqrt{k/N}}\wedge 2) 
\le \exp\left( - \frac{2}{3} \sqrt{\frac{N(\Re z)^2 \wedge 4k -k}{k}} \right) .
\]
These estimates shows that 
\[
\big| J( \tfrac{z}{\sqrt{k/N}})  \big| 
\le \exp\left( - \frac{2}{3} \sqrt{\frac{\omega_N + \hat{k}}{k}}\right) \vee e^{-2/\sqrt{3}} . 
\]
Upon replacing $k$ by $N_H$ on the RHS and using \eqref{Jlambda}, this proves the claim. 
\end{proof}

\begin{proposition} \label{prop:Hermite}
Recall the definitions \eqref{def:lambda} and \eqref{eq:delta}  of $\lambda_+$  and $\delta_k$. 
There exists a small $\epsilon>0$ (depending on $\delta>0$ in the definition~\ref{def:hyper}) so that for any $z\in \mathscr{D}_H$ and $n \in [N^\delta, N_H]$, 
\begin{equation} \label{charpoly5}
\biggl[ \prod_{k=2}^{n}  \lambda_+(\tfrac{k-1}N)\big(1- \delta_k \big) \biggr]  \begin{pmatrix}  \lambda_+(\frac{n}{N}) \\ 1   \end{pmatrix}
= \begin{pmatrix} \pi_{n}(z) \\  \pi_{n-1}(z)  \end{pmatrix}   \left(1+  \O\left(N^{-\epsilon} \right) \right)
\end{equation}
where $\{\pi_n\}$ are the monic Hermite polynomials, orthogonal with respect to  $e^{-2N x^2}$ on~$\R$.
\end{proposition}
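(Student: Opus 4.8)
## Proof of Proposition~\ref{prop:Hermite}

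\textbf{Overview of the approach.} The plan is to recognize the left-hand side of \eqref{charpoly5} as exactly the noiseless specialization of the recurrence \eqref{rec2}, and to obtain its asymptotics by combining the exact diagonalization in Lemma~\ref{prop:diag} with the control of products of \emph{deterministic} hyperbolic transfer matrices furnished by our general framework (with all $\eta_{k,ij}$ replaced by their contributions coming purely from $\delta_k$ and $\rho_k$, i.e.\ with zero noise). In effect I want to show that the product $\prod_{k=2}^n \widetilde U_k$ of deterministic matrices $\widetilde U_k = \mathbb{E} U_k$ degenerates to $\left(\begin{smallmatrix} 1 & 0 \\ 0 & 0\end{smallmatrix}\right)$ up to a small error, and then unwind the diagonalization to recover $\pi_n, \pi_{n-1}$. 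The key quantitative input is the decay of $|\rho_k(z)|$ away from the turning point provided by Proposition~\ref{prop:Hyper}, together with the smallness of the $\delta_k$ which follows from $|Nz^2-k|\ge (\omega_N+\hat k)/\sqrt2$ via \eqref{eq:delta}.

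\textbf{Key steps.} First I would specialize \eqref{detrec} and Lemma~\ref{prop:diag}: with no noise, $\widetilde U_k = \left(\begin{smallmatrix} 1 & \widetilde\eta_{k,12} \\ \widetilde\eta_{k,21} & \rho_{k-1} - \widetilde\eta_{k,22}\end{smallmatrix}\right)$ where the $\widetilde\eta_{k,ij}$ are obtained from \eqref{def:eta2} by setting $X_k=Y_k=0$; explicitly $\widetilde\eta_{k,11}=0$ and each $\widetilde\eta_{k,ij} = \O(\delta_k) = \O\!\big(N^{-1}(\omega_N+\hat k)^{-1}\big)$ for $z\in\mathscr{D}_H$ and $k\le N_H$, using \eqref{eq:delta} and Proposition~\ref{prop:Hyper}. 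Second, I apply the deterministic analogue of Proposition~\ref{prop:rec}/Proposition~\ref{prop:planar} (equivalently the reasoning in Section~\ref{sect:ind} with zero noise, where Assumption~\ref{diffusive:ass} holds with $a_k \lesssim N^{-1/2}$ say, and the hyperbolicity events \eqref{hyperbolic:ass} hold with probability one because $|\rho_k|\le \exp(-\tfrac43\sqrt{(\omega_N+\hat k)/N_H})$): this gives $\prod_{k=2}^n \widetilde U_k = \left(\begin{smallmatrix} 1 & 0 \\ 0 & 0\end{smallmatrix}\right) + \O(N^{-\epsilon})$ for a small $\epsilon>0$, uniformly for $z\in\mathscr{D}_H$ and $N^\delta\le n\le N_H$. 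Third, I substitute this into \eqref{detrec}, extract the first two rows of $V_{n+1}\left(\begin{smallmatrix} 1 & 0 \\ 0 & 0\end{smallmatrix}\right) V_2^{-1}\left(\begin{smallmatrix} z \\ 1\end{smallmatrix}\right)$, and use \eqref{bc} with $b_1=0$ together with $V_{n+1} = \big(\begin{smallmatrix}\lambda_+(n/N)\\1\end{smallmatrix}\ \cdot\ \big)$ to see that the vector on the RHS equals $\left(\begin{smallmatrix}\lambda_+(n/N)\\1\end{smallmatrix}\right)(1+\O(N^{-\epsilon}))$; here I need $|z|\ge N^{-1/2+\delta}$, which holds on $\mathscr{D}_H$ since $\Re z\ge N^{\delta-1/2}$, and also $|\lambda_-(n/N)|\le|\lambda_+(n/N)|$ to discard the second column's contribution. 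Comparing with \eqref{detrec} then yields \eqref{charpoly5}.

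\textbf{Main obstacle and remaining care.} The main point requiring attention is making the deterministic product estimate uniform down to $n$ as small as $N^\delta$, where $\hat k = N_H - k$ can be as large as $\approx N$, so that the hyperbolicity rate $\sqrt{(\omega_N+\hat k)/N_H}$ degenerates only near the turning point; fortunately the relevant sums $\sum_k \delta_k |\rho_{k-1}|^{(\cdot)}$ are controlled by geometric-type decay in $\hat k$ exactly as in the proofs of Propositions~\ref{prop:1}--\ref{prop:2}, and one verifies $\prod_{k}(\text{off-diagonal corrections})$ contributes only $\O(N^{-\epsilon})$. A secondary bookkeeping issue is that the product \eqref{charpoly5} is indexed by $\lambda_+(\tfrac{k-1}N)$ with argument $k-1$ while \eqref{detrec} carries the factor $\prod_{k=2}^n\lambda_+(\tfrac{k-1}N)(1-\delta_k)$ identically, so no reindexing error arises; the only genuine check is that the truncation error in replacing $\prod\widetilde U_k$ by its limit is $\O(N^{-\epsilon})$ uniformly, and that $\epsilon$ can be chosen depending only on $\delta$ (one finds, as in Proposition~\ref{prop:planar}, an $\epsilon$ proportional to $\delta$). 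Finally, consistency of the resulting expansion with the classical Plancherel--Rotach formula is then immediate upon inserting the explicit $\lambda_\pm$ from \eqref{def:lambda} and is not needed for the statement itself.
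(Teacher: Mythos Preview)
Your approach is genuinely different from the paper's. The paper never shows $\prod_{k=2}^n\widetilde U_k \approx \left(\begin{smallmatrix}1&0\\0&0\end{smallmatrix}\right)$; instead it works purely with the \emph{scalar} $\prod_{k=1}^n\lambda_+(\tfrac kN)\prod_{k=2}^n(1-\delta_k)$, evaluates it by a trapezoidal Riemann-sum approximation (obtaining $e^{ng_t(z)}$ times an explicit prefactor with error $O(N^{-2\delta/3})$), and then matches this to the classical Plancherel--Rotach expansion of $\pi_n$ quoted from \cite{Deiftuni}. Your transfer-matrix route is attractive because it would be internal to the paper and avoid importing Hermite asymptotics, but it requires controlling the off-diagonal of the matrix product near the turning point, which the paper's scalar computation sidesteps entirely.

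There is a concrete gap in your invocation of Proposition~\ref{prop:rec}/\ref{prop:planar}. Those results need the uniform hyperbolicity $|\rho_k|\le 1-cN^{-\alpha}$ with $\alpha\le 1/9$, but for $z\in\mathscr{D}_H$ and $k$ near $N_H$ Proposition~\ref{prop:Hyper} only gives $|\rho_k|\le 1-cN_H^{-1/3}(\Omega\log N_H)^{1/3}$, which for $N_H\sim N$ is merely $1-cN^{-1/3+o(1)}$; the induction in Section~\ref{sect:ind} does not close with this gap. You have also inverted the location of the difficulty: for small $n$ every $\hat k=N_H-k$ is large and hyperbolicity is excellent, so that regime is easy; the hard case is $n$ close to $N_H$, where $\hat k\approx 0$, $|\rho_k|$ is closest to $1$, and $|\delta_k|\sim\omega_N^{-1}$ is largest. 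The deterministic $\widetilde\eta_{k,ij}$ are of size $O((\omega_N+\hat k)^{-1})$, not $O(N^{-1/2})$ as you write. The right internal tool would be the noiseless specialization of the framework behind Theorem~\ref{thm:main}; but even there the predictable part gives $\psi^{(1)}_{N_H,2,21}\approx\sum_k\big[\prod_{j>k}\rho_j\big]\delta_k\sim\sqrt{N_H}/\omega_N^{3/2}=1/(\Omega\log N_H)$ (all terms are positive for real $z$, so no cancellation), which does not obviously reach the claimed $O(N^{-\epsilon})$ rate. The paper's route via explicit asymptotics avoids ever having to control this off-diagonal quantity.
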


\begin{proof}

\noindent\underline{1- Riemann sum approximations:}\
Recall that $\lambda_+(t) = \sqrt{t}J(z/\sqrt{t})^{-1}/2.$
Since $J : \C \setminus (-\infty,1] \mapsto \D \setminus[-1,0]$, we 
 can define an analytic version of it complex logarithm on this simply connected domain using the principal branch.  Hence also, we can define the complex logarithm of $\lambda_+(t)$ which is analytic in $z$ and infinitely differentiable in $(z,t)$ on the domain
\[
  \widehat{\mathscr{D}} = \left( (z,t) \in \C \times [0,\infty) : \neg (z \leq \sqrt{t}) \right) . 
\]
For $z\in \mathscr{D}_H, n\le N_H$ we have that $(z, \tfrac{n}{N})$ is in $ \widehat{\mathscr{D}}$ since $n/N \leq N_H/N < N_P/N < (\Re z)^2$ and $\Re z > 0.$
Computing the second derivative:
\[
\frac{d^2 \log(\lambda_+(t))}{dt^2}
=-\frac{2\lambda_+(t)+\sqrt{z^2-t}}{16\lambda_{+}(t)^2(z^2 - t)^{3/2}}.
\]
Then, by using the trapezoidal rule, we have for $t=n/N$, 
\[
 \sum_{k=1}^{n} \log\lambda_+(\tfrac{k}N)   = N \int_0^t \log\left( \lambda_+(u) \right) du
 + \frac{ \log\left(\lambda_+(t)\right) - \log(\lambda_+(0) )}{2}
 + \O\left( \frac{1}{\sqrt{N}}  \sum_{k=1}^{n} \bigg| \frac{\lambda_+(\tfrac{k}N) + \sqrt{z^2-\frac kN}}{\lambda_+(\tfrac{k}N)^2(Nz^2- k)^{3/2}} \bigg| \right) . 
\]
According to Lemma \ref{lem:J}, we have $|\lambda_+(t)| \ge |z| /2$ for any $t \in (0,1]$. 
Using the estimate \eqref{est0}, this implies that for $z\in \mathscr{D}_H, n\le N_H$, 
\[
\frac{1}{\sqrt{N}}  \sum_{k=1}^{n} \bigg| \frac{\lambda_+(\tfrac{k}N) + \sqrt{z^2-\frac kN}}{\lambda_+(\tfrac{k}N)^2(Nz^2- k)^{3/2}} \bigg| 
\  \lesssim \  \frac{N^{-1/2}}{|z| \sqrt{\omega_N}} +  \frac{\log N}{|z|^2N}
= \O\left(N^{-\delta}\right) ,
\]
where we have used that $|z| \ge N^{-1/2+\delta}$ for $z\in \mathscr{D}_H$.
Since $\lambda_+(0) = z$,  this shows that
\begin{equation} \label{mean1}
 \sum_{k=1}^{n} \log\lambda_+(\tfrac{k}N)   = N \int_0^t \log\left( \lambda_+(u) \right) du
 + \frac{ \log\left(\lambda_+(t)\right) - \log(z)}{2} + \O\left(N^{-\delta} \right) . 
\end{equation}

In addition, let us observe 
\[
\frac{1}{2} \int_{0}^{t} \frac{ du}{z^2-u} = \log\left(\frac{z}{\sqrt{z^2-t}} \right) 
\]
where the branch has been chosen so that the RHS is analytic on $\C \setminus[-\sqrt t, \sqrt t]$. 
 
Taylor expanding \eqref{eq:delta} and using the estimate \eqref{est0} we have
\[
  \delta_k 
  = -\frac{1}{4N(z^2 - \tfrac kN)} + \O(|Nz^2-k|^{-2})
  = -\frac{1}{4N(z^2 - \tfrac kN)} + \O(|\omega_N + \widehat{k}|^{-2}),
\]
so that using the estimate \eqref{est0}, we obtain for $n \leq N_H$
\begin{equation} \label{mean2} 
\begin{aligned}
\sum_{k=2}^n \delta_k 
&=-  \frac 14 \int_{0}^{t} \frac{ du}{z^2-u}  + \O\left( \sum_{\widehat{k}=1}^\infty    \frac{1}{(\omega_N + \widehat{k})^2}  \right) \\
& = -\frac 12 \log\left(\frac{z}{\sqrt{z^2-t}} \right)  +  \O\left(\omega_N^{-1} \right) . 
\end{aligned}
\end{equation}

Since $\omega_N \ge \Omega N^{2\delta/3}$, combining the estimates \eqref{mean1} and \eqref{mean2}, we obtain that for $n \le N_H$ with $t=n/N$,  
\begin{align*}
 \sum_{k=1}^{n}  \log\lambda_+(\tfrac{k}N)  -  \sum_{k=2}^{n}   \delta_k  & = N \int_0^t \log\left( \lambda_+(u) \right) du
 + \frac{ \log\left(\lambda_+(t)\right) - \log(\sqrt{z^2-t})}{2} +  \O\left(N^{-\frac{2\delta}{3}} \right) \\
 &
  = N \int_0^t \log\left( \lambda_+(u) \right) du
 + \log\left( \frac{\sqrt{z-\sqrt{t}} + \sqrt{z+\sqrt{t}} }{2(z^2-t)^{1/4}} \right)+  \O\left(N^{-\frac{2\delta}{3}} \right) , 
 \end{align*} 
where we used that $\left(\sqrt{z-\sqrt{t}} + \sqrt{z+\sqrt{t}} \right)^2 = 4 \lambda_+(t)$. 
Since $\sum_{k=2}^n \delta_k^2=  \O\left(\omega_N^{-1} \right) $ for  $n \le N_H$, this shows that   with $t=n/N$,  
\begin{equation} \label{mean3}
\prod_{k=1}^{n}  \lambda_+(\tfrac{k}N) \prod_{k=2}^{n} \big(1- \delta_k \big)
= \left(\frac{\gamma(z/\sqrt{t}) + \gamma(z/\sqrt{t})^{-1}}{2} \right)  \exp\left( N \int_0^t \log\left( \lambda_+(u) \right) du+  \O\left(N^{-\frac{2\delta}{3}} \right) \right) 
\end{equation}
where  $\gamma(z) =  \left(\frac{z+1}{z-1}\right)^{1/4}$ is analytic, non-zero on $\C\setminus[-1,1]$, and the error is uniform for  $z\in \mathscr{D}_H$.

\medskip

\noindent\underline{2- $g$ function:}\
Since $\lambda_+(u) = \frac{\sqrt{u} J(z/\sqrt{u})^{-1}}{2} $, we deduce from  \eqref{J:logderivative} that
$\frac{d \lambda_+(u)}{dz}   =  \frac{\lambda_+(u)}{\sqrt{z^2-u}}$. 
This implies that 
\begin{equation} \label{mean4} 
\frac{d}{dz} \left( \int_0^t \log\left( \lambda_+(u) \right) du \right) = \int_0^t \frac{du}{\sqrt{z^2-u}}
= 2 \left( z- \sqrt{z^2-t} \right) = 4 \lambda_-(t)
\end{equation}
where we used \eqref{def:root}. 
Let $\rho(x) = \frac{\sqrt{1-x^2}}{\pi/2}\1_{|x| \le 1} $ denotes the semicircle density on $[-1,1]$ and recall that its Stieltjes transform satisfies for all $t>0$, 
\[
\int \frac{\rho_t(x)}{z-x} dx =\frac{ 4 \lambda_-(t)}{t}  ,\qquad \text{where}\qquad  \rho_t(x) = \rho(x/\sqrt{t})/\sqrt{t} . 
\]
So if we define for  $(z,t) \in \widehat{\mathscr{D}},$
\begin{equation}\label{eq:gfunction}
g_t(z) = \int \log (z -x) \rho_t(x) dx,
\end{equation}
using the principal branch of the logarithm.
We deduce from \eqref{mean4} that
\[
 \int_0^t \log\left( \lambda_+(u) \right) du = t g_t(z)  . 
\]
Note that there is no constant of integration since $\lambda_+(t) = z + \O_t(z^{-1})$ as $z\to+\infty$ and  $g_t(z)= \log z + \O_t(z^{-2})$ as $z\to+\infty$.
Hence, by \eqref{mean3}, we obtain that  for  $n \le N_H$ and  $t=n/N$,  
\begin{equation} \label{mean5}
\prod_{k=1}^{n}  \lambda_+(\tfrac{k}N) \prod_{k=2}^{n} \big(1- \delta_k \big)
= \left(\frac{\gamma(z/\sqrt{t}) + \gamma(z/\sqrt{t})^{-1}}{2} \right)  \exp\left( n g_t(z)+  \O\left(N^{-\frac{2\delta}{3}} \right) \right) .
\end{equation}

\medskip

\noindent\underline{3- Hermite asymptotics:}\
Recall from \cite[Theorem 1.3]{Deiftuni} that the (monic) Hermite polynomials $\{\Pi_n\}$ defined with respect the weight $e^{-2n x^2}$ on $\R$ have the asymptotics as $n\to+\infty$ with $g=g_1$ as in \eqref{eq:gfunction} 
\begin{equation} \label{Piasymp}
\Pi_n(z) = \left(\frac{\gamma(z) + \gamma(z)^{-1} }{2} \right) e^{n g(z)}\big(1+ \O(n^{-\epsilon})\big) . 
\end{equation}
uniformly for $(\Re z)^2 \ge 1+ n^{-1/3+\delta}$, with $\epsilon>0$ sufficiently small depending on $\delta>0$.
Using the scaling property of the Hermite polynomials  
$\pi_n(z) = \Pi_n(z/\sqrt{t}) t^{n/2}$ and $g_t(z) = g(z/\sqrt{t}) + \log(\sqrt{t})$ for $t=n/N$, we obtain  that as $n\to+\infty$, 
\[
\pi_n(z) =  \left(\frac{\gamma(z/\sqrt{t}) + \gamma(z/\sqrt{t})^{-1}}{2} \right)  \exp\left( n g_t(z)+ \O(n^{-\epsilon})\right) , \qquad n \le N_H(z) . 
\]
By \eqref{mean5}, this shows that for $\delta>0$, there exists a small $\epsilon>0$ (depending on $\delta>0$) such that uniformly for $z\in\mathscr{D}_H$, for all $n\in [N^\delta, N_H]$, 
\[
\prod_{k=1}^{n}  \lambda_+(\tfrac{k}N) \prod_{k=2}^{n} \big(1- \delta_k \big)
= \pi_n(z)  \left(1+  \O\left(N^{-\epsilon} \right) \right) . 
\]
The same argument  with $t=\frac{n-1}{N}$ shows that  for all $n\in [N^\delta, N_H]$,
$
\prod_{k=1}^{n-1}  \lambda_+(\tfrac{k}N) \prod_{k=2}^{n} \big(1- \delta_k \big)
= \pi_{n-1}(z)  \left(1+  \O\left(N^{-\epsilon} \right) \right)$.
This completes the proof. 
\end{proof}

\begin{lemma} \label{lem:magic}
The function
$(q,z) \mapsto \log\big(1- J(q)J(z)\big)$ 
is biholomorphic in the domain $\C\setminus[-1,1]$ and we have for any  $q, z\in \C\setminus[-1,1]$ and $t\in(0,1]$, 
\[
\frac{d}{dt}  \log\big(1- J(q/\sqrt{t})J(z/\sqrt{t})\big) = - \frac{1 +  J(z/\sqrt{t}) J(q/\sqrt{t}) }{4\sqrt{q^2-t} \sqrt{z^2-t}} . 
\]
\end{lemma}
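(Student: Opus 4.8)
The claim has two parts: (i) that $(q,z)\mapsto \log(1-J(q)J(z))$ extends to a biholomorphic (i.e.\ jointly holomorphic, and holomorphic in each variable) function on $(\C\setminus[-1,1])^2$, and (ii) the explicit formula for the $t$-derivative. For (i), recall from Lemma~\ref{lem:J} that $|J(q)|\le 1/|q|$ and, more importantly, that $J:\C\setminus[-1,1]\to\D$ is a conformal map onto the open unit disk. Hence for $q,z\in\C\setminus[-1,1]$ we have $|J(q)|<1$ and $|J(z)|<1$, so $|J(q)J(z)|<1$ and in particular $1-J(q)J(z)$ stays in the slit plane $\C\setminus(-\infty,0]$ on which the principal branch of $\log$ is holomorphic. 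Since $J$ is holomorphic on $\C\setminus[-1,1]$, the composition $(q,z)\mapsto \log(1-J(q)J(z))$ is holomorphic in each variable separately (and jointly, by Hartogs or by direct inspection), which is the first assertion.

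For (ii), the plan is a direct computation using the logarithmic derivative identity \eqref{J:logderivative}. Write $u=q/\sqrt t$ and $v=z/\sqrt t$, so that $\tfrac{d}{dt}u = -\tfrac{1}{2}t^{-3/2}q = -\tfrac{u}{2t}$ and similarly $\tfrac{d}{dt}v=-\tfrac{v}{2t}$. Differentiating $f(t):=\log(1-J(u)J(v))$ gives
\[
f'(t) = \frac{-J'(u)J(v)\,\dot u - J(u)J'(v)\,\dot v}{1-J(u)J(v)}
= \frac{1}{2t}\cdot\frac{u\,J'(u)J(v) + v\,J(u)J'(v)}{1-J(u)J(v)}.
\]
Now from \eqref{J:logderivative} (valid on $\C\setminus[-1,1]$ by analytic continuation, with the branch of $\sqrt{\cdot}$ consistent with $J$), $J'(w)/J(w) = -1/\sqrt{w^2-1}$, so $w\,J'(w) = -w\,J(w)/\sqrt{w^2-1}$. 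Substituting $w=u=q/\sqrt t$ gives $u J'(u) = -\tfrac{(q/\sqrt t)J(u)}{\sqrt{q^2/t-1}} = -\tfrac{q\,J(u)}{\sqrt{q^2-t}}$, and likewise $vJ'(v) = -\tfrac{z\,J(v)}{\sqrt{z^2-t}}$. Therefore the numerator becomes $-J(u)J(v)\big(\tfrac{q}{\sqrt{q^2-t}}+\tfrac{z}{\sqrt{z^2-t}}\big)$.

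It remains to simplify $\tfrac{1}{2t}\cdot\frac{-J(u)J(v)\left(\frac{q}{\sqrt{q^2-t}}+\frac{z}{\sqrt{z^2-t}}\right)}{1-J(u)J(v)}$ into the claimed form $-\tfrac{1+J(u)J(v)}{4\sqrt{q^2-t}\sqrt{z^2-t}}$. The key algebraic input is the relation between $J(q/\sqrt t)$ and the roots $\lambda_\pm$: by \eqref{def:lambda}, $\sqrt t\,J(q/\sqrt t)^{\mp 1}/2 = \lambda_\pm(t)$ with argument $q$, so $J(u) = \frac{\sqrt t}{2\lambda_+(t)} = \frac{q-\sqrt{q^2-t}}{q+\sqrt{q^2-t}}$ (the latter from $\lambda_\pm = \tfrac{q\pm\sqrt{q^2-t}}{2}$); equivalently $\frac{1-J(u)}{1+J(u)} = \frac{\sqrt{q^2-t}}{q}$, i.e.\ $\frac{q}{\sqrt{q^2-t}} = \frac{1+J(u)}{1-J(u)}$. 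Substituting this (and the analogous identity for $v$) into the numerator, one gets $\frac{q}{\sqrt{q^2-t}}+\frac{z}{\sqrt{z^2-t}} = \frac{(1+J(u))(1-J(v)) + (1-J(u))(1+J(v))}{(1-J(u))(1-J(v))} = \frac{2(1-J(u)J(v))}{(1-J(u))(1-J(v))}$. Hence
\[
f'(t) = \frac{1}{2t}\cdot\frac{-2\,J(u)J(v)}{(1-J(u))(1-J(v))} = \frac{-J(u)J(v)}{t\,(1-J(u))(1-J(v))},
\]
and using once more $\frac{\sqrt{q^2-t}}{q}=\frac{1-J(u)}{1+J(u)}$ to write $(1-J(u)) = \frac{2\sqrt{q^2-t}}{q+\sqrt{q^2-t}}$ and similarly for $v$, together with $J(u)J(v) = \frac{(q-\sqrt{q^2-t})(z-\sqrt{z^2-t})}{(q+\sqrt{q^2-t})(z+\sqrt{z^2-t})}$ and $t = q^2 - (q^2-t) = (q-\sqrt{q^2-t})(q+\sqrt{q^2-t})$, a short cancellation yields $f'(t) = -\tfrac{1+J(u)J(v)}{4\sqrt{q^2-t}\sqrt{z^2-t}}$, as claimed.

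\textbf{Main obstacle.} The only genuinely delicate point is branch-consistency: the identity \eqref{J:logderivative} was derived for real $q\in[1,2]$, and \eqref{def:lambda} fixes the branch of $\sqrt{z^2-t}$ via the convention that $J$ is conformal (equivalently $|\lambda_+|\ge|\lambda_-|$). One must check that all the square-root identities used above ($uJ'(u) = -qJ(u)/\sqrt{q^2-t}$, $q/\sqrt{q^2-t} = (1+J(u))/(1-J(u))$, etc.) hold with a single coherent choice of branch on $\C\setminus[-1,1]$, which follows because both sides are holomorphic there and agree on the real interval $(1,\infty)$ by the identity theorem. Once this bookkeeping is settled, the computation is routine; I would present it compactly using the $\lambda_\pm$ substitution rather than manipulating $J$ directly, since that makes the cancellations transparent.
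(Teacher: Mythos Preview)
Your chain-rule setup and the intermediate expression
\[
f'(t)=\frac{-J(u)J(v)}{2t\,(1-J(u)J(v))}\left(\frac{q}{\sqrt{q^2-t}}+\frac{z}{\sqrt{z^2-t}}\right)
\]
are correct, but the algebraic identity you invoke next is wrong. You write $J(u)=\frac{\sqrt t}{2\lambda_+(t)}=\frac{q-\sqrt{q^2-t}}{q+\sqrt{q^2-t}}$; the first equality is fine, but the second is not: $\frac{\sqrt t}{2\lambda_+(t)}=\frac{\sqrt t}{q+\sqrt{q^2-t}}=\frac{q-\sqrt{q^2-t}}{\sqrt t}$, whereas $\frac{q-\sqrt{q^2-t}}{q+\sqrt{q^2-t}}=\frac{\lambda_-}{\lambda_+}=J(u)^2$ (this is exactly $\rho_k$ in \eqref{Jlambda}). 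You have conflated $J(u)$ with its square. Consequently the identity $\frac{q}{\sqrt{q^2-t}}=\frac{1+J(u)}{1-J(u)}$ is false (try $t=1$, $q=2$: the left side is $2/\sqrt3\approx1.155$, the right side is $\sqrt3\approx1.732$); the correct version is $\frac{q}{\sqrt{q^2-t}}=\frac{1+J(u)^2}{1-J(u)^2}$. The subsequent ``short cancellation'' you assert does not in fact produce the claimed answer from your expression $\frac{-J(u)J(v)}{t(1-J(u))(1-J(v))}$.

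Your route is easily repaired and then coincides with the paper's. With the corrected identity one gets $\frac{q}{\sqrt{q^2-t}}+\frac{z}{\sqrt{z^2-t}}=\frac{2(1-J(u)^2J(v)^2)}{(1-J(u)^2)(1-J(v)^2)}$, and since $1-J(w)^2=2J(w)\sqrt{w^2-1}$ (from $J(w)^{-1}-J(w)=2\sqrt{w^2-1}$) one finds $(1-J(u)^2)(1-J(v)^2)=\tfrac{4J(u)J(v)}{t}\sqrt{q^2-t}\sqrt{z^2-t}$, after which the $(1-J(u)J(v))$ cancels against one factor of $(1-J(u)^2J(v)^2)$ and the result follows. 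The paper packages the same cancellation via the identity $J(u)^{-1}J(v)^{-1}-J(u)J(v)=2\big(u\sqrt{v^2-1}+v\sqrt{u^2-1}\big)$, which directly exhibits $z\sqrt{q^2/t-1}+q\sqrt{z^2/t-1}$ and makes the factor $(1+J(u)J(v))=(1-J^2J^2)/(1-JJ)$ appear in one step.
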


\begin{proof}
Since $J: \C\setminus[-1,1] \mapsto \C$ is holomorphic, it immediately follows that  $\log\big(1- J(q)J(z)\big)$ is  biholomorphic in $(\C\setminus[-1,1])^2.$
Differentiating in $t,$
\[
\frac{d}{dt}  \log\big(1- J(q/\sqrt{t})J(z/\sqrt{t})\big) = \frac{1}{2t^{3/2}} \frac{zJ'(z/\sqrt{t}) J(q/\sqrt{t})+ qJ(z/\sqrt{t}) J'(q/\sqrt{t}) }{1- J(q/\sqrt{t})J(z/\sqrt{t})}. 
\]
Using the identity \eqref{J:logderivative}, this implies that 
\[
\frac{d}{dt}  \log\big(1- J(q/\sqrt{t})J(z/\sqrt{t})\big) = - \frac{J(z/\sqrt{t}) J(q/\sqrt{t})}{\sqrt{z^2/t-1} \sqrt{q^2/t-1}} \frac{z\sqrt{q^2/t-1}+ q\sqrt{z^2/t-1} }{ 2t^{3/2}\big(1- J(q/\sqrt{t})J(z/\sqrt{t})\big)}. 
\]
Now, let us observe that since $J(q)^{-1} = q+\sqrt{q^2-1}$, we have 
\[
  \frac{J(q/\sqrt{t})^{-1} J(z/\sqrt{t})^{-1} - J(q/\sqrt{t})J(z/\sqrt{t})}
  {2}
  = \frac{z\sqrt{q^2/t-1}+ q\sqrt{z^2/t-1}}{\sqrt{t}}  , 
\]
so that 
\[ \begin{aligned}
\frac{d}{dt}  \log\big(1- J(q/\sqrt{t})J(z/\sqrt{t})\big) & = - \frac{1/(4t)}{\sqrt{z^2/t-1} \sqrt{q^2/t-1}} \frac{1- J(q/\sqrt{t})^2J(z/\sqrt{t})^2}{1- J(q/\sqrt{t})J(z/\sqrt{t})} \\
& = - \frac{1+J(q/\sqrt{t})J(z/\sqrt{t})}{4\sqrt{q^2-t} \sqrt{z^2-t}} . 
\end{aligned}\]
\end{proof}

\section{Estimates for the noise} \label{sect:est}

In this section, we provide estimates for the random variables \eqref{def:XY} which are necessary to obtain control of the noise for the proofs of Theorems~\ref{thm:planar} and~\ref{thm:real}. 

\begin{lemma} \label{lem:XY}
The random variables $X_1, X_2, X_3, \dots $ and $Y_2, Y_3, \dots$ are all independent. 
We have $X_k \sim \mathcal{N}(0,1)$ for all $k\in \N$. Moreover,  we have for any $k\ge2$,
\[
\E Y_k = 0 , \qquad \E Y_k^2 = 1
\qquad \text{and}\qquad
\VERT Y_k \VERT_1 \lesssim 1. 
\]
Finally,  for any $k\ge 2$ and $ \xi \le  \sqrt{\beta(k-1)/2}$, 
\[
\P\left[ |Y_k | \ge \xi \right]  \le 2 e^{- \xi^2/4} . 
\]
\end{lemma}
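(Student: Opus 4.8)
\textbf{Proof plan for Lemma~\ref{lem:XY}.}
The independence of the families $\{X_k\}$ and $\{Y_k\}$ is immediate from the independence of the entries $b_i$ and $a_i$ of $\mathbf{A}$ in \eqref{def:trimatrix}, together with the definitions \eqref{def:XY}; and $X_k = b_k/\sqrt 2 \sim \mathcal N(0,1)$ since $b_k \sim \mathcal N(0,2)$. So the content is entirely in the statements about $Y_k$. Write $\alpha = \beta(k-1)$, so that $a_{k-1}^2 \sim \Gamma(\alpha/2, 1/2) = \chi^2_\alpha$, i.e. a sum of $\alpha$ (non-integer, but this is only for intuition) squared standard Gaussians, with $\E a_{k-1}^2 = \alpha$ and $\Var(a_{k-1}^2) = 2\alpha$. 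Thus $Y_k = (a_{k-1}^2 - \alpha)/\sqrt{2\alpha}$ has mean $0$ and variance $1$ by direct computation of the first two moments of the Gamma law; this handles the first two assertions.

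For the sub-exponential norm and the tail bound, the plan is to use the standard moment generating function of a Gamma random variable. For $G \sim \Gamma(\alpha/2, 1/2)$ one has $\E e^{\lambda(G - \alpha)} = (1-2\lambda)^{-\alpha/2} e^{-\lambda\alpha}$ for $\lambda < 1/2$. Taking $\lambda = s/\sqrt{2\alpha}$ with $|s|$ bounded away from $\sqrt{\alpha/2}$ gives, after a Taylor expansion of $-\tfrac\alpha2 \log(1 - 2\lambda) - \lambda\alpha$, a bound of the form $\E e^{s Y_k} \le e^{C s^2}$ valid for $|s| \le c\sqrt\alpha$, which is exactly the moderate-deviation sub-Gaussian window one expects. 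From this, Markov's inequality applied to $e^{sY_k}$ and optimization over $s$ in the allowed range yields $\P[|Y_k| \ge \xi] \lesssim e^{-\xi^2/8}$ for $\xi$ in the stated range $\xi \le 2\sqrt{\alpha} = 2\sqrt{\beta(k-1)}$ (the precise constant $1/8$ being produced by choosing the numerical constants in the MGF bound generously; one splits the estimate into the quadratic regime near $\xi \approx 0$ and the linear regime, which is where the restriction $\xi \le 2\sqrt\alpha$ enters). The bound $\VERT Y_k\VERT_1 \lesssim 1$ then follows from \eqref{eq:normup}: the tail estimate gives $\P[|Y_k| \ge \xi] \le s e^{-\xi}$ for $\xi \le 2\sqrt\alpha$, and for $\xi > 2\sqrt\alpha$ one uses a crude bound (e.g. the full Gamma tail or that $\alpha \ge \beta$), after which the characterization of $\VERT\cdot\VERT_1$ in Section~\ref{sec:concentration} applies with an absolute constant.

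The only mild obstacle is bookkeeping: one must keep the exponential-moment estimate \emph{uniform in $k$} (equivalently in $\alpha = \beta(k-1) \ge \beta$), so that the constants depend only on $\beta$ and not on $k$ or $N$. This is handled by rescaling the variable $s \mapsto s\sqrt\alpha$ as above, which makes the admissible range of $s$ and the quadratic coefficient both $\alpha$-independent; for small $\alpha$ (i.e. $k=2$ and $\beta$ small) one simply notes $\alpha \ge \beta$ is a fixed positive number so all estimates remain valid with $\beta$-dependent constants, consistent with the convention in Section~\ref{sect:prel}. No genuinely hard step is involved; this is a routine Chernoff bound for a Gamma random variable, packaged in the $\VERT\cdot\VERT_p$ formalism of Section~\ref{sec:concentration}.
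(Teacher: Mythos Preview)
Your proposal is correct and follows essentially the same approach as the paper: both compute the Gamma MGF $\E e^{\lambda(a_{k-1}^2-\alpha)} = (1-2\lambda)^{-\alpha/2}e^{-\lambda\alpha}$, bound it sub-Gaussianly via elementary inequalities on $\log(1-x)$ in the range $|\lambda|\sqrt{2\alpha} \lesssim \sqrt\alpha$, and then apply the Chernoff/Markov argument for the tail; the paper derives $\VERT Y_k\VERT_1 \lesssim 1$ directly from the boundedness of the MGF at a fixed small argument rather than via \eqref{eq:normup}, but this is a cosmetic difference.
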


\begin{proof}
We have seen at the beginning of Section~\ref{sect:TM} that $a_{k}^2 \sim \Gamma(\frac{\beta k}{2},2)$ for $k\ge 1$. Using the formulae for the mean and variance of a Gamma random variable, we find that 
$\E Y_k = 0$ and $\E Y_k^2 = 1$.
Moreover, we can also explicitly compute the Laplace transform of the random variable $a_{k}^2$; for $\xi \in [0, \tfrac12]$,
\[
  \E[e^{\xi a_{k}^2}] = (1- 2{\xi})^{- \beta k/2}  .
\]

Using that $1-x  \ge e^{-x -  x^2}$ for all $x\in[0,1/2]$, we obtain for any $k\ge 1$ and $\xi \in [0,\sqrt{2 \beta k}/4]$, 
\[
\E[e^{\xi Y_{k+1}}] = \left(1- \frac{2\xi}{\sqrt{2\beta k}}\right)^{- k\beta /2} e^{- \xi \sqrt{k \beta/2}} \le e^{ \xi^2 } .
\]
Similarly, using that $1+x \ge  e^{x - x^2/2}$ for all $x\ge 0$, we have for any $\xi \ge 0$,
\[
\E[e^{ - \xi Y_{k+1}}] = \left(1 + \frac{2\xi}{\sqrt{2\beta k}}\right)^{-k \beta /2} e^{\xi \sqrt{k \beta/2 }} \le e^{ \xi^2/2} . 
\]
Both these estimates combined show that uniformly in $k,$ sufficiently small exponential moments exist, so there is an absolute constant $\varkappa > 0$ so that $\VERT Y_{k+1} \VERT_1 \leq \varkappa\beta^{-1/2}.$ 
Moreover by Markov's inequality, these bounds show that for any $k\ge 1$ and $\xi \in [0, \sqrt{\beta k/2}]$, 
\[
\P[ | Y_{k+1}| \ge \xi ] \le  \E[e^{\xi |Y_{k+1}|/2}] e^{- \xi^2/2}  \le 2 e^{-\xi^2/4} . 
\]
This completes the proof. 
\end{proof}

The important consequence is that we can truncate the random variables $X_1 ,Y_2, X_2 , \cdots,  Y_N, X_N$ by working on an event of overwhelming probability. 
For  $\mathrm{S}>0$, we define the event 
\begin{equation}\label{truncation} \begin{aligned}
\mathscr{T}_\mathrm{S}   =  \left\{ |Y_k| \le \sqrt{\mathrm{S}} \text{ for }k= \lceil \beta^{-1} \mathrm{S} \rceil, \dots N \right\} \cap  \left\{ |Y_k| \le  \mathrm{S}  \text{ for }k=1, \dots  \lfloor \beta^{-1} \mathrm{S}  \rfloor \right\} \\
 \cap \left\{ |X_k| \le  \sqrt{\mathrm{S}} \text{ for }k=1, \dots N \right\} . 
\end{aligned}
\end{equation}
Then, it follows from Lemma~\ref{lem:XY} that there exists absolute constants $C, c>0$ such that for any $\mathrm{S}>0$, 
\begin{equation} \label{Rtrunc}
\P[\mathscr{T}_\mathrm{S}] \ge 1-  C \left( 2+  \tfrac{\mathrm{S}}{\beta N} \right)  N  e^{-c \mathrm{S}} . 
\end{equation}
Note that it is possible to choose $\mathrm{S}$ growing with the dimension $N$ in the estimate \eqref{Rtrunc}, so that the event $\mathscr{T}_\mathrm{S}$ holds with overwhelming probability.

\medskip

Since, we would like to work with truncated random variables instead of $\{ X_k, Y_k \}_{k=1}^N$, this motivates the following notation. 

\begin{definition} \label{def:cond}
Let $\P_\mathrm{S} = \frac{\P[\cdot \1_{\mathscr{T}_\mathrm{S}}]}{\P[\mathscr{T}_\mathrm{S}]}$. 
This probability measure is absolutely continuous with respect to $\P$ and the random variables $X_1, X_2, X_3, \dots $ and $Y_2, Y_3, \dots$ remain independent under  $\P_\mathrm{S}$.
\end{definition}

Let us also record that by Lemma~\ref{lem:XY} and \eqref{Rtrunc}, we have for any integers $k\ge 1$ and $q\ge 1$, 
\begin{equation} \label{XYmoments}
\E_\mathrm{S}[X_k^q ] =  \E[X_k^q]  + \O_q(N e^{-c \mathrm{S}})
\qquad\text{and}\qquad
\E_\mathrm{S}[Y_{k+1}^q ] =  \E[Y_{k+1}^q]  + \O_q(N e^{-c \mathrm{S}}) . 
\end{equation}

\medskip

We now turn to the applications  for the proofs of Theorems~\ref{thm:planar} and~\ref{thm:real} (see Section~\ref{sec:rmp} for further details).

\begin{lemma} \label{lem:planarest}
Let $\mathrm{S} = N^{\epsilon}$ and $\P_\mathrm{S}$ be as in definition~\ref{def:cond}, then the conditions \eqref{cond:eta} hold under $\P_\mathrm{S}$  uniformly for all $z \in \mathscr{P}$, \eqref{def:P}. 
\end{lemma}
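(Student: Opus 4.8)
The statement to be proved is Lemma~\ref{lem:planarest}: under the truncated measure $\P_\mathrm{S}$ with $\mathrm{S}=N^\epsilon$, the three conditions in \eqref{cond:eta} (bound on $|\E\eta_{k,ij}|$, on $\E|\eta_{k,ij}|^2$, and the almost sure bound $|\eta_{k,ij}|\le C_\beta N^{\alpha-1/2+\epsilon}$) hold uniformly for all $z\in\mathscr{P}$ and all $k\in\{1,\dots,N\}$. The plan is to reduce everything to (a) the explicit formulas \eqref{def:eta1}--\eqref{def:eta2} for the $\eta_{k,ij}$ in terms of $\delta_k$, $\rho_{k-1}$, $X_k$, $Y_k$ and the prefactor $\sqrt{\tfrac{1/2\beta}{Nz^2-k}}$, (b) the uniform lower bound $\inf_{t\in[0,1]}|z^2-t|\ge N^{-2\alpha}$ valid on $\mathscr{P}$ (this is the observation recorded in the proof of Lemma~\ref{lem:XY}: if $|\Im z|\ge N^{-\alpha}$ then $|z^2-t|\ge |\Im(z^2)|=2|\Re z||\Im z|$... more precisely $|z^2-t|\gtrsim N^{-2\alpha}$, and if $|\Re z|\ge 1+N^{-2\alpha}/2$ then $\Re(z^2-t)\ge (\Re z)^2-(\Im z)^2 - 1\gtrsim N^{-2\alpha}$), and (c) the moment and tail estimates for $X_k,Y_k$ from Lemma~\ref{lem:XY} together with the truncation bound \eqref{XYmoments} and the definition \eqref{truncation} of $\mathscr{T}_\mathrm{S}$.

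First I would establish the deterministic ingredients: from \eqref{eq:delta} and the lower bound $|z^2-t|\ge cN^{-2\alpha}$ on $\mathscr{P}$ we get $|\delta_k|\le \tfrac14 N^{-1}\cdot N^{2\alpha}=\tfrac14 N^{2\alpha-1}$ uniformly in $k$; from Lemma~\ref{lem:J} ($|J(q)|\le 1$ and $|J(q)|\le|J(\Re q)|$) we get $|\rho_{k-1}|\le 1$ and $|\breve Y_k|\le |Y_k|$ (since $\breve Y_k=Y_kJ(z\sqrt{N/(k-1)})$); and the scalar prefactor satisfies $\sqrt{\tfrac{1/2\beta}{|Nz^2-k|}}\le \sqrt{\tfrac{1}{2\beta}}\,N^{\alpha-1/2}$ because $|Nz^2-k|=N|z^2-k/N|\ge cN^{1-2\alpha}$. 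Next, the almost sure bound: on the truncation event $\mathscr{T}_{N^\epsilon}$ we have $|X_k|\le N^{\epsilon/2}$ and $|Y_k|\le N^{\epsilon/2}$ for all $k$ (using $\mathrm{S}=N^\epsilon$ and, for the finitely many small $k\le \beta^{-1}\mathrm{S}$, $|Y_k|\le \mathrm{S}=N^\epsilon$; but then the prefactor $\sqrt{\tfrac{1/2\beta}{Nz^2-k}}$ is larger there too — one must check $k\le\beta^{-1}N^\epsilon$ contributes at worst $N^{\alpha-1/2}\cdot N^\epsilon$, which is still within $N^{\alpha-1/2+\epsilon}$ up to a $\beta$-constant). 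Combining these, $|\eta_{k,11}|\le C_\beta N^{\alpha-1/2+\epsilon}$; and then from \eqref{def:eta2}, since the denominators $1-\delta_k-\eta_{k,11}$ are $\ge 1/2$ for $N$ large, the same bound (with a larger constant) propagates to $\eta_{k,12},\eta_{k,21},\eta_{k,22}$. For the first moment: $\E_\mathrm{S}X_k=\E X_k+O(Ne^{-cN^\epsilon})=O(Ne^{-cN^\epsilon})$ and similarly $\E_\mathrm{S}Y_k=O(Ne^{-cN^\epsilon})$ by \eqref{XYmoments}, so $|\E_\mathrm{S}\eta_{k,11}|\le \sqrt{\tfrac{1/2\beta}{|Nz^2-k|}}(|\E_\mathrm{S}X_k|+|J(\cdot)||\E_\mathrm{S}Y_k|)=O(N^{\alpha-1/2}\cdot Ne^{-cN^\epsilon})$, which is $\ll N^{2\alpha-1}$; and $|\E_\mathrm{S}\delta_k|=|\delta_k|\le\tfrac14N^{2\alpha-1}$, so $|\E_\mathrm{S}\eta_{k,ij}|\le C_\beta N^{2\alpha-1}$ after feeding through \eqref{def:eta2} (the $\delta_k$ and $\rho_{k-1}\delta_k$ terms are the dominant contributions, each of size $O(N^{2\alpha-1})$). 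For the second moment: $\E_\mathrm{S}|\eta_{k,11}|^2\le \tfrac{1/2\beta}{|Nz^2-k|}\big(\E_\mathrm{S}X_k^2+|J(\cdot)|^2\E_\mathrm{S}Y_k^2\big)\le \tfrac{1}{\beta}N^{2\alpha-1}\cdot(1+o(1))$ using $\E_\mathrm{S}X_k^2,\E_\mathrm{S}Y_k^2=1+o(1)$ from \eqref{XYmoments} and Lemma~\ref{lem:XY}; and then $\E_\mathrm{S}|\eta_{k,ij}|^2\le C_\beta N^{2\alpha-1}$ for the other entries via \eqref{def:eta2}, using $\E|\eta_{k,12}|^2\lesssim |\delta_k|^2+\E|\eta_{k,11}|^2\lesssim N^{4\alpha-2}+N^{2\alpha-1}\lesssim N^{2\alpha-1}$, etc.

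The main obstacle — really the only place requiring care rather than bookkeeping — is the uniformity in $z\in\mathscr{P}$ of the lower bound $\inf_{t\in[0,1]}|z^2-t|\gtrsim N^{-2\alpha}$, and in particular handling the boundary of $\mathscr{P}$ where the two defining conditions ($|\Im z|\ge N^{-\alpha}$ versus $|\Re z|\ge 1+N^{-2\alpha}/2$) interact, together with the slightly awkward fact that the truncation \eqref{truncation} treats the indices $k\le\beta^{-1}\mathrm{S}$ differently (cruder bound $|Y_k|\le\mathrm{S}$ rather than $\sqrt{\mathrm{S}}$). For the first point one splits $\mathscr{P}$ into $\{|\Im z|\ge N^{-\alpha}\}$, on which $|z^2-t|\ge 2|\Re z||\Im z|$ if $|\Re z|$ is bounded below, and one must also note that on a fixed compact $K$ one has $|\Re z|$ controlled; in the regime $|\Re z|\ge 1+N^{-2\alpha}/2$ and $|\Im z|$ small one uses $\Re(z^2-t)=(\Re z)^2-(\Im z)^2-t\ge (1+N^{-2\alpha}/2)^2-t-(\Im z)^2$ and either $t<1$ (giving a gap from the $N^{-2\alpha}$ excess) or the excess in $(\Re z)^2$ over $1$. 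For the second point, since $\alpha-1/2<0$ and we are allowed a $\beta$-dependent constant, the factor $N^\epsilon$ absorbs the extra $\sqrt{\mathrm{S}}=N^{\epsilon/2}$ coming from the crude truncation of the first $\lesssim_\beta N^\epsilon$ indices. Once these two points are dispatched the rest is the elementary propagation through \eqref{def:eta2} described above. I would therefore organize the write-up as: (1) deterministic bounds on $\delta_k$, $\rho_k$, $J$-factors and $|Nz^2-k|$ uniform on $\mathscr{P}$; (2) almost sure bound on $\mathscr{T}_{N^\epsilon}$; (3) first- and second-moment bounds under $\P_\mathrm{S}$ via \eqref{XYmoments}; (4) propagation to all four entries via \eqref{def:eta2}.
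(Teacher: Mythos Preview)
Your overall strategy matches the paper's exactly: establish the deterministic lower bound $|Nz^2-k|\ge cN^{1-2\alpha}$ on $\mathscr{P}$, deduce $|\delta_k|\lesssim N^{2\alpha-1}$ and $|\eta_{k,11}|\lesssim N^{\alpha-1/2}(|X_k|+|Y_k|)$, use the truncation and \eqref{XYmoments} for the almost-sure and moment bounds, and propagate through \eqref{def:eta2}. The structure is correct and the bookkeeping is essentially complete.

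There is one genuine gap, however, in your verification of $\inf_{t\in[0,1]}|z^2-t|\gtrsim N^{-2\alpha}$ on $\{|\Im z|\ge N^{-\alpha}\}$. Your bound $|z^2-t|\ge|\Im(z^2)|=2|\Re z|\,|\Im z|$ collapses when $\Re z$ is small, and your claim that ``on a fixed compact $K$ one has $|\Re z|$ controlled'' does not help: compactness bounds $|\Re z|$ from above, not below, and the point $z=iN^{-\alpha}$ lies in $\mathscr{P}$ with $\Re z=0$. The paper handles this by observing that the curve $\{(x+i\eta)^2:x\in\R\}$ is closest to the nonnegative real axis precisely at $x=0$, where $z^2=-\eta^2$ has distance $\eta^2$ to $[0,1]$; more explicitly, $|(x+i\eta)^2-t|^2=(x^2-\eta^2-t)^2+4x^2\eta^2$, and a short case analysis in $t$ shows this is always $\ge\eta^4$. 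You need this (or an equivalent argument) to close the case $|\Re z|\lesssim\eta$.

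A smaller point: when you ``feed through'' \eqref{def:eta2} for the first-moment bound, the map $\eta_{k,11}\mapsto\eta_{k,21}=(\delta_k-\eta_{k,11})/(1-\delta_k-\eta_{k,11})$ is nonlinear, so $\E_\mathrm{S}\eta_{k,21}$ picks up a contribution of size $\E_\mathrm{S}\eta_{k,11}^2\sim N^{2\alpha-1}$ in addition to $\delta_k$. The paper makes this explicit via the Taylor expansion $\eta_{k,21}=\delta_k-\eta_{k,11}-\eta_{k,11}^2+O(|\delta_k|^3+|\eta_{k,11}|^3)$. Your conclusion $|\E_\mathrm{S}\eta_{k,ij}|\lesssim N^{2\alpha-1}$ is still correct, but the reasoning should acknowledge this quadratic term rather than treat the expectation as if it passed through linearly.
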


\begin{proof}
  First, we claim that if $\Im z \ge N^{-\alpha}$ and $k\in\{0,1,\dots, N\}$, 
\begin{equation} \label{estimate1}
|N z^2 - k|^{-1} 
\leq N^{2\alpha - 1} .
  \end{equation}
Indeed,  the closest point the parabola $z \mapsto Nz^2$ restricted to a horizontal line with $\Im z = \eta >0$ makes to the positive real axis is for $z=\i \eta$.  For $\eta = N^{-\alpha}$ and $k=0$, we obtain the bound claimed in the previous display.   As for $J$, we have that for  $N^\alpha \ge 2$, $t \geq 1$ and $\Im z \ge N^{-\alpha},$
  \begin{align} 
    |J(zt)|  &\notag \leq |J(z)| \leq |J(\i N^{-\alpha})| = \sqrt{1+ N^{-2\alpha}} - N^{-\alpha}\\
    & \label{estimate2}  \leq  1- 3 N^{-\alpha}/4 .
  \end{align}
  Here we used that $J$ maps the $\C\setminus \mathscr{E}$ conformally onto the disk $\big\{|q|\le  |J(\i N^{-\alpha})| \big\}$ where $\mathscr{E}$  is the ellipse with foci $\pm 1$ tangent to the line $\Im z = N^{-\alpha}$.

  \medskip
  
 By similar considerations, we verify that both estimates \eqref{estimate1} and \eqref{estimate2}  hold if $\Re z \ge 1+ N^{-2\alpha}/2$. Hence by symmetry, these estimates also hold uniformly for all $z \in \mathscr{P}$ and $k\in\{1,\dots, N\}$.
%
 \smallskip
 
By \eqref{def:eta1},  this implies that for all $z\in \mathscr{P}$ and $k\in\{1,\dots, N\}$,
 \begin{equation} \label{boundeta1}
|  \eta_{k,11}  |  \le     \sqrt{\tfrac{N^{2\alpha - 1}}{2\beta}}\big(|X_k| + |Y_k| \big) . 
 \end{equation}
So that by  \eqref{XYmoments}, we obtain $\E_\mathrm{S}[  \eta_{k,11} ] = \O(Ne^{-cN^{\epsilon}})$, 
 $\E_\mathrm{S}[ |\eta_{k,11}|^2 ] \le \tfrac{N^{2\alpha - 1}}{\beta}\big(1+  \O(Ne^{-c N^{\epsilon}})\big)$, 
 and $ |  \eta_{k,11}  |  \le \sqrt{2\beta^{-1}} N^{\epsilon+ \alpha-1/2} $, $\P_{\mathrm{S}}$ almost surely.  
  Therefore, this shows that for an absolute constant $C_\beta$, the random variables $\{\eta_{k,11} \}_{k=1}^N $ satisfies the conditions \eqref{cond:eta} from Proposition~\ref{prop:rec} (uniformly for all $z \in \mathscr{P}$). 
  
  \medskip
  
   Now, by \eqref{eq:delta} and \eqref{Jlambda},   we also have  for all $z\in \mathscr{P}$ and $k\in\{1,\dots, N\}$, 
   \[
   | \delta_k | \le N^{2\alpha - 1}/4 
   \qquad\text{and}\qquad
   |\rho_k| \le  1- c N^{-\alpha}  
   \]
  with $c\in (1,3/2)$  if $N$ is sufficiently large. 
   By  \eqref{def:eta2},  this implies that 
  \[
 | \eta_{k,21} | \le 2  \big(   | \delta_k |  + |\eta_{k,11}| \big)  
\quad\text{and}\qquad
|\eta_{k,12}| \le  2  | \delta_k |+ \sqrt{\tfrac{2N^{2\alpha - 1}}{\beta}}\big(|X_k| + |Y_k| \big)  ,
  \]
  where we have used that $|\breve{Y}_k| \le |Y_k|$ and that $|\delta_k|+|\eta_{k,11}| \le 1/2 $  if $N$ is sufficiently large. Using all the previous estimates, this shows that  for all $z\in \mathscr{P}$ and $k\in\{1,\dots, N\}$,
  \[
  \E_\mathrm{S}[ |\eta_{k,21}|^2 ] ,   \E_\mathrm{S}[ |\eta_{k,12}|^2 ] \lesssim (1+\beta^{-1}) N^{2\alpha - 1} 
  \qquad \text{and} \qquad
   | \eta_{k,21} | ,  | \eta_{k,12} |  \lesssim  (1+\beta^{-1}) N^{\epsilon+\alpha-1/2} , 
  \]
  where the second holds $\P_{\mathrm{S}}$ almost surely if $N$ is sufficiently large. 
  Plainly, similar bounds holds for the random variable $\eta_{k,22}$ as well. 
 As for the means, by a Taylor expansion of \eqref{def:eta2},  we have   
\begin{equation}\label{eq:etak21expansion}
  \begin{aligned}
\eta_{k,21}
& =  \big( \delta_k + \eta_{k,11} \big)\big(1 + \delta_k + \eta_{k,11}+  \O( |\delta_k|^2+|\eta_{k,11}|^2)\big) \\
& =  \delta_k  +  \eta_{k,11}  + (\delta_k + \eta_{k,11})^2 + \O\left(  |\delta_k|^3+|\eta_{k,11}|^3\right) .
\end{aligned}  
\end{equation}
Using the estimates \eqref{XYmoments} and \eqref{boundeta1}, this shows that  for all $z\in \mathscr{P}$ and $k\in\{1,\dots, N\}$,
  \[
  \E_\mathrm{S}[  \eta_{k,21} ] =  \delta_k + \E  \eta_{k,11}^2  +  \O_\beta(N^{3\alpha -3/2}). 
  \]
  We conclude that if $N$ is sufficiently large, we have 
  $\big|  \E_\mathrm{S}[  \eta_{k,21} ] \big|  \lesssim (1+\beta^{-1}) N^{2\alpha - 1}  $.  By similar considerations, we obtain the same estimates for the means of $\eta_{k,12}$ and  $\eta_{k,22}$. 
  \end{proof}
 
Analogous estimates also holds for the noise if $z\in \mathcal{D}_H$ away from the turning point of the recurrence, that is for $k\in \{1, \dots, N_H\}$, see Definition~\ref{def:hyper}. 
However, in this case, we need to keep carefully track of the size of the noise as $k$ approaches the turning point. 

\begin{lemma} \label{lem:noise}
Let $\{\eta_{k,ij}\}$ be as in Lemma~\ref{lem:diag} for $i,j \in \{1,2\}$ and let $\P_\mathrm{S}$ be as in Definition~\ref{def:cond} with $\mathrm{S} \ge r_\beta \log N$ for a fixed large $ r_\beta >0$. 
It holds  $\P_\mathrm{S}$ almost surely, for all $z\in \mathcal{D}_H$ and all $k=1, \dots, N_H(z)$, 
\begin{equation} \label{est:eta2}
\big|\E_{\mathrm{S}} \eta_{k,ij} \big|  \lesssim  \frac{1+\beta^{-1}}{\omega_N +\hat{k}} \ ,
\quad
\E_R |\eta_{k,ij}|^2 \lesssim  \frac{1+\beta^{-1}}{(\omega_N +\hat{k})}
\quad
\text{and}
\quad
|\eta_{k,ij}|  \lesssim  \sqrt{ \frac{\mathrm{S}(1+\beta^{-1})}{(\omega_N +\hat{k})} }  , 
\end{equation}
 where $\hat{k}= N_H(z) - k$ and the implied constant depends only on $r_\beta$. 
\end{lemma}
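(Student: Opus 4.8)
The strategy is to reduce everything to the two basic estimates already established for the noise of the G$\beta$E transfer matrix recurrence: the bound \eqref{estimate1}-type estimate on $|Nz^2-k|^{-1}$ inside $\mathscr{D}_H$, which is exactly Proposition~\ref{prop:Hyper}, and the moment/tail control on the raw variables $X_k,Y_k$ under $\P_\mathrm{S}$, which is Lemma~\ref{lem:XY} together with \eqref{XYmoments}. The main point to exploit is Proposition~\ref{prop:Hyper}: for $z\in\mathscr{D}_H$ and $k\le N_H(z)$ we have $|Nz^2-k|^{-1}\le \sqrt{2}/(\omega_N+\hat k)$ and $|\rho_{k-1}|\le e^{-\frac43\sqrt{(\omega_N+\hat k)/N_H}}\le 1$; in particular $|\rho_{k-1}|\le 1$, and from \eqref{eq:delta} and Proposition~\ref{prop:Hyper}, $|\delta_k|\le \tfrac14|Nz^2-k+u_k'|^{-1/2}|Nz^2-k|^{-1/2}\lesssim 1/(\omega_N+\hat k)$ (using that $u_k\in[0,1]$ only shifts $\hat k$ by a bounded amount, so $\omega_N+\hat k\asymp \omega_N+\hat k-u_k$). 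These deterministic facts are the backbone; everything else is bookkeeping.

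\textbf{Step 1: the $\eta_{k,11}$ bound.} Start from the explicit formula \eqref{def:eta1}, $\eta_{k,11}=\sqrt{\frac{1/2\beta}{Nz^2-k}}\bigl(X_k+Y_k J(z\sqrt{N/(k-1)})\bigr)$. Since $|J|\le 1$ everywhere, $|\eta_{k,11}|\le \sqrt{\frac{1}{2\beta}}\,|Nz^2-k|^{-1/2}(|X_k|+|Y_k|)\lesssim \beta^{-1/2}(\omega_N+\hat k)^{-1/2}(|X_k|+|Y_k|)$ by Proposition~\ref{prop:Hyper}. Under $\P_\mathrm{S}$ the truncation \eqref{truncation} gives $|X_k|,|Y_k|\le\sqrt{\mathrm{S}}$ for $k\ge\lceil\beta^{-1}\mathrm{S}\rceil$, which, since in $\mathscr{D}_H$ we always have $\omega_N+\hat k\ge \omega_N\ge N^{1/3}$, combined with $\mathrm{S}\ge r_\beta\log N$, covers all relevant $k$ (the finitely many small indices $k<\beta^{-1}\mathrm{S}$ have $\hat k$ of order $N$ and cause no trouble since then $|Y_k|\le\mathrm{S}$ and $(\omega_N+\hat k)^{-1/2}\mathrm{S}\lesssim\sqrt{\mathrm{S}(\omega_N+\hat k)^{-1}}$ as $\omega_N+\hat k\gtrsim N\gg \mathrm{S}$). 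Hence $|\eta_{k,11}|\lesssim\sqrt{\mathrm{S}(1+\beta^{-1})/(\omega_N+\hat k)}$ a.s. under $\P_\mathrm{S}$. For the second moment, $\E_\mathrm{S}|\eta_{k,11}|^2\le \frac{1}{2\beta}|Nz^2-k|^{-1}\E_\mathrm{S}(|X_k|+|Y_k|)^2\lesssim (1+\beta^{-1})/(\omega_N+\hat k)$ using $\E_\mathrm{S}X_k^2,\E_\mathrm{S}Y_k^2=1+O(Ne^{-c\mathrm{S}})$ from \eqref{XYmoments} and Cauchy--Schwarz, and $Ne^{-c\mathrm{S}}=o(1)$ since $\mathrm{S}\ge r_\beta\log N$. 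For the mean, $\E_\mathrm{S}\eta_{k,11}=\sqrt{\frac{1/2\beta}{Nz^2-k}}(\E_\mathrm{S}X_k+\E_\mathrm{S}Y_k\,J(\cdots))=O(|Nz^2-k|^{-1/2}Ne^{-c\mathrm{S}})$, which is $\le (\omega_N+\hat k)^{-1}$ once $N$ is large (this is far smaller than required).

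\textbf{Step 2: the remaining entries.} For $\eta_{k,21},\eta_{k,12},\eta_{k,22}$ use the explicit expressions \eqref{def:eta2}. The denominator $1-\delta_k-\eta_{k,11}$ is bounded below by $1/2$ once $N$ is large since $|\delta_k|+|\eta_{k,11}|\lesssim\sqrt{\mathrm{S}/(\omega_N+\hat k)}=o(1)$ (here $\mathrm{S}=o(\omega_N)$ because $\omega_N=N^{1/3}(\Omega\log N)^{2/3}$ and $\mathrm{S}=R\Omega\log N$ with $\Omega=o(N^{1/10}/\log N)$, so $\mathrm{S}/\omega_N\to 0$; this is the same computation used repeatedly in Section~\ref{sec:linearized}). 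The numerators of $\eta_{k,21},\eta_{k,12}$ are linear combinations of $\delta_k$, $\rho_{k-1}\delta_k$, and $\sqrt{\frac{1/2\beta}{Nz^2-k}}(\rho_{k-1}X_k+\breve Y_k)$ with $|\breve Y_k|=|Y_k J(z\sqrt{N/(k-1)})|\le|Y_k|$ and $|\rho_{k-1}|\le 1$; so the a.s.\ bound and the $L^2$ bound follow exactly as for $\eta_{k,11}$ plus the contribution $|\delta_k|\lesssim 1/(\omega_N+\hat k)\le\sqrt{1/(\omega_N+\hat k)}$ (absorbed). The entry $\eta_{k,22}$ is $(\rho_{k-1}(\delta_k-\eta_{k,11})-\eta_{k,12})/(1-\delta_k-\eta_{k,11})$, again bounded by the same quantities. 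For the means, one Taylor-expands the fractions as in \eqref{eq:etak21expansion}: $\eta_{k,21}=\delta_k-\eta_{k,11}-\eta_{k,11}^2+O(|\delta_k|^3+|\eta_{k,11}|^3)$, and after $\E_\mathrm{S}$ the leading term $\delta_k-\E\eta_{k,11}^2$ is $O(1/(\omega_N+\hat k))$ (using Step 1's second-moment bound and $|\delta_k|\lesssim 1/(\omega_N+\hat k)$), while the cubic remainder is $O((\mathrm{S}/(\omega_N+\hat k))^{3/2})=o(1/(\omega_N+\hat k))$ and the truncation correction is $O(Ne^{-c\mathrm{S}})$, negligible. The same expansion handles $\E_\mathrm{S}\eta_{k,12}$ and $\E_\mathrm{S}\eta_{k,22}$.

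\textbf{Main obstacle.} There is no deep obstacle; the proof is entirely a matter of organizing the elementary estimates. The one place requiring care is the interplay of the parameters near the turning point: one must check that the truncation level $\mathrm{S}=R\Omega\log N$ is genuinely small compared to $\omega_N+\hat k$ uniformly over $k\le N_H(z)$ and $z\in\mathscr{D}_H$, so that (i) the denominators $1-\delta_k-\eta_{k,11}$ stay bounded away from zero and (ii) the a.s.\ bound $|\eta_{k,ij}|\lesssim\sqrt{\mathrm{S}(1+\beta^{-1})/(\omega_N+\hat k)}$ is below $1$ — this is exactly where the hypothesis $\Omega=o(N^{1/10}/\log N)$ (equivalently $\mathrm{S}=o(\omega_N)$) enters, and it is the same sizing used throughout Section~\ref{sec:linearized}. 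Also mildly annoying is the bookkeeping for the finitely many small indices $k\le\beta^{-1}\mathrm{S}$ where the truncation only gives $|Y_k|\le\mathrm{S}$ rather than $\sqrt{\mathrm{S}}$; but since those $k$ have $\hat k\asymp N$ and $N\gg\mathrm{S}^2$, the cruder bound $|\eta_{k,ij}|\lesssim\mathrm{S}/\sqrt{\omega_N+\hat k}\lesssim\sqrt{\mathrm{S}/(\omega_N+\hat k)}$ still holds. I would present Steps 1 and 2 compactly, citing Proposition~\ref{prop:Hyper}, \eqref{XYmoments}, and \eqref{def:eta2}, and point to the proof of Lemma~\ref{lem:planarest} for the analogous but simpler computation in the planar case.
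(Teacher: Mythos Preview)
Your proposal is correct and follows essentially the same approach as the paper's proof: both reduce to Proposition~\ref{prop:Hyper} for the bound $|Nz^2-k|^{-1}\lesssim(\omega_N+\hat k)^{-1}$, use $|J|\le 1$ (the paper via Lemma~\ref{lem:J}) and the truncation/moment estimates \eqref{XYmoments} for $\eta_{k,11}$, then feed these into the explicit formulae \eqref{def:eta2} and the Taylor expansion \eqref{eq:etak21expansion} for the remaining entries. If anything, you are slightly more careful than the paper about the bookkeeping for small indices $k\le\beta^{-1}\mathrm S$ and the parameter check $\mathrm S=o(\omega_N)$.
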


\begin{proof}
First recall that for all  $z\in \mathcal{D}_H$,   $\omega_N  \ge   \Omega N^{2\delta/3}$ is a large parameter.
With the notation from the proof of Lemma~\ref{lem:diag}, we have $\eta_{k,11} =  \sqrt{\frac{1/2\beta}{Nz^2-k}}\big( X_k + \breve{Y}_k \big)$ where $\breve{Y}_k =Y_k J\big(z \sqrt{\tfrac{N}{k-1}}\big)$ and $X_k, Y_k$ have mean 0 and variance 1 under $\P$. 
Then, according to \eqref{XYmoments}, this shows that  
\[
\E_{\mathrm{S}} \eta_{k,11} =  \sqrt{\frac{1/2\beta}{Nz^2-k}}   \O(N^{1-c r_\beta})  
 \quad
\text{and}
\quad
\E_{\mathrm{S}} |\eta_{k,11}|^2 \le \frac{1/\beta}{|Nz^2-k|} \big( 1+  \O(N^{1-c r_\beta})  \big)
\]
By Proposition \ref{prop:Hyper}, this shows that $|\E_{\mathrm{S}}\eta_{k,11}|  =  \O( \beta^{-1/2}N^{1-c r_\beta})$ 
and $\E_{\mathrm{S}} |\eta_{k,11}|^2 \lesssim \frac{\beta^{-1}}{\omega_N +\hat{k}}$. 
Moreover, by Lemma~\ref{lem:J},  we have 
\[
|\breve{Y}_k  |  \le |Y_k|  \sqrt{\tfrac{k}{N (\Re z)^2}}  
\qquad\text{and}\qquad
| \rho_k | \le \frac{k}{N (\Re z)^2} . 
\]
This implies that conditionally on the event $\mathscr{T}_{\mathrm{S}}$, \eqref{truncation}, 
$|\eta_{k,11}| \le  4  \sqrt{\frac{\mathrm{S}/\beta}{\omega_N +\hat{k}}}$ if $N$ is sufficiently large (depending on $\delta, \beta$).
By formula \eqref{eq:delta}, we also verify that for $k=1, \dots, N_H$,  $|\delta_k | \lesssim \frac{1}{\omega_N +\hat{k}}$. 
Then by \eqref{def:eta2}, we can obtain similar estimates for the other random variables $\eta_{k,ij}$. 
For instance, conditionally on the event $\mathscr{T}_{\mathrm{S}}$, 
\[
|\eta_{k,21}| \le 2 |\delta_k| + 2| \eta_{k,11}| \lesssim \sqrt{\frac{(1+ \beta^{-1})\mathrm{S}}{\omega_N +\hat{k}}}
\]
and 
\[
\E_{\mathrm{S}} |\eta_{k,21}|^2  \le 8 \big( |\delta_k|^2  + \E_{\mathrm{S}} | \eta_{k,11}|^2  \big) \lesssim \frac{1+ \beta^{-1}}{\omega_N +\hat{k}} . 
\]
As for the mean, using \eqref{eq:etak21expansion},
  \[
 \E_{\mathrm{S}}[  \eta_{k,21} ] =  \delta_k - \E  \eta_{k,11}^2  +  \O\left( \Big( \tfrac{1/\beta}{\omega_N+ \hat{k}}\Big)^{-3/2} \right). 
  \]
This shows that we also have $|\E_{\mathrm{S}} \eta_{k,21}|  \lesssim \frac{1+ \beta^{-1}}{\omega_N +\hat{k}}$.  By similar considerations, we obtain the same estimates for the means of $\eta_{k,12}$ and  $\eta_{k,22}$. 
\end{proof}

\section{Strong embeddings}
\label{sec:embeddings}

We discuss some of the literature on so-called \emph{strong embeddings}, which embed random walks into Brownian motions with essentially optimal supremum error bounds.  The classical paper in this subject is the Koml\'os--Tusn\'ady--Major \cite{KMT}, but many other authors have contributed, e.g.\ \cite{Chatterjee, Shao,  Goldstein, Zaitsev, Sakhanenko}.  For our purposes, the formulation of \cite{Shao} suffices (the theorem is due to \cite{Sakhanenko}), which we formulate in the following way.
\begin{theorem}  \label{thm:shaoKMT}
  Suppose that $\left\{ X_k \right\}_1^\infty$ is a sequence of independent, real, centered, variance $1$ random variables having $M = \sup_{k\in\N} \VERT X_k \VERT_1 < \infty.$  Then there is a constant $C_M>0$ and an extension of the probability space supporting a standard Brownian motion ${\mathbf{X}}$ so that for any $n\in\N$,
  \[
    \left\VERT \max_{1 \leq k \leq n} \biggl| {\textstyle \sum_{j=1}^k X_j} - \mathbf{X}_n \biggr| \right\VERT_1 \leq C_M\log n .
  \]
\end{theorem}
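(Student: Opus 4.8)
The statement is a strong-approximation (Koml\'os--Major--Tusn\'ady) theorem for independent, not necessarily identically distributed summands whose sub-exponential norms are bounded uniformly by $M$; it is contained in \cite{Shao} (see also \cite{Sakhanenko, Zaitsev}), and the plan is to reproduce the Hungarian dyadic construction of \cite{KMT}.

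First I would dispose of the normalisation: rather than a separate extension for each $n$, it is cleaner to build one standard Brownian motion $\mathbf{X}$ on $[0,\infty)$ together with the full sequence $S_k=\sum_{j=1}^k X_j$ on a common probability space, and to prove the tail estimate $\mathbb{P}\!\left[\max_{k\le n}|S_k-\mathbf{X}_k|\ge C_M\log n + t\right]\le C_M e^{-c_M t}$ for all $t\ge 0$; the asserted bound on the sub-exponential norm of $\max_{k\le n}|S_k-\mathbf{X}_k|$ is immediate from such a tail bound. Working on the interval $[0,2^N]$ with $2^N$ the least power of $2$ exceeding $n$, the construction runs from the coarsest dyadic scale down to the finest: supposing $S_{j2^{\ell+1}}$ and $\mathbf{X}_{j2^{\ell+1}}$ have already been coupled for every $j$, the bridge variable $\beta_{j,\ell}:=S_{(2j+1)2^{\ell}}-\tfrac12\big(S_{j2^{\ell+1}}+S_{(j+1)2^{\ell+1}}\big)$ is, conditionally on the two endpoint sums, a measurable function of $X_{j2^{\ell+1}+1},\dots,X_{(j+1)2^{\ell+1}}$; I would couple it through the conditional quantile transform to the corresponding Brownian bridge midpoint $\mathbf{X}_{(2j+1)2^{\ell}}-\tfrac12(\cdots)$, which is a centred Gaussian of variance $2^{\ell-1}$. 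Iterating down to $\ell=0$ defines the whole pair $(S_\cdot,\mathbf{X}_\cdot)$ on the dyadic grid, and $\max_k|S_k-\mathbf{X}_k|$ is controlled by telescoping the midpoint errors along dyadic paths from the root to the leaves.

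The analytic heart --- which I would either quote verbatim from \cite{Shao} or reprove --- is a one-block coupling estimate of KMT type: if $T$ is a sum of $m$ independent centred variance-$1$ random variables with sub-exponential norm at most $M$, uniformly, and the estimate is required to persist after conditioning on the endpoint sums that turn $T$ into a bridge increment $\beta_{j,\ell}$, then there is a matching Gaussian $Z$ with $\mathbb{P}\!\left[|T-Z|\ge t+C_M\right]\le C_M\exp(-c_M\sqrt m\,t)$. This is proved by a Cram\'er-type (``conjugate distribution'' / exponential-tilting) sharpening of the Berry--Esseen theorem that compares the distribution function of $T/\sqrt m$ with $\Phi$ up to an Edgeworth correction with an \emph{exponentially} small remainder --- the uniform sub-exponential bound $M$ being exactly what keeps the tilt, the saddle-point estimate and the remainder uniform over all dyadic blocks --- and then transporting the distributional comparison to a coupling via the quantile map. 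Feeding these block estimates into the telescoping sum, and tracking how the scale-dependent errors accumulate across the roughly $\log_2 n$ dyadic levels, produces the $O(\log n)$ bound.

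The main obstacle is precisely this last combination: establishing the one-block Gaussian coupling with an exponentially decaying, scale-sharp error, uniform over all blocks and stable under conditioning on a linear statistic of the block, and then doing the bookkeeping carefully enough that the accumulated error over $\log n$ levels is $O(\log n)$ and not $O((\log n)^2)$. This is the technical core of the KMT method; since \cite{Shao} carries it out in the independent, non-identically-distributed, uniformly-sub-exponential generality we require, we will take Theorem~\ref{thm:shaoKMT} as input for what follows.
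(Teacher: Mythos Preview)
Your proposal is correct in spirit but vastly overshoots what the paper actually does. You outline a full reconstruction of the KMT dyadic scheme (bridge midpoints, quantile coupling, Cram\'er-tilted Berry--Esseen, level-by-level bookkeeping), only to conclude at the end that you will cite \cite{Shao}. The paper's proof is entirely the latter option, executed in about ten lines: it simply quotes \cite[Theorem~A]{Shao}, which asserts that on a suitable extension one has $\Exp e^{\lambda \mathcal{M}_n}\le 1+Cn\lambda^{-1}$ for some $\lambda>0$ depending on the summands, and then does two small but concrete steps you omit. First, Shao's hypothesis is not stated in terms of the sub-exponential norm but rather as the existence of $\lambda>0$ with $\lambda\,\Exp[e^{\lambda|X_k|}|X_k|^3]\le \Exp X_k^2=1$ uniformly in $k$; the paper verifies this from $M=\sup_k\VERT X_k\VERT_1<\infty$ via a one-line H\"older estimate, taking $\lambda\asymp M^{-3}$. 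Second, to pass from the exponential moment bound to $\VERT\mathcal{M}_n\VERT_1\lesssim_M\log n$, the paper applies Jensen's inequality in the form $\Exp e^{\mathcal{M}_n/(C\log n)}\le\bigl(\Exp e^{\mathcal{M}_n/C}\bigr)^{1/\log n}\le(1+Cn)^{1/\log n}=O(1)$, which directly gives the sub-exponential norm bound on the scale $\log n$.

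In short: the paper treats this theorem as a black-box citation plus a hypothesis check and a Jensen trick, whereas you sketch the internal mechanism of the black box. Your tail bound $\Pr[\mathcal{M}_n\ge C_M\log n+t]\le C_Me^{-c_Mt}$ is equivalent (by Markov from Shao's moment bound, choosing $C_M$ large enough to absorb the $n$), so nothing you wrote is wrong; it is just that the paper's route is the two-line translation of Shao's statement into the $\VERT\cdot\VERT_1$ language, not a reproof of the KMT construction.
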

\begin{proof}
  The condition in \cite[Theorem A]{Shao} is that there is a $\lambda > 0$ so that
  \(
  \lambda \Exp[ e^{\lambda |X_k|} |X_k|^3] \leq \Exp X_k^2 = 1.
  \)
  By H\"older's inequality 
  \(
  \Exp[ e^{\lambda |X_k|} |X_k|^3] \leq  (\Exp[ e^{2\lambda |X_k|}] \Exp |X_k|^6)^{1/2} \leq C M^3,
  \)
  for some absolute constant $C>0$ as soon as $2\lambda \leq M.$  Hence the condition of \cite{Shao} is satisfied taking $\lambda = 1/(CM^3).$
  The conclusion of \cite{Shao} shows that with $\mathcal{M}_n =  \max_{1 \leq k \leq n} \biggl| {\textstyle \sum_{j=1}^k X_j} - \mathbf{X}_n \biggr|,$ for all $n \geq 1,$
  \[
    \Exp e^{\tfrac{\mathcal{M}_n}{(CM^3)}} \leq 1 + CnM^3
  \]
  for an appropriately large absolute constant $C > 0.$  Thus from Jensen's inequality,
  \[
    \Exp e^{\tfrac{\mathcal{M}_n}{(CM^3\log n)}}
    \leq
    \biggl(\Exp e^{\tfrac{\mathcal{M}_n}{(CM^3)}}\biggr)^{1/\log n}
    \leq 
    (1 + CnM^3)^{1/\log n},
  \]
  which is uniformly bounded in $n \in \N$ for each $M.$
  We conclude that $\VERT \mathcal{M}_n \VERT_1 \lesssim \log n.$
\end{proof}

This type of strong embedding can be used to control the errors in integration, using integration by parts.  For a continuously differentiable function $f : [0,1] \to \C,$ define
\[
  \|f\|_{\operatorname{TV},t}
  =  |f(t)| + \int_{0}^t |f'(s)|\,ds.
\]
\begin{proposition} \label{prop:integralapprox}
  Let $\left\{ X_n \right\}_1^\infty$ be random variables and let $(B_t : t \geq 0)$ be a standard Brownian motion defined on the same probability space.
  Let $W_t=\frac{1}{\sqrt{N}}B_{N t}$ be another Brownian motion.
  For any continuously differentiable $f : [0,1] \to \C,$ it holds for any $n\in\{1,\dots, N\}$ with $t=n/N$,  
  \[
    \biggl|\frac{1}{\sqrt{N}} \sum_{k=1}^{n} f(\tfrac{k}{N}) X_k - \int_0^t f(t) dW_t\biggr|
    \leq \frac{1}{\sqrt{N}} \left( \|f\|_{\operatorname{TV},t}\max_{1 \leq k \leq n} \biggl| {\textstyle \sum_{j=1}^k X_j} - B_k \biggr|    +Y_n(f) \right) 
  \]
  where $Y_n(f)$ is an (increasing) random variable measurable with respect to $\sigma\{ B_u : u\le n \}$
  which satisfies 
  \[
  \VERT Y_n(f) \VERT_1 \lesssim  \int_{0}^t |f'(s)|\,ds.
  \]
\end{proposition}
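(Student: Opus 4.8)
The plan is to reduce the claim to a summation-by-parts (Abel) identity and then apply the strong embedding bound to the partial-sum differences. First I would set $S_k = \sum_{j=1}^k X_j$, with $S_0 = 0$, and note that since $W_t = N^{-1/2}B_{Nt}$, one has $\int_0^t f(s)\,dW_s = N^{-1/2}\int_0^n f(u/N)\,dB_u$ after the change of variables $u = Ns$. The discrete sum can be rewritten by Abel summation:
\[
  \frac{1}{\sqrt N}\sum_{k=1}^n f(\tfrac kN) X_k
  = \frac{1}{\sqrt N}\Big( f(\tfrac nN) S_n - \sum_{k=1}^{n-1}\big(f(\tfrac{k+1}{N}) - f(\tfrac kN)\big) S_k\Big),
\]
and, in the same way, applying It\^o's formula (or integration by parts for the continuous semimartingale $B$ against the $C^1$ function $u\mapsto f(u/N)$) gives
\[
  \frac{1}{\sqrt N}\int_0^n f(\tfrac uN)\,dB_u
  = \frac{1}{\sqrt N}\Big( f(\tfrac nN) B_n - \int_0^n \tfrac1N f'(\tfrac uN) B_u\,du\Big).
\]
Subtracting these two identities, the boundary terms combine into $N^{-1/2} f(n/N)(S_n - B_n)$, and the ``bulk'' terms combine into a single expression involving $S_k - B_u$.

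The key step is then to bound the bulk difference. Writing $D_k = S_k - B_k$ and $E_u = B_{\lfloor u\rfloor} - B_u$ for the Brownian oscillation within a unit interval, one decomposes $S_k - B_u = D_k + E_u$ for $u\in[k,k+1)$. The $D_k$ part is handled deterministically: $|D_k|\le \max_{1\le k\le n}|S_k - B_k| =: \mathcal{M}_n$, and summing $|f(\tfrac{k+1}{N}) - f(\tfrac kN)|$ against this bound, together with the analogous integral term $\int |f'|$, yields exactly the coefficient $\|f\|_{\operatorname{TV},t} = |f(t)| + \int_0^t |f'|$ multiplying $\mathcal{M}_n$ (after absorbing the boundary contribution $|f(t)|\mathcal M_n$). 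The $E_u$ part is collected into the random variable $Y_n(f)$: concretely, $Y_n(f)$ will be (a bound for) $\big|\int_0^n \tfrac1N f'(\tfrac uN) E_u\,du\big|$ plus the corresponding Riemann-sum correction, so that $Y_n(f)$ is measurable with respect to $\sigma\{B_u: u\le n\}$, increasing in $n$, and — since $\Exp|E_u|\lesssim 1$ uniformly — satisfies $\VERT Y_n(f)\VERT_1 \lesssim \int_0^t |f'(s)|\,ds$ by the triangle inequality for the $\VERT\cdot\VERT_1$ norm (using $\int_0^n \tfrac1N |f'(\tfrac uN)|\,du = \int_0^t|f'(s)|\,ds$).

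The main obstacle I anticipate is the careful bookkeeping needed to match the continuous integral against $dB_u$ with the discrete sum at the level of \emph{each} unit interval rather than just in aggregate — in particular, making sure the within-interval Brownian fluctuation $E_u$ is correctly separated from the macroscopic discrepancy $D_k$, and that the resulting $Y_n(f)$ really is adapted and has the stated $L^1$-type bound with only $\int|f'|$ (no extra factor of $|f|$). There is also a minor technical point in justifying the integration-by-parts step for $B$ against $u\mapsto f(u/N)$ when $f$ is merely $C^1$; this is standard since $f(\cdot/N)$ is then $C^1$ hence of finite variation, so the It\^o correction term vanishes and the formula is classical. Once the identity is in place, every remaining estimate is a routine application of the triangle inequality and the sub-exponential norm formalism from Section~\ref{sec:concentration}, with the strong embedding of Theorem~\ref{thm:shaoKMT} supplying the eventual $\O(\log n)$ control on $\mathcal M_n$ when this proposition is invoked.
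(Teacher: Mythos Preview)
Your proposal is correct and follows essentially the same approach as the paper: Abel summation for the discrete sum, stochastic integration by parts for the It\^o integral, then splitting the difference into the macroscopic embedding error $S_k-B_k$ (controlled by $\|f\|_{\operatorname{TV},t}\cdot\mathcal M_n$) and the within-interval Brownian oscillation (collected into $Y_n(f)$). The paper's only cosmetic difference is that it defines $Y_n(f)=\sum_{k=0}^{n-1}\xi_k\int_{k/N}^{(k+1)/N}|f'(s)|\,ds$ with $\xi_k=\max_{s\in[0,1]}|B_{s+k}-B_k|$, which makes the ``increasing in $n$'' property immediate and the $\VERT\cdot\VERT_1$ bound a one-line consequence of $\VERT\xi_k\VERT_1\lesssim 1$.
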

\begin{proof}
 Define $X(t) = \sum_{0 \leq k \leq t N} X_k$ (where we set $X_0 = 0$).
Then we apply Abel summation to the partial sum, 
Let $W_t = \frac{1}{\sqrt N} B_{tN},$ and apply stochastic integration by parts: 
\[
  \int_0^t f(t) dW_t
  = W_{t} f(t) - \int_0^t W_s f'(s)\,ds
  = \frac{B_{tN}}{\sqrt{N}} f(t) - \frac{1}{\sqrt{N}}\int_0^t B_{sN} f'(s)\,ds.
\]
Hence we arrive at the bound with  $t=n/N$, 
\[
  \begin{aligned}
    \biggl|\frac{1}{\sqrt{N}}\sum_{k=1}^{n} f(\tfrac{k}{N}) X_k 
    -\int_0^t f(t) dW_t
    \biggr| 
  &  \leq \frac{1}{\sqrt{N}} \biggl(|f(t)|+ \int_{0}^t |f'(s)|\,ds\biggr)\cdot \max_{1 \leq k \leq n} \biggl| {\textstyle \sum_{j=1}^k X_j} - B_k\biggr| \\
   & \quad + \frac{1}{\sqrt{N}} \biggl| \int_0^t (B_{sN}-B_{\lfloor s N \rfloor}) f'(s)\,ds  \biggr|  .
     \end{aligned}
\]
For this second term, we can define $\xi_k = \max_{s \in [0,1]} |B_{s+k} - B_k|$ and bound
\[
 \biggl| \int_0^t (B_{sN}-B_{\lfloor s N \rfloor})f'(s)\,ds \biggr|
  \leq 
  \sum_{k=0}^{n-1} \xi_k  \int_{\frac kN}^{\frac{k+1}N} |f'(s)|\,ds 
  \eqqcolon Y_n(f).
\]
Since $\VERT\xi_k \VERT_1  \lesssim 1$ uniformly for all $k\ge 0$,  we conclude that 
\[
\VERT Y_n(f) \VERT_1 \lesssim \sum_{k=0}^{n-1} \int_{\frac kN}^{\frac{k+1}N} |f'(s)|\,ds  =   \int_0^1 |f'(s)| .
\]
This completes the proof.
\end{proof}

\section{Central limit theorem for smooth linear statistics}
\label{sec:CLT}

Our goal is to show how Theorem~\ref{thm:planar} relates to  numerous results on eigenvalues linear statistics of the the Gaussian $\beta$-ensemble.
Let $\rho(x) = \frac{\sqrt{1-x^2}}{\pi/2}\1_{|x| \le 1} $  be the semicircle density on $[-1,1]$. 
Let us recall that the eigenvalues of the Gaussian $\beta$-ensemble $\{\lambda_j\}_{j=1}^N$ have law \eqref{eq:GbE} and that linear statistics satisfy a central limit theorem.

\begin{theorem} \textnormal{(\cite[Theorem 2.4]{Johansson})} \label{thm:clt}
If $f:\R\to\R$ is a sufficiently smooth with $|f(x)| \le C (1+x^2)$, then as $N\to+\infty$
\begin{equation} \label{Johansson0}
\bigg( \sum_{j=1}^N f(x_j) -  N \int f(x) \rho(x) dx \bigg) \Rightarrow \left(\frac{2}{\beta}-1\right)\mathbf{m}(f) +\sqrt{\tfrac{2}{\beta}\Sigma(f)} \mathcal{N}
\end{equation}
where $\mathcal{N}$ is a standard Gaussian variable, $\Sigma(f) =\sum_{k=1}^{+\infty} k f_k^2$, $f_k$ are the Fourier--Chebyshev\footnote{The Chebyshev polynomials are defined by $T_k(\cos\theta) = \cos(k\theta)$ for any $k\ge 0$ and $\theta\in\R$. They form an orthogonal basis of $L^2([-1,1], \frac{dx}{\pi\sqrt{1-x^2}})$.} coefficients of $f$: for $k\ge 0$, 
\begin{equation} \label{def:m}
f_k =    \int_{-1}^1 \frac{f(x) T_k(x)}{\pi\sqrt{1-x^2}}dx   
\qquad\text{and}\qquad
\mathbf{m}(f)  =   \frac{f(1)+f(-1)}{4}  - \frac{f_0}{2}
\end{equation}
\end{theorem}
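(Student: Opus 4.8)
The plan is to write the linear statistic as a contour integral of $\log\varphi_N$ and substitute the Gaussian coupling of Theorem~\ref{thm:planar}. First I would reduce to $f$ a polynomial: using a uniform–in–$N$ variance bound for smooth linear statistics of the G$\beta$E — which itself can be read off from the representation below — together with the continuity of $\mathbf{m}(f)$ and $\Sigma(f)$ in a Sobolev norm in which polynomials are dense, the convergence \eqref{Johansson0} propagates from a dense class to all sufficiently smooth $f$. Fix such an $f$ and a fixed contour $\Gamma$ encircling $[-1,1]$ once at positive distance; for $N$ large, $\Gamma\subset\mathscr{P}$, and on the overwhelming–probability event that every eigenvalue lies inside $\Gamma$ — an edge/no–outlier estimate, classical for the G$\beta$E and in any case implicit in the transfer–matrix analysis — the argument principle, the identity $\tfrac{N}{2\pi i}\oint_\Gamma f(z)g'(z)\,dz=N\int f\rho$ (since $g'$ is the Stieltjes transform of $\rho$, where $g(z)=\int\log(z-x)\rho(x)\,dx$), and an integration by parts (valid because $\log\varphi_N(z)-Ng(z)$ is single–valued on $\C\setminus[-1,1]$) give
\[
\sum_{j=1}^N f(\lambda_j)-N\int f\rho
= -\frac{1}{2\pi i}\oint_\Gamma f'(z)\bigl(\log\varphi_N(z)-Ng(z)\bigr)\,dz .
\]

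I would then insert two asymptotic expansions, uniformly on $\Gamma$ and on the good event. Theorem~\ref{thm:planar} gives $\log\varphi_N(z)-\log\pi_N(z)=\sqrt{\tfrac{2}{\beta}}\,\mathrm{W}(z)-\log\E\bigl[\exp(\sqrt{\tfrac{2}{\beta}}\,\mathrm{W}(z))\bigr]+O(N^{-1/10})$, and by \eqref{eq:Wdef} the second term equals $\tfrac1\beta\log(1-J(z)^2)$; the Plancherel–Rotach asymptotics \eqref{Piasymp} (cf.\ Appendix~\ref{sect:properties}) give $\log\pi_N(z)-Ng(z)=\log D(z)+O(N^{-\epsilon})$ with outer function $D(z)=\tfrac12(\gamma(z)+\gamma(z)^{-1})$, which a short computation with $J$ identifies as $D(z)=(1-J(z)^2)^{-1/2}$. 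Hence on $\Gamma$
\[
\log\varphi_N(z)-Ng(z)=\sqrt{\tfrac{2}{\beta}}\,\mathrm{W}(z)+\Bigl(\tfrac1\beta-\tfrac12\Bigr)\log\bigl(1-J(z)^2\bigr)+o(1),
\]
and since $\oint_\Gamma f'(z)\,o(1)\,dz=o(1)$, the linear statistic splits — up to $o_{\mathbb{P}}(1)$ — into the random term $-\sqrt{\tfrac2\beta}\,\tfrac{1}{2\pi i}\oint_\Gamma f'(z)\mathrm{W}(z)\,dz$ and the deterministic integral $-(\tfrac1\beta-\tfrac12)\,\tfrac{1}{2\pi i}\oint_\Gamma f'(z)\log(1-J(z)^2)\,dz$.

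The last step evaluates these by a residue computation. Writing $\mathrm{W}=\xi\circ J$ with $\xi(q)=\sum_{k\ge1}\xi_k q^k/\sqrt{k}$ (Remark~\ref{rk:GAF}) and using $\tfrac{1}{2\pi i}\oint_\Gamma f'(z)J(z)^k\,dz=k f_k$ for $k\ge1$ — which follows from $T_k'(z)=\tfrac{k}{2}\bigl(J(z)^{-k}-J(z)^k\bigr)/\sqrt{z^2-1}$, the expansion $f=f_0+2\sum_{k\ge1}f_k T_k$, and $\tfrac1{2\pi i}\oint_\Gamma J(z)^p/\sqrt{z^2-1}\,dz=\delta_{p,0}$ — the random term equals $-\sqrt{\tfrac2\beta}\sum_{k\ge1}\sqrt{k}\,f_k\,\xi_k$, a centred Gaussian of variance $\tfrac2\beta\sum_{k\ge1}k f_k^2=\tfrac2\beta\Sigma(f)$; and expanding $\log(1-J^2)=-\sum_{k\ge1}J^{2k}/k$ turns the deterministic integral into a multiple of $\sum_{k\ge1}f_{2k}$, which is $\mathbf{m}(f)$ by \eqref{def:m}, so the two contributions combine — after bookkeeping of the constants — to the deterministic bias in \eqref{Johansson0}. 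This yields \eqref{Johansson0}, the polynomiality assumption being removed by the initial density argument; Johansson's separate formulae for the mean and variance of the limit are recovered along the way. The one genuinely delicate point is this reduction to polynomial test functions, including the edge estimate that places $\Gamma$ inside $\mathscr{P}$; once that is in place, everything is a direct consequence of Theorem~\ref{thm:planar} and the Hermite asymptotics.
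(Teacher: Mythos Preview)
Your proposal is correct and follows essentially the same route as the paper: both represent the linear statistic as a contour integral of $\log\varphi_N$, insert Theorem~\ref{thm:planar} together with the Plancherel--Rotach asymptotics, and then evaluate the resulting integrals via the GAF expansion $\mathrm{W}=\xi\circ J$ and the Chebyshev coefficients of $f$. The only notable organisational difference is that you combine the Hermite outer function $\log D(z)=-\tfrac12\log(1-J^2)$ and the variance term $\tfrac1\beta\log(1-J^2)$ into a single $(\tfrac1\beta-\tfrac12)\log(1-J^2)$ before integrating, whereas the paper treats the two deterministic contributions separately (first showing $-\tfrac{1}{2\pi i}\oint f'\log\pi_N=N\!\int f\rho-\mathbf m(f)$, then computing $\oint f\,\E[\mathrm W'\mathrm W]$ via collapsing the contour to $[-1,1]$); your packaging is arguably cleaner. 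The paper also works directly with analytic $f$ and only sketches the extension to $C^2$ via Helffer--Sj\"ostrand, while you reduce to polynomials and invoke a density argument --- a legitimate alternative, though the uniform variance bound you appeal to is itself an input of roughly the same strength as the theorem, so this step deserves the caution you flag.
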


Theorem \ref{thm:clt} first appeared in the seminal work of \cite{Johansson} (see \cite{BPS95} for the law of large numbers) and they have been several improvements, e.g. \cite{BG13, Shcherbina13, BLS19, LLW19}.

First, let us explain where the mean in Theorem~\ref{thm:clt} comes from.
Recall that  $\E\varphi_N = \pi_N$ where $\pi_N$ is a Hermite polynomial of degree $N$ orthogonal with respect to the weight $e^{-2N x^2}$ on $\R$ (c.f. \eqref{eq:hermite}).
Let $z_1,\dots, z_N$ be the zeroes of $\pi_N$, and let $\log \pi_N(z)$ be the complex logarithm that is analytic in $\C \setminus (-\infty, 1]$ (the zeros $z_j\in [-1,1]$) and real valued on $\R.$ 

Let $\boldsymbol\gamma \subset \C$ be a fixed (simple oriented) contour around the cut $[-1,1]$ and suppose it crosses $(-\infty,-1)$ at a point $x^*.$  Parameterize this contour so that $\boldsymbol\gamma: [0,1] \to \C$ and so that $\boldsymbol\gamma(0)=\boldsymbol\gamma(1)=x^*$.  Then by an integration by parts,
\[
\sum_{j=1}^N f(z_j) 
= \frac{1}{2\pi\i}
\oint_{\boldsymbol\gamma}  f(z) \frac{\pi_N'(z)}{\pi_N(z)} dz
= \frac{1}{2\pi\i}
\int_{0}^1  f(\boldsymbol\gamma(t)) \frac{\pi_N'(\boldsymbol\gamma(t))}{\pi_N(\boldsymbol\gamma(t))} \boldsymbol\gamma'(t) dt
= 
Nf(x^*)
-
\frac{1}{2\pi\i}
\oint_{\boldsymbol\gamma}  f'(z) \log \pi_N(z) dz,
\]
where we have used argument principle to evaluate the boundary term 
\[
  \lim_{\varepsilon \to 0}
  \bigl(
  f(\gamma(1-\varepsilon))
  \log \pi_N(\gamma(1-\varepsilon))
  -f(\gamma(\varepsilon))
  \log \pi_N(\gamma(\varepsilon))
  \bigr)
  =2\pi\i N f(x^*).
\]

Recall the Hermite polynomial asymptotics \eqref{Piasymp} 
where $\gamma(z) =  \left(\frac{z+1}{z-1}\right)^{1/4}$ is analytic, non-zero on $\C\setminus[-1,1]$ and $g(z)=g_1(z)$ defined in \eqref{eq:gfunction} has its branch cut on $(-\infty,1]$. Hence, taking logarithm, we obtain for $z\in \boldsymbol\gamma$,
\[
 \log \pi_N(z)  = N g(z) + \log\left(\frac{\gamma(z) + \gamma(z)^{-1} }{2} \right)  + \O(N^{-1}) . 
\]
The error term of order $N^{-1}$ comes from the fact that we can choose the contour $\boldsymbol\gamma$ macroscopically separated from $[-1,1]$.

Moreover, we also have 
\[
\frac{1}{2\pi\i}
\oint_{\boldsymbol\gamma}  f(z) g'(z) dz
= 
f(x^*)
-\frac{1}{2\pi\i}
\oint_{\boldsymbol\gamma}  f'(z) g(z) dz.
\]

Thus for a function $f$ which is analytic in a simply connected neighborhood of  $[-1,1]$, 
\[ \begin{aligned}
\sum_{j=1}^N f(z_j)  &= Nf(x^*)-\frac{1}{2\pi\i} \oint_{\boldsymbol\gamma}  f'(z) \log \pi_N(z)  dz \\
& =  \frac{N}{2\pi\i} \oint_{\boldsymbol\gamma}  f(z) g'(z) dz
- \frac{1}{2\pi\i} \oint_{\boldsymbol\gamma}  f'(z) \log\left(\frac{\gamma(z) + \gamma(z)^{-1} }{2} \right) dz +\O(N^{-1})
\end{aligned}\]

Moreover from Step 2 in the proof of Proposition~\ref{prop:Hermite}, we have $ g'(z) = 4\lambda_-(1)$, so that by \eqref{def:root}, 
\[
\frac{1}{2\pi\i} \oint_{\boldsymbol\gamma}  f(z) g'(z) dz = - \frac{1}{\pi\i} \oint_{\boldsymbol\gamma}  f(z) \sqrt{z^2-1} dz =  \int f(x) \rho(x) dx
\]
where the last step follows from collapsing the contour $\boldsymbol\gamma$ on $[-1,1]$. 
Moreover, we also check that 
$-\frac{d}{dz} \log\left(\frac{\gamma(z) + \gamma(z)^{-1} }{2} \right)= \frac{z/2}{z^2-1} -\frac{1/2}{\sqrt{z^2-1}} $ and by a similar argument, we obtain
\[
\frac{1}{2\pi\i} \oint_{\boldsymbol\gamma}  f'(z) \log\left(\frac{\gamma(z) + \gamma(z)^{-1} }{2} \right) dz 
=  \frac{f(1)+f(-1)}{4} - \int_{-1}^1 \frac{f(x)}{2\pi\sqrt{1-x^2}} dx
=  \mathbf{m}(f)
\]
according to \eqref{def:m}.
This shows that 

\begin{equation}  \label{zero_hermite}
\sum_{j=1}^N f(z_j)  
=Nf(x^*) - \frac{1}{2\pi\i} \oint_{\boldsymbol\gamma}  f'(z) \log \pi_N(z)  dz 
= N \int f(x) \rho(x) dx - \mathbf{m}(f) , 
\end{equation}
which corresponds to the asymptotics \eqref{Johansson0} as $\beta\to+\infty$.

\medskip

We now proceed in a similar way to recover Theorem~\ref{thm:clt} for analytic test functions from our Theorem~\ref{thm:planar}. 
Namely, on an event of probability $1-e^{-c_\beta N^{\delta}}$, we have  for any function $f$ which is analytic in a neighborhood of  $[-1,1]$, 
\[ \begin{aligned}
&\sum_{j=1}^N f(\lambda_j)  = 
Nf(x^*)- \frac{1}{2\pi\i} \oint_{\boldsymbol\gamma}  f'(z) \log \varphi_N(z)  dz \\
& = 
Nf(x^*)- \frac{1}{2\pi\i} \oint_{\boldsymbol\gamma}  f'(z) \log \pi_N(z)  dz 
- \frac{\sqrt{2/\beta}}{2\pi\i} \oint_{\boldsymbol\gamma}  f'(z) \mathrm{W}(z) dz
+ \frac{1}{2\pi\i} \oint_{\boldsymbol\gamma}  f'(z) 
\E\big[\tfrac{\mathrm{W}(z)^2}{\beta}\big]
dz 
+\O\Big(\tfrac{\|f'\|_\infty}{N^{1/15}} \Big) .
\end{aligned}\]
Note that to evaluate the integral, it is crucial that the asymptotics from Theorem~\ref{thm:planar} are uniform and they imply that with probability at least $1-e^{-c_{\beta} N}$, all $\lambda_j \in [-1-\epsilon, 1+\epsilon]$ with $\epsilon\le  N^{-2\alpha}/2$.
By integration by parts and using the asymptotics \eqref{zero_hermite}, 
\begin{equation} \label{Johansson1}
\sum_{j=1}^N f(\lambda_j)  = N \int f(x) \rho(x) dx - \mathbf{m}(f)
+  \frac{\sqrt{2/\beta}}{2\pi\i} \oint_{\boldsymbol\gamma}  f(z) \mathrm{W}'(z) dz
-   \frac{2/\beta}{2\pi\i} \oint_{\boldsymbol\gamma}  f(z) \E\big[ \mathrm{W}'(z)\mathrm{W}(z)\big] dz +\O\Big(\tfrac{\|f'\|_\infty}{N^{1/15}} \Big) .
\end{equation}
According to our Remark~\ref{rk:GAF}, we can represent $ \mathrm{W}(z) = \sum_{k=1}^{+\infty} \frac{\xi_k}{\sqrt{k}} J(z)^k$ and  $ \mathrm{W}'(z) = -  \sum_{k=1}^{+\infty}\frac{ \sqrt{k} \xi_k J(z)^k}{\sqrt{z^2-1}}$ 
for $z\in \C\setminus[-1,1]$ --  we used formula \eqref{J:logderivative} to compute the derivative. 
Then, since $\xi_1,\xi_2,\dots$ are i.i.d. standard Gaussians,  we have
 \[
  \E\big[\mathrm{W}(z)\mathrm{W}'(z)\big] = - \frac{1}{\sqrt{z^2-1}} \sum_{k=1}^{+\infty}  J(z)^{2k} .
 \]
For any $k\in\N$ and $x\in[-1,1]$, we have the boundary values: $J(x_\pm)^k = e^{\mp\i k \theta}$ if $x= \cos(\theta)$ with $\theta\ge 0$. This shows that $ \frac{J(x_+)^{k} + J(x_-)^{k}}{2} = T_k(x)$ where $T_k$ are the Chebyshev's polynomials of the first kind.  
 By using analyticity, to deform the contour $\boldsymbol\gamma$ to $[-1,1]$, this implies that 
 \[
 \frac{-1}{2\pi\i} \oint_{\boldsymbol\gamma}  f(z) \E\big[ \mathrm{W}'(z)\mathrm{W}(z)\big] dz 
 = \int_{-1}^1  f(x) \sum_{k=1}^{+\infty}  \frac{J(x_+)^{2k} + J(x_-)^{2k}}{2}   \frac{dx}{\pi\sqrt{1-x^2}}
 =\sum_{k=1}^{+\infty}  \int_{-1}^1  \frac{  f(x) T_{2k}(x) }{\pi\sqrt{1-x^2}} dx.
 \] 
If we expand $f = f_0 + 2  \sum_{k=1}^{+\infty}  f_k T_k$ in the Chebyshev's polynomial basis\footnote{The Chebyshev polynomials orthogonal with respect to the arcsine law:  
$\displaystyle \int_{-1}^1  T_k(x) T_{j}(x)   \frac{dx}{\pi\sqrt{1-x^2}} =  \frac{1+\1_{k=0}}{2} \delta_{k,j} $ for all $k,j\ge0$.},
 according to \eqref{def:m} 
 and since $T_k(\pm 1) = (\pm 1)^k$ for all $k\ge 0$, we also have 
 \[
 \sum_{k=1}^{+\infty}  \int_{-1}^1   \frac{ f(x) T_{2k}(x) }{\pi\sqrt{1-x^2}} dx = \sum_{k=1}^{+\infty} f_{2k} = \frac{f(1)+f(-1)}{4} -  \frac{f_0}{2} =  \mathbf{m}(f). 
 \]
 This shows that 
 \begin{equation} \label{Johansson2}
   \frac{2/\beta}{2\pi\i} \oint_{\boldsymbol\gamma}  f(z)  \E\big[ \mathrm{W}'(z)\mathrm{W}(z)\big] dz =   -\frac{2}{\beta}\mathbf{m}(f).
 \end{equation}

By a similar argument, we also verify that
 \begin{align}
  \frac{1}{2\pi\i} \oint_{\boldsymbol\gamma}  f(z)  \mathrm{W}'(z) dz
&\notag  =  -  \sum_{k=1}^{+\infty} \sqrt{k}  \xi_k   \frac{1}{2\pi\i} \oint_{\boldsymbol\gamma}  f(z)   J(z)^k \frac{dz}{\sqrt{z^2-1}} \\
&\notag =  \sum_{k=1}^{+\infty}  \sqrt{k}  \xi_k   \int_{-1}^1  f(x) T_{k}(x)   \frac{dx}{\pi\sqrt{1-x^2}} \\
&\notag =   \sum_{k=1}^{+\infty}  \sqrt{k}  f_k \xi_k   \\
& \label{Johansson3}
 \overset{\rm law}{=} \sqrt{\Sigma(f)} \mathcal{N}
\end{align}
 where $\mathcal{N}$ is a standard Gaussian variable. 
 By combining \eqref{Johansson2}, \eqref{Johansson3} with the asymptotics \eqref{Johansson1}, we conclude that
 with  probability (at least) $1-e^{-c_\beta N^{\delta}}$, it holds  for any function $f$ which is analytic in a neighborhood of $[-1,1]$,
 \[
 \sum_{j=1}^N f(\lambda_j) =  
 N \int f(x) \rho(x) dx + \Big(\tfrac{2}{\beta}-1\Big) \mathbf{m}(f) +  \sqrt{\tfrac{2}{\beta}}\sum_{k=1}^{+\infty}  \sqrt{k}  f_k \xi_k  +\O\Big(\tfrac{\|f'\|_\infty}{N^{1/15}} \Big) .
  \]
 This completes our proof of Theorem~\ref{thm:clt}. In principle, we could also extend the results to functions $f\in \mathcal{C}^2$  in a neighborhood of $[-1,1]$ using the  Helffer-Sj\H{o}strand formula to construct an \emph{almost--analytic extension} of $f$. 
\printbibliography[heading=bibliography]


\end{document}